\let\hat\widehat
\newcommand{\mb}[1]{\mathbb{#1}}
\newcommand{\mc}[1]{\mathcal{#1}}
\newcommand{\mf}[1]{\mathfrak{#1}}
\newcommand{\mrm}[1]{\mathrm{#1}}
\newcommand{\tn}[1]{\textnormal{#1}}
\newcommand{\bun}{\mrm{Bun}}
\newcommand{\tbun}{\widetilde{\bun}}
\renewcommand{\sslash}{{/\mkern-6mu/}}
\newcommand{\B}{\mb{B}}
\newcommand{\Deg}{{\mf{D}\mrm{eg}}}
\newcommand{\piccat}{{\mc{P}\!\mathit{ic}}}
\newcommand{\mmid}{{\,|\,}}
\newcommand{\sym}{\mrm{Sym}}
\newcommand{\id}{\mrm{id}}
\newcommand{\ul}[1]{\underline{#1}}
\newcommand{\longhookrightarrow}{\ensuremath{\lhook\joinrel\relbar\joinrel\rightarrow}}
\newcommand{\longdashrightarrow}{{\ensuremath{\,\,\begin{tikzpicture}\path[->] (0, 0) edge[dashed] (0.6, 0);\end{tikzpicture}\,\,}}}
\newcommand{\bslash}{\mathchoice
{\reflectbox{$\displaystyle /$}}%
{\reflectbox{$\textstyle /$}}%
{\reflectbox{$\scriptstyle /$}}%
{\reflectbox{$\scriptscriptstyle /$}}}
\newcommand{\liset}{\mathit{lisse}\tn{-}\mathit{\acute{e}t}}
\renewcommand{\hom}{\mrm{Hom}}
\DeclareMathOperator{\spec}{\mrm{Spec}}
\DeclareMathOperator{\spf}{\mrm{Spf}}
\DeclareMathOperator{\rank}{\mrm{rank}}
\DeclareMathOperator{\codim}{\mrm{codim}}
\theoremstyle{plain}
\newtheorem{thm}{Theorem}[subsection]
\newtheorem{prop}[thm]{Proposition}
\newtheorem{lem}[thm]{Lemma}
\newtheorem{cor}[thm]{Corollary}
\theoremstyle{definition}
\newtheorem{defn}[thm]{Definition}
\newtheorem{rmk}[thm]{Remark}
\newtheorem{eg}[thm]{Example}
\numberwithin{equation}{subsection}
\author{Dougal Davis}
\email{dougal.davis@ed.ac.uk}
\address{School of Mathematics, The University of Edinburgh, James Clerk Maxwell Building, The King's Buildings, Peter Guthrie Tait Road, Edinburgh, EH9 3FD, UK}
\title[The elliptic Grothendieck-Springer resolution]{The elliptic Grothendieck-Springer resolution as a simultaneous log resolution of algebraic stacks}
\begin{document}
\maketitle


\begin{abstract}
Let $G$ be a simply connected simple algebraic group. The classical multiplicative and additive Grothendieck-Springer resolutions are simultaneous resolutions of singularities for the maps from $G$ and its Lie algebra to their invariant theory quotients by the conjugation action of $G$. In this paper, we construct an elliptic version of the Grothendieck-Springer resolution, which is a simultaneous log resolution of a family whose total space is the stack of principal $G$-bundles on an elliptic curve. Our construction extends a well-known simultaneous resolution of the coarse moduli space map for semistable principal bundles to the stack of all principal bundles. We also prove elliptic versions of the Chevalley isomorphism and the Kostant and Steinberg section theorems, on which our construction relies.


\end{abstract}

\tableofcontents

\section{Introduction} \label{section:introduction}

Let $G$ be a simply connected simple algebraic group over an algebraically closed field $k$. Classically, the Springer theory of $G$ is the study of various features of the additive and multiplicative adjoint quotient maps
\begin{equation} \label{eq:introadjointquotient}
\chi^{add} \colon \mf{g} \longrightarrow \mf{g}\sslash G = \spec k[\mf{g}]^G \quad \text{and} \quad \chi^{mul} \colon G \longrightarrow G \sslash G = \spec k[G]^G,
\end{equation}
where $\mf{g} = \mrm{Lie}(G)$ is the Lie algebra of $G$ and $G$ acts on $\mf{g}$ (resp., $G$) via the adjoint representation (resp., by conjugation). For example, when $G = SL_n$, $\mf{g}\sslash G$ and $G \sslash G$ are both affine spaces of dimension $n - 1$, and $\chi^{add}$ (resp., $\chi^{mul}$) sends a matrix of trace $0$ (resp., determinant $1$) to the nontrivial coefficients of its characteristic polynomial.

The key observation of Springer theory is that both $\chi^{add}$ and $\chi^{mul}$ are flat families of varieties admitting simultaneous resolutions of singularities after pulling back along finite coverings of their targets. More precisely, there are isomorphisms $\mf{g}\sslash G \cong \mf{t}\sslash W$ and $G\sslash G \cong T \sslash W$, due to Chevalley, and commutative diagrams
\begin{equation} \label{eq:introgrothendieckspringer}
\begin{tikzcd}
\tilde{\mf{g}} = G \times^B \mf{b} \ar[r, "\psi^{add}"] \ar[d, "\tilde{\chi}^{add}"'] & \mf{g} \ar[d, "\chi^{add}"] \\
\mf{t} \ar[r] & \mf{t} \sslash W
\end{tikzcd}
\quad \text{and} \quad
\begin{tikzcd}
\tilde{G} = G \times^B B \ar[r, "\psi^{mul}"] \ar[d, "\tilde{\chi}^{mul}"'] & G \ar[d, "\chi^{mul}"] \\ T \ar[r] & T \sslash W,
\end{tikzcd}
\end{equation}
where $T \subseteq B \subseteq G$ are a maximal torus and Borel subgroup respectively, $\mf{t} = \mrm{Lie}(T)$ and $\mf{b} = \mrm{Lie}(B)$ their Lie algebras, and $W = N_G(T)/T$ is the Weyl group. The diagrams \eqref{eq:introgrothendieckspringer} are called the \emph{additive} (aka rational) and \emph{multiplicative} (aka trigonometric) \emph{Grothendieck-Springer resolutions}. Assuming that $k$ has characteristic $0$ in the additive case, they are simultaneous resolutions in the sense that $\tilde{\chi}^{add}$ and $\tilde{\chi}^{mul}$ are smooth, $\psi^{add}$ and $\psi^{mul}$ are proper, and for all $t \in \mf{t}$ (resp., $T$), the morphism $(\tilde{\chi}^{add})^{-1}(t) \to (\chi^{add})^{-1}(tW)$ (resp., $(\tilde{\chi}^{mul})^{-1}(t) \to (\chi^{mul})^{-1}(tW)$) is a resolution of singularites.

The central idea of elliptic Springer theory is to replace the stack quotients $\mf{g}/G$ and $G/G$ (of which $\mf{g}$ and $G$ are charts, and $\mf{g}\sslash G$ and $G \sslash G$ are coarse moduli spaces) with the stack $\bun_G$ of principal $G$-bundles on a smooth elliptic curve $E$. Remarkably, many constructions from additive and multiplicative Springer theory have direct analogues in this context.

One of the earliest incarnations of this idea can be found in P. Slodowy's work on singularities associated with loop groups (e.g., \cite{slodowy82}), culminating in the paper \cite{helmke-slodowy04} with S. Helmke. In \cite{helmke-slodowy04}, $G$-bundles on elliptic curves appear via the isomorphism of complex analytic stacks
\begin{align}
\mc{L}G/_q\mc{L}G &\overset{\sim}\longrightarrow \bun_G^{an}(\mb{C}^\times/q^\mb{Z}) \label{eq:looijengaloopgroup} \\
\varphi &\longmapsto \frac{\mb{C}^\times \times G}{(qz, g) \sim (z, \varphi(z)g)} \nonumber
\end{align}
where $q$ is a fixed complex number with $0 < |q| < 1$, $\bun_G^{an}(\mb{C}^\times/q^\mb{Z})$ is the analytic stack of $G$-bundles on the elliptic curve $\mb{C}^\times/q^\mb{Z}$, and $\mc{L}G$ is the group of holomorphic maps $\varphi \colon \mb{C}^\times \to G$ acting on itself by \emph{$q$-twisted conjugation}
\[ (\theta \cdot \varphi)(z) = \theta(z)\varphi(z)\theta(qz)^{-1}.\]
The $\mb{C}^\times$-action on $\mc{L}G$ given by $(\lambda \cdot \varphi)(z) = \varphi(\lambda z)$ lifts to an action on the universal central extension $1 \to \mb{C}^\times \to \tilde{\mc{L}}G \to \mc{L}G \to 1$ of $\mc{L}G$. The characters of irreducible representations of the semi-direct product $\hat{\mc{L}}G = \tilde{\mc{L}}G \rtimes \mb{C}^\times$ are used to define a morphism
\begin{equation} \label{eq:helmkeslodowyadjointquotient}
\tilde{\mc{L}}G/_q \mc{L} G = (\tilde{\mc{L}}G \times \{q\})/\mc{L}G \longrightarrow (\mb{A}^{l + 1})^{an},
\end{equation}
where $l = \dim T$ is the rank of $G$. The morphism \eqref{eq:helmkeslodowyadjointquotient} can be viewed as a version of the adjoint quotient map for $\mc{L}G$. (Note that the domain of \eqref{eq:helmkeslodowyadjointquotient} is a $\mb{C}^\times$-bundle over $\bun_G^{an}(\mb{C}^\times/q^\mb{Z})$.)

Elements of elliptic Springer theory can also be found in the work of R. Friedman and J. Morgan \cite{friedman-morgan98} \cite{friedman-morgan00}. In \cite{friedman-morgan98}, Friedman and Morgan prove (for $E$ an elliptic curve over $\mb{C}$) that the coarse moduli space of semistable $G$-bundles is isomorphic to the quotient $Y \sslash W$, where $Y = \hom(\mb{X}^*(T), \mrm{Pic}^0(E)) \cong \mrm{Pic}^0(E)^l$ is the coarse moduli space of degree $0$ $T$-bundles on $E$. Friedman and Morgan's result can be viewed as a weak form of an elliptic Chevalley isomorphism; we give a more refined version, adapted for elliptic Springer theory, in Theorem \ref{thm:introbungchevalley} below. In \cite{friedman-morgan00}, Friedman and Morgan construct a map $\mb{A}^{l + 1} \to \bun_G$ sending $\mb{A}^{l + 1} \setminus \{0\}$ to the open substack $\bun_G^{ss}$ of semistable bundles, such that the composition $\mb{A}^{l + 1} \setminus \{0\} \to Y \sslash W$ with the coarse moduli space map factors through an isomorphism
\[ \mb{WP}^l = (\mb{A}^{l + 1} \setminus \{0\})\sslash \mb{G}_m \overset{\sim}\longrightarrow Y \sslash W\]
for some weighted action of $\mb{G}_m$ on $\mb{A}^{l + 1}$. This result is an elliptic version of the section theorems of B. Kostant \cite[Theorem 0.10]{kostant63} and R. Steinberg \cite[Theorem 1.4]{steinberg65}, which we also refine in this paper (Theorem \ref{thm:introfriedmanmorgansection}).

An elliptic version of the Grothendieck-Springer resolution for semistable $G$-bundles was studied by D. Ben-Zvi and D. Nadler \cite{ben-zvi-nadler15}, who considered the commutative diagram
\begin{equation} \label{eq:introssgrothendieckspringer}
\begin{tikzcd}
\tbun_G^{ss} \ar[r, "\psi^{ss}"] \ar[d, "\tilde{\chi}^{ss}"'] & \bun_G^{ss} \ar[d, "\chi^{ss}"] \\ Y \ar[r] & Y \sslash W,
\end{tikzcd}
\end{equation}
where $\chi^{ss}$ is Friedman and Morgan's coarse moduli space map and $\tbun_G^{ss} = \bun_B^0$ is the stack of degree $0$ $B$-bundles on $E$, and used it to prove some results about perverse sheaves on $\bun_G^{ss}$ of interest from the point of view of elliptic character sheaves. One can reasonably view \eqref{eq:introssgrothendieckspringer} as a semistable elliptic Grothendieck-Springer resolution, although (see Remark \ref{rmk:semistablegrothendieckspringer}) it is only a simultaneous resolution after deleting a closed set from the variety $Y \sslash W$ and its preimages in $Y$, $\bun_G^{ss}$ and $\tbun_G^{ss}$.

A small part of an elliptic Grothendieck-Springer resolution for unstable bundles was studied by I. Grojnowski and N. Shepherd-Barron \cite{grojnowski-shep19} for groups $G$ of type $D_5$, $E_6$, $E_7$ and $E_8$ only. For each of these $G$, Grojnowski and Shepherd-Barron construct a natural deformation $Z = \mb{A}^{l + 3} \to \bun_G$ of a subregular unstable bundle, and a commutative diagram
\begin{equation} \label{eq:grojnowskishepdiagram}
\begin{tikzcd}
\tilde{Z} \ar[r, "\psi_Z"] \ar[d, "\tilde{\chi}_Z"'] & Z \ar[d, "\chi_Z"] \\
\Theta_Y^{-1} \ar[r] & \hat{Y} \sslash W,
\end{tikzcd}
\end{equation}
which they prove to be a simultaneous log resolution (see \cite[Definition 1.1]{grojnowski-shep19} and Definition \ref{defn:simultaneouslogresolution} below). Here $\Theta_Y^{-1}$ is a certain anti-ample $W$-linearised line bundle on $Y$, $\hat{Y}$ is the affine cone over $Y$ obtained by contracting the zero section of $\Theta_Y^{-1}$. The resolution $\tilde{Z}$ is given by $\tilde{Z} = \tbun_G \times_{\bun_G} Z$, where $\tbun_G$ is the \emph{Kontsevich-Mori compactification} of $\tbun_G^{ss} = \bun_B^0$ over $\bun_G$, constructed using Kontsevich's theory of stable maps. The morphism $\chi_Z$ is a local version of Helmke and Slodowy's map \eqref{eq:helmkeslodowyadjointquotient}: a theorem of E. Looijenga \cite{looijenga76} gives an identification $\hat{Y}\sslash W \cong \mb{A}^{l + 1}$ of the bases of the two morphisms over $\mb{C}$. Grojnowski and Shepherd-Barron also give a detailed description of the morphism $\chi_Z$ and its resolution $\tilde{Z}$ in terms of the theory of del Pezzo surfaces \cite[Theorem 1.2]{grojnowski-shep19}.

The main result of this paper is that the diagrams \eqref{eq:introssgrothendieckspringer} and \eqref{eq:grojnowskishepdiagram} extend in a nice way to the whole of $\bun_G$, for all simply connected simple groups $G$. More precisely, we have the following.

\begin{thm}[Corollaries \ref{cor:fundiag} and \ref{cor:simultaneouslogresolution} and Proposition \ref{prop:chifibreunstable}] \label{thm:introgrothendieckspringer}
There exists an ample $W$-linearised line bundle $\Theta_Y$ on $Y$, with inverse $\Theta_Y^{-1}$, and a commutative diagram
\begin{equation} \label{eq:introbunggrothendieckspringer}
\begin{tikzcd}
\tbun_G \ar[r, "\psi"] \ar[d, "\tilde{\chi}"'] & \bun_G \ar[d, "\chi"] \\
\Theta_Y^{-1}/\mb{G}_m \ar[r] & (\hat{Y}\sslash W)/\mb{G}_m,
\end{tikzcd}
\end{equation}
which is a simultaneous log resolution with respect to the zero section of $\Theta_Y^{-1}/\mb{G}_m$ in the sense of Definition \ref{defn:simultaneouslogresolution} below, where $\hat{Y}$ is the affine cone over $Y$ obtained by contracting the zero section of $\Theta_Y^{-1}$ to a point. The preimage of the cone point under $\chi$ is precisely the locus of unstable bundles in $\bun_G$.
\end{thm}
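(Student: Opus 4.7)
The plan is to build the two base objects $\Theta_Y^{-1}/\mb{G}_m$ and $(\hat Y\sslash W)/\mb{G}_m$ from the elliptic Chevalley isomorphism (Theorem \ref{thm:introbungchevalley}), to produce the vertical maps $\tilde\chi$ and $\chi$ by extending the known semistable ones of \eqref{eq:introssgrothendieckspringer}, and only then to verify each clause of Definition \ref{defn:simultaneouslogresolution} separately. First, I will take $\Theta_Y$ to be the natural $W$-linearised ample line bundle on $Y$ coming from the determinant-of-cohomology on $\bun_T^0\to Y$ (equivalently, the pullback from $\bun_G$ of a suitable power of the ample generator); ampleness then guarantees that the affine cone $\hat Y$ exists and that the $\mb{G}_m$-action contracting the zero section of $\Theta_Y^{-1}$ descends to give the stacky base $(\hat Y\sslash W)/\mb{G}_m$, which is exactly the target that Theorem \ref{thm:introfriedmanmorgansection} (the elliptic Kostant--Steinberg theorem) matches to the coarse quotient of $\bun_G^{ss}$.

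Next, I will construct $\chi$ and $\tilde\chi$ in tandem. The Friedman--Morgan map of \cite{friedman-morgan00} factored through $\mb{WP}^l\cong Y\sslash W$ extends, via the weighted section theorem, to a morphism $\bun_G\to (\hat Y\sslash W)/\mb{G}_m$: the $\mb{G}_m$-action encodes the $\Theta_Y$-twist that is necessary to make the extension go through at unstable bundles, and by construction the fibre over the cone point $0/\mb{G}_m$ is cut out precisely by the vanishing of all generators of $\oplus_{n\ge 0} H^0(Y, \Theta_Y^n)^W$, which by the Friedman--Morgan characterisation of semistability is the locus of unstable bundles. For $\tilde\chi$, the existing morphism $\bun_B^0\to Y$ likewise lifts to $\bun_B^0\to \Theta_Y^{-1}/\mb{G}_m$; the key point is that the Kontsevich--Mori compactification $\tbun_G\supseteq \bun_B^0$ (already used in \cite{grojnowski-shep19}) was designed so that this map extends across the boundary, sending stable-map degenerations to the zero section exactly over the cone point. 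Commutativity of \eqref{eq:introbunggrothendieckspringer} follows because both squares reduce, after composing with $Y\to Y\sslash W$, to the known semistable commutativity \eqref{eq:introssgrothendieckspringer}, which then propagates by density of $\bun_B^0$ in $\tbun_G$.

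The heart of the argument is then to verify the three clauses of Definition \ref{defn:simultaneouslogresolution}: (i) $\psi$ is proper, (ii) $\tilde\chi$ is smooth away from the zero section and log smooth across it, and (iii) over the zero-section stratum the induced map of boundary divisors is étale. Properness of $\psi$ is immediate from the stable-maps definition of $\tbun_G$, since Kontsevich moduli stacks are proper over their target. Away from the zero section we are in the semistable locus, where $\tilde\chi$ restricts to $\bun_B^0\to Y$, a smooth morphism of smooth stacks (this reproduces \eqref{eq:introssgrothendieckspringer}). The remaining input is the log-smoothness of $\tilde\chi$ along the boundary, and the identification of its boundary divisor: here one argues étale-locally around each point of $\tbun_G$ over an unstable bundle, using the local structure of the Kontsevich--Mori compactification (a tree of rational curves attached at a unique point determined by the Harder--Narasimhan reduction) to reduce to an explicit toric calculation controlled by the affine Weyl group action on the universal cover of $Y$.

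The main obstacle, as expected, is clause (ii) at the unstable locus. For types $D_5, E_6, E_7, E_8$, Grojnowski--Shepherd-Barron carried this out via an explicit del Pezzo surface model on the subregular deformation $Z$; the work here is to show that their log-smoothness analysis generalises uniformly to all simply connected simple $G$ and to the whole of $\bun_G$ rather than a single subregular slice. The strategy is to reduce the general case to a local model: near any unstable bundle, étale-locally I expect $\chi$ to pull back from a Grojnowski--Shepherd-Barron type slice (or, for $G$ outside the del Pezzo types, from an analogous Looijenga-type local model built from the elliptic Chevalley isomorphism), so that log smoothness is inherited. Once this local-to-global reduction is set up, the final statement on the unstable fibre of $\chi$ follows formally from the fact that $0\in \hat Y\sslash W$ is the unique $W$-invariant cone point, together with Theorem \ref{thm:introfriedmanmorgansection}'s identification of its complement with $\bun_G^{ss}$.
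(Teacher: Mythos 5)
There are genuine gaps at the two places where the real work of the paper happens. First, the existence of $\tilde{\chi}$ is asserted rather than proved: you say the Kontsevich--Mori compactification "was designed so that this map extends across the boundary", but the extension of $\bun_B^0 \to \Theta_Y^{-1}/\mb{G}_m$ over $\tbun_G$ is exactly the non-trivial point. What is needed is to show that the rational map of line bundles $\psi^*\Theta_{\bun_G}^{-1} \dashrightarrow \mrm{Bl}_{B,Y}^*\Theta_Y^{-1}$, which is an isomorphism over $\tbun_G^{ss}$, has no poles along the boundary divisors $D_\lambda$; the paper does this by a determinant-of-cohomology computation (Theorem \ref{thm:chitildedivisor}) giving the divisor $\sum_{\lambda}\tfrac{1}{2}(\lambda\mmid\lambda)D_\lambda$, which simultaneously supplies the multiplicities of $\tilde{\chi}^{-1}(0_{\Theta_Y^{-1}})$ needed for clause \eqref{itm:simultaneouslogres3} and the identification $\tilde{\chi}^{-1}(\Theta_Y^{-1}\setminus 0) = \tbun_G^{ss}$ underlying the unstable-fibre statement. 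Relatedly, your construction of $\chi$ by "extending the Friedman--Morgan map" runs in the wrong direction: that map goes from an affine space \emph{into} $\bun_{G,rig}$, so it cannot produce a morphism out of $\bun_G$; the paper instead obtains $\chi$ tautologically from the Chevalley isomorphism of section rings $\bigoplus_d \pi_*\Theta_{\bun_G}^{\otimes d} \cong \bigoplus_d(\pi_*\Theta_Y^{\otimes d})^W$.

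Second, you have misidentified where the difficulty lies in verifying Definition \ref{defn:simultaneouslogresolution}. Once the divisor formula is known, clause \eqref{itm:simultaneouslogres3} is essentially formal ($\tbun_G\to\Deg_S(E)$ is smooth with normal-crossings boundary, and $\mrm{Bl}_{B,Y}$ is smooth), so no \'etale-local toric or del Pezzo analysis at the unstable locus is required --- and a reduction to Grojnowski--Shepherd-Barron slices could not work uniformly anyway, since those exist only in types $D_5, E_6, E_7, E_8$ and only near subregular bundles. The conditions your proposal does not address are clause \eqref{itm:simultaneouslogres1} (flatness of $\chi$, which in the paper requires knowing $\hat{Y}\sslash W$ is an affine space bundle, itself deduced from the section theorem) and, crucially, clause \eqref{itm:simultaneouslogres2}: that $\tilde{\chi}^{-1}(s)\to\chi^{-1}(q(s))$ is an isomorphism over a dense open substack of \emph{every} fibre, including the fibre over the cone point, which is the entire unstable locus. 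This is where the theory of regular semistable and regular unstable bundles and the full strength of the Friedman--Morgan section theorem enter (Propositions \ref{prop:regularautomorphisms}, \ref{prop:fmsliceregular} and \ref{prop:regulardensity}); without an argument that the regular locus is dense in each fibre of $\chi$, the simultaneous log resolution property is not established.
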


We will call the diagram \eqref{eq:introbunggrothendieckspringer} the \emph{elliptic Grothendieck-Springer resolution}. Unlike the additive and multiplicative Grothendieck-Springer resolutions, the elliptic Grothendieck-Springer resolution is not quite a simultaneous resolution, as the morphism $\tilde{\chi}$ fails to be smooth over the zero section of $\Theta_Y^{-1}$. It does, however, satisfy the following weaker property.

\begin{defn} \label{defn:simultaneouslogresolution}
Let
\begin{equation} \label{eq:simultaneouslogresolution}
\begin{tikzcd}
\tilde{X} \ar[r, "\pi"] \ar[d, "\tilde{f}"'] & X \ar[d, "f"] \\
\tilde{S} \ar[r, "q"] & S
\end{tikzcd}
\end{equation}
be a commutative diagram of algebraic stacks and let $D \subseteq \tilde{S}$ be a divisor. We say that \eqref{eq:simultaneouslogresolution} is a \emph{simultaneous log resolution with respect to $D$} if the following conditions are satisfied.
\begin{enumerate}[(1)]
\item \label{itm:simultaneouslogres1} The morphisms $f$ and $\tilde{f}$ are flat, $q$ is representable, proper, surjective and generically finite, and $\pi$ is proper with finite diagonal.
\item \label{itm:simultaneouslogres2} For any point $s \colon \spec k \to \tilde{S}$, the morphism $\tilde{f}^{-1}(s) \to f^{-1}(q(s))$ is an isomorphism over a dense open substack of $f^{-1}(q(s))$.
\item \label{itm:simultaneouslogres3} The stack $\tilde{X}$ is regular, the morphism $\tilde{f}$ is smooth away from $D$ and $\tilde{f}^{-1}(D)$ is a (possibly non-reduced) divisor with normal crossings.
\end{enumerate}
\end{defn}

\begin{rmk}
Grojnowski and Shepherd-Barron also give a definition of simultaneous log resolution \cite[Definition 1.1]{grojnowski-shep19}, which is stronger than Definition \ref{defn:simultaneouslogresolution} in several respects. First, Grojnowski and Shepherd-Barron require that the map $\pi \colon \tilde{X} \to X$ be representable, whereas we impose only the weaker condition that it have finite diagonal. (Our condition is equivalent to requiring that the fibres of $\pi$ have only finite stabilisers, which in characteristic $0$ is equivalent to requiring that $\pi$ be relatively Deligne-Mumford.) Second, Grojnowski and Shepherd-Barron require that the singular fibres of $\tilde{f}$ be reduced with simple normal crossings, whereas we allow non-reduced irreducible components with self-intersections. Finally, Grojnowski and Shepherd-Barron require that the relative canonical bundle $K_{\tilde{X}/\tilde{S}}$ be the pullback of $K_{X/S}$, which we do not. We have chosen to make these modifications in order for Theorem \ref{thm:introgrothendieckspringer} to be true.
\end{rmk}

\begin{rmk} \label{rmk:semistablegrothendieckspringer}
Deleting the zero section from $\Theta_Y^{-1}$, Theorem \ref{thm:introgrothendieckspringer} implies that the semistable elliptic Grothendieck-Springer resolution \eqref{eq:introssgrothendieckspringer} is a simultaneous resolution as long as we replace $Y \sslash W \cong (\mb{A}^{l + 1} \setminus \{0\})\sslash\mb{G}_m$ with the complement $(\mb{A}^{l + 1} \setminus \{0\})/\mb{G}_m$ of the cone point in $(\hat{Y}\sslash W)/\mb{G}_m$. It follows that the original diagram \eqref{eq:introssgrothendieckspringer} is also a simultaneous resolution after deleting the points with stabilisers in the weighted projective space $Y \sslash W$.
\end{rmk}

\begin{rmk}
An important property of the additive and multiplicative Grothendieck-Springer resolutions is that the maps $\tilde{\mf{g}} \to \mf{g}$ and $\tilde{G} \to G$ (resp., $(\tilde{\chi}^{add})^{-1}(t) \to (\chi^{add})^{-1}(tW)$ and $(\tilde{\chi}^{mul})^{-1}(t) \to (\chi^{mul})^{-1}(tW)$) are small (resp., semi-small) in the sense of Goresky and MacPherson. While this is the case for the simultaneous resolution \eqref{eq:introssgrothendieckspringer} \cite[Theorem 2.2 (1)]{ben-zvi-nadler15}, it fails for the simultaneous log resolution \eqref{eq:introbunggrothendieckspringer}. The map $\tbun_G \to \bun_G$ is never small, since the simultaneous log resolution property implies that the codimension of the union of the images of the fibres of dimension $l$ is at most $l + 1$, and the maps $\tilde{\chi}^{-1}(y) \to \chi^{-1}(q(y))$ are never semi-small for $y$ in the zero section of $\Theta_Y^{-1}$, since the results of \cite{grojnowski-shep19} (in type $E$) and \cite{davis20} (all types) show that there is always at least one irreducible component of $\tilde{\chi}^{-1}(y)$ covered by positive-dimensional fibres.
\end{rmk}

The proof of Theorem \ref{thm:introgrothendieckspringer} is divided into two parts: the construction of the diagram \eqref{eq:introbunggrothendieckspringer} (Corollary \ref{cor:fundiag}) and the proof that it is a simultaneous log resolution (Corollary \ref{cor:simultaneouslogresolution}). The main ingredient in the construction is an elliptic version of Chevalley's isomorphisms $\mf{g}\sslash G \cong \mf{t}\sslash W$ and $G\sslash G \cong T \sslash W$, which refines Friedman and Morgan's identification of the coarse moduli space of $\bun_G^{ss}$ with $Y \sslash W$.

\begin{thm}[Theorem \ref{thm:bungchevalley}] \label{thm:introbungchevalley}
There is a certain subgroup $\mrm{Pic}^W(Y)_{good} \subseteq \mrm{Pic}^W(Y)$ of the group of $W$-linearised line bundles on $Y$ and an isomorphism
\begin{equation} \label{eq:introbungchevalley1}
 \mrm{Pic}^W(Y)_{good} \overset{\sim}\longrightarrow \mrm{Pic}(\bun_G).
\end{equation}
Moreover, if $L_Y \in \mrm{Pic}^W(Y)_{good}$ and $L_{\bun_G} \in \mrm{Pic}(\bun_G)$ is its image under \eqref{eq:introbungchevalley1}, then there is a canonical isomorphism
\begin{equation} \label{eq:introbungchevalley2}
H^0(Y, L_Y)^W \overset{\sim}\longrightarrow H^0(\bun_G, L_{\bun_G}).
\end{equation}
The isomorphisms \eqref{eq:introbungchevalley2} are compatible with tensor products of line bundles on $Y$ and $\bun_G$.
\end{thm}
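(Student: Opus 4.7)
The idea is to compare line bundles and sections on $\bun_G$ and $Y$ through the correspondence
\[
\bun_G \xleftarrow{\iota} \bun_B^0 \xrightarrow{\pi} \bun_T^0 \xrightarrow{p} Y,
\]
where $\iota$ is induced by $B \hookrightarrow G$ (and factors through $\bun_G^{ss}$), $\pi$ is induced by $B \twoheadrightarrow T$ and has unipotent fibres, and $p$ is the coarse moduli morphism, so that $\bun_T^0$ is canonically a $\B T$-gerbe over $Y$ that trivialises after choosing a basepoint on $E$.

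I would construct a map $\Phi \colon \mrm{Pic}(\bun_G) \to \mrm{Pic}^W(Y)$ as follows. For $L \in \mrm{Pic}(\bun_G)$, the pullback $\iota^* L$ descends uniquely along $\pi$ because the unipotent fibres are cohomologically trivial. On $\bun_T^0 \cong Y \times \B T$ the resulting bundle splits as $p^* L_Y \otimes \chi$ for some character $\chi \in \mb{X}^*(T)$; one sees $\chi = 0$ by restricting to the trivial $T$-bundle, whose image in $\bun_G$ is the trivial $G$-bundle with automorphism group $G$ acting on the fibre of $L$ by a character, necessarily trivial since simply connected simple $G$ has no nontrivial characters. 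To put a $W$-linearisation on $L_Y$ I would use that a $T$-bundle $\mc{T}$ and its Weyl translate $w \cdot \mc{T}$ induce the same $G$-bundle $\mc{G}$; the fibre of $L$ at $\mc{G}$ is then canonically identified with both $L_Y|_{[\mc{T}]}$ and $L_Y|_{[w \cdot \mc{T}]}$, giving isomorphisms $w^* L_Y \overset{\sim}\longrightarrow L_Y$ that one checks satisfy the cocycle condition. Define $\mrm{Pic}^W(Y)_{good}$ as the image of $\Phi$; the isomorphism claimed in the theorem is then $\Phi^{-1}$.

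For injectivity of $\Phi$, surjectivity of $\iota$ onto $\bun_G^{ss}$ with geometrically connected fibres reduces the question to injectivity of the restriction $\mrm{Pic}(\bun_G) \to \mrm{Pic}(\bun_G^{ss})$, which I would verify by a direct analysis of the unstable Harder-Narasimhan strata. The sections statement follows from the same diagram: pullback gives an injection $H^0(\bun_G, L_{\bun_G}) \hookrightarrow H^0(\bun_B^0, \iota^* L_{\bun_G})$; the projection formula along $\pi$ (using $\pi_* \mc{O} = \mc{O}$) and along $p$ (using the triviality of $\chi$) identifies the right-hand side with $H^0(Y, L_Y)$; the image lands in $H^0(Y, L_Y)^W$ by construction of the linearisation; and conversely, a $W$-invariant section descends to the coarse moduli $Y \sslash W$ of $\bun_G^{ss}$, pulls back to $\bun_G^{ss}$, and extends uniquely to $\bun_G$. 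Tensor-product compatibility is immediate. The principal obstacle I anticipate is the $W$-linearisation step: since the $W$-action on $\bun_T^0$ does not lift to $\bun_B^0$, the linearisation must be assembled from cocycle data, and it is not a priori clear which $W$-linearisations on a given $L_Y$ lie in the image of $\Phi$. Giving $\mrm{Pic}^W(Y)_{good}$ a clean intrinsic characterisation---and verifying that it forms a subgroup closed under tensor products---is the delicate technical heart of the argument.
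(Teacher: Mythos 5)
The correspondence you set up, $\bun_G \xleftarrow{\iota} \bun_B^0 \to \bun_T^0 \to Y$ with the $W$-symmetry coming from the fact that Weyl-conjugate degree-$0$ $T$-bundles induce isomorphic $G$-bundles, is the same one the paper uses, and several of your intermediate observations (descent of $\iota^*L$ along the unipotent fibres of $\pi$, vanishing of the gerbe character $\chi$ because $G$ has no nontrivial characters, tensor compatibility) are sound. But there are two genuine gaps.

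First, your injectivity argument rests on a false premise: the fibres of $\iota \colon \bun_B^0 \to \bun_G^{ss}$ are \emph{not} geometrically connected. Over a strictly regular semistable bundle the fibre of $\psi$ is a disjoint union of $|W|$ reduced points --- this is precisely what makes $\tbun_G^{ss, sreg} \to \bun_G^{ss, sreg}$ an \'etale $W$-Galois covering (Proposition \ref{prop:reggalois}) --- and pullback of line bundles along a finite \'etale cover is not injective in general, so $\iota^*L \cong \mc{O}$ does not give $L|_{\bun_G^{ss}} \cong \mc{O}$. Injectivity has to be argued by Galois descent, using the $W$-linearisation you construct as the descent datum over the strictly regular locus and then a codimension-$\geq 2$ argument to spread out. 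Relatedly, away from the regular semistable locus the fibres of $\psi$ become positive-dimensional, so to have a \emph{finite} covering to descend along one must first restrict to the big open substack $\bun_G^{ss, reg}$; working with all of $\bun_B^0 \to \bun_G^{ss}$ as you do does not suffice.

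Second, and more seriously, the issue you defer --- which $W$-linearisations occur, and why $W$-invariant sections of $L_Y$ give sections on $\bun_G$ --- is not a technicality that can be postponed; it is where the theorem lives. Your converse direction for the sections statement (``a $W$-invariant section descends to $Y \sslash W$, pulls back to $\bun_G^{ss}$'') presupposes that $L_Y$ descends to the singular quotient $Y \sslash W$ and that its descent pulls back to $L|_{\bun_G^{ss}}$ under the coarse moduli map; but that is essentially the statement being proved, and it fails for a general $W$-linearisation. The paper's resolution is a ramified Galois descent criterion (Proposition \ref{prop:ramgaloisdescent}): a $W$-linearised bundle descends along the ramified Galois covering $\tbun_G^{ss, reg} \to \bun_G^{ss, reg}$ exactly when each reflection acts as the identity on the restriction of the bundle to the codimension-one part of its fixed locus, which transported to $Y$ is the intrinsic definition of $\mrm{Pic}^W(Y)_{good}$ (Definition \ref{defn:picwygood}). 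Without such a criterion your $\mrm{Pic}^W(Y)_{good}$ is defined only as an image, and you have no handle on the surjectivity of the sections map nor on the later computation of this group that the rest of the paper depends on.
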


We give the proof of Theorem \ref{thm:introbungchevalley} (as Theorem \ref{thm:bungchevalley}) in \S\ref{subsection:chevalley}.

The fact that \eqref{eq:introbunggrothendieckspringer} is a simultaneous log resolution is proved in \S\ref{subsection:friedmanmorganapplications} as a fairly straightforward consequence of the following analogue of the Kostant and Steinberg section theorems (\cite[Theorem 0.10]{kostant63} and \cite[Theorem 1.4]{steinberg65}).

\begin{thm}[Theorem \ref{thm:friedmanmorgansection}] \label{thm:introfriedmanmorgansection}
There exists a morphism $Z \to \bun_G$ from an affine space $Z$ such that the composition $Z \to (\hat{Y}\sslash W)/\mb{G}_m$ with $\chi$ factors through an isomorphism $Z \cong \hat{Y}\sslash W$. Moreover, writing
\[ \tilde{Z} = \tbun_G \times_{\bun_G} Z,\]
the morphism $\tilde{Z} \to \Theta_Y^{-1}/\mb{G}_m$ induced by $\tilde{\chi}$ also factors through an isomorphism $\tilde{Z} \cong \Theta_Y^{-1}$.
\end{thm}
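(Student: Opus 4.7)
The plan is to take $Z = \mb{A}^{l+1}$ with Friedman and Morgan's family $Z \to \bun_G$, and to identify $Z$ with $\hat{Y}\sslash W$ using Theorem \ref{thm:introbungchevalley}; then to handle the Borel-level claim $\tilde{Z} \cong \Theta_Y^{-1}$ via a parallel analysis of $\tbun_G \to \bun_G$.

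First, applying Theorem \ref{thm:introbungchevalley} to all tensor powers of a sufficiently positive $\Theta_Y \in \mrm{Pic}^W(Y)_{good}$ yields a canonical graded ring isomorphism
\[
\bigoplus_{n \geq 0} H^0(Y, \Theta_Y^{\otimes n})^W \overset{\sim}\longrightarrow \bigoplus_{n \geq 0} H^0(\bun_G, \Theta_{\bun_G}^{\otimes n}),
\]
where $\Theta_{\bun_G} \in \mrm{Pic}(\bun_G)$ denotes the image of $\Theta_Y$. Taking $\spec$ identifies $\hat{Y}\sslash W = \spec \bigoplus_n H^0(Y, \Theta_Y^{\otimes n})^W$ with $\spec \bigoplus_n H^0(\bun_G, \Theta_{\bun_G}^{\otimes n})$, and under this identification $\chi \colon \bun_G \to (\hat{Y}\sslash W)/\mb{G}_m$ is tautologically the morphism defined by (the $\mb{G}_m$-equivariant version of) this section algebra.

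Second, take $Z = \mb{A}^{l+1}$ together with Friedman and Morgan's family $Z \to \bun_G$, sending $Z \setminus \{0\} \to \bun_G^{ss}$ and $0$ to a distinguished unstable bundle. Since $Z$ is an affine space, the $\mb{G}_m$-torsor on $Z$ pulled back from $(\hat{Y}\sslash W)/\mb{G}_m$ is trivial, so the composition $Z \to (\hat{Y}\sslash W)/\mb{G}_m$ lifts to a $\mb{G}_m$-equivariant morphism $Z \to \hat{Y}\sslash W$ (the $\mb{G}_m$-action on $Z$ being Friedman and Morgan's weighted action). On $Z \setminus \{0\}$, this map is an isomorphism onto the complement of the cone point: the quotient map on both sides agrees with Friedman and Morgan's identification $(Z \setminus \{0\})/\mb{G}_m \cong Y \sslash W$ combined with the description of $\chi$ above. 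Since $\{0\} \subset Z$ has codimension $l + 1 \geq 2$ and both source and target are normal, the isomorphism extends uniquely across the origin; as a by-product one recovers that $\hat{Y}\sslash W \cong \mb{A}^{l + 1}$ is an affine space.

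Third, for $\tilde{Z} \cong \Theta_Y^{-1}$ I would argue in parallel at the Borel level. Over $Z \setminus \{0\}$ the diagram \eqref{eq:introssgrothendieckspringer} identifies $\tbun_G^{ss} \times_{\bun_G^{ss}} (Z \setminus \{0\})$ with the pullback of the $W$-quotient $Y \to Y \sslash W$ along $(Z \setminus \{0\})/\mb{G}_m \cong Y \sslash W$, which coincides with the complement of the zero section in $\Theta_Y^{-1}$. Over the origin $0 \in Z$ one must identify the fibre of $\tbun_G \to \bun_G$ at the distinguished unstable bundle with the zero section $Y \subset \Theta_Y^{-1}$, via the Kontsevich--Mori construction together with the Harder--Narasimhan reduction of that bundle; then a codimension argument as above glues the two descriptions into a global isomorphism $\tilde{Z} \cong \Theta_Y^{-1}$. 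The main obstacle is this last identification of Kontsevich--Mori fibres over unstable bundles: it is the most delicate geometric input, must be carried out uniformly across all simply connected simple groups $G$, and generalises Grojnowski and Shepherd-Barron's analysis in type $E$.
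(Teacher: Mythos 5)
There is a genuine gap, concentrated in your third step. The gluing you propose there cannot be ``a codimension argument as above'': the loci over which your two partial descriptions of $\tilde{Z}$ are not yet identified are \emph{divisors}, not closed substacks of codimension $\geq 2$. Concretely, the identification of $\tbun_G^{ss} \times_{\bun_G^{ss}} (Z \setminus \{0\})$ with the pullback of $Y \to Y \sslash W$ is only valid over the strictly regular locus (where $\psi$ is an \'etale $W$-cover); its complement in $Y$ is the union of the divisors $Y^{s_\alpha}$, over which $\psi$ is ramified (and, for non-regular bundles, has positive-dimensional fibres), so the fibre-product description fails there, and likewise the central fibre $\tilde{\chi}_Z^{-1}(0_{\Theta_Y^{-1}})$ is a divisor in $\tilde{Z}$. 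Isomorphisms do not extend across divisors, so nothing in your argument produces a global map $\tilde{Z} \to \Theta_Y^{-1}$, let alone an isomorphism. The paper's mechanism is quite different: one first shows (Lemmas \ref{lem:emptyregulardivisors}--\ref{lem:ssvbfiltration}, Proposition \ref{prop:nonemptyregulardivisor}, using Bruhat cells, Atiyah's classification and the multiplicity formula of Corollary \ref{cor:chitildedivisor}) that $\tilde{\chi}_Z^{-1}(0_{\Theta_Y^{-1}})$ is the single reduced divisor $D_{\alpha_i^\vee}(Z) \cong Y$, and then invokes a $\mb{G}_m$-equivariant ``cone recognition'' statement (Lemma \ref{lem:conerecognition}, proved by deformation to the normal cone plus the Grothendieck existence theorem) to conclude that $\tilde{Z}$ is \emph{globally} the total space of a line bundle over $Y$; the map to $\Theta_Y^{-1}$ is then a map of line bundles preserving zero sections, hence an isomorphism. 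This replacement for your gluing step is the essential missing idea, and it is exactly where the ``delicate geometric input'' you flag gets used.

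Two further problems with the first part. Your isomorphism $Z \setminus \{0\} \cong \hat{Y}\sslash W \setminus \{0\}$ is imported wholesale from Friedman--Morgan's Theorem 5.1.1, which is proved over $\mb{C}$ and is precisely the statement the theorem is meant to reprove and refine (the paper works over an arbitrary regular base, in particular in positive and mixed characteristic), so this is both a scope restriction and close to circular; moreover the $\mb{G}_m$-equivariance of your lift $Z \to \hat{Y}\sslash W$ is not automatic from triviality of line bundles on affine space --- one must compute the weight of the $\mb{G}_m$-action on $\Theta_{\bun_{G,rig}}$ along the slice, which is the content of Propositions \ref{prop:mucheck} and \ref{prop:inducedequivariantslice}. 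The paper's logic also runs in the opposite direction to yours: the base isomorphism $Z \cong \hat{Y}\sslash W$ is \emph{deduced} from $\tilde{Z} \cong \Theta_Y^{-1}$ by ramified Galois descent of regular functions along $\tilde{Z}^{ss,reg} \to Z^{ss,reg}$, rather than established first and then lifted.
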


Theorem \ref{thm:introfriedmanmorgansection} is a mild refinement of Friedman and Morgan's other result discussed above \cite[Theorem 5.1.1]{friedman-morgan00} , so we call it the Friedman-Morgan section theorem. The new observations here are that Friedman and Morgan's parabolic induction construction for the map $Z \to \bun_G$ can be made to give a natural lift $Z \to \hat{Y}\sslash W$ (Proposition \ref{prop:inducedequivariantslice}), and that the isomorphism $Z \cong \hat{Y}\sslash W$ lifts to (and can actually be deduced from) an isomorphism $\tilde{Z}\cong \Theta_Y^{-1}$.

\begin{rmk} \label{rmk:rigidification}
For technical reasons, we will often need to work with the rigidified stack $\bun_{G, rig}$ obtained from $\bun_G$ by taking the quotient of all automorphism groups by the centre of $G$. (See \S\ref{subsection:rigidification} for the precise definition.) The advantages of $\bun_{G, rig}$ over $\bun_G$ are that various automorphism groups (coming from centres of Levi subgroups) that are disconnected in $\bun_G$ become connected in $\bun_{G, rig}$, and that it is easier in practice to construct morphisms $Z \to \bun_{G, rig}$. For example, the Friedman-Morgan map $\hat{Y}\sslash W \to \bun_G$ does not factor through a section $(\hat{Y}\sslash W)/\mb{G}_m \to \bun_G$ of the coarse quotient map, but it \emph{does} factor through a section $(\hat{Y}\sslash W)/\mb{G}_m \to \bun_{G, rig}$.
\end{rmk}

\begin{rmk}
The stacks $\mf{g}/G$, $G/G$ and $\bun_G^{ss}$ can all be put on an equal footing by observing that $\mf{g}/G \cong \bun_G^{ss}(C_{cusp})$ and $G/G \cong \bun_G^{ss}(C_{node})$, where $C_{cusp}$ and $C_{node}$ are singular degenerations of an elliptic curve with a single cusp and node respectively. (Here we na\"ively define a principal bundle on a singular curve to be semistable if the pullback to the normalisation is semistable.) The stacks $\tilde{\mf{g}}/G \cong \mf{b}/B$ and $\tilde{G}/G \cong B/B$ are naturally identified with $\bun_B^0(C_{cusp})$ and $\bun_B^0(C_{node})$, and one can therefore consider Kontsevich-Mori compactifications $\tbun_G(C) \to \bun_G(C)$ for $C = C_{cusp}$ and $C_{node}$. While we believe many of the results of this paper can probably be extended in some way to this context, a number of subtleties arise in the singular case that we do not know how to resolve at present. This added difficulty is already apparent in the fact that the morphisms $\tilde{\mf{g}}/G \to \mf{t}$ and $\tilde{G}/G \to T$ do not extend to $\tbun_{G}(C_{cusp})$ and $\tbun_G(C_{node})$, so some additional compactification of the space of $T$-bundles is needed.
\end{rmk}

\begin{rmk}
Throughout the body of this paper, we will work in a somewhat more general context than in this introduction. Instead of working with a single elliptic curve $E$ defined over an algebraically closed field $k$, we will allow arbitrary families $E \to S$ of smooth curves of genus $1$ over a regular stack $S$ (and work with a split simply connected simple group scheme $G$ over $\spec \mb{Z}$), subject only sometimes to the restriction that $E \to S$ have a section. The key examples that should be kept in mind are:
\begin{enumerate}[(1)]
\item \label{itm:curveoverfield} $S = \spec k$ for $k$ a field, and $E$ an elliptic curve over $k$,
\item $S = \B E'$ and $E = \spec k$, where $\B E'$ is the classifying stack of an elliptic curve $E'$ over $k$ (this amounts to working with $G$-bundles on $E'$ up to translation), and
\item $S = M_{1, 1}$ the stack of elliptic curves over $\spec \mb{Z}$ (or over some field) and $E \to S$ the universal elliptic curve.
\end{enumerate}
It should be emphasised that very little will be lost to the reader who wishes to assume that we are in case \eqref{itm:curveoverfield} throughout.
\end{rmk}

\begin{rmk}
The results presented here can all be found in some form in the author's PhD thesis \cite{davis19a}. We will also refer to that work for the proofs of several peripheral or well-known propositions.
\end{rmk}

\subsection{Plan of the paper}

The paper is divided into this introduction (Section \ref{section:introduction}) and three other sections.

Section \ref{section:preparations} is a collection of miscellaneous preparatory material, including the definition of the Kontsevich-Mori compactification, a review of the rigidification construction, structural results for various stacks of principal bundles, and a small result on ramified Galois descent.

Section \ref{section:grothendieckspringer} is concerned with the construction of the fundamental diagram \eqref{eq:introbunggrothendieckspringer}. The main results in this section are the elliptic Chevalley isomorphism (Theorem \ref{thm:bungchevalley}), a computation of the group $\mrm{Pic}^W(Y)_{good} \cong \mrm{Pic}(\bun_G)$ (Proposition \ref{prop:picwygoodcomputation} and its Corollary \ref{cor:picbung}), and the existence of the fundamental diagram (Corollary \ref{cor:fundiag}). In constructing the morphism $\tilde{\chi}$, we also give an explicit formula (Corollary \ref{cor:chitildedivisor}) for the multiplicities in the divisor $\tilde{\chi}^{-1}(0_{\Theta_Y^{-1}})$, which may be of independent interest.

Finally, Section \ref{section:friedmanmorgan} is concerned with proving that \eqref{eq:introbunggrothendieckspringer} is indeed a simultaneous log resolution (Corollary \ref{cor:simultaneouslogresolution}). The bulk of this section is taken up by the setup and proof of the Friedman-Morgan section theorem (Theorem \ref{thm:friedmanmorgansection}), from which we deduce the simultaneous log resolution property. As part of the setup, we introduce the notion of an \emph{equivariant slice} of $\bun_{G, rig}$ (Definition \ref{defn:equivariantslice}), which we believe to be the correct analogue of a transversal slice in elliptic Springer theory.

\subsection{Notation and conventions}

Unless otherwise specified, all schemes will be locally Noetherian, and all group schemes will be flat, affine and of finite presentation.

Unless otherwise specified, by a \emph{reductive group} we will mean a split connected reductive group scheme over $\spec \mb{Z}$.

Throughout the paper, we will fix a connected regular stack $S$, a smooth curve $E \to S$ of genus $1$, and a simply connected simple reductive group $G$ (over $\spec \mb{Z}$) with maximal torus and Borel subgroup $T \subseteq B \subseteq G$. 
We will write $(\mb{X}^*(T), \Phi, \mb{X}_*(T), \Phi^\vee)$ for its root datum, where
\[ \mb{X}^*(T) = \hom(T, \mb{G}_m) \quad \text{and} \quad \mb{X}_*(T) = \hom(\mb{G}_m, T)\]
are the groups of characters and cocharacters of the split torus $T$. The set of roots $\Phi$ is by definition the set of weights of $T$ acting on the Lie algebra $\mf{g} = \mrm{Lie}(G)$; we will adopt the convention that the set $\Phi_- \subseteq \Phi$ of negative roots is the set of nonzero weights of $T$ acting on $\mrm{Lie}(B)$, and let $\Phi_+ = -\Phi_-$ be the corresponding set of positive roots. We will write $\Delta = \{\alpha_1, \ldots, \alpha_l\} \subseteq \Phi_+$ and $\Delta^\vee = \{\alpha_1^\vee, \ldots, \alpha_l^\vee\} \subseteq \Phi_+^\vee$ for the sets of positive simple roots and coroots respectively, and $\{\varpi_1, \ldots, \varpi_l\}$ and $\{\varpi_1^\vee, \ldots, \varpi_l^\vee\}$ for the bases of $(\mb{Z}\Phi^\vee)^\vee$ and $(\mb{Z}\Phi)^\vee$ dual to $\Delta$ and $\Delta^\vee$ respectively. Note that $\mb{Z}\Phi^\vee = \mb{X}_*(T)$ since $G$ is simply connected, so $\{\alpha_1^\vee, \ldots, \alpha_l^\vee\}$ is a basis for $\mb{X}_*(T)$ and $\{\varpi_1, \ldots, \varpi_l\}$ is a basis for $\mb{X}^*(T)$.

If $P \subseteq G$ is a parabolic subgroup, we will say that $P$ is \emph{standard} if $B \subseteq P$. Every parabolic subgroup is conjugate to a unique standard one. If $P$ is standard, the \emph{type of $P$} is the set
\[ t(P) = \{\alpha_i \in \Delta \mid \alpha_i \;\text{is not a root of}\; P\} \subseteq \Delta.\]
The construction $P \mapsto t(P)$ defines a bijection between (proper) parabolic subgroups of $G$ and (nonempty) subsets of $\Delta$.

If $H \to S'$ is a group scheme on a stack $S'$, a \emph{principal $H$-bundle} or \emph{$H$-torsor} on an $S'$-scheme $U$ will be a morphism $\xi \to U$ equipped with a right $H$-action on $\xi$, such that $\xi$ is locally $H$-isomorphic to $U \times_{S'} H$ in the \'etale topology on $U$. We will write $\B H$ or $\B_{S'} H$ for the classifying stack of $H$ over $S'$, whose functor of points sends an $S'$-scheme $U$ to the groupoid of $H$-torsors on $U$. If $H$ is commutative, then $\B H$ is itself a (commutative) group stack over $S'$, and a torsor under this group stack will be called an \emph{$H$-gerbe}.

If $S'$ and $S''$ are stacks, $X \to S'$ is a proper curve over $S'$, $H \to S''$ is a group scheme over $S''$ and $X \to S''$ is any morphism, we will write $\bun_{H/S'}(X)$ for the stack over $S'$ whose functor of points sends an $S'$-scheme $U$ to the groupoid of $H$-torsors on $U \times_{S'} X$. (Note that $U \times_{S'} X$ is an $S''$-scheme via its morphism to $X$, so this makes sense.) So, for example, with $S'' = X$ (resp., $S'' = \spec \mb{Z}$), we have a notion of $\bun_{H/S'}(X)$ when $H$ is a group scheme over $X$ (resp., $\spec \mb{Z}$). When $X$ is smooth over $S'$ and $H$ is reductive, we write $\bun_{H/S'}^{ss}(X) \subseteq \bun_{H/S'}(X)$ for the open substack of semistable bundles. In the special case when $S' = S$ and $X = E$, we will write $\bun_H = \bun_{H/S}(E)$.

If $X$ is any stack equipped with an injective action of the classifying stack $\B Z(G)$ of the centre of $G$, then we write $X_{rig}$ for the rigidification of $X$ with respect to $Z(G)$ obtained by taking the quotient of all automorphism groups in $X$ by $Z(G)$ (see \S \ref{subsection:rigidification} for what this means). For example, if $H$ is any group scheme with $Z(G) \subseteq Z(H)$, then $\B Z(G)$ acts injectively on $\bun_H$, so we have a rigidification $\bun_{H, rig}$.

If $H$ is any reductive group, then the abelianisation $H/[H, H]$ and the reduced identity component $Z(H)^\circ$ of the centre are split tori. If $X \to \spec k$ is a proper curve over a field $k$ and $\xi_H \to X$ is a principal $H$-bundle, the \emph{degree} (resp., \emph{slope}) of $\xi_H$ is the unique vector $\deg(\xi_H) \in \mb{X}_*(H/[H, H])$ (resp., $\mu(\xi_H) \in \mb{X}_*(Z(H)^\circ)_\mb{Q} = \mb{X}_*(Z(H)^\circ) \otimes \mb{Q}$) such that
\[ \langle \lambda, \deg(\xi_H) \rangle = \deg(\xi_H \times^H \mb{Z}_\lambda) \quad \text{( resp.,}\;\; \langle \lambda, \mu(\xi_H) \rangle = \deg (\xi_H \times^H \mb{Z}_\lambda) \; \text{)},\]
for all $\lambda \in \mb{X}^*(H/[H, H]) \subseteq \mb{X}^*(Z(H)^\circ)$, where $\mb{Z}_\lambda$ is the $1$-dimensional representation with weight $\lambda$ and $\langle\,,\,\rangle$ is the canonical pairing between characters and cocharacters. If $X \to S'$ is a proper curve over a general base $S'$ and $d \in \mb{X}_*(H/[H, H])$ (resp., $\mu \in \mb{X}_*(Z(H)^\circ)_\mb{Q}$), we write $\bun_{H/S'}^{d}(X)$ (resp., $\bun_{H/S'}^\mu(X)$) for the open and closed substack of $\bun_{H/S'}(X)$ consisting of principal bundles with degree $d$ (resp., slope $\mu$) on every fibre of $X \to S'$.

More generally, if $H$ is a group scheme over $\spec \mb{Z}$ with unipotent radical $R_u(H)$ such that $H/R_u(H)$ is split reductive, the degree and slope of an $H$-bundle $\xi_H$ on a curve $X$ are by definition the degree and slope of the induced $H/R_u(H)$-bundle $\xi_H/R_u(H)$. We will use the same notation $\bun_{H/S'}^d(X)$ and $\bun_{H/S'}^\mu(X)$ for $H$-bundles with fixed degree and slope as in the case when $H$ is reductive.

Finally, if $X \to S$ is a morphism of Artin stacks, we will write $\mb{L}_{X/S}$ for the relative cotangent complex \cite[\S 8]{olsson07} and $\mb{T}_{X/S} = (\mb{L}_{X/S})^\vee$ for the relative tangent complex.

\subsection{Acknowledgements}

The author would especially like to thank Nicholas Shepherd-Barron for introducing him to this topic, for many detailed discussions, and for his very useful advice, insights and questions. He would also like to thank Ian Grojnowski, Masoud Kamgarpour, Arun Ram, Travis Schedler and Richard Thomas for helpful conversations relating to the contents of this paper, and the anonymous referee for their comments.

The author was supported by the EPSRC grants [EP/L015234/1] (The EPSRC Centre for Doctoral Training in Geometry and Number Theory (The London School of Geometry and Number Theory), University College London), and [EP/R034826/1]. The majority of this work was completed while the author was a PhD student at King's College London. The author would also like to acknowledge the generous support of the University of Melbourne, at which he was a visitor during the early stages of this research.


\section{Preparations} \label{section:preparations}

In this section, we review some of the basic objects appearing in the elliptic Grothendieck-Springer resolution \eqref{eq:introbunggrothendieckspringer} and some of the tools going into the proof of Theorem \ref{thm:introgrothendieckspringer}.

In \S\ref{subsection:kontsevichmori}, we give the definition of the Kontsevich-Mori compactification $\tbun_G$, and in \S\ref{subsection:rigidification} we review the process of rigidification mentioned in Remark \ref{rmk:rigidification}. In \S\ref{subsection:unstable}, we recall the definitions of unstable and semistable principal bundles and the decomposition of the locus of unstable $G$-bundles into Harder-Narasimhan loci. In \S\ref{subsection:bruhat}, we use Bruhat cells in $G/B$ to define some useful locally closed substacks inside the stack of $G$-bundles equipped with reductions to both $B$ and a parabolic subgroup. Finally, in \S\ref{subsection:descent}, we recall a basic result for descending line bundles along ramified Galois coverings.

\subsection{The Kontsevich-Mori compactification} \label{subsection:kontsevichmori}

In this subsection, we review the definition and basic properties of the Kontsevich-Mori compactification $\tbun_G$ of $\bun_B^0$.

To motivate the construction, first recall that functor of points of the $S$-stack $\bun_B$ is isomorphic to the functor $\mrm{Sch}_{/S}^{op} \to \mrm{Grpd}$ sending an $S$-scheme $U$ to the groupoid of pairs $(\xi_G, \sigma)$, where $\xi_G \to U \times_S E$ is a principal $G$-bundle and $\sigma \colon U \times_S E \to \xi_G/B$ is a section of the associated bundle of flag varieties. The isomorphism sends a $B$-bundle $\xi_B \to U \times_S E$ to the pair $(\xi_G, \sigma)$, where $\xi_G = \xi_B \times^B G$ is the induced $G$-bundle and $\sigma$ is the section
\[ \sigma \colon E = \xi_B \times^B B/B \longrightarrow \xi_B \times^B G/B = \xi_G/B \]
induced by the inclusion $B/B \hookrightarrow G/B$. Given a pair $(\xi_G, \sigma)$, the degree of the associated $B$-bundle is identified with the unique vector $[\sigma] \in \mb{X}_*(T)$ such that
\[ \langle \lambda, [\sigma] \rangle = \deg \sigma^* \mc{L}_\lambda^{\xi_G} \]
for all $\lambda \in \mb{X}^*(T)$, where
\[ \mc{L}_\lambda^{\xi_G} = \xi_G \times^B \mb{Z}_\lambda,\]
is the natural line bundle on $\xi_G/B$ associated to $\lambda$. The stack $\bun_B^0$ is therefore isomorphic to the stack of pairs $(\xi_G, \sigma)$ with $[\sigma] = 0$.

The Kontsevich-Mori compactification is defined by allowing the section $\sigma$ in the above description of $\bun_B^0$ to degenerate to a map from a singular curve.

\begin{defn}
Let $U$ be a scheme and let $X \to U$ be a proper morphism. A \emph{prestable map to $X$ over $U$ of genus $g$} is a pair $(C, \sigma)$, where $C \to U$ is a proper flat family of curves whose geometric fibres have arithmetic genus $g$ and at worst nodal singularities, and $\sigma \colon C \to X$ is a morphism over $U$. We say that a prestable map $(C, \sigma)$ is \emph{stable} if, for every geometric point $u \colon \spec k \to U$, the automorphism group of $C_u$ over $X_u$ is finite.
\end{defn}

\begin{defn} \label{defn:kontsevichmori}
The \emph{Kontsevich-Mori compactification} $\tbun_G$ of $\bun_B^0$ is the $S$-stack whose functor of points sends an $S$-scheme $U$ to the groupoid of tuples $(\xi_G, C, \sigma)$ where $\xi_G \to U \times_S E$ is a $G$-bundle and $(C, \sigma)$ is a stable map to $\xi_G/B$ over $U$ of genus $1$ such that
\begin{enumerate}[(1)]
\item $C \to U \times_S E$ has degree $1$ on every fibre over $U$, and
\item $[\sigma_u] = 0$ for every geometric point $u$ of $U$, where $[\sigma_u] \in \mb{X}_*(T)$ is the unique cocharacter satisfying
\[ \langle \lambda, [\sigma_u] \rangle = \deg \sigma_u^* \mc{L}_\lambda^{\xi_G} \]
for all $\lambda \in \mb{X}^*(T)$.
\end{enumerate}
\end{defn}

\begin{rmk}
Let $s \colon \spec k \to S$ be a geometric point and $(\xi_G, C, \sigma)$ a $k$-point of $\tbun_G$ lying over $s$. Then $C = E_s \cup \bigcup_i C_i$ has a unique irreducible component mapping to a section of $\xi_G/B \to E_s$, and a number of rational components $C_i \cong \mb{P}^1_k$ mapping into fibres of the $G/B$-bundle $\xi_G/B \to E_s$.
\end{rmk}

The following proposition is a straightforward consequence of the general theory of stable maps (e.g., \cite[\S 2]{abramovich-oort98}).

\begin{prop} \label{prop:kmproper}
The stack $\tbun_G$ is an Artin stack, and the morphism $\tbun_G \to \bun_G$ given by forgetting the stable map is proper with finite relative stabilisers.
\end{prop}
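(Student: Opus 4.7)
My plan is to realise $\tbun_G$ as an open and closed substack of a relative Kontsevich moduli stack of stable maps over $\bun_G$, and then transport the standard properties from the general theory of stable maps cited in \cite[\S 2]{abramovich-oort98}.

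First I would construct the relative flag bundle: let $\xi_G^{univ} \to \bun_G \times_S E$ be the universal $G$-bundle, and set $X := \xi_G^{univ}/B$. Then $X$ is smooth and proper over $\bun_G \times_S E$ with fibre $G/B$, and composing with the projection $\bun_G \times_S E \to \bun_G$ realises $X$ as a proper $\bun_G$-stack. Unwinding Definition \ref{defn:kontsevichmori}, the datum of a $U$-point of $\tbun_G$ is equivalent to a morphism $U \to \bun_G$ (equivalently, a principal $G$-bundle $\xi_G$ on $U \times_S E$) together with a stable map $(C, \sigma)$ of arithmetic genus $1$ into $X \times_{\bun_G} U = \xi_G/B$ over $U$, subject to the degree constraints (1) and (2).

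Next I would apply the relative version of Kontsevich's theorem (as in \cite[\S 2]{abramovich-oort98}) to the proper morphism $X \to \bun_G$: the stack of genus $1$ stable maps to $X$ over $\bun_G$ is Artin, and the forgetful morphism to $\bun_G$ is proper with finite relative inertia (the latter because stability forces fibrewise automorphism groups to be finite). The conditions (1) and (2) fix topological invariants, namely the degree of $C \to U \times_S E$ and the degrees of the pullbacks $\sigma^* \mc{L}_\lambda^{\xi_G}$, so they cut out an open and closed substack. Transporting the conclusion, $\tbun_G$ is Artin and $\tbun_G \to \bun_G$ is proper with finite relative stabilisers. The only subtlety, which I expect to be the mildest part of the argument, is that Kontsevich's theorem is usually formulated over a scheme base; since the construction is compatible with smooth base change, one can verify all properties by pulling back along a smooth atlas $V \to \bun_G$ and descending.
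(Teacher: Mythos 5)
Your proposal is correct and matches the paper's approach: the paper simply asserts the proposition as a straightforward consequence of the general theory of stable maps, citing \cite[\S 2]{abramovich-oort98}, which is exactly the relative Kontsevich-space argument you spell out. Your identification of conditions (1) and (2) as fixing locally constant degree invariants, hence cutting out an open and closed substack of the proper relative moduli stack, is the right way to fill in the details.
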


\begin{prop} \label{prop:kmsurjective}
The morphism $\tbun_G \to \bun_G$ is surjective.
\end{prop}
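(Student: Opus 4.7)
The plan is to exploit the properness of $\tbun_G \to \bun_G$ (Proposition \ref{prop:kmproper}) to reduce the question to geometric points. Since the image of a proper morphism is closed, it suffices to construct, for each geometric point $s \colon \spec k \to S$ and each $G$-bundle $\xi_G$ on $E_s$, some stable map $(C, \sigma)$ satisfying the conditions of Definition \ref{defn:kontsevichmori}.

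To build $(C, \sigma)$, I would first construct a section $\sigma_0 \colon E_s \to \xi_G/B$ whose degree $d_0 \in \mb{X}_*(T)$ lies in the anti-dominant cone $-\sum_{i=1}^l \mb{Z}_{\geq 0} \alpha_i^\vee$, i.e., with $\langle \varpi_j, d_0 \rangle \leq 0$ for every fundamental weight $\varpi_j$. Existence of such a section, equivalently a sufficiently anti-dominant $B$-reduction of $\xi_G$, should follow from the Drinfeld--Simpson theorem on the existence of $B$-reductions of $G$-bundles on curves, combined with a Hecke modification argument at a single point of $E_s$ showing that the set of degrees of $B$-reductions of a fixed $G$-bundle is unbounded in the anti-dominant direction. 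Once $\sigma_0$ is in hand, write $-d_0 = \sum_i n_i \alpha_i^\vee$ with $n_i \in \mb{Z}_{\geq 0}$; for each $i$ with $n_i > 0$, I would pick a point $p_i \in E_s$ and attach to $E_s$ at $p_i$ a rational curve $C_i \cong \mb{P}^1$ mapping into the fiber $(\xi_G/B)_{p_i} \cong G/B$ via a degree-$n_i$ cover of the Schubert curve of class $\alpha_i^\vee \in H_2(G/B, \mb{Z}) \cong \mb{X}_*(T)$. The resulting nodal curve $C = E_s \cup \bigcup_i C_i$ has arithmetic genus $1$, and the morphism $\sigma \colon C \to \xi_G/B$ obtained by assembling $\sigma_0$ and the $\sigma|_{C_i}$ satisfies condition (1) of Definition \ref{defn:kontsevichmori} (the $C_i$ collapse to points of $E_s$) and has class $[\sigma] = d_0 + \sum_i n_i \alpha_i^\vee = 0$, satisfying condition (2).

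It remains to verify the stability of $(C, \sigma)$: the map $\sigma_0$ is a section, hence a closed embedding, so its automorphism group over $\xi_G/B$ is trivial; each $\sigma|_{C_i} \colon \mb{P}^1 \to G/B$ is non-constant of degree $n_i \geq 1$ and must additionally fix the attaching node $p_i$, so its automorphism group over $\xi_G/B$ is finite. The main obstacle in this plan is the anti-dominance step: showing that every $G$-bundle on $E_s$ admits a $B$-reduction of degree in the strict anti-dominant cone. Though this is classical, care is needed to justify it cleanly -- either via Hecke modifications (which rely on identifying $B$-reductions with sections of a flag bundle and varying them at a single point) or via an appeal to the unboundedness in the anti-dominant direction of the degrees occurring in the fiber $\bun_B \times_{\bun_G} \{\xi_G\}$.
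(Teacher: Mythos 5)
Your proposal is correct in outline, but it takes a genuinely different and considerably heavier route than the paper. The paper's proof is soft: for a generic degree-$0$ $T$-bundle $\xi_T$, the bundle $\xi_T \times^T \mf{g}/\mf{b}$ is a direct sum of nontrivial degree-$0$ line bundles, so its $H^1$ vanishes and $\bun_B^0 \to \bun_G$ is smooth at $\xi_T \times^T B$; hence the image of $\tbun_G \to \bun_G$ contains a nonempty open substack, is dense because $\bun_G$ is irreducible, and is closed by properness (Proposition \ref{prop:kmproper}), so the map is surjective. You instead build an explicit preimage over every geometric point — which, note, makes your appeal to properness and closedness of the image superfluous, since surjectivity is checked on geometric points anyway — but this forces you to prove that every $G$-bundle on $E_s$ admits a $B$-reduction of anti-dominant degree. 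That statement is true, and your sketch is the standard one: Drinfeld--Simpson gives some $B$-reduction, and modifications in the $\mb{P}^1$-fibration $\xi_G/B \to \xi_G/P_i$ decrease $\langle \varpi_i, -\rangle$ arbitrarily while fixing $\langle \varpi_j, -\rangle$ for $j \neq i$, because $\mc{L}_{\varpi_j}$ is pulled back from $\xi_G/P_i$. But this is precisely the nontrivial content of your argument, and as written it is outsourced to a citation plus a sketch; the paper's argument avoids it entirely. The remainder of your construction — writing $-d_0 = \sum_i n_i \alpha_i^\vee$ (using $\mb{X}_*(T) = \mb{Z}\Phi^\vee$ so that the anti-dominant cone is indeed $-\sum_i \mb{Z}_{\geq 0}\alpha_i^\vee$), attaching at distinct points $p_i$ rational tails of class $n_i\alpha_i^\vee$ lying in fibres of $\xi_G/B \to E_s$, and checking stability componentwise — is fine, and has the minor virtue of exhibiting concrete points in every fibre of $\psi$ rather than arguing by density.
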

\begin{proof}
For simplicity, we can assume without loss of generality that $S = \spec k$ for $k$ an algebraically closed field.

For a generic $T$-bundle of degree $0$, we have that $\xi_T \times^T \mf{g}/\mf{b}$ is a direct sum of nontrivial line bundles of degree $0$, and hence $H^1(E, \xi_T \times^T \mf{g}/\mf{b}) = 0$, where $\mf{g} = \mrm{Lie}(G)$ and $\mf{b} = \mrm{Lie}(B)$. So the morphism $\bun_B^0 \to \bun_G$ is smooth at the point $\xi_B = \xi_T \times^T B$ for such a $T$-bundle.

Hence, there is a nonempty open subset $U \subseteq \bun_{B}^0$ such that the morphism $U \to \bun_G$ is smooth. Since smooth morphisms are open, and $\bun_B^0 \to \bun_G$ factors through $\tbun_G \to \bun_G$, it follows that the image of $\tbun_G \to \bun_G$ contains a nonempty open substack. Since $\bun_G$ is regular and connected, hence irreducible, the image of $\tbun_G \to\bun_G$ is therefore dense. So surjectivity now follows from properness.
\end{proof}

In studying the Kontsevich-Mori compactification, it is often useful to study the domain curves for the stable maps in isolation.

\begin{defn}
Let $U$ be a scheme over $S$. A \emph{prestable degeneration of $E$ over $U$} is a prestable map $f \colon C \to U \times_S E$ over $U$ such that for every geometric point $u \colon \spec k \to U$ over $s \colon \spec k \to S$, the fibre $C_u$ has arithmetic genus $1$ and the map $f_u \colon C_u \to E_s$ has degree $1$. We write $\Deg_S(E)$ for the (Artin) stack whose functor of points sends an $S$-scheme $U$ to the groupoid of prestable degenerations of $E$ over $U$.
\end{defn}

In the following proposition, we write $f \colon \mc{C} \to \Deg_S(E) \times_S E$ for the universal prestable degeneration. We also write $\Deg_S(E)^{\leq 1} \subseteq \Deg_S(E)$ for the open substack of curves with at most one node, and $\mc{C}^{\leq 1} = f^{-1}(\Deg_S(E)^{\leq 1})$.

\begin{prop} \label{prop:basicdegproperties}
The stacks $\mc{C}$ and $\Deg_S(E)$ are smooth over $S$. The section $S \to \Deg_S(E)$ classifying the identity $\id_E$ is an open immersion and its complement $D$ is a divisor with normal crossings relative to $S$. The smooth locus $D^\circ \subseteq D$ classifies prestable maps $C \to E$ with exactly one node. The stack $\mc{C}^{\leq 1}$ is the blowup of $\Deg_S(E)^{\leq 1} \times_S E$ along a closed substack mapping isomorphically to $D^\circ$ under the projection to the first factor.
\end{prop}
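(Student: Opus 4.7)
The plan is to pin down the étale-local structure of $\Deg_S(E)$ around any geometric point $(C, f)$ using the standard deformation theory of nodal curves, and then extract each claim in turn. For such a point, the condition that $C$ has arithmetic genus $1$ and $f$ has degree $1$ forces $C$ to consist of a unique elliptic component $E' \subseteq C$ mapping isomorphically to the fibre of $E$, together with trees of $\mb{P}^1$'s attached at points of $E'$ and contracted by $f$. Deformations of the pair $(C, f)$ with $E$ fixed are unobstructed: the $\mb{P}^1$-components are rigid, and the only other possible cohomology comes from the elliptic component, on which $f$ is the identity so its relative cotangent complex is trivial. They are governed by $n$ independent smoothing parameters $t_1, \ldots, t_n$ (one per node, via the versal deformation $uv = t_i$) together with the usual position parameters for the attachment points on $E$. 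This produces a smooth étale chart over $S$ on which the locus of singular $C$ is cut out by $t_1 \cdots t_n = 0$, a normal crossings divisor. The automorphism groups (products of stabilisers $\mrm{Aut}(\mb{P}^1, \text{special points})$) are smooth of constant dimension along each stratum, so after descending to the stack the smoothness and the normal-crossings structure of $D$ both survive.

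From this local description the first three assertions are immediate. The identity section $S \to \Deg_S(E)$ parameterises the locus where $C$ is smooth; since any smooth $(C, f)$ is canonically identified with $(E, \id_E)$ via $f^{-1}$ and has trivial automorphisms, this section is fully faithful, and it is open because smoothness of $C$ is an open condition. Its complement $D$ is the singular locus, a normal crossings divisor by the local model, and the smooth locus $D^\circ$ is the stratum where exactly one $t_i$ vanishes, i.e.\ the locus of degenerations with exactly one node.

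For the blowup description, I would write the universal map $\mc{C}^{\leq 1} \to \Deg_S(E)^{\leq 1} \times_S E$ in explicit local coordinates around a point of $D^\circ$. Picking a coordinate $z$ on $E$ near the node $x_0$, letting $x$ parameterise the node position along $E$ and $t$ be the smoothing parameter, $\mc{C}^{\leq 1}$ is étale-locally $\spec \mc{O}_S[u, v, x, t]/(uv - t)$ with universal map $(u, v, x, t) \mapsto (x, t, x + u)$. Substituting $w = z - x$ on the target identifies this with the standard affine chart of the blowup of $\spec \mc{O}_S[x, t, w]$ along the codimension-$2$ locus $\{t = 0,\, w = 0\}$, which globally corresponds to the graph of the node-position morphism $D^\circ \to E$ inside $\Deg_S(E)^{\leq 1} \times_S E$. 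Since this graph projects isomorphically onto $D^\circ$ via the first factor, gluing the local charts yields the claim. The main obstacle is the unobstructedness of the deformation problem: our objects are not Kontsevich-stable (the contracted $\mb{P}^1$'s have insufficiently many special points), so the positive-dimensional stabilisers must be tracked carefully when descending the smooth chart to the stack. Once this is in place, every other assertion is extracted from the explicit local model by formal manipulations.
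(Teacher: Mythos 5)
Your argument is correct, but it is genuinely different in character from what the paper does: the paper proves nothing here, deferring both the smoothness of $\Deg_S(E)$ and all the remaining assertions to \cite{campbell16}, whereas you give a self-contained deformation-theoretic proof with explicit local models. Your identification of the geometric points (a unique elliptic component plus contracted trees of $\mb{P}^1$'s, forced by degree $1$ and arithmetic genus $1$), the open-immersion argument for the identity section, and the chart computation $uv = t$, $z = x + u$ exhibiting $\mc{C}^{\leq 1}$ as the blowup along the graph of the node-position map $D^\circ \to E$ are all sound; the same local model $\{uv = t\}$ over a smooth base also gives smoothness of $\mc{C}$ itself, which you do not address explicitly. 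The one step where your write-up is thinner than the argument actually requires is unobstructedness: the obstruction space for deforming $f$ with $C$ held fixed is $H^1(C, f^*T_{E_s}) = H^1(C, \mc{O}_C) \cong k$, which does \emph{not} vanish, so ``the contracted components are rigid and the elliptic component contributes nothing'' is not by itself a proof. What makes the claim true is that the full obstruction space $\mrm{Ext}^2(\mb{L}_{C/E_s}, \mc{O}_C)$ vanishes: locally, $\Omega_C$ admits a length-one free resolution at each node, so $\mc{E}\mathit{xt}^{\geq 2}$ dies, and the sheaf $\mc{E}\mathit{xt}^1(\mb{L}_{C/E_s}, \mc{O}_C)$ is supported on trees of rational curves and points, so has no $H^1$; equivalently, the map $\mrm{Ext}^1(\Omega_C, \mc{O}_C) \to H^1(C, f^*T_{E_s})$ is surjective, i.e.\ every obstruction to deforming the map is absorbed into a deformation of the source curve. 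With that point made precise, the rest of your proof --- including descending the normal-crossings chart $t_1 \cdots t_n = 0$ through the smooth, positive-dimensional automorphism groups to the stack --- goes through, and it buys the reader a proof that the paper itself only outsources.
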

\begin{proof}
Smoothness of $\Deg_S(X)$ is proved in \cite[Proposition 2.4.1]{campbell16}. The remaining assertions are all clear from the proof of \cite[Proposition 4.4.1]{campbell16}.
\end{proof}

\begin{prop} \label{prop:kmdegsmooth}
The morphism
\begin{equation} \label{eq:kmdegmap}
\tbun_G \longrightarrow \Deg_S(E)
\end{equation}
is smooth.
\end{prop}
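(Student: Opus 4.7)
The plan is to identify $\tbun_G$ with an open substack of the relative stack $\bun_B^0(\mc{C}/\Deg_S(E))$ of degree $0$ $B$-bundles on the universal prestable degeneration $\mc{C} \to \Deg_S(E)$, and then exploit smoothness of the latter over $\Deg_S(E)$ coming from cohomological-dimension considerations.

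First I would construct a forgetful morphism $\tbun_G \to \bun_B^0(\mc{C}/\Deg_S(E))$ over $\Deg_S(E)$ by sending a point $(\xi_G, C, \sigma)$ to the pair $(f \colon C \to E, \xi_B)$, where $f$ is the degeneration obtained by composing $\sigma$ with $\xi_G/B \to E$ and $\xi_B$ is the $B$-reduction of $f^* \xi_G$ corresponding to $\sigma$ under the equivalence recalled just before Definition~\ref{defn:kontsevichmori}. Under this correspondence, the condition $[\sigma] = 0$ in Definition~\ref{defn:kontsevichmori}(2) becomes exactly the condition $\xi_B \in \bun_B^0$. To show the morphism is an open immersion, I would construct a two-sided inverse on the open substack where $\xi_B|_{C_i}$ is non-trivial on every bubble $C_i \cong \mb{P}^1$ of $C$, which corresponds precisely to the stability condition on $\sigma$. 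The crucial input is that $G$ is simply connected: since $\pi_1(G) = \mb{X}_*(T)/\mb{Z}\Phi^\vee = 0$, every $B$-bundle on $\mb{P}^1$ induces a trivial $G$-bundle, so $\xi_B \times^B G$ is trivial on each bubble. It therefore descends canonically along the contraction $f$ — explicitly, via restriction to the main component $E_s$, which maps isomorphically to $E$ since $f$ has total degree $1$ — to a $G$-bundle $\xi_G$ on $E$ with a canonical isomorphism $f^* \xi_G \cong \xi_B \times^B G$, reconstructing $(\xi_G, \sigma)$ from $(f, \xi_B)$.

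The final ingredient is smoothness of $\bun_B^0(\mc{C}/\Deg_S(E)) \to \Deg_S(E)$, which I would prove by standard deformation theory: for any prestable genus $1$ curve $C$, deformations of a $B$-bundle $\xi_B$ on $C$ are unobstructed because the relevant obstruction group $H^2(C, \mrm{ad}(\xi_B))$ vanishes, as $C$ has cohomological dimension $1$. Putting the pieces together, $\tbun_G \to \Deg_S(E)$ is the composition of an open immersion and a smooth morphism, hence smooth. The main obstacle is the descent argument in the second paragraph — in particular, pinning down a canonical isomorphism $f^* \xi_G \cong \xi_B \times^B G$ on each bubble (for which the iso at the attaching node, together with simple connectedness, rigidifies the choice globally) — and verifying that this is functorial enough to give a genuine inverse at the level of groupoids rather than merely isomorphism classes.
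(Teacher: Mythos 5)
Your overall strategy — realise $\tbun_G$ inside a relative stack of $B$-bundles on the universal degeneration $\mc{C} \to \Deg_S(E)$ and quote unobstructedness of bundles on curves — is sound, and is essentially the route the paper takes (it simply cites \cite[Proposition 2.4.1]{campbell16}). But there is one genuine gap in your reconstruction step: the claim that simple connectedness of $G$ forces $\xi_B \times^B G$ to be trivial on each bubble is false. Vanishing of $\pi_1(G) = \mb{X}_*(T)/\mb{Z}\Phi^\vee$ only kills the topological degree of a $G$-bundle on $\mb{P}^1$; it does not make the bundle trivial. For $G = SL_2$, the $B$-bundle on $\mb{P}^1$ induced from the $T$-bundle of degree $\alpha^\vee$ gives the $G$-bundle $\mc{O}(1) \oplus \mc{O}(-1)$, which is nontrivial; attaching such a bubble to $E$ and adjusting the degree on the main component produces a point of $\bun_B^0(\mc{C}/\Deg_S(E))$ satisfying your degree and stability conditions that does \emph{not} lie in the image of $\tbun_G$, because the induced $G$-bundle on $C$ is not pulled back from $E$ and hence no $(\xi_G, C, \sigma)$ can give rise to it. So the locus you describe is strictly larger than $\tbun_G$, and the "canonical descent" of $\xi_B \times^B G$ to $E$ is not available on all of it.

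The repair is to impose, as an additional condition cutting out the image, that $\xi_B \times^B G$ be trivial on every positive-dimensional fibre of $f \colon C \to E$ (equivalently, pulled back from $E$); this is an open condition (triviality is the generic stratum for $G$-bundles on trees of $\mb{P}^1$'s), and on that locus your descent argument — extending the isomorphism uniquely from the attaching node using that isomorphisms of trivial $G$-bundles on a proper connected curve are constant maps to the affine group $G$ — does work, as does the deformation-theoretic smoothness of $\bun_{B/\Deg_S(E)}(\mc{C}) \to \Deg_S(E)$ via vanishing of $H^2$ on curves. This extra condition is exactly why the paper records $\tbun_G$ as an open substack of the fibre product $\bun_{B/\Deg_S(E)}(\mc{C}) \times_{\bun_{G/\Deg_S(E)}(\mc{C})} (\bun_G \times_S \Deg_S(E))$, keeping the $G$-bundle on $E$ as part of the data, rather than of $\bun_{B/\Deg_S(E)}(\mc{C})$ alone. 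A smaller inaccuracy: stability is not "$\xi_B$ nontrivial on every bubble" — bubbles carrying at least three nodes may be contracted by $\sigma$ — though this does not affect the smoothness statement since in any case the relevant locus is open.
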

\begin{proof}
This is proved in \cite[Proposition 2.4.1]{campbell16}.
\end{proof}

\begin{cor}
The stack $\tbun_G$ is smooth over $S$, and contains $\bun_B^0$ as a dense open substack.
\end{cor}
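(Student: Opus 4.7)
The plan has two parts. For the smoothness claim, write $\tbun_G \to S$ as the composition $\tbun_G \to \Deg_S(E) \to S$. Proposition \ref{prop:kmdegsmooth} gives that the first arrow is smooth, and Proposition \ref{prop:basicdegproperties} gives that the second is smooth. Since smooth morphisms of Artin stacks compose, $\tbun_G$ is smooth over $S$.

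For the second assertion, I would first identify $\bun_B^0$ with a distinguished open substack of $\tbun_G$ and then argue density. Recall from the start of \S\ref{subsection:kontsevichmori} that an $S$-point of $\bun_B^0$ is a pair $(\xi_G, \sigma)$ with $\sigma \colon U \times_S E \to \xi_G/B$ a section of degree $0$. Such a pair gives a canonical triple $(\xi_G, C, \sigma)$ with $C = U \times_S E$ and $C \to U \times_S E$ the identity; this satisfies the hypotheses of Definition \ref{defn:kontsevichmori}, the stability condition being automatic because a morphism from $U \times_S E$ covered by the identity has no nontrivial automorphisms over the target. This defines an open embedding $\bun_B^0 \hookrightarrow \tbun_G$, and by Proposition \ref{prop:basicdegproperties} its image is exactly the preimage of the open immersion $S \hookrightarrow \Deg_S(E)$ classifying $\id_E$, i.e.,
\[ \bun_B^0 \cong \tbun_G \times_{\Deg_S(E)} S. \]

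To finish, I would invoke Proposition \ref{prop:basicdegproperties} once more: the complement $D = \Deg_S(E) \setminus S$ is an effective Cartier divisor in the regular stack $\Deg_S(E)$. Since $\tbun_G \to \Deg_S(E)$ is flat (being smooth), its pullback $\tbun_G \times_{\Deg_S(E)} D$ is an effective Cartier divisor in $\tbun_G$, which is regular by the smoothness over $S$ just proved. An effective Cartier divisor in a regular stack cannot contain any irreducible component, so its complement $\bun_B^0$ is dense.

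The only nontrivial point is the identification in step two: one has to check that the stability condition in Definition \ref{defn:kontsevichmori} is automatic when $C = U \times_S E$, so that the open substack $\tbun_G \times_{\Deg_S(E)} S$ really coincides with the full substack $\bun_B^0$ rather than a proper subfunctor of it. Everything else is a formal consequence of the structural results already established for $\Deg_S(E)$ and the smooth morphism $\tbun_G \to \Deg_S(E)$.
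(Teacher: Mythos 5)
Your proof is correct and follows essentially the same route as the paper: smoothness via the composition $\tbun_G \to \Deg_S(E) \to S$ using Propositions \ref{prop:kmdegsmooth} and \ref{prop:basicdegproperties}, and openness plus density of $\bun_B^0$ by identifying it with the preimage of $\{\id_E\}$ and noting its complement is the preimage of the boundary divisor. The extra detail you supply (automatic stability for $C = U\times_S E$, and the Cartier-divisor argument for density in a regular stack) is exactly what the paper leaves implicit.
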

\begin{proof}
Smoothness follows from Proposition \ref{prop:basicdegproperties} and Proposition \ref{prop:kmdegsmooth}. The discussion at the beginning of this subsection identifies $\bun_B^0$ with the preimage of $\{\id_X\}$ under \eqref{eq:kmdegmap}, so openness and density follow from Proposition \ref{prop:basicdegproperties}.
\end{proof}

If $C$ is a curve over a field $k$ and $\xi_G \to \spec k$ is a $G$-torsor, then the \emph{degree} of a map $f \colon C \to \xi_G/B$ is the unique element $[f] \in \mb{X}_*(T)$ satisfying $\langle \lambda, [f] \rangle = \deg f^*\mc{L}^{\xi_G}_\lambda$ for all $\lambda \in \mb{X}^*(T)$. Since the line bundle $\mc{L}_{\varpi_i}^{\xi_G}$ on $\xi_G/B$ is nef for all fundamental dominant weights $\varpi_i$, it follows that
\[ [f] \in \mb{X}_*(T)_{\geq 0} = \{ \mu \in \mb{X}_*(T) \mid \langle \varpi_i, \mu \rangle \geq 0 \; \text{for}\; \alpha_i \in \Delta\} \subseteq \mb{X}_*(T).\]

For $\lambda \in \mb{X}_*(T)_+ = \mb{X}_*(T)_{\geq 0} \setminus \{0\}$, we write $D_{\lambda}^{\circ} \subseteq \tbun_G$ for the locus of stable maps $C \to \xi_G/B$ with a single rational component mapping into a fibre of $\xi_G/B \to E$ with degree $\lambda$, and $D_{\lambda}$ for its closure. We also write $M_{0, 1}^+(G/B, \lambda)$ for the stack of $1$-pointed stable maps to $G/B$ of degree $\lambda$ with smooth domain curve, sending the marked point to the natural base point $B/B \subseteq G/B$. Note that the action of $B$ on $G/B$ induces an action on $M_{0, 1}^+(G/B, \lambda)$.

\begin{prop} \label{prop:kmdivisor}
We have the following.
\begin{enumerate}[(1)]
\item \label{itm:kmdivisor1} The complement $D_B$ of $\bun_B^0$ in $\tbun_G$ is a divisor with normal crossings relative to $S$, and decomposes as
\[ D_B = \sum_{\lambda \in \mb{X}_*(T)_+} D_{\lambda}.\]
\item \label{itm:kmdivisor2} The smooth locus of $D_B$ is
\[ D_B^\circ = \sum_{\lambda \in \mb{X}^*(T)_+} D_{\lambda}^{\circ}.\]
\item \label{itm:kmdivisor3} For each $\lambda \in \mb{X}^*(T)_+$, there is an isomorphism
\[ D_{\lambda}^\circ \cong \xi_{B, \mu - \lambda}^{uni} \times^B M_{0, 1}^+(G/B, \mu),\]
where $\xi_{B, \mu - \lambda}^{uni} \to \bun_{B}^{\mu - \lambda} \times_S E$ is the universal $B$-bundle of degree $\mu - \lambda$.
\end{enumerate}
\end{prop}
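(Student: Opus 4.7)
The plan is to deduce all three parts from the smoothness of the forgetful morphism $\tbun_G \to \Deg_S(E)$ (Proposition \ref{prop:kmdegsmooth}) combined with the divisorial structure on $\Deg_S(E)$ from Proposition \ref{prop:basicdegproperties}. The discussion preceding Definition \ref{defn:kontsevichmori} identifies $\bun_B^0 \subseteq \tbun_G$ with the preimage of the identity section $S \hookrightarrow \Deg_S(E)$, so $D_B$ is the smooth pullback of the relative normal crossings divisor $D \subseteq \Deg_S(E)$, and is therefore itself a relative normal crossings divisor.

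For parts \eqref{itm:kmdivisor1} and \eqref{itm:kmdivisor2}, the next step is to identify the stratification of $D_B$ via the geometry of stable maps. A geometric point of $D$ with exactly $k$ nodes corresponds to a curve $C = E \cup C_1 \cup \cdots \cup C_k$ with rational tails $C_i \cong \mb{P}^1$; a stable map from such a $C$ encodes a section of $\xi_G/B$ over $E$ together with, for each tail, a rational curve of some degree $\lambda_i$ in the fibre of $\xi_G/B$ over the attachment point. Stability of each tail forces $\lambda_i$ to be nonzero, and nefness of the $\mc{L}_{\varpi_i}^{\xi_G}$ places each $\lambda_i$ in $\mb{X}_*(T)_{\geq 0} \setminus \{0\} = \mb{X}_*(T)_+$; the constraint $[\sigma] = 0$ then forces the $B$-reduction over $E$ to have degree $-\sum_i \lambda_i$. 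Since degree is locally constant and the one-node case with tail degree $\lambda$ is precisely $D_\lambda^\circ$, the loci $\{D_\lambda^\circ\}_\lambda$ exhaust the codimension-$1$ strata of $D_B$, while the multi-tail strata exhaust the higher-codimension ones. This gives the decomposition in \eqref{itm:kmdivisor1} together with the identification of the smooth locus in \eqref{itm:kmdivisor2}.

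For part \eqref{itm:kmdivisor3}, I would construct the isomorphism by producing mutually inverse morphisms in families. Given a $U$-point $(\xi_G, C, \sigma) \in D_\lambda^\circ(U)$ with $C = E_U \cup_p \mb{P}^1_U$, restriction of $\sigma$ to the elliptic component produces a $B$-reduction $\xi_B \subseteq \xi_G$ of degree $-\lambda$, and hence a $U$-point $(\xi_B, p)$ of $\bun_B^{-\lambda} \times_S E$; restriction to the rational tail produces a $1$-pointed degree-$\lambda$ stable map $\mb{P}^1 \to \xi_G/B|_p = (\xi_B)_p \times^B G/B$ with marked point at the node mapping to the distinguished section coming from $\xi_B$. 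A choice of trivialisation of the $B$-torsor $(\xi_B)_p$ turns this last datum into a point of $M_{0, 1}^+(G/B, \lambda)$ well-defined up to the natural $B$-action, and together with $(\xi_B, p)$ this is precisely a point of $\xi_{B, -\lambda}^{uni} \times^B M_{0, 1}^+(G/B, \lambda)$, matching the description in the statement. The inverse glues the universal elliptic section to the universal rational tail along the node section $D^\circ \hookrightarrow \mc{C}^{\leq 1}$ provided by Proposition \ref{prop:basicdegproperties}.

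The main obstacle is the bookkeeping required to make \eqref{itm:kmdivisor3} a genuine isomorphism of algebraic stacks rather than a bijection on points, which entails matching the automorphism groups on both sides and carefully tracking the universal gluing in families. Any automorphism of a point of $D_\lambda^\circ$ must fix the rigid elliptic component (on which the map to $E$ is an isomorphism), so it restricts to a $B$-equivariant automorphism of the rational tail fixing the node; this corresponds exactly to the $B$-quotient identification inherent in the twisted product, so the two automorphism groups agree and the morphism is an isomorphism of stacks.
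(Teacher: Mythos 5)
Your proposal is correct and follows essentially the same route as the paper: parts (1) and (2) are deduced by pulling back the relative normal crossings divisor $D \subseteq \Deg_S(E)$ along the smooth morphism of Proposition \ref{prop:kmdegsmooth}, and part (3) is proved by identifying the twisted product with the stack of tuples (degree $-\lambda$ section over $E$, attachment point, degree $\lambda$ pointed rational curve in the fibre) and gluing along the node. Your extra care with the stratification by numbers of tails and with automorphism groups only fleshes out steps the paper leaves implicit.
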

\begin{proof}
The assertions \eqref{itm:kmdivisor1} and \eqref{itm:kmdivisor2} follow immediately from Proposition \ref{prop:basicdegproperties} and Proposition \ref{prop:kmdegsmooth}. To prove \eqref{itm:kmdivisor3}, note that the right hand side can be identified with the stack of tuples $(\xi_G, \sigma, x, f \colon C \to \xi_{G, x}/B, p)$, where $\xi_G$ is a $G$-bundle on (a fibre of) $E$, $\sigma \colon E \to \xi_G/B$ is a section of degree $\mu - \lambda$, $x \in E$ is a point and $(f \colon C \to \xi_{G, x}/B, p \in C)$ is a $1$-pointed stable map from a smooth rational curve $C$ to $\xi_{G, x}/B$ of degree $\lambda$ sending the marked point to $\sigma(x)$. The isomorphism sends $(\xi_G, \sigma, x, f, p)$ to the stable map $E \cup_{x, p} C \to \xi_G/B$ induced by $\sigma$ and $f$.
\end{proof}

The next proposition shows that the inclusion $\bun_B^0 \subseteq \tbun_G$ is closely related to the inclusion $\bun_G^{ss} \subseteq \bun_G$ of the open substack of semistable bundles. (We direct the reader to Definition \ref{defn:semistable} for the definition of semistable and unstable $G$-bundles.)

\begin{prop}
Let $\tbun_G^{ss} = \psi^{-1}(\bun_G^{ss}) \subseteq \tbun_G$ be the preimage of the locus of semistable bundles. Then $\tbun_G^{ss} = \bun_B^0$ as open substacks of $\tbun_G$.
\end{prop}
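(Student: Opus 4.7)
The plan is to verify the claimed equality of open substacks of $\tbun_G$ on geometric points by proving both inclusions.

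For $\tbun_G^{ss} \subseteq \bun_B^0$, I will argue by contradiction. Let $(\xi_G, C, \sigma)$ be a geometric point with $\xi_G$ semistable and suppose $C \neq E$. Writing $C = E \cup \bigcup_i C_i$ with $C_i$ the rational components, the arithmetic genus condition $p_a(C) = p_a(E) = 1$ forces the dual graph to be a tree. Each restriction $\sigma|_{C_i}$ lands in a single fibre of $\xi_G/B \to E$, and its degree $\lambda_i$ lies in the pointed cone $\mb{X}_*(T)_{\geq 0}$ by the nef-ness observation recalled just before Proposition \ref{prop:kmdivisor}. The key combinatorial step is that any leaf of the tree has exactly one special point, so stability (with no marked points) forces it to map non-constantly, giving $\lambda_i \neq 0$. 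Therefore $\sum_i \lambda_i$ is a nonzero element of $\mb{X}_*(T)_{\geq 0}$. Combining this with $[\sigma] = 0$ from Definition \ref{defn:kontsevichmori} forces $[\tilde{\sigma}] := [\sigma|_E] = -\sum_i \lambda_i$, so that $\langle \varpi_j, [\tilde{\sigma}]\rangle < 0$ for some $j$. Viewing $\tilde{\sigma}$ as a $B$-reduction $\xi_B$ of $\xi_G$, the associated line bundle $\xi_B \times^B \mb{Z}_{\varpi_j}$ then has strictly negative degree, contradicting semistability of $\xi_G$ via the standard characterization that every $B$-reduction of a semistable $G$-bundle has degree lying in $\mb{X}_*(T)_{\geq 0}$.

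For the reverse inclusion $\bun_B^0 \subseteq \tbun_G^{ss}$, I will show directly that $\xi_G = \xi_B^0 \times^B G$ is semistable whenever $\xi_B^0 \in \bun_B^0$. By Ramanathan's theorem, it suffices to check that the associated vector bundles $\xi_G \times^G V = \xi_B^0 \times^B V$ are semistable for all finite-dimensional $G$-representations $V$. Since $B$ is solvable, $V$ admits a complete $B$-stable flag with $1$-dimensional graded pieces $k_\mu$ of $T$-weight $\mu$, and the induced filtration of $\xi_B^0 \times^B V$ has graded pieces $\xi_B^0 \times^B k_\mu$, each a line bundle of degree $\langle \mu, \deg \xi_B^0\rangle = 0$. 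Iterated extensions of semistable bundles of the same slope being semistable, $\xi_B^0 \times^B V$ is then semistable of slope $0$, as required.

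I expect the main obstacle to be the combinatorial step in Part 2 of forcing at least one $\lambda_i$ to be nonzero, which relies on the interplay of the pointed cone structure of $\mb{X}_*(T)_{\geq 0}$, the tree structure of the dual graph, and the stability condition applied to a leaf with no marked points. The semistability bookkeeping (both Ramanathan's theorem for Part 1 and the $B$-reduction characterization for Part 2) is then routine.
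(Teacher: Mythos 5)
Your proof is correct and follows essentially the same route as the paper: both directions reduce to degree bookkeeping for the $B$-reduction given by the main component of $C$, with instability detected by a dominant character of negative degree on that reduction (your leaf/stability argument just makes explicit the paper's terser assertion that the rational components contribute a nonzero element of $\mb{X}_*(T)_+$), and semistability of induced bundles established via the weight filtration of associated vector bundles. The one place you should be slightly careful is the appeal to ``Ramanathan's theorem'': since the paper works over geometric fibres of arbitrary characteristic, where the equivalence between semistability of $\xi_G$ and of its associated vector bundles is delicate, it is worth noting that you only need the easy direction, which the paper proves inline --- for an unstable $\xi_G$ with destabilising pair $(\xi_P, \lambda)$, the kernel of $V \to V_\lambda$ for $V$ of highest weight $\lambda$ is $P$-stable and yields a positive-degree subbundle of $\xi_G \times^G V$.
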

\begin{proof}
The claim reduces easily to the following: given a geometric point $s \colon \spec k \to S$, a $G$-bundle $\xi_G \to E_s$ and a stable map $\sigma \colon C \to \xi_G/B$ with $[\sigma] = 0$, the $G$-bundle $\xi_G$ is semistable if and only if $C \to E_s$ is an isomorphism.

Assume first that $C \to E_s$ is an isomorphism. Then $\sigma$ is a section $\sigma \colon E_s \to \xi_G/B$ of degree $0$, so we can write $\xi_G = \xi_B \times^B G$ for some degree $0$ $B$-bundle $\xi_B \to E_s$ with associated $T$-bundle $\xi_T = \xi_B \times^B T$. So for every representation $V$ of $G$, the vector bundle $\xi_B \times^B V = \xi_G \times^G V$ on $E_s$ has a filtration whose subquotients are line bundles of degree $0$. So the vector bundle $\xi_G \times^G V$ is (slope) semistable, i.e., it has no subbundles of positive degree. If $\xi_G$ were unstable, then there would exist a reduction $\xi_P$ of $\xi_G$ to some standard parabolic $P$ and a dominant weight $\lambda$ of $P$ such that $\langle \lambda, \deg \xi_P \rangle < 0$. Taking $V$ to be a representation of highest weight $\lambda$, the kernel of the projection $V \to V_\lambda$ to the $\lambda$-weight space is $P$-invariant (recall that $P$ contains the weight-\emph{lowering} Borel of negative roots according to our conventions), so defines a subbundle of $\xi_G \times^G V$ of degree $-\langle \lambda, \deg \xi_P \rangle > 0$. This is a contradiction, so $\xi_G$ must be semistable.

Conversely, assume that $C \to E_s$ is not an isomorphism. Then there exists a unique irreducible component of $C$ mapping isomorphically to $E_s$, and the restriction of $\sigma$ to this irreducible component defines a section of degree
\[ [\sigma|_{E_s}] < [\sigma] = 0.\]
If $\xi_B$ denotes the $B$-bundle corresponding to $\sigma|_{E_s} \colon E_s \to \xi_G/B$, then it follows that there exists a dominant character $\lambda$ of $B$ such that
\[ \deg \xi_B \times^B \mb{Z}_\lambda = \deg (\sigma|_{E_s})^*\mc{L}^{\xi_G}_{\lambda} = \langle \lambda, [\sigma|_{E_s}] \rangle < 0,\]
so $\xi_G$ is unstable.
\end{proof}

An important part of Theorem \ref{thm:introgrothendieckspringer} is the existence of a morphism $\tilde{\chi} \colon \tbun_G \to \Theta_Y^{-1}/\mb{G}_m$, where $\Theta_Y^{-1}$ is some line bundle over the $S$-relative coarse moduli space $Y = \hom(\mb{X}^*(T), \mrm{Pic}^0_S(E))$ of degree $0$ $T$-bundles on $E$. For now, we will construct a morphism $\mrm{Bl}_B \colon \tbun_G \to \bun_T^0$. The morphism $\tilde{\chi}$ will eventually be constructed in \S\ref{subsection:fundiag} as a lift of the composition $\mrm{Bl}_{B, Y} \colon \tbun_G \to Y$ of $\mrm{Bl}_B$ with the coarse moduli space map $\bun_T^0 \to Y$.

Let $\xi_{T}^{uni} \to \bun_{T/\Deg_S(E)}(\mc{C}) \times_{\Deg_S(E)} \mc{C}$ be the universal $T$-bundle, where $f \colon \mc{C} \to \Deg_S(E) \times_S E$ is the universal prestable degeneration of $E$. Note that since the locus of singular curves in $\Deg_S(E)$ is a divisor, and since each prestable degeneration is an isomorphism over a dense open set of a fibre of $E$, it follows that the complement of the open substack $U \subseteq \Deg_S(E) \times_S E$ of points over which $f$ is an isomorphism has codimension $2$. Since $\bun_{T/\Deg_S(E)}(\mc{C}) \times_S E$ is smooth over $S$, it is regular, so the restriction of $\xi_{T}^{uni}$ to
\begin{align*}
\bun_{T/\Deg_S(E)}(\mc{C}) \times_{\Deg_S(E)} f^{-1}(U) &\cong \bun_{T/\Deg_S(E)}(\mc{C}) \times_{\Deg_S(E)} U \\
&\subseteq \bun_{T/\Deg_S(E)}(\mc{C}) \times_S E
\end{align*}
extends uniquely to a $T$-bundle $\mrm{Bl}(\xi_{T}^{uni})$ on $\bun_{T/\Deg_S(E)}(\mc{C}) \times_S E$ since $T$ is a torus. The $T$-bundle $\mrm{Bl}(\xi_{T}^{uni})$ determines a morphism
\[
\mrm{Bl}_{T} \colon \bun_{T/\Deg_S(E)}(\mc{C}) \longrightarrow \bun_{T}.
\]

\begin{defn}
In the setup above, we call the morphism $\mrm{Bl}_{T}$ the \emph{blow down morphism for $T$}.
\end{defn}

The blow down of a $T$-bundle can also be described in terms of its associated line bundles.

\begin{lem} \label{lem:tbundleblowdowndet}
Let $\lambda \in \mb{X}^*(T)$ be a character. Then
\begin{equation} \label{eq:tbundleblowdowndet}
\lambda(\mrm{Bl}(\xi_{T}^{uni})) = \det \mb{R}f_* \lambda(\xi_{T}^{uni}),
\end{equation}
where $\det$ denotes the determinant of a perfect complex, and by abuse of notation we write
\[ f \colon \bun_{T/\Deg_S(E)}(\mc{C}) \times_{\Deg_S(E)} \mc{C} \to \bun_{T/\Deg_S(E)}(\mc{C}) \times_S E \]
for the pullback of the morphism $f \colon \mc{C} \to \Deg_S(E) \times_S E$.
\end{lem}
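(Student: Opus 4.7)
My plan is to prove the lemma as a codimension-two extension argument on the regular ambient stack $\bun_{T/\Deg_S(E)}(\mc{C}) \times_S E$. Concretely, let $U \subseteq \Deg_S(E) \times_S E$ be the open locus over which $f$ restricts to an isomorphism; its complement has codimension $\geq 2$ by Proposition \ref{prop:basicdegproperties}, and this codimension estimate is preserved after base change along the smooth morphism $\bun_{T/\Deg_S(E)}(\mc{C}) \times_S E \to \Deg_S(E) \times_S E$. The strategy is to show that both sides of \eqref{eq:tbundleblowdowndet} are line bundles agreeing canonically on the open substack $\bun_{T/\Deg_S(E)}(\mc{C}) \times_{\Deg_S(E)} U$, and then to invoke Hartogs-type purity for line bundles on a regular stack to upgrade this to a global equality.

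For the individual sides: the left hand side is a line bundle by construction, since $\mrm{Bl}(\xi_T^{uni})$ is defined precisely as the unique extension of the restriction to $f^{-1}(U)$ (transported via the iso $f|_{f^{-1}(U)}$) across the codim-$\geq 2$ closed locus. The right hand side is a line bundle because $\mb{R}f_*\lambda(\xi_T^{uni})$ is a perfect complex (using that $f$ is proper and the target is regular), so its determinant is a well-defined line bundle. To compare them on $\bun_{T/\Deg_S(E)}(\mc{C}) \times_{\Deg_S(E)} U$, note that the pulled-back $f$ is an isomorphism there, so $\mb{R}f_*\lambda(\xi_T^{uni})$ is concentrated in degree $0$ and is canonically identified with $\lambda(\xi_T^{uni})|_{f^{-1}(U)}$ transported through that isomorphism, whose determinant is itself; this matches $\lambda(\mrm{Bl}(\xi_T^{uni}))|_U$ tautologically by the definition of $\mrm{Bl}$.

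The conclusion is then immediate: two line bundles on the regular stack $\bun_{T/\Deg_S(E)}(\mc{C}) \times_S E$ that agree on the complement of a codimension-$\geq 2$ closed substack are canonically isomorphic globally. The main technical point I expect to verify carefully is perfectness of $\mb{R}f_*\lambda(\xi_T^{uni})$ in this relative-over-an-Artin-stack setting and the purity/extension statement for line bundles on regular Artin stacks; both are standard but deserve a brief justification. I do not foresee any deeper obstacle, as the heart of the argument is simply that over $U$ the map $f$ is an isomorphism, while off $U$ we are working in codimension at least two.
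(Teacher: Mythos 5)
Your proposal is correct and is essentially the paper's own argument: both sides are line bundles that agree canonically over the open locus where $f$ is an isomorphism (whose complement has codimension $\geq 2$), and regularity of $\bun_{T/\Deg_S(E)}(\mc{C}) \times_S E$ lets you extend the identification uniquely. The technical points you flag (perfectness of $\mb{R}f_*\lambda(\xi_T^{uni})$ and purity for line bundles on regular stacks) are indeed the only things to check, and both are standard.
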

\begin{proof}
Since $\bun_{T/\Deg_S(E)}(\mc{C}) \times_S E$ is regular, this follows from the fact that both sides of \eqref{eq:tbundleblowdowndet} agree when restricted to $\bun_{T/\Deg_S(E)}(\mc{C}) \times_{\Deg_S(E)} U$.
\end{proof}

Returning to the Kontsevich-Mori compactification, note that given a prestable degeneration $g \colon C \to E_s$ and a $G$-bundle $\xi_G \to E_s$, the datum of a map $\sigma \colon C \to \xi_G/B$ lifting $g$ is equivalent to the datum of a reduction of the $G$-bundle $g^*\xi_G$ to $B$. So we can identify $\tbun_G$ with an open substack of
\[ \bun_{B/\Deg_S(E)}(\mc{C}) \times_{\bun_{G/\Deg_S(E)}(\mc{C})}(\bun_G \times_S \Deg_S(E)) \]
defined by degree and stability conditions. In particular, we have a morphism
\[ \tbun_G \longrightarrow \bun_{B/\Deg_S(E)}(\mc{C}) \longrightarrow \bun_{T/\Deg_S(E)}(\mc{C}),\]
where the second morphism is induced by the canonical retraction $B \to T$.

\begin{defn}
We define the \emph{blow down morphism for $\tbun_G$} to be the composition
\[ \mrm{Bl}_B \colon \tbun_G \longrightarrow \bun_{T/\Deg_S(E)}(\mc{C}) \xrightarrow{\mrm{Bl}_{T}} \bun_{T}.\]
\end{defn}

\begin{prop} \label{prop:kmblowdown}
The blow down morphism $\mrm{Bl}_B$ is smooth, extends the canonical morphism $\bun_B^0 \to \bun_T$, and factors through the connected component $\bun_T^0 \subseteq \bun_T$.
\end{prop}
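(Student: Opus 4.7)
Parts (b) and (c) are easier. For (b), the identity section $S \hookrightarrow \Deg_S(E)$ of Proposition \ref{prop:basicdegproperties} pulls back, under the smooth morphism $\tbun_G \to \Deg_S(E)$, to the open substack $\bun_B^0 \subseteq \tbun_G$. Over this locus $C = E$, so the $T$-bundle already lives on $E$ and the extension procedure in the definition of $\mrm{Bl}_T$ is trivial; hence $\mrm{Bl}_B|_{\bun_B^0}$ coincides with the canonical morphism to $\bun_T$. For (c), degree is locally constant on $\bun_T$, and $\deg \circ \mrm{Bl}_B$ vanishes on the dense open $\bun_B^0 \subseteq \tbun_G$ by construction, so it vanishes everywhere.

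For smoothness, I would factor
\begin{equation*}
\mrm{Bl}_B \colon \tbun_G \xrightarrow{\alpha} \bun_{B/\Deg_S(E)}(\mc{C}) \xrightarrow{\beta} \bun_{T/\Deg_S(E)}(\mc{C}) \xrightarrow{\mrm{Bl}_T} \bun_T^0
\end{equation*}
and verify each factor is smooth. The map $\beta$ is smooth because its fibers are twisted forms of $\bun_U$ for the unipotent radical $U \subseteq B$, and $\bun_U$ on any family of prestable genus-$1$ curves is smooth by dévissage of $U$ to a chain of $\mb{G}_a$-bundles and the vanishing of $H^{\geq 2}$ of coherent sheaves on a curve. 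For $\mrm{Bl}_T$, I would compute the relative tangent complex over $\Deg_S(E)$: since each prestable degeneration $f \colon C \to E$ is birational proper with rational contracted components, $\mrm{R}f_*\mc{O}_C = \mc{O}_E$, so the relative tangent $\mrm{R}\Gamma(C, \mf{t})[1]$ of $\bun_{T/\Deg_S(E)}(\mc{C}) \to \Deg_S(E)$ at $(C, \xi_T)$ agrees with the tangent $\mrm{R}\Gamma(E, \mf{t})[1]$ of $\bun_T$ at $\mrm{Bl}_T(\xi_T)$. Consequently the relative tangent complex of $\mrm{Bl}_T$ is the pullback of $\mb{T}_{\Deg_S(E)/S}$, which is locally free by smoothness of $\Deg_S(E) \to S$ (Proposition \ref{prop:basicdegproperties}); hence $\mrm{Bl}_T$ is smooth.

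The main remaining task is to verify that $\alpha$ is smooth. Here the key point is that, in the fibre product description of $\tbun_G$ given before the definition of $\mrm{Bl}_B$, the ambient $G$-bundle $\xi_G$ on $E$ is determined up to canonical isomorphism by $\xi_B$ via its restriction to the main component of $C$, while the required isomorphism $g^*\xi_G \cong \xi_B \times^B G$ on contracted rational components is prescribed by the stable-map data (each contracted $\mb{P}^1$ maps non-constantly to a fibre of $\xi_G/B$ and so carries a distinguished $B$-reduction of the trivial bundle). The stability condition, which rules out constant maps, ensures both existence and rigidity of this data, so $\alpha$ should in fact be an open immersion, which completes the proof. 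The delicate step is the careful matching of isomorphism classes on contracted components; this can be handled directly using the explicit description of the boundary divisors $D_\lambda^\circ$ in Proposition \ref{prop:kmdivisor}, which presents them as quotients involving $M_{0,1}^+(G/B, \mu)$ where the stable-map rigidity is manifest.
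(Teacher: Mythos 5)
The paper itself offers no argument here beyond a citation to \cite[Corollary 3.5.4]{davis19a}, so your proof has to stand on its own. Your strategy --- factor $\mrm{Bl}_B$ as $\mrm{Bl}_T \circ \beta \circ \alpha$ and check each factor --- is sound, and your treatments of the extension property, the degree-$0$ claim, and $\beta$ all go through. For $\alpha$, the clean statement is that $\bun_G \times_S \Deg_S(E) \to \bun_{G/\Deg_S(E)}(\mc{C})$, $\xi_G \mapsto f^*\xi_G$, is an \'etale monomorphism, hence an open immersion: both full faithfulness and formal \'etaleness follow from $\mb{R}f_*\mc{O}_{\mc{C}} = \mc{O}$ together with the projection formula, and $\alpha$ is then a base change of this composed with the open immersion imposing the degree and stability conditions. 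You do not need the description of the $D_\lambda^\circ$ for this, and phrasing it via a ``main component'' is awkward in families.

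The genuine gap is in the smoothness of $\mrm{Bl}_T$. You observe that $\mb{R}\Gamma(C,\mf{t}\otimes\mc{O}_C)[1] \cong \mb{R}\Gamma(E,\mf{t}\otimes\mc{O}_E)[1]$ and conclude that the relative tangent complex of $\mrm{Bl}_T$ is pulled back from $\Deg_S(E)$. But an abstract isomorphism between the relative tangent complex of the source over $\Deg_S(E)$ and the tangent complex of the target proves nothing until you know that the \emph{differential of $\mrm{Bl}_T$} realizes it; a priori it could degenerate on boundary fibres (compare a blowup, which is an isomorphism over a dense open yet is not \'etale, so no density argument can substitute). Nor is the claim visible from the definition of $\mrm{Bl}_T$, which extends the universal bundle over a locus that has codimension $2$ only in the total space $\bun_{T/\Deg_S(E)}(\mc{C})\times_S E$: in a fibre $E_s$ over a singular curve the deleted locus is a single point, $E_s$ minus a point is affine, and restriction there kills $H^1(\mc{O})$, so the fibrewise derivative cannot be read off from the extension description. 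The fix is Lemma \ref{lem:tbundleblowdowndet}: on the fibre over a fixed degeneration $f\colon C \to E_s$ and a fixed multidegree, $\mrm{Bl}_T$ acts on associated line bundles by $L \mapsto \det\mb{R}f_*L$, which by the computation behind Proposition \ref{prop:blowdowngluing} is restriction to the main component followed by a \emph{constant} twist of the form $\lambda(\mc{O}(x))$. Its differential is therefore the restriction map $H^1(C,\mc{O}_C)\to H^1(E_s,\mc{O}_{E_s})$, which is the inverse of the isomorphism $f^*$ and in particular an isomorphism. With that supplied, $(\mrm{pr}_{\Deg_S(E)},\mrm{Bl}_T)$ is \'etale, $\mrm{Bl}_T$ is smooth, and your argument closes.
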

\begin{proof}
This is a special case of \cite[Corollary 3.5.4]{davis19a}.
\end{proof}

\begin{prop} \label{prop:blowdowngluing}
Let $\lambda \in \mb{X}_*(T)_+$. Then the composition
\[ D_{\lambda}^{\circ} \longhookrightarrow \tbun_G \xrightarrow{\mrm{Bl}_B} \bun_{T}^0 \]
is given in terms of the isomorphism of Proposition \ref{prop:kmdivisor} \eqref{itm:kmdivisor3} by
\begin{align*}
\xi_{B, - \lambda}^{uni} \times^B M_{0, 1}^+(G/B, \lambda) \longrightarrow \bun_B^{ - \lambda} \times_S E \longrightarrow \bun_{T}^{- \lambda} \times_S E &\longrightarrow \bun_{T}^0 \\
(\xi_{T}, x) &\longmapsto \xi_{T} \otimes \lambda(\mc{O}(x)).
\end{align*}
\end{prop}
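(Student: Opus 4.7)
The plan is to reduce, via Lemma \ref{lem:tbundleblowdowndet}, to a line bundle computation on a blow-up of a smooth surface, where the blow-down can be understood explicitly using the geometry of Proposition \ref{prop:basicdegproperties}. A geometric point of $D_\lambda^\circ$ lying over $s \colon \spec k \to S$ corresponds, under the isomorphism of Proposition \ref{prop:kmdivisor} \eqref{itm:kmdivisor3}, to a $B$-bundle $\xi_B$ on $E_s$ of degree $-\lambda$, a marked point $x \in E_s$, and a $1$-pointed stable map from $\mb{P}^1$ of degree $\lambda$ attached at $x$. The induced stable map has domain $C = E_s \cup_x \mb{P}^1$, and yields a $T$-bundle $\xi_T^C$ on $C$ whose restriction to $E_s$ is the $T$-bundle $\xi_T$ associated to $\xi_B$ (of degree $-\lambda$), and whose restriction to the rational tail has degree $\lambda$.

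By Lemma \ref{lem:tbundleblowdowndet}, identifying $\mrm{Bl}_B$ on this locus with the formula in the statement amounts to producing, for each character $\chi \in \mb{X}^*(T)$, a canonical isomorphism $\det \mb{R}f_*\chi(\xi_T^C) \cong \chi(\xi_T) \otimes \mc{O}(\langle\chi,\lambda\rangle x)$ that is compatible with tensor products in $\chi$. Note that the line bundle $\chi(\xi_T^C)$ has degree $-\langle\chi,\lambda\rangle$ on the main component $E_s$ and degree $\langle\chi,\lambda\rangle$ on the rational tail.

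The core computation is local near the node $x$, where Proposition \ref{prop:basicdegproperties} identifies the relevant piece of $\mc{C}^{\leq 1} \to \Deg_S(E)^{\leq 1} \times_S E$ with the blow-up $\pi \colon \mrm{Bl}_{(x, 0)}(E_s \times \mb{A}^1) \to E_s \times \mb{A}^1$, the exceptional divisor $F$ playing the role of the rational tail on the central fibre. Since $F^2 = -1$, the line bundle $\chi(\xi_T^{uni})$ on this blow-up is of the unique form $\pi^*\mc{M} \otimes \mc{O}(-\langle\chi,\lambda\rangle F)$, as forced by its degree on $F$. The blow-down $\chi(\mrm{Bl}(\xi_T^{uni}))$, defined as the unique extension of the restriction to the locus where $\pi$ is an isomorphism, is therefore equal to $\mc{M}$. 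Computing $\mc{M}|_{E_s \times \{0\}}$ from $(\pi^*\mc{M} \otimes \mc{O}(-\langle\chi,\lambda\rangle F))|_{\tilde{E}}$, where $\tilde{E}$ denotes the strict transform of $E_s \times \{0\}$, and using that $\tilde{E}$ meets $F$ transversely at the single point over $x$, yields $\mc{M}|_{E_s} = \chi(\xi_T) \otimes \mc{O}(\langle\chi,\lambda\rangle x)$, which is precisely $\chi$ applied to $\xi_T \otimes \lambda(\mc{O}(x))$. The main subtlety is ensuring that this character-by-character computation assembles into a comparison of $T$-bundles in families over $D_\lambda^\circ$; this follows from running the same argument universally and invoking regularity of the ambient stacks to force uniqueness of the extensions across codimension-$2$ loci, so that all identifications are canonical and multiplicative in $\chi$.
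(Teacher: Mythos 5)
Your proof is correct, but it takes a genuinely different route from the paper's. The paper's own argument is a two-line rigidity argument: the two $T$-bundles on $D_\lambda^\circ \times_S E$ (the blow-down and the one given by the stated formula) are canonically isomorphic away from the section picking out the image of the node, and since both have degree $0$, the resulting rational isomorphism — whose divisor is a cocharacter-valued multiple of that section, computed fibrewise by the difference of degrees — extends to an isomorphism everywhere. You instead \emph{derive} the formula by an explicit computation on the local model of $\mc{C}^{\leq 1}$ as a blow-up of a smooth surface (via Proposition \ref{prop:basicdegproperties}), writing $\chi(\xi_T^{uni}) = \pi^*\mc{M} \otimes \mc{O}(-\langle\chi,\lambda\rangle F)$ and extracting the twist $\mc{O}(\langle\chi,\lambda\rangle x)$ from the intersection numbers $F^2 = -1$ and $F\cdot\tilde{E} = 1$; your determinant bookkeeping is consistent with Lemma \ref{lem:tbundleblowdowndet} (the $\mb{R}^1\pi_*$ and ideal-sheaf contributions are supported in codimension $2$ and so do not affect the determinant), and your appeal to uniqueness of extensions across codimension-$2$ loci correctly handles both the passage from a character-by-character statement to a $T$-bundle isomorphism and the passage from the transverse slice to the universal family. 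What each approach buys: yours explains \emph{where} the twist by $\lambda(\mc{O}(x))$ comes from and would let you discover the formula without knowing it in advance, at the cost of invoking the blow-up structure of the universal degeneration; the paper's verifies the already-guessed answer with no local computation at all, using only that $\mrm{Bl}_B$ lands in $\bun_T^0$ and agrees with $\xi_T$ away from the node.
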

\begin{proof}
By construction, the $T$-bundles on $D_{\lambda}^{\circ} \times_S E$ classified by the two morphisms to $\bun_{T}^0$ are canonically isomorphic outside the section $D_{\lambda}^{\circ} \to D_{\lambda}^{\circ} \times_S E$ sending a stable map to the image of the node in $E$. Since both have degree $0$, this extends to an isomorphism over all of $D_{\lambda}^{\circ} \times_S E$.
\end{proof}

\subsection{Rigidifications} \label{subsection:rigidification}

In this subsection, we briefly review the rigidification construction of D. Abramovich, A. Corti and A. Vistoli \cite[\S 5.1]{abramovich-corti-vistoli03}.

Let $H$ be a commutative group scheme, so that its classifying stack $\B H$ is a (commutative) group stack. Let $X$ be an $S$-stack equipped with an action $a \colon X \times \B H \to X$. We will say that the action $a$ is \emph{injective} if for every $S$-scheme $U$ and every $U$-point $x \colon U \to X$ of $X$, the induced map
\[ H_U \longrightarrow \ker(\ul{\mrm{Aut}}_X(x) \to \ul{\mrm{Aut}}_S(s)) \]
identifies $H_U$ with a closed subgroup (necessarily normal) of the target. Here $s \colon U \to X$ is the composition of $u$ with the structure map $X \to S$, and $\ul{\mrm{Aut}}_X(x)$ and $\ul{\mrm{Aut}}_S(s)$ are the automorphism group schemes (over $U$) of the points $x$ and $s$ in the stacks $X$ and $S$ respectively.

\begin{prop} \label{prop:rigidificationexistence}
Given an injective action $a \colon X \times \B H \to X$ as above, there exists a unique Artin stack $X_{rig}$ equipped with a $\B H$-invariant morphism $X \to X_{rig}$ such that the $\B H$-action makes $X$ into an $H$-gerbe (i.e., a $\B H$-torsor) over $X_{rig}$.
\end{prop}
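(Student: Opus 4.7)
The plan is to follow the construction of Abramovich--Corti--Vistoli: build $X_{rig}$ by fppf stackification, verify the gerbe property, and then prove algebraicity by exhibiting a smooth groupoid presentation. Uniqueness is formal: if $X \to Y$ is a $\B H$-torsor, then $Y$ is canonically the quotient of $X$ by the $\B H$-action, so any two choices are related by a canonical equivalence over $X$, and I would dispose of it at the outset.

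For existence, I would first define a prestack $X^{\mathrm{pre}}$ on the fppf site of $S$ with the same objects as $X$, but with morphism sets $\mathrm{Hom}_{X^{\mathrm{pre}}}(x, y) = \mathrm{Hom}_X(x, y)/H(U)$, where $H(U)$ acts through its canonical embedding into the relative automorphism group of $y$ supplied by injectivity. Injectivity guarantees that this action is free, that the pre- and post-composition actions agree (so the definition is symmetric and composition descends cleanly), and that the quotient is a normal subgroup quotient. I would then take $X_{rig}$ to be the fppf stackification of $X^{\mathrm{pre}}$, with the tautological $\B H$-invariant morphism $X \to X_{rig}$. To verify the gerbe property, note that any fppf-local section of $X_{rig}$ lifts to a section $\tilde x$ of $X$ whose $\B H$-stabiliser is canonically $H_U$ by injectivity; hence $X \to X_{rig}$ is fppf-locally equivalent to $\B H$, i.e., a $\B H$-gerbe.

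The main obstacle is showing that $X_{rig}$ is an Artin stack. My plan is to pick a smooth atlas $P \to X$ by a scheme and to establish that $P \to X_{rig}$, together with the groupoid $P \times_{X_{rig}} P \rightrightarrows P$, presents $X_{rig}$ as an Artin stack. The key point is that the gerbe structure identifies the natural morphism $P \times_X P \to P \times_{X_{rig}} P$ as an fppf $H$-torsor: an isomorphism in $X_{rig}$ between the images of $p_1$ and $p_2$ is an $H$-orbit of isomorphisms in $X$, so two lifts to $P \times_X P$ of the same point of $P \times_{X_{rig}} P$ differ uniquely by an element of $H$. Since $H$ is flat, affine and of finite presentation, this torsor is itself an algebraic space and $P \times_{X_{rig}} P$ is the associated fppf quotient, again representable by an algebraic space via fppf descent. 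Smoothness of the two projections $P \times_{X_{rig}} P \rightrightarrows P$ descends from smoothness of $P \times_X P \rightrightarrows P$ along the torsor. The resulting smooth algebraic groupoid then presents $X_{rig}$ as an Artin stack, completing the construction.
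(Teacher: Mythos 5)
Your construction is correct and is essentially the argument of Abramovich--Corti--Vistoli (Theorem 5.1.5), which is exactly what the paper invokes: its ``proof'' is just a citation to that result, and your sketch (prestack with $H(U)$-quotiented Hom-sets, fppf stackification, gerbe property from local lifting, algebraicity via the $H$-torsor $P \times_X P \to P \times_{X_{rig}} P$ and fppf descent of smoothness) is a faithful reconstruction of it. The only quibble is that the compatibility of pre- and post-composition comes from the functoriality of the $\B H$-action rather than from injectivity, which instead supplies freeness and the closed normal subgroup condition; this does not affect the argument.
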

\begin{proof}
The proposition is just a restatement of \cite[Theorem 5.1.5]{abramovich-corti-vistoli03}.
\end{proof}

\begin{defn} \label{defn:rigidification}
The stack $X_{rig}$ of Proposition \ref{prop:rigidificationexistence} is called the \emph{rigidification of $X$ with respect to $H$}.
\end{defn}

We will be interested mainly in rigidifications where $H = Z(G)$ is the centre of $G$ and $X = \bun_G$, $\tbun_G$, or $\bun_T$. Observe that we have an action
\begin{align}
\bun_G \times \B Z(G) &\longrightarrow \bun_G \label{eq:rigidificationaction} \\
(\xi_G, \eta) &\longmapsto \xi_G \otimes \eta, \nonumber
\end{align}
where, for $U$ an $S$-scheme, $\xi_G \to U \times_S E$ a $G$-bundle and $\eta \to U$ a $Z(G)$-bundle, we set
\[ \xi_G \otimes \eta = (\xi_G \times_U \eta)/Z(G),\]
where $Z(G)$ acts on $\xi_G \times_U \eta$ by the formula $(x, y)\cdot h = (xh, yh^{-1})$. Given $U$ the induced homomorphism $H_U \to \ul{\mrm{Aut}}(\xi_G)$ corresponds to the action
\begin{align*}
H_U \times_U \xi_G &\longrightarrow \xi_G \\
(h, x) &\longmapsto xh.
\end{align*}
Since this homomorphism is the inclusion of a closed subgroup, the action \eqref{eq:rigidificationaction} is injective, so we have a rigidification $\bun_{G, rig}$ of $\bun_G$ with respect to $Z(G)$. Similarly, $\B Z(G)$ also acts injectively on $\bun_T$ via the inclusion $Z(G)\subseteq T$, giving a rigidification $\bun_{T, rig}$. Since $Z(G)$ acts trivially on the flag variety $G/B$, the action on $\bun_G$ lifts to an injective $\B Z(G)$-action on the Kontsevich-Mori compactification given by
\[ (\xi_G, C, \sigma) \cdot \eta = (\xi_G \otimes \eta, C, \sigma),\]
so we also have a rigidification $\tbun_{G, rig}$. These actions are compatible with the morphisms $\psi \colon \tbun_G \to \bun_G$ and $\mrm{Bl}_B \colon \tbun_G \to \bun_T^0$, so we get an induced diagram
\[
\begin{tikzcd}
\tbun_{G, rig} \ar[r, "\psi_{rig}"] \ar[d, "\mrm{Bl}_{B, rig}"] & \bun_{G, rig} \\ \bun_{T, rig}^0
\end{tikzcd}
\]
of rigidifications.

\subsection{Unstable $G$-bundles and Harder-Narasimhan loci} \label{subsection:unstable}

In this subsection, we review some facts about unstable principal bundles on elliptic curves. We first recall the definition of semistability.

\begin{defn} \label{defn:semistable}
Fix a reductive group $H$, a geometric point $s \colon \spec k \to S$ and a principal $H$-bundle $\xi_H \to E_s$. We say that $\xi_H$ is \emph{stable} (resp., \emph{semistable}) if for every reduction $\xi_P$ of $\xi_H$ to a parabolic subgroup $P \subseteq H$ and every dominant character $\lambda \colon P \to \mb{G}_m$ of $P$ that vanishes on the reduced identity component $Z(H)^\circ$ of the centre $Z(H)$, we have
\[ \deg (\xi_P \times^P \mb{Z}_\lambda) > 0 \qquad \text{(resp.,} \quad \deg (\xi_P \times^P \mb{Z}_\lambda) \geq 0 \; \text{).}\]
We say that $\xi_H$ is \emph{unstable} if it is not semistable.
\end{defn}

\begin{rmk}
As is well known, the locus $\bun_H^{ss} \subseteq \bun_H$ of semistable bundles is an open substack.
\end{rmk}

Let $\xi_G \to E_s$ be an unstable $G$-bundle on a geometric fibre of $X \to S$. Then by definition, there exists a parabolic subgroup $P \subseteq G$, a reduction $\xi_P$ of $\xi_G$ to $P$ and a dominant character $\lambda\colon P \to \mb{G}_m$ such that the line bundle $\xi_P \times^P \mb{Z}_\lambda$ has strictly negative degree. In fact, there is a canonical choice of such a reduction, which is in some sense as destabilising as possible. 

\begin{defn} \label{defn:hardernarasimhanvector}
Let $P \subseteq G$ be a parabolic subgroup with Levi factor $L$. We say that $\mu \in \mb{X}_*(Z(L)^\circ)_{\mb{Q}}$ is a \emph{Harder-Narasimhan vector for $P$} if
\[ \mf{p} = \mrm{Lie}(P) = \bigoplus_{\langle \lambda, \mu(\xi_L) \rangle \geq 0} \mf{g}_\lambda,\]
where 
\[ \mf{g} = \bigoplus_{\lambda \in \mb{X}^*(Z(L)^\circ)} \mf{g}_\lambda,\]
is the weight space decomposition under the action of the torus $Z(L)^\circ$.
\end{defn}

\begin{defn} \label{defn:hardernarasimhan}
Let $\xi_G \to E_s$ be a principal $G$-bundle, and $\xi_P$ a reduction of $\xi_G$ to a parabolic subgroup $P \subseteq G$ with Levi factor $L \cong P/R_u(P)$. We say that the reduction $\xi_P$ is \emph{canonical}, or \emph{Harder-Narasimhan}, if the induced $L$-bundle $\xi_L$ is semistable and $\mu(\xi_L)$ is a Harder-Narasimhan vector for $P$.
\end{defn}

\begin{thm}[{\cite[Theorem 7.3]{behrend95}}] \label{thm:behrendhnreduction}
Given a $G$-bundle $\xi_G \to E_s$, there exists a parabolic subgroup $P \subseteq G$, unique up to conjugation, and a unique Harder-Narasimhan reduction of $\xi_G$ to $P$.
\end{thm}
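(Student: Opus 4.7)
The plan is to prove existence and uniqueness of the canonical reduction separately, following the strategy used by Behrend and Ramanathan for principal bundles on arbitrary smooth projective curves.

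For existence, I would first reduce to the case $S = \spec k$ with $k$ algebraically closed. For each standard parabolic $P \subseteq G$ with Levi factor $L$, consider the reductions $\xi_P$ of $\xi_G$ to $P$ for which the induced $L$-bundle $\xi_L = \xi_P \times^P L$ is semistable, and record the slope $\mu(\xi_L) \in \mb{X}_*(Z(L)^\circ)_\mb{Q} \subseteq \mb{X}_*(T)_\mb{Q}$. Standard boundedness results for subsheaves of a fixed vector bundle on $E_s$ (applied through a faithful representation $G \hookrightarrow GL(V)$) imply that the set of pairs $(P, \mu(\xi_L))$ realised in this way is finite in any half-space $\langle \rho, \mu \rangle \geq c$, where $\rho = \varpi_1 + \cdots + \varpi_l$. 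I would then select a reduction maximising $\langle \rho, \mu(\xi_L) \rangle$. An exchange argument shows that any maximiser automatically satisfies the Harder-Narasimhan vector condition: otherwise $\xi_L$ itself would admit a nontrivial destabilising reduction to a smaller parabolic of $L$, and gluing this onto $\xi_P$ would produce a new reduction with strictly larger invariant, contradicting maximality.

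For uniqueness, suppose $\xi_P$ and $\xi_{P'}$ are both Harder-Narasimhan reductions of $\xi_G$. The induced section of the associated $(G/P \times G/P')$-bundle on $E_s$ generically lies in a single $G$-orbit, giving an element $w \in W_P \backslash W / W_{P'}$, from which one extracts a reduction of $\xi_G$ to the parabolic $P \cap w P' w^{-1}$. Comparing the slope of this joint reduction against $\mu(\xi_L)$ and $\mu(\xi_{L'})$, and using the strict positivity condition on the Harder-Narasimhan vectors to force the relevant line bundles in the appropriate unipotent radical pieces to have negative degree, one obtains a numerical contradiction unless $P = wP'w^{-1}$, in which case the two reductions must agree.

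The main obstacle is the uniqueness step. On an elliptic curve the standard vanishing arguments used in higher genus are delicate, since semistable vector bundles of slope $0$ can carry nontrivial global sections, so $\mathrm{Hom}$ vanishing between semistable bundles of equal slope cannot be relied upon naively. One therefore has to exploit the strict positivity built into Definition \ref{defn:hardernarasimhanvector} (rather than merely the semistability of $\xi_L$ and $\xi_{L'}$) to obtain strictly negative slopes in the unipotent radical contributions; this is precisely the place where the full combinatorial strength of the Harder-Narasimhan vector condition, as opposed to bare Levi-semistability, is needed to pin down a canonical $P$.
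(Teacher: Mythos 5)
The paper does not actually prove this statement: it is quoted directly from Behrend, and your proposal is in essence a sketch of the standard Behrend--Ramanathan argument for curves of arbitrary genus, so at the level of strategy it matches the cited source. In particular your uniqueness outline (relative position of two reductions via a double coset in $W_P\backslash W/W_{P'}$, a joint reduction to $P\cap wP'w^{-1}$, and a slope comparison) is the right skeleton, and your observation that on an elliptic curve one cannot rely on $\hom$-vanishing between semistable bundles of equal slope, but must instead exploit the strict inequalities encoded in the Harder--Narasimhan vector condition, is exactly the point that needs care.

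The exchange argument in your existence step, however, fails as written. You restrict the maximisation to reductions $\xi_P$ whose Levi quotient $\xi_L$ is semistable, and then argue that a maximiser violating the Harder--Narasimhan vector condition would force $\xi_L$ to admit a nontrivial destabilising reduction to a smaller parabolic of $L$. But $\xi_L$ is semistable by the very hypothesis defining the class over which you maximise, so no such destabilising reduction exists and the implication is vacuous. The correct exchange goes in the opposite direction: if $\langle \varpi_i, \mu(\xi_L)\rangle$ has the wrong sign for some $\alpha_i \in t(P)$, one passes to the \emph{larger} parabolic $P' \supseteq P$ of type $t(P)\setminus\{\alpha_i\}$; the induced $L'$-bundle $\xi_{L'}$ need not be semistable, and one must invoke the canonical reduction for the smaller semisimple rank group $L'$ (i.e., run an induction) to destabilise $\xi_{L'}$ and splice the resulting parabolic reduction back into a reduction of $\xi_G$ with strictly larger invariant. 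Without this induction the argument does not close. Relatedly, it is not clear that the single linear functional $\langle \rho, -\rangle$ suffices to isolate the canonical reduction; Behrend works with the full degree vector $\deg \xi_P$ (via complementary polyhedra) precisely so that the exchange step and the subsequent uniqueness comparison both go through.
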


\begin{rmk}
When $G = GL_n$, Theorem \ref{thm:behrendhnreduction} reduces to the statement that any vector bundle $V \to X_s$ of rank $n$ has a unique filtration
\[ 0 = V_0 \subseteq V_1 \subseteq \cdots \subseteq V_m = V\]
such that $V_i/V_{i - 1}$ is semistable for each $i$ and
\[ \mu(V_1/V_0) > \mu(V_2/V_1) > \cdots > \mu(V_m/V_{m - 1}),\]
where $\mu(U) = \deg U/\rank U$ denotes the slope of a vector bundle $U$. This filtration is called the Harder-Narasimhan filtration on $V$.
\end{rmk}

On an elliptic curve, the Harder-Narasimhan reduction of a $G$-bundle can be reduced further to a Levi subgroup of $G$.

\begin{prop} \label{prop:hnlevi}
Let $\xi_G \to E_s$ be an unstable principal bundle, and let $\xi_P \to E_s$ be its Harder-Narasimhan reduction. Fix a Levi subgroup $L \subseteq P$ (so that $L \to P/R_u(P)$ is an isomorphism) and set $\xi_L = \xi_P \times^P L$. Then there is an isomorphism of $P$-bundles
\[ \xi_P \cong \xi_L \times^L P.\]
In particular, the unstable $G$-bundle $\xi_G$ has a reduction to a semistable bundle for the Levi subgroup $L$.
\end{prop}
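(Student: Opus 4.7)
The plan is to build a reduction of the Harder-Narasimhan parabolic bundle $\xi_P$ to its Levi $L$ by successively lifting sections along a central filtration of the unipotent radical $R_u(P)$, using the strict positivity of the $Z(L)^\circ$-weights appearing in $R_u(P)$ (paired against the Harder-Narasimhan vector $\mu(\xi_L)$) to kill all obstructions. First I would choose an $L$-equivariant filtration $R_u(P) = U_0 \supseteq U_1 \supseteq \cdots \supseteq U_n = 1$ of $R_u(P)$ by normal subgroups whose successive quotients $U_i/U_{i+1}$ are abelian vector groups with $L$-action via conjugation---for instance, the filtration by root height, since $[U_\alpha, U_\beta] \subseteq U_{\alpha + \beta}$ makes this both normal in $R_u(P)$ and $L$-invariant. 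Setting $L_i = U_i L \subseteq P$, so that $L_0 = P$, $L_n = L$, and $L_i/L_{i+1} \cong U_i/U_{i+1}$, one obtains a tower of $E_s$-stacks
\[ E_s = \xi_P/L_0 \longleftarrow \xi_P/L_1 \longleftarrow \cdots \longleftarrow \xi_P/L_n = \xi_P/L, \]
in which each arrow is a torsor for the vector bundle on $\xi_P/L_i$ associated to the $L_i$-representation $U_i/U_{i+1}$.

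I would then lift the tautological section of $\xi_P/L_0 = E_s$ step by step through this tower. At stage $i$, given a section $s_i \colon E_s \to \xi_P/L_i$, corresponding to an $L_i$-reduction $\xi_{L_i}$ of $\xi_P$, the pullback $s_i^*$ of the relevant vector bundle is $\xi_{L_i} \times^{L_i} (U_i/U_{i+1}) \cong \xi_L \times^L (U_i/U_{i+1})$, since $U_i$ acts trivially on $U_i/U_{i+1}$ in a central filtration of this shape. Consequently the obstruction to lifting $s_i$ to a section of $\xi_P/L_{i+1}$ lies in $H^1(E_s, \xi_L \times^L (U_i/U_{i+1}))$. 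The $L$-module $U_i/U_{i+1}$ decomposes into root spaces $\mf{g}_\alpha$ for $\alpha$ a root of $R_u(P)$, and each $\alpha|_{Z(L)^\circ}$ is a weight $\lambda$ with $\langle \lambda, \mu(\xi_L) \rangle > 0$, because by Definition \ref{defn:hardernarasimhanvector} the weight-zero part of $\mf{g}$ under $Z(L)^\circ$ is $\mf{l}$ and hence contributes nothing to $R_u(P)$.

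Since $\xi_L$ is semistable, each irreducible $Z(L)^\circ$-weight summand of $\xi_L \times^L (U_i/U_{i+1})$ is a semistable vector bundle of strictly positive slope $\langle \lambda, \mu(\xi_L) \rangle > 0$ (applying Ramanan-Ramanathan in characteristic zero, or, more elementarily, Atiyah's classification of semistable bundles on an elliptic curve together with the compatibility of tensor operations). Serre duality on the genus-$1$ curve $E_s$ then gives $H^1(E_s, W) = H^0(E_s, W^\vee)^* = 0$ for any such semistable $W$ of positive degree, so every obstruction vanishes and the tower can be traversed in full. The resulting section $E_s \to \xi_P/L$ is a reduction of $\xi_P$ to $L$, and its associated $L$-bundle is $\xi_P/R_u(P) = \xi_L$ by construction, which delivers the desired isomorphism $\xi_P \cong \xi_L \times^L P$; the final clause follows from $\xi_G = \xi_P \times^P G \cong \xi_L \times^L G$ together with the semistability of $\xi_L$. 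The main obstacle in executing this plan is the bookkeeping of the filtration and verifying that successive torsor classes are correctly identified with the stated $H^1$'s; the cohomology vanishing itself is essentially immediate from the Harder-Narasimhan positivity condition.
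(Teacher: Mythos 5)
Your argument is correct in outline and is the standard obstruction-theoretic proof; the paper itself defers this statement to \cite{davis19a}, but your method is exactly the one the paper deploys elsewhere for the same kind of problem (compare the proof of Proposition \ref{prop:inductionaffinebundle}, which filters $R_u(P)$ and kills cohomology of semistable summands of nonzero slope, and Lemma \ref{lem:unipotentreduction}). One detail needs fixing: the filtration of $R_u(P)$ by \emph{total} root height is not $L$-invariant, since conjugation by a negative root group of $L$ can lower the height while staying inside $R_u(P)$. You should instead filter by the values of $\langle \alpha, -\mu\rangle$ for $\mu$ the Harder--Narasimhan vector (equivalently, by $\sum_{\alpha_i \in t(P)} n_i$ for $\alpha = \sum_i n_i \alpha_i$), exactly as in the proof of Proposition \ref{prop:inductionaffinebundle}; this filtration is normal, $L$-invariant, and central in the sense you need ($[R_u(P), U_i] \subseteq U_{i+1}$), after which your identification of the successive obstruction spaces with $H^1(E_s, \xi_L \times^L (U_i/U_{i+1}))$ and the vanishing via positivity of $\langle \lambda, \mu(\xi_L)\rangle$ and semistability of $\xi_L$ go through as written.
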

\begin{proof}
The proposition is well known. For a proof, see for example \cite[Proposition 2.6.2]{davis19a}.
\end{proof}

\begin{prop} \label{prop:hnembedding}
Fix a parabolic subgroup $P \subseteq G$ with Levi factor $L$ and $\mu \in \mb{X}_*(Z(L)^\circ)_\mb{Q}$ a Harder-Narasimhan vector for $P$. Then the morphism
\begin{equation} \label{eq:hninclusion}
\bun_{P}^{ss, \mu} \longrightarrow \bun_G
\end{equation}
is a locally closed immersion, where $\bun_{P}^{ss, \mu}$ is the open substack of $P$-bundles such that the induced $L$-bundle $\xi_L$ is semistable with slope $\mu(\xi_L) = \mu$.
\end{prop}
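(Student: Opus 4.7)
The plan is to verify that $f \colon \bun_P^{ss,\mu} \to \bun_G$ is representable, unramified, and a monomorphism, which by standard results suffices for $f$ to be a locally closed immersion. For representability, I would observe that for any $U \to \bun_G$ classifying a family $\xi_G \to U \times_S E$, the fibre product $\bun_P \times_{\bun_G} U$ is the $U$-scheme of sections of the flag bundle $\xi_G/P \to U \times_S E$; each fixed-degree component is projective over $U$ by projectivity of $G/P$, and $\bun_P^{ss,\mu}$ cuts out the open subscheme where the induced $L$-bundle is semistable of slope $\mu$.

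The key computation is unramifiedness. At a point $\xi_P$ over $s \in S$, the fibre sequence $\mb{T}_f \to \mb{T}_{\bun_P^{ss,\mu}} \to f^* \mb{T}_{\bun_G}$ combined with the standard identification $\mb{T}_{\bun_H,\xi_H} \cong R\Gamma(E_s, \xi_H \times^H \mf{h})[1]$ gives $\mb{T}_{f,\xi_P} \cong R\Gamma(E_s, \xi_P \times^P \mf{g}/\mf{p})$. The vector bundle $\xi_P \times^P \mf{g}/\mf{p}$ has a filtration by $Z(L)^\circ$-weight with graded pieces of the form $\xi_L \times^L \mf{g}_\lambda$. By Definition \ref{defn:hardernarasimhanvector}, every $\lambda$ appearing in $\mf{g}/\mf{p}$ satisfies $\langle \lambda, \mu \rangle < 0$, and by semistability of $\xi_L$ each $\xi_L \times^L \mf{g}_\lambda$ is a semistable vector bundle of negative slope on the elliptic curve $E_s$, hence has $H^0 = 0$. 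Inducting along the filtration gives $H^0(E_s, \xi_P \times^P \mf{g}/\mf{p}) = 0$, so $h^0(\mb{T}_f) = 0$ and $f$ is unramified.

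For the monomorphism property, I would apply Theorem \ref{thm:behrendhnreduction}: the condition (Definition \ref{defn:hardernarasimhan}) that $\mu$ is an HN vector forces every $\xi_P \in \bun_P^{ss,\mu}$ to be the canonical Harder-Narasimhan reduction of its associated $\xi_G$, so any two such reductions of a common $\xi_G$ are canonically and uniquely isomorphic. Applied in families, this shows that the diagonal $\bun_P^{ss,\mu} \to \bun_P^{ss,\mu} \times_{\bun_G} \bun_P^{ss,\mu}$ is an isomorphism, so $f$ is a monomorphism.

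The main obstacle I anticipate is verifying the monomorphism property at the stack-theoretic level (rather than merely on geometric points): one needs HN uniqueness to propagate through families and infinitesimal neighbourhoods. This is underwritten by the unramifiedness step, since the $H^0$-vanishing proved there rigidifies the space of HN reductions infinitesimally, ensuring that Theorem \ref{thm:behrendhnreduction} suffices for the family version.
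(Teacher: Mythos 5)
Your local computations are correct and are the genuinely substantive parts of any proof of this statement: the identification $\mb{T}_{f,\xi_P}\cong \mb{R}\Gamma(E_s,\xi_P\times^P\mf{g}/\mf{p})$, the vanishing $H^0(E_s,\xi_P\times^P\mf{g}/\mf{p})=0$ because every $Z(L)^\circ$-weight $\lambda$ of $\mf{g}/\mf{p}$ has $\langle\lambda,\mu\rangle<0$ so the graded pieces $\xi_L\times^L\mf{g}_\lambda$ are semistable of negative degree, and the monomorphism property from Theorem \ref{thm:behrendhnreduction} (note you should also say a word about automorphisms: the fibre groupoid over $\xi_G$ has trivial automorphisms because $\mrm{Aut}(\xi_P)\to\mrm{Aut}(\xi_G)$ is injective; once the diagonal is known to be an open immersion by unramifiedness, surjectivity of the diagonal can indeed be checked on geometric points, so your "family version" worry is handled correctly). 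The paper itself only cites \cite{davis19a} here, so there is no in-text argument to compare against.

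The gap is in your very first sentence: a representable, unramified monomorphism locally of finite type is \emph{not} in general a locally closed immersion. The standard counterexample is the bijective morphism $(\mb{A}^1\setminus\{0\})\sqcup\{0\}\to\mb{A}^1$, which is unramified and a monomorphism but not an immersion. (Relatedly, your claim that the fixed-degree components of the space of sections of $\xi_G/P$ are projective over $U$ is false --- they are only quasi-projective, since sections degenerate to stable maps with bubbles --- and this failure of properness is exactly where the difficulty lives.) What is missing is a topological input showing the image is locally closed together with a properness argument. The standard way to supply it: upper semicontinuity of the Harder--Narasimhan type gives an open substack $V\subseteq\bun_G$ (the complement of all strata of strictly larger instability type) through which \eqref{eq:hninclusion} factors and in which the image is set-theoretically closed; one then checks that $\bun_{P}^{ss,\mu}\to V$ is proper by the valuative criterion --- over a DVR the HN reduction of the generic fibre extends to a $P$-reduction of the family by properness of $G/P$ and boundedness of the degree, and the limit reduction is again the canonical one because the HN type cannot jump inside $V$ --- and a proper monomorphism is a closed immersion. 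Without this step your argument establishes only that \eqref{eq:hninclusion} is an unramified monomorphism, which is strictly weaker than the claim.
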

\begin{proof}
This proposition is also well known. For a proof, see \cite[Proposition 2.6.5]{davis19a}.
\end{proof}

For future reference, we will use the following terminology for the locally closed substacks coming from Proposition \ref{prop:hnembedding}.

\begin{defn}
If $\xi_G \to E_s$ is an unstable $G$-bundle on a geometric fibre of $E \to S$ with a Harder-Narasimhan reduction to $P \subseteq G$ with slope $\mu$, then the \emph{Harder-Narasimhan locus of $\xi_G$} is the locally closed substack
\[ \bun_{P}^{ss, \mu} \longhookrightarrow \bun_G.\]
\end{defn}

We can also compute the codimension of the Harder-Narasimhan loci. Note that if $L \subseteq P$ is the standard Levi factor (i.e., containing $T$) of a standard parabolic subgroup $P \subseteq G$, then we have an inclusion $Z(L) \subseteq T$ and hence a homomorphism $\mb{X}_*(Z(L)^\circ)_\mb{Q} \to \mb{X}_*(T)_\mb{Q}$ and a pairing
\[ \langle -, - \rangle \colon \mb{X}^*(T) \times \mb{X}_*(Z(L)^\circ)_\mb{Q} \longrightarrow \mb{Q}.\]

\begin{prop} \label{prop:hncodimension}
In the situation of Proposition \ref{prop:hnembedding}, the locally closed immersion \eqref{eq:hninclusion} has codimension $-\langle 2\rho, \mu \rangle$, where $2\rho \in \mb{X}^*(T)$ is the sum of the positive roots.
\end{prop}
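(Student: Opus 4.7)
\smallskip
\noindent\textbf{Proof proposal.} The plan is to compute both dimensions involved by deformation theory on the curve $E_s$ and then reduce to a calculation in the root system. Since the claim is invariant under conjugation, I may assume $P$ is standard (so that $T \subseteq L$ and $Z(L)^\circ \subseteq T$ are given canonically). At a geometric point $\xi_P$ of $\bun_{P}^{ss, \mu}$ over some $s \colon \spec k \to S$, with associated bundles $\xi_G = \xi_P \times^P G$ and $\xi_L = \xi_P \times^P L$, the vanishing of $H^2$ on the curve $E_s$ ensures that both $\bun_G$ and $\bun_{P}^{ss, \mu}$ are smooth over $S$ at $\xi_P$, with pointwise relative dimensions
\[ \dim_{\xi_G} \bun_G = -\chi(E_s, \xi_G \times^G \mf{g}) \quad \text{and} \quad \dim_{\xi_P}\bun_{P}^{ss, \mu} = -\chi(E_s, \xi_P \times^P \mf{p}). \]
Because \eqref{eq:hninclusion} is a locally closed immersion of smooth stacks, its codimension is the difference $\chi(E_s, \xi_P \times^P \mf{p}) - \chi(E_s, \xi_G \times^G \mf{g})$.

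Next, I would exploit the $P$-module short exact sequence $0 \to \mf{u} \to \mf{p} \to \mf{l} \to 0$, where $\mf{u} = \mrm{Lie}(R_u(P))$, to split the Euler characteristic as $\chi(\xi_P \times^P \mf{p}) = \chi(\xi_P \times^P \mf{u}) + \chi(\xi_L \times^L \mf{l})$. Since $R_u(P)$ acts unipotently on $\mf{u}$, a standard filtration argument identifies the classes of $\xi_P \times^P \mf{u}$ and $\xi_L \times^L \mf{u}$ in $K$-theory, so they have the same Euler characteristic. Moreover, the adjoint representation of any reductive group has trivial determinant (its $T$-weights are symmetric under $\alpha \mapsto -\alpha$), so on a genus $1$ curve $\chi(\xi_G \times^G \mf{g}) = \chi(\xi_L \times^L \mf{l}) = 0$. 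Hence the codimension reduces to $\chi(\xi_L \times^L \mf{u}) = \deg(\xi_L \times^L \mf{u})$, using $g = 1$ once more.

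Finally, I decompose $\mf{u}$ under the torus $Z(L)^\circ$ into weight spaces $\mf{u} = \bigoplus_{\lambda} \mf{u}_\lambda$. Each $\mf{u}_\lambda$ is an $L$-submodule (as $L$ centralises $Z(L)^\circ$), and its determinant restricts on $Z(L)^\circ$ to $\dim(\mf{u}_\lambda) \cdot \lambda$; so by the very definition of slope in the notation section, $\deg(\xi_L \times^L \mf{u}_\lambda) = \dim(\mf{u}_\lambda) \langle \lambda, \mu\rangle$. Refining to $T$-root spaces yields $\deg(\xi_L \times^L \mf{u}) = \sum_{\alpha} \langle \alpha, \mu \rangle$, where $\alpha$ ranges over the $T$-roots of $\mf{u}$. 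By the Harder-Narasimhan condition (Definition \ref{defn:hardernarasimhanvector}) these are precisely the roots with $\langle \alpha, \mu \rangle > 0$, and the roots of $\mf{l}$ all satisfy $\langle \alpha, \mu\rangle = 0$; since $P$ is standard, the roots of $\mf{u}$ together with the roots of $\mf{l} \cap \mf{b}$ exhaust $\Phi_-$, so I may extend the sum to all of $\Phi_-$ without change, obtaining
\[ \codim = \sum_{\alpha \in \Phi_-} \langle \alpha, \mu \rangle = \langle -2\rho, \mu\rangle = -\langle 2\rho, \mu\rangle.\]
The main subtlety I anticipate is justifying the Euler-characteristic manipulations on parabolic-associated bundles (the passage from $P$-modules to $L$-modules via the filtration by the lower central series of $\mf{u}$), and carefully tracking the paper's sign convention $\Phi_- = -\Phi_+$ in the final root-theoretic identification.
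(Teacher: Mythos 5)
Your argument is correct. The paper gives no proof of this proposition in the text (it simply cites \cite[Proposition 2.6.7]{davis19a}), but your computation --- codimension as the difference of Euler characteristics $\chi(E_s, \xi_P \times^P \mf{p}) - \chi(E_s, \xi_G \times^G \mf{g})$, reduced via the filtration of $\mf{u}$ and the triviality of $\det \mf{g}$ and $\det \mf{l}$ to $\deg(\xi_L \times^L \mf{u}) = \sum_{\alpha} \langle \alpha, \mu \rangle = -\langle 2\rho, \mu \rangle$ --- is the standard deformation-theoretic argument and is surely what the cited proof does. Every step checks out, including the fact you use implicitly that each $Z(L)^\circ$-weight $\lambda$ of $\mf{u}$ pairs \emph{strictly} positively with $\mu$: if $\langle \lambda, \mu \rangle = 0$ for such a $\lambda$, then Definition \ref{defn:hardernarasimhanvector} would force $\mf{g}_{-\lambda} \subseteq \mf{p}$ as well, contradicting $\mf{g}_{-\lambda} \subseteq \mf{u}^-$ (though, as you note, the final sum only needs that the roots of $\mf{u}$ and of $\mf{l} \cap \mf{b}$ exhaust $\Phi_-$ and that the roots of $\mf{l}$ pair to zero with $\mu$).
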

\begin{proof}
This is \cite[Proposition 2.6.7]{davis19a}.
\end{proof}

\begin{prop} \label{prop:unstablecodim}
The locus of unstable bundles has codimension $l + 1$ in $\bun_G$, where $l = \dim T$.
\end{prop}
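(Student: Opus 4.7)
The plan is to compute the codimension of the unstable locus directly via the Harder-Narasimhan stratification. Combining Theorem~\ref{thm:behrendhnreduction} with Propositions~\ref{prop:hnembedding} and \ref{prop:hncodimension}, the unstable locus of $\bun_G$ decomposes as a disjoint union of locally closed substacks $\bun_P^{ss, \mu}$ of codimension $-\langle 2\rho, \mu \rangle$, as $(P, \mu)$ ranges over pairs of a proper standard parabolic subgroup and a Harder-Narasimhan vector for $P$ giving rise to a non-empty stratum. The proposition is therefore equivalent to the combinatorial assertion
\[\min_{(P, \mu)} \bigl( -\langle 2\rho, \mu \rangle \bigr) = l + 1.\]

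Writing $I = t(P)$, I would observe that every Harder-Narasimhan vector takes the form $\mu = -\sum_{i \in I} a_i \varpi_i^\vee$ with all $a_i > 0$ (encoding strict anti-dominance on $I$ and centrality in the Levi $L_P$), so $-\langle 2\rho, \mu \rangle = \sum_{i \in I} a_i \kappa_i$ with $\kappa_i := \langle 2\rho, \varpi_i^\vee \rangle > 0$. Since $G$ is simply connected, the lattice $\mb{X}^*(L_P/[L_P, L_P])$ is freely generated by $\{\varpi_i\}_{i \in I}$, and the integrality condition on the slope $\mu$ reads $\langle \varpi_i, \mu \rangle \in \mb{Z}$ for every $i \in I$; on the elliptic curve $E$, any such $\mu$ is realised as the slope of some semistable $L_P$-bundle. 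I would then argue that the minimum is attained at a maximal parabolic $P = P_k$: for $|I| > 1$, the integrality couples the $a_i$ via the strictly positive off-diagonal entries of the inverse Cartan matrix $C^{-1}$ in a way that prevents any single $a_i$ from being small enough to offset the additional positive summands. For $P_k$ maximal, the integrality reduces to $a_k (C^{-1})_{kk} \in \mb{Z}$, so the optimal Harder-Narasimhan vector is $\mu = -\varpi_k^\vee/(C^{-1})_{kk}$, with codimension $\kappa_k/(C^{-1})_{kk}$.

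The proposition thus reduces to the purely root-theoretic identity
\[ \min_{1 \leq k \leq l} \frac{\kappa_k}{(C^{-1})_{kk}} = l + 1 \]
for every simply connected simple root system. The main obstacle is that I do not know a type-free derivation of this identity: in type $A_l$ the ratio equals $l + 1$ for every $k$, but in all other simple types only certain specific nodes of the Dynkin diagram achieve the minimum, so the identity must ultimately be checked case by case using standard Bourbaki-style tables for inverse Cartan matrices and coefficients of positive roots.
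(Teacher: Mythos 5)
Your overall route is the paper's: stratify the unstable locus by Harder--Narasimhan type, use Proposition \ref{prop:hncodimension} to convert the statement into minimising $-\langle 2\rho, \mu\rangle$ over pairs $(P, \mu)$ with $\bun_P^{ss,\mu}$ nonempty, and observe that for maximal $P$ of type $\{\alpha_k\}$ the admissible slopes are the positive integer multiples of $-\varpi_k^\vee/\langle\varpi_k,\varpi_k^\vee\rangle$, so that the maximal case reduces to $\min_k \langle 2\rho,\varpi_k^\vee\rangle/\langle\varpi_k,\varpi_k^\vee\rangle = l+1$. That last identity is precisely \cite[Lemma 3.3.2]{friedman-morgan00}, which the paper simply cites; you do not need to redo the type-by-type verification, though doing so from the tables would of course be legitimate.

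The genuine gap is the reduction from an arbitrary proper standard parabolic to a maximal one. The heuristic you offer --- that integrality couples the $a_i$ through the off-diagonal entries of $C^{-1}$ so that no single coefficient can be small enough to compensate for the extra summands --- is not a proof, and it is also not the mechanism that makes the reduction work: integrality plays no role there. The paper's argument is uniform and short. Fix any $\alpha_i \in t(P)$ and let $\tilde{\mu}$ be the unique rational multiple of $\varpi_i^\vee$ with $\langle \varpi_i, \tilde{\mu}\rangle = \langle\varpi_i,\mu\rangle$; this is a negative integer because every $\langle\varpi_i,\varpi_j^\vee\rangle$ is positive, so $\tilde{\mu}$ is an admissible Harder--Narasimhan slope for the maximal parabolic of type $\{\alpha_i\}$ and $-\langle 2\rho,\tilde{\mu}\rangle \geq l+1$ by the maximal case. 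Since $\tilde{\mu}$ is proportional to $\varpi_i^\vee$, only the positive roots $\alpha$ with $\langle\alpha,\varpi_i^\vee\rangle > 0$ contribute to $\langle 2\rho,\tilde{\mu}\rangle$, and their sum lies in $\mb{Z}_{\geq 0}\tn{-span}\{\varpi_i\}$, so that
\[
\langle 2\rho,\tilde{\mu}\rangle \;=\; \sum_{\substack{\alpha\in\Phi_+\\ \langle\alpha,\varpi_i^\vee\rangle>0}}\langle\alpha,\tilde{\mu}\rangle \;=\; \sum_{\substack{\alpha\in\Phi_+\\ \langle\alpha,\varpi_i^\vee\rangle>0}}\langle\alpha,\mu\rangle \;\geq\; \langle 2\rho,\mu\rangle,
\]
the final inequality holding because every positive root pairs nonpositively with $\mu$ (roots of the Levi pair to zero, roots of $\mf{g}/\mf{p}$ pair negatively). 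Hence $-\langle 2\rho,\mu\rangle \geq -\langle 2\rho,\tilde{\mu}\rangle \geq l+1$ for every stratum, with equality for a suitable maximal parabolic. You should replace your case-by-case plan for non-maximal parabolics with an argument of this kind; as written, that step of your proposal is not established.
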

\begin{proof}
Since the locus of unstable bundles in $\bun_G$ is the union of the images of $\bun_{P}^{ss, \mu} \to \bun_{G}$ where $P$ ranges over all standard parabolic subgroups and $\mu$ ranges over all Harder-Narasimhan vectors for $P$, by Proposition \ref{prop:hncodimension}, it suffices to prove that
\[ - \langle 2\rho, \mu \rangle \geq l + 1\]
for all such $P$ and $\mu$ for which $\bun_{P}^{ss, \mu}$ is nonempty, with equality for some such choice of $P$ and $\mu$. Note that $\bun_{P}^{ss, \mu}$ is nonempty if and only if $\langle \varpi_i, \mu \rangle \in \mb{Z}$ for all $\alpha_i \in t(P)$.

Consider the case where $P$ is a maximal parabolic of type $t(P) = \{\alpha_i\}$. Then the conditions on $\mu$ are equivalent to
\[ \mu \in \mb{Z}_{>0}\tn{-span}\left\{-\frac{\varpi_i^\vee}{\langle \varpi_i, \varpi_i^\vee\rangle}\right\}.\]
So
\[ -\langle 2\rho, \mu \rangle \geq \frac{\langle 2\rho , \varpi_i^\vee \rangle}{\langle \varpi_i, \varpi_i^\vee\rangle},\]
which by \cite[Lemma 3.3.2]{friedman-morgan00} is always $\geq l + 1$, with equality achieved for some choice of $\alpha_i$.

More generally, suppose that $P \subseteq G$ is an arbitrary parabolic, choose $\alpha_i \in t(P)$, and let $L_i \supseteq L$ be the Levi factor of the unique  maximal parabolic of type $\{\alpha_i\}$ containing $P$. Let $\tilde{\mu} \in \mb{X}_*(Z(L_i)^\circ)_\mb{Q}$ be the unique element such that $\langle \varpi_i, \tilde{\mu} \rangle = \langle \varpi_i, \mu\rangle$. Then
\[ \tilde{\mu} \in \mb{Z}_{>0}\tn{-span}\left\{-\frac{\varpi_i^\vee}{\langle \varpi_i, \varpi_i^\vee\rangle}\right\},\]
so $-\langle 2\rho, \tilde{\mu} \rangle \geq l + 1$ as shown above. But
\[ -\langle 2\rho, \tilde{\mu} \rangle = -\sum_{\substack{\alpha \in \Phi_+ \\ \langle \alpha, \varpi_i^\vee \rangle > 0}} \langle \alpha, \tilde{\mu} \rangle = -\sum_{\substack{\alpha \in \Phi_+ \\ \langle \alpha, \varpi_i^\vee\rangle > 0}} \langle \alpha, \mu \rangle \leq -\langle 2\rho, \mu \rangle
\]
since
\[ \sum_{\substack{\alpha \in \Phi_+ \\ \langle \alpha, \varpi_i^\vee\rangle > 0}} \alpha \in \mb{Z}_{\geq 0}\tn{-span}\{\varpi_i\},\]
so we are done.
\end{proof}

\subsection{Bruhat cells for $B$-bundles} \label{subsection:bruhat}

Let $P \subseteq G$ be a standard parabolic subgroup. The Bruhat decomposition 
\[ G/B = \coprod_{w \in W_P \bslash W} PwB/B \]
into $P$-orbits is an important tool in the study of the flag variety $G/B$. Here $w$ ranges over any fixed set of coset representatives for the Weyl group $W_P = W_L$ of the Levi factor $L \subseteq P$ inside the Weyl group $W = N_G(T)/T$ of $G$. The purpose of this subsection is to review the construction and structure of a closely related collection of locally closed substacks in
\[ \bun_P \times_{\bun_{G}} \bun_B. \]

The construction is as follows: the Bruhat decomposition of $G/B$ gives a decomposition
\[ \B P \times_{\B G} \B B = P \bslash G/B = \coprod_{w \in W_P \bslash W} P \bslash PwB/B \cong \coprod_{w \in W_P \bslash W} \B(P \cap wBw^{-1}) \]
into disjoint locally closed substacks, and hence a family of disjoint locally closed substacks
\begin{equation} \label{eq:bruhatinclusion}
\bun_{P \cap wBw^{-1}} \longhookrightarrow \bun_{P} \times_{\bun_{G}} \bun_B
\end{equation}
for $w \in W_P \bslash W$.

\begin{defn} \label{defn:bruhat}
If $w \in W_P \bslash W$ and $\lambda \in \mb{X}_*(T)$, the associated \emph{Bruhat cell} is
\begin{align*}
C^{w, \lambda}_P &= \bun_{P \cap wBw^{-1}} \times_{\bun_B} \bun_B^\lambda \\
&\subseteq \bun_P \times_{\bun_G} \bun_{B}^\lambda,
\end{align*}
where the inclusion is the restriction of \eqref{eq:bruhatinclusion} to $B$-bundles of degree $\lambda$.
\end{defn}

Observe that the composition
\[ T \longrightarrow P \cap wBw^{-1} \longrightarrow (P \cap wBw^{-1})/[P \cap wBw^{-1}, P \cap wBw^{-1}]\]
is an isomorphism, so that the degree of a $P \cap wBw^{-1}$-bundle is naturally an element of $\mb{X}_*(T)$.

\begin{prop} \label{prop:bruhatdegrees}
The natural projection $C^{w, \lambda}_P \to \bun_{P \cap wBw^{-1}}$ factors through an isomorphism
\[ C^{w, \lambda}_{P} \cong \bun_{P \cap wBw^{-1}}^{w\lambda}.\]
\end{prop}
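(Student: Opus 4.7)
The plan is to compare, for a $P \cap wBw^{-1}$-bundle $\xi$, the intrinsic degree of $\xi$ (in $\mb{X}_*(T)$, via the identification of the abelianization with $T$ noted immediately before the proposition) with the degree of the $B$-bundle induced from $\xi$; the locally closed substack $C^{w,\lambda}_P$ will then be read off as the appropriate union of open-and-closed degree components of $\bun_{P \cap wBw^{-1}}$.

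To set up, I would unwind the locally closed immersion $\bun_{P \cap wBw^{-1}} \hookrightarrow \bun_P \times_{\bun_G} \bun_B$ coming from $\B(P \cap wBw^{-1}) \hookrightarrow \B P \times_{\B G} \B B$: its two projections are induced by the inclusion $P \cap wBw^{-1} \hookrightarrow P$ on one hand, and by the composition $\phi \colon P \cap wBw^{-1} \hookrightarrow wBw^{-1} \overset{\sim}{\to} B$ of the inclusion with the conjugation isomorphism $x \mapsto \tilde{w}^{-1} x \tilde{w}$ for a fixed lift $\tilde{w} \in N_G(T)$ of $w$ on the other. In particular, the morphism $\bun_{P \cap wBw^{-1}} \to \bun_B$ used to form the fibre product $C^{w,\lambda}_P$ is induction along $\phi$.

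Next I would compute the restriction of $\phi$ to the maximal torus $T \subseteq P \cap wBw^{-1}$: explicitly $\phi(t) = \tilde{w}^{-1} t \tilde{w}$, which is exactly the Weyl group action of $w^{-1}$ on $T$. Since the degrees of both $\xi$ and $\xi \times^{P \cap wBw^{-1}} B$ are computed through their respective abelianization maps to $T$, each of which restricts to the identity on $T$ inside the relevant group, functoriality of the degree under pushforward yields
\begin{equation*}
\deg\bigl(\xi \times^{P \cap wBw^{-1}} B\bigr) = w^{-1} \cdot \deg(\xi) \in \mb{X}_*(T).
\end{equation*}
Hence the induced $B$-bundle has degree $\lambda$ if and only if $\deg(\xi) = w\lambda$.

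Finally, because $\bun_B^\lambda \subseteq \bun_B$ is open and closed (it is a single connected component of $\bun_B$), the projection $C^{w,\lambda}_P = \bun_{P \cap wBw^{-1}} \times_{\bun_B} \bun_B^\lambda \to \bun_{P \cap wBw^{-1}}$ is an isomorphism onto the preimage of $\bun_B^\lambda$, which by the previous paragraph is exactly $\bun_{P \cap wBw^{-1}}^{w\lambda}$. The only real care required is in tracking the direction of the Weyl action — verifying that induction twists the degree by $w^{-1}$, so that the condition $w^{-1} d = \lambda$ rearranges to $d = w\lambda$ — after which the conclusion is immediate.
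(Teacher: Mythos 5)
Your argument is correct: identifying the map $\bun_{P\cap wBw^{-1}}\to\bun_B$ as induction along $x\mapsto \tilde{w}^{-1}x\tilde{w}$, noting that this restricts to $w^{-1}$ on $T$ and hence twists degrees by $w^{-1}$, and then using that $\bun_B^\lambda\subseteq\bun_B$ is open and closed is exactly the expected proof, with the Weyl twist pointing in the right direction. The paper itself only cites \cite{davis19a} for this statement, and your computation is the natural one that the cited proof carries out.
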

\begin{proof}
This is an easy special case of \cite[Proposition 3.7.4]{davis19a}.
\end{proof}

The decomposition $P = L \ltimes R_u(P)$ gives a description of the Bruhat cell $C^{w, \lambda}_P$ in terms of $L$ and $R_u(P)$. In the following proposition, if $\mu \in \mb{X}_*(T)$, then we write $\xi_L$ and $\xi_{L \cap wBw^{-1}}$ respectively for the universal $L$-bundle and $L \cap wBw^{-1}$ bundle on
\[ \bun_{L \cap wBw^{-1}}^{\mu} \times_{\bun_{L}} \bun_{P} \times_S E,\]
$\mc{U} = \xi_L \times^L R_u(P)$ for the associated unipotent group scheme, $\mc{U}_w = \xi_{L \cap wBw^{-1}} \times^{L \cap wBw^{-1}} (R_u(P) \cap wBw^{-1}) \subseteq \mc{U}$, and $\xi_{\mc{U}} = \xi_P/L$ for the associated $\mc{U}$-bundle. If $X \to X' \to S'$ are morphisms of stacks, we also write $\Gamma_{S'}(X', X)$ for the $S'$-stack whose functor of points sends $U \to S'$ to the groupoid of sections of the morphism $X \times_{S'} U \to X' \times_{S'} U$.

\begin{prop}[{\cite[Proposition 3.7.5]{davis19a}}] \label{prop:bruhatstructure}
In the setup above, there is an isomorphism
\[ \bun_{P \cap wBw^{-1}}^\mu \cong \Gamma_M(M \times_S E, \xi_{\mc{U}}/\mc{U}_w), \]
where
\[ M = \bun_{L \cap wBw^{-1}}^{\mu} \times_{\bun_{L}} \bun_{P}.\]
\end{prop}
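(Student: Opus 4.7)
The plan is to exploit the semidirect product decompositions \(P = L \ltimes R_u(P)\) and \(P \cap wBw^{-1} = (L \cap wBw^{-1}) \ltimes (R_u(P) \cap wBw^{-1})\) to identify both sides with classifying data for the same tower of torsors. The second decomposition follows from the Bruhat-decomposition structure of \(G\): \(R_u(P) \cap wBw^{-1}\) is normal in \(P \cap wBw^{-1}\) because \(R_u(P)\) is normal in \(P\), it meets \(L \cap wBw^{-1}\) trivially because \(L \cap R_u(P) = \{1\}\), and the Levi quotient \(P \to L\) restricts to a surjection \(P \cap wBw^{-1} \to L \cap wBw^{-1}\) that is split by the inclusion. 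I would first construct the comparison morphism \(\bun_{P \cap wBw^{-1}}^\mu \to M\) by sending a \(P \cap wBw^{-1}\)-torsor \(\xi\) to the pair \(\bigl(\xi \times^{P \cap wBw^{-1}} (L \cap wBw^{-1}),\, \xi \times^{P \cap wBw^{-1}} P\bigr)\); this lands in \(M\) because inducing either factor up to \(L\) yields the same \(L\)-bundle \(\xi \times^{P \cap wBw^{-1}} L\), and the slope condition \(\mu\) is preserved since both \(P \cap wBw^{-1}\) and \(L \cap wBw^{-1}\) have reductive quotient \(T\).

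The main technical tool is the general principle that for a semidirect product \(H = J \ltimes N\) and any base stack, an \(H\)-torsor is equivalent to a \(J\)-torsor \(\xi_J\) together with a torsor for the twisted group scheme \(\xi_J \times^J N\). In stack-theoretic language, this expresses the fact that \(\B H \to \B J\) is a fibration whose pullback along a point \(\xi_J\colon T \to \B J\) is the classifying stack of \(\xi_J \times^J N\). Applied to \(P = L \ltimes R_u(P)\), this identifies the universal \(P\)-bundle \(\xi_P\) over \(M \times_S E\) with the pair \((\xi_L, \xi_\mc{U})\), where \(\xi_\mc{U} = \xi_P/L\) is the induced \(\mc{U}\)-torsor.

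Applying the same principle to \(P \cap wBw^{-1}\), a lift of a test point \((\xi_{L \cap wBw^{-1}}, \xi_P) \in M(T)\) to \(\bun_{P \cap wBw^{-1}}^\mu\) is the data of a \(\mc{U}_w\)-torsor \(\eta\) on \(T \times_S E\) together with an isomorphism \(\eta \times^{\mc{U}_w} \mc{U} \cong \xi_\mc{U}|_T\): the \(\mc{U}_w\)-torsor supplies the missing unipotent piece of the \(P \cap wBw^{-1}\)-structure, while the isomorphism forces the induced \(P\)-bundle to match the prescribed \(\xi_P\). Such data is precisely a \(\mc{U}_w\)-reduction of \(\xi_\mc{U}\), equivalently a section of \(\xi_\mc{U}/\mc{U}_w\) over \(T \times_S E\), yielding the desired identification with \(\Gamma_M(M \times_S E, \xi_\mc{U}/\mc{U}_w)\). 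The main obstacle is purely bookkeeping: verifying that the inclusion \(\mc{U}_w \subseteq \mc{U}\) recorded in the setup is compatible with both twisting constructions used above. This is immediate from the canonical identification \(\xi_L \cong \xi_{L \cap wBw^{-1}} \times^{L \cap wBw^{-1}} L\) together with the functoriality of associated bundles applied to the inclusion \(R_u(P) \cap wBw^{-1} \hookrightarrow R_u(P)\), which is equivariant for the conjugation action of \(L \cap wBw^{-1}\).
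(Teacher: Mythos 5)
Your proposal is correct and is essentially the intended argument (the paper defers the proof to \cite[Proposition 3.7.5]{davis19a}, but the decomposition $P \cap wBw^{-1} = (L \cap wBw^{-1}) \ltimes (R_u(P) \cap wBw^{-1})$ followed by the torsor-tower description of torsors under a semidirect product is exactly the canonical route). The only thin spot is your justification of that decomposition: the claim that the Levi projection maps $P \cap wBw^{-1}$ \emph{onto} $L \cap wBw^{-1}$ rather than onto something larger is the real content, and it is the standard lemma on intersections of parabolic subgroups containing a common maximal torus (compare root subgroups on both sides), not a formal consequence of the splitting you cite.
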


For future reference, we define the following set of well-behaved coset representatives for $W_P$.

\begin{prop} \label{prop:cosetreps}
The set
\[ W^0_P = \{w \in W \mid w^{-1}\alpha_i \in \Phi_+ \; \text{for}\; \alpha_i \in \Delta \setminus t(P)\} \]
is a complete set of coset representatives for $W_P$ in $W$. Moreover, if $w \in W^0_P$, then $L \cap wBw^{-1} = L \cap B$.
\end{prop}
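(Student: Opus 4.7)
The plan is to identify $W^0_P$ with the set of minimum-length representatives for the left cosets $W_P \backslash W$, and to deduce the second assertion by a direct root-system computation. Both claims are standard consequences of Coxeter theory once one keeps track of the paper's convention that $B$ corresponds to the root groups for $\Phi_-$.

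For the first claim, first I would note that $W_P = W_L$ is generated by the simple reflections $s_{\alpha_i}$ with $\alpha_i \in \Delta \setminus t(P)$, these being the simple roots of the Levi $L$. The standard length criterion gives $\ell(s_{\alpha_i} w) < \ell(w)$ iff $w^{-1}\alpha_i \in \Phi_-$. Hence $w$ has minimum length in its coset $W_P w$ iff no generator $s_{\alpha_i}$ (with $\alpha_i \in \Delta \setminus t(P)$) shortens it, iff $w^{-1}\alpha_i \in \Phi_+$ for every such $i$ -- precisely the defining condition for $W^0_P$. Since minimum-length coset representatives exist and are unique (by the exchange condition), this shows that $W^0_P$ is a complete set of coset representatives.

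For the second claim, both $L \cap B$ and $L \cap wBw^{-1}$ are smooth connected subgroups of $G$ containing $T$ (the latter because $T$ lies in both $L$ and $wBw^{-1} = wTw^{-1}\cdot w U_{\Phi_-} w^{-1}$), so it suffices to identify their root groups. Writing $\Phi_L := \Phi \cap \mb{Z}(\Delta \setminus t(P))$ for the root system of $L$, a root $\alpha \in \Phi_L$ gives a root group in $L \cap B$ iff $\alpha \in \Phi_-$, and in $L \cap wBw^{-1}$ iff $w^{-1}\alpha \in \Phi_-$. I would check that these conditions coincide by expanding $\alpha = \sum_{\alpha_i \in \Delta \setminus t(P)} c_i \alpha_i$ with all $c_i$ of the same sign (a standard fact for roots of the sub-root system $\Phi_L$), and observing that $w^{-1}\alpha = \sum c_i w^{-1}\alpha_i$ is a $\mb{Z}$-linear combination of the positive roots $w^{-1}\alpha_i$ (positive by the hypothesis $w \in W^0_P$) with all coefficients of the same sign as the $c_i$. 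Since the simple-root coefficients of any root share a single sign, this forces $w^{-1}\alpha \in \Phi_-$ iff all $c_i \leq 0$ iff $\alpha \in \Phi_-$, as required.

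There is no significant obstacle: the proposition is essentially a direct translation of standard Coxeter- and root-system-theoretic facts. The only point requiring care is the paper's convention that $B$ contains the $\Phi_-$-root groups rather than the $\Phi_+$-root groups; this reverses the sign in the length criterion relative to the more common convention but has no effect on the logical structure of the argument.
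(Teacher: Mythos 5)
Your proof is correct and is exactly the standard Coxeter-theoretic argument (minimal-length coset representatives via the descent criterion, plus the sign-coherence of simple-root coefficients of roots of $\Phi_L$); the paper itself gives no proof, describing the result as well-known and elementary and citing the author's thesis for the same standard argument. You also correctly account for the paper's convention that $B$ carries the $\Phi_-$ root groups, which is the only place where care is needed.
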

\begin{proof}
The result is well-known and elementary. For a proof, see for example \cite[Propositions 3.7.1 and 3.7.2]{davis19a}.
\end{proof}

Unlike the Bruhat cells for the flag variety, the cells $C_{P}^{w, \lambda}$ do not cover the stack $\bun_{P} \times_{\bun_{G}}\bun_B^\lambda$. However, by giving bounds on the degrees of sections of flag variety bundles, the following proposition can often be used to show that they do cover the preimages of certain substacks of interest in $\bun_P$. In what follows, we write
\[ C^{w,\lambda}_{P, \xi_P} = \{\xi_P\} \times_{\bun_P} C^{w, \lambda}_{P}\]
for $\xi_P \in \bun_P$. For $\lambda, \lambda' \in \mb{X}_*(T)$, we also write $\lambda' < \lambda$ if $\lambda - \lambda' \in \mb{X}_*(T)_+$.

\begin{prop} \label{prop:levidegreebound}
Let $\xi_P \to E_s$ be a $P$-bundle on a geometric fibre of $E \to S$, and suppose there exists a point in $\bun_{P} \times_{\bun_{G}} \bun_{B}^\lambda$ over $\xi_P$ that does not lie in any Bruhat cell. Then there exists $w \in W^0_{P} \setminus \{1\}$ and $\lambda' < \lambda$ such that $C_{P, \xi_L \times^L P}^{w, \lambda'} \neq \emptyset$, where $\xi_L = \xi_P \times^P L$ is the associated $L$-bundle.
\end{prop}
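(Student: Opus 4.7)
The plan is to identify the ``generic'' Bruhat stratum for the section associated to the given point and use a Bialynicki--Birula retraction to produce a point of a Bruhat cell over $\xi_L \times^L P$ of strictly smaller degree. The starting observation is that a point of $\bun_P \times_{\bun_G} \bun_B^\lambda$ over $\xi_P$ is the same as a section $\sigma \colon E_s \to \xi_G/B$ of degree $\lambda$, where $\xi_G = \xi_P \times^P G$, and the Bruhat decomposition of $G/B$ stratifies $\xi_G/B = \coprod_{w \in W^0_P} \xi_P \times^P (PwB/B)$. There is a unique $w \in W^0_P$ for which $U := \sigma^{-1}(\xi_P \times^P (PwB/B))$ is open and dense in $E_s$; since the $w = 1$ stratum $\xi_P \times^P (P/B) \cong \xi_L/B_L$ is closed in $\xi_G/B$, the hypothesis forces $w \neq 1$, and then $D := E_s \setminus U$ is a nonempty finite divisor.

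To construct the desired Bruhat cell point, I would choose an integer Harder--Narasimhan cocharacter $\mu \in \mb{X}_*(Z(L)^\circ)$ for $P$ (cf.~Definition~\ref{defn:hardernarasimhanvector}). A direct calculation shows that the Bialynicki--Birula decomposition of $G/B$ with respect to $\mu$ coincides with the Bruhat decomposition: the fixed locus is $\coprod_{w' \in W^0_P} Lw'B/B$, each component isomorphic to $L/B_L$ by Proposition~\ref{prop:cosetreps}, and the attractor of $Lw'B/B$ is exactly $Pw'B/B$, with retraction $\pi_{w'} \colon Pw'B/B \to Lw'B/B$. Composing $\sigma|_U$ with $\pi_w$ and extending across $D$ by the properness of $\xi_L/B_L \to E_s$ yields a section $\sigma_0 \colon E_s \to \xi_L/B_L$. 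Viewing $\xi_L/B_L$ inside $\xi_L \times^L G/B$ via the closed immersion $LwB/B \hookrightarrow G/B$, the section $\sigma_0$ is a $B$-reduction of $\xi_L \times^L G$ factoring through the $w$-th Bruhat stratum, and Proposition~\ref{prop:bruhatdegrees} identifies it with a point of $C_{P, \xi_L \times^L P}^{w, \lambda'}$ for some $\lambda' \in \mb{X}_*(T)$.

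For the inequality $\lambda' < \lambda$, I would use a flat deformation. The Rees construction along $\mu$ produces an $\mb{A}^1$-family of $P$-bundles $\tilde{\xi}_P$ on $\mb{A}^1 \times E_s$ with fibres $\xi_P$ at $t = 1$ and $\xi_L \times^L P$ at $t = 0$, and $\sigma$ transports to a family $\tilde{\sigma}$ on $\mb{G}_m \times E_s$ of sections of the associated $G/B$-bundle $\tilde{\xi}_G/B = \tilde{\xi}_P \times^P G/B$. Taking the flat closure of the image of $\tilde{\sigma}$ gives a proper flat family of $1$-cycles over $\mb{A}^1$ with generic fibre $\sigma(E_s)$ and special fibre $\sigma_0(E_s) + \sum_{e \in D} C_e$, where each $C_e$ is a nonzero effective $1$-cycle supported in the fibre of $\xi_L \times^L G/B \to E_s$ over $e$. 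Conservation of degree under flat specialization then gives, for every $\nu \in \mb{X}^*(T)$,
\[
\langle \nu, \lambda \rangle = \langle \nu, \lambda' \rangle + \sum_{e \in D} \deg\bigl(\mc{L}_\nu^{\xi_L \times^L G}\big|_{C_e}\bigr).
\]
For $\nu$ dominant each summand is nonnegative by nefness of $\mc{L}_\nu$ on $G/B$, and ampleness of $\bigotimes_i \mc{L}_{\varpi_i}$ on $G/B$ forces strict positivity for some $\varpi_i$. Hence $\lambda - \lambda' \in \mb{X}_*(T)_+$, i.e.\ $\lambda' < \lambda$.

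The main technical obstacle I anticipate is pinning down the structure of the flat limit in the last step, in particular verifying that the residual cycles $C_e$ are nonzero and supported in the fibres of $G/B \to E_s$ over $D$. This should follow from a local analysis at each $e \in D$ of how $\sigma$ enters a lower Schubert cell, combined with the affine-bundle structure of the open Bruhat cells over their Bialynicki--Birula bases.
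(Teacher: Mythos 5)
Your proof follows essentially the same route as the paper's: both degenerate $\xi_P$ to $\xi_L \times^L P$ along a cocharacter $\mu \in \mb{X}_*(Z(L)^\circ)$ for which $\mf{p}$ is the non-negative weight space, transport $\sigma$ along the family, and account for the drop in degree by the positivity of the pieces of the limit lying in fibres of the flag bundle. The difference is one of packaging: the paper produces the limit abstractly as a stable map, invoking properness of the stack of stable maps after a finite base change, whereas you identify its main component explicitly as the Bia{\l}ynicki--Birula retraction $\pi_w \circ \sigma$ of the generic stratum and work with flat limits of $1$-cycles. Your observation that $w \neq 1$ because $P/B$ is closed in $G/B$ (so a section generically in the $w = 1$ stratum would lie entirely in it) is exactly the right way to land in $W^0_P \setminus \{1\}$, a point the paper's sketch leaves implicit.

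The step you flag --- nonvanishing of the residual cycles $C_e$ --- is a genuine gap as written, since without it you only get $\lambda' \leq \lambda$ and the conclusion needs strictness. It does close, as follows. For $e \in D$ the point $\sigma(e)$ lies in a stratum $\xi_P \times^P Pw'B/B$ with $W_P w' \neq W_P w$, so the set-theoretic limit $\lim_{t \to 0} \mu(t)\cdot\sigma(e) = \pi_{w'}(\sigma(e))$ lies in $\xi_L \times^L Lw'B/B$, which is disjoint from $\sigma_0(E_s) \subseteq \xi_L \times^L LwB/B$ because distinct Bruhat strata are disjoint. This limit point belongs to the support of the flat limit, and the flat limit is pure of dimension $1$ (the total space of the closure is irreducible of dimension $2$ and the special fibre is cut out by $t$, so every component has dimension $1$); hence there is a nonzero effective $1$-cycle through $\pi_{w'}(\sigma(e))$ contracted to $e$. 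Ampleness of $\bigotimes_i \mc{L}_{\varpi_i} = \mc{L}_\rho$ on $G/B$ then gives $\langle \varpi_i, \lambda - \lambda' \rangle > 0$ for some $i$, and nefness of each $\mc{L}_{\varpi_i}$ gives $\geq 0$ for all $i$, so $\lambda - \lambda' \in \mb{X}_*(T)_+$ as required.
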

\begin{proof}
We briefly sketch the argument here; for a more detailed proof, see \cite[Proposition 3.7.6]{davis19a}.

Assume for simplicity of notation that $S = \spec k$ for some algebraically closed field $k$. Unravelling the definitions, the assumption of the proposition is equivalent to the assumption that we have a section $\sigma \colon E \to \xi_P \times^P G/B$ of degree $\lambda$ that does not factor through any Bruhat cell $\xi_P \times^P PwB/B$. 

The strategy of the proof is to construct a degeneration of $\xi_P$ to the bundle $\xi_L \times^L P$, together with a degeneration of $\sigma$ to a stable map $\sigma' \colon C \to \xi_L \times^L G/B$ such that the restriction of $\sigma'$ to the irreducible component $E \subseteq C$ factors through some Bruhat cell. The statement of the proposition then follows since $\sigma$ and $\sigma'$ have the same degree, and the degree of $\sigma'$ restricted to the union of rational components must lie in $\mb{X}_*(T)_+$.

The degeneration of $\xi_P$ is constructed by choosing a cocharacter $\mu \in \mb{X}_*(Z(L)^\circ)$ such that the action
\begin{align*}
\mb{G}_m \times P &\longrightarrow \mb{G}_m \times P \\
(t, p) &\longmapsto (t, \mu(t)p\mu(t)^{-1})
\end{align*}
extends (uniquely) to a morphism $\mb{A}^1 \times P \to P$ of group schemes over $\mb{A}^1$ whose fibre over $0$ is the composition $P \to L \to P$. Induction of $\xi_P$ along this family of homomorphisms defines the desired degeneration $\mb{A}^1_k \to \bun_P$. Over $\mb{G}_{m, k}$, the $\mb{G}_m$-action on $\sigma$ through $\mu$ gives a lift to a family of stable maps to the associated $G/B$-bundle, so we get a family of stable maps over some finite cover of $\mb{A}^1_k$ by properness of stable maps. It is straightforward to check that this gives a degeneration with the desired properties.
\end{proof}

\subsection{Descent for ramified Galois coverings} \label{subsection:descent}

In this subsection, we state a simple result on descent for line bundles along ramified Galois coverings, which is an important technical tool in the proof of the elliptic Chevalley isomorphism in \S\ref{subsection:chevalley}.

To simplify numerous statements later on, it will be convenient to package the data of line bundles and their sections as follows. For any stack $X$ over $S$, we will write $\piccat(X)$ for the category of line bundles on $X$. This is a symmetric monoidal category under tensor product, which has an enrichment over the category $\mc{O}_{S_{\liset}}\tn{-mod}$ of sheaves of $\mc{O}$-modules on the lisse-\'etale site of $S$ defined by the formula
\[ \ul{\hom}(L, L') = {\pi_X}_*(L^\vee \otimes L')\]
for $L, L' \in \piccat(X)$, where $\pi_X \colon X \to S$ is the structure morphism. If $\Gamma$ is a finite group acting on $X$ over $S$, then we write $\piccat^\Gamma(X)$ for the category of $\Gamma$-linearised line bundles on $X$. This is again a symmetric monoidal category with enrichment over $\mc{O}_{S_{\liset}}\tn{-mod}$ defined by the formula
\[ \ul{\hom}(L, L') = {\pi_X}_*(L^\vee \otimes L')^\Gamma.\]
If $X$ is proper and representable over $S$, then $\piccat(X)$ and $\piccat^\Gamma(X)$ are actually enriched over the full subcategory $\mrm{Coh}(S) \subseteq \mc{O}_{S_{\liset}}\tn{-mod}$ of coherent sheaves on $S$.

\begin{defn} \label{defn:ramifiedgaloiscovering}
Let $f \colon X \to Z$ be a morphism of smooth stacks over $S$, with $Z$ connected. We say that $f$ is a \emph{ramified Galois covering relative to $S$ with Galois group $\Gamma$} if
\begin{enumerate}[(1)]
\item the morphism $f$ is representable and finite, and
\item there exists an open substack $U \subseteq Z$ such that $f^{-1}(U) \to U$ is an \'etale Galois covering with Galois group $\Gamma$, and $U$ is dense in every fibre of $Z \to S$.
\end{enumerate}
\end{defn}

\begin{rmk}
Note that if $f \colon X \to Z$ is a ramified Galois covering relative to $S$, then $f$ is automatically flat, since it is a finite morphism between regular stacks of the same dimension.
\end{rmk}

\begin{lem}
Let $f \colon X \to Z$ be a ramified Galois covering relative to $S$ with Galois group $\Gamma$, and let $U \subseteq Z$ be as in Definition \ref{defn:ramifiedgaloiscovering}. Then the action of $\Gamma$ on $f^{-1}(U)$ extends uniquely to an action on $X$ over $Z$.
\end{lem}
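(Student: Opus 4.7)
The plan is to establish uniqueness first by a density argument, and then construct the extension of each individual $\gamma \in \Gamma$ as an automorphism of $X$ over $Z$ by taking the closure of its graph in $X \times_Z X$ and applying Zariski's main theorem; the group action axioms will then follow from uniqueness.

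For uniqueness, observe that $f$ is flat (as remarked just after the definition), so $f^{-1}(U) \subseteq X$ is open, and it is dense in every fibre of $X \to S$ because $U$ is dense in every fibre of $Z \to S$. Since $X$ is smooth, and in particular reduced, any two $Z$-morphisms $X \to X$ agreeing on the open substack $f^{-1}(U)$ must coincide, so any extension of the $\Gamma$-action is unique.

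For existence, I would fix $\gamma \in \Gamma$, let $\gamma_U \colon f^{-1}(U) \to f^{-1}(U)$ be the corresponding $U$-automorphism, and form its graph $\Gamma_\gamma \subseteq f^{-1}(U) \times_U f^{-1}(U) \subseteq X \times_Z X$. Let $\overline{\Gamma_\gamma} \subseteq X \times_Z X$ denote its scheme-theoretic closure, constructed \'etale-locally on $X$ using that $f$ is representable so that the projection $X \times_Z X \to X$ is again representable and finite. The first projection $p_1 \colon \overline{\Gamma_\gamma} \to X$ is finite, as a base change of $f$, and birational, since it restricts to an isomorphism on the dense open $\Gamma_\gamma \cong f^{-1}(U)$. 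Because $X$ is smooth and therefore normal, Zariski's main theorem implies that $p_1$ is an isomorphism, and I set $\gamma \colon X \to X$ to be the composition $p_2 \circ p_1^{-1}$, where $p_2 \colon \overline{\Gamma_\gamma} \to X$ is the second projection. This is a $Z$-morphism by construction and extends $\gamma_U$. The group action axioms---that $\gamma_1\gamma_2$ extends $\gamma_{1,U} \circ \gamma_{2,U}$ and that the identity of $\Gamma$ extends to $\mrm{id}_X$---then follow at once from the uniqueness statement, since both sides of each equation are $Z$-automorphisms of $X$ agreeing on $f^{-1}(U)$.

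The main (mild) obstacle is simply ensuring that the graph-closure construction is legitimate in the stacky setting, but this is handled cleanly by the representability hypothesis on $f$: all the relevant constructions can be performed after base change to a smooth atlas of $X$ and then descended, and both Zariski's main theorem and the reducedness used in the uniqueness step are \'etale-local properties that survive this procedure.
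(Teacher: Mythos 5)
Your proof is correct, and it reaches the conclusion by a slightly different mechanism than the paper. The paper reduces by descent to the affine case $Z = \spec A$, $X = \spec B$ with $A \to B$ finite flat between regular rings, and then observes that $B$ is exactly the set of elements of $K \otimes_A B$ integral over $A$ (using that $B$ is normal and finite over $A$), so the $\Gamma$-action on the generic fibre preserves $B$. Your graph-closure argument is the geometric counterpart of the same idea: $\overline{\Gamma_\gamma}$ is finite over $X$ and generically isomorphic to it, and normality of $X$ (via Zariski's main theorem, i.e.\ integral closedness of $\mc{O}_X$ in its total ring of fractions) forces $p_1$ to be an isomorphism. Both proofs hinge on exactly the same two inputs --- finiteness of $f$ and normality of $X$ --- so neither buys more generality, but yours has the minor advantage of not needing to pass through function fields explicitly, while the paper's is shorter once the reduction to the affine case is made. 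One point you should make explicit: for $p_1|_{\overline{\Gamma_\gamma}}$ to be an isomorphism over $f^{-1}(U)$ you need $\overline{\Gamma_\gamma} \cap p_1^{-1}(f^{-1}(U)) = \Gamma_\gamma$, not merely that $\Gamma_\gamma$ is dense in $\overline{\Gamma_\gamma}$; this holds because $p_1^{-1}(f^{-1}(U)) = f^{-1}(U) \times_U f^{-1}(U)$ (any second coordinate lying over $U$ automatically lands in $f^{-1}(U)$) and the graph of $\gamma_U$ is closed there, $f$ being separated. With that observation supplied, the uniqueness step (schematic density of $f^{-1}(U)$ in the reduced, $Z$-separated $X$) and the deduction of the group axioms from uniqueness are both sound.
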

\begin{proof}
Since for any smooth (connected) chart $V \to Z$, the pullback $V \times_Z X \to V$ is a ramified Galois covering relative to $S$ with Galois group $\Gamma$, by descent for morphisms of stacks, it suffices to prove the claim in the case where $Z$ (and hence $X$) is a regular affine scheme. So we can assume $Z = \spec A$ and $X = \spec B$, with $A \to B$ a finite flat extension of regular rings, with $\spec A$ connected. By assumption, we have $\spec K \otimes_A B \to \spec K$ a Galois covering with Galois group $\Gamma$, where $K = \mrm{Frac}(A)$ is the fraction field of $A$. Since $B \subseteq K \otimes_A B$ is the subring of elements integral over $A$, it follows that $B$ is preserved by the action of $\Gamma$, which completes the proof.
\end{proof}

\begin{defn} \label{defn:ramgaloispicgood}
Let $f \colon X \to Z$ be a ramified Galois covering with Galois group $\Gamma$, and let $L$ be a $\Gamma$-linearised line bundle on $X$. We say that $L$ is \emph{good} if for every $\gamma \in \Gamma$, the morphism
\[ \gamma \colon L|_{X^\gamma_{(1)}} \longrightarrow L|_{X^\gamma_{(1)}} \]
is the identity, where $X^\gamma_{(1)} \subseteq X^\gamma$ denotes the open substack of points in the fixed locus $X^\gamma$ (relative to $Z$) at which $X^\gamma \subseteq X$ has codimension $\leq 1$. We write $\mrm{Pic}^\Gamma(X)_{good} \subseteq \mrm{Pic}^\Gamma(X)$ for the subgroup of good $\Gamma$-linearised line bundles, and $\piccat^\Gamma(X)_{good} \subseteq \piccat^\Gamma(X)$ for the corresponding full subcategory.
\end{defn}

\begin{rmk}
It is important in Definition \ref{defn:ramgaloispicgood} that we take fixed loci relative to $Z$ and not $S$. The fixed locus $X^\gamma$ relative to $Z$ is by definition the fibre product
\[
\begin{tikzcd}
X^\gamma \ar[r] \ar[d] & X \ar[d, "\Delta_{X/Z}"] \\
X \ar[r, "{(\id, \gamma)}"] & X \times_Z X,
\end{tikzcd}
\]
which is a closed substack of $X$ since $X$ is representable and separated over $Z$. Taking fixed loci relative to $S$ would amount to replacing $\Delta_{X/Z}$ with $\Delta_{X/S}$, which will not be a closed immersion if $X \to S$ is not representable. 
\end{rmk}

Our descent result for ramified Galois coverings is the following.

\begin{prop} \label{prop:ramgaloisdescent}
Let $f \colon X \to Z$ be a ramified Galois covering of smooth stacks over $S$, with Galois group $\Gamma$. Assume that for any $\gamma, \gamma' \in \Gamma \setminus \{1\}$ with $\gamma \neq \gamma'$, the intersection of the fixed loci (relative to $Z$) $X^\gamma \cap X^{\gamma'} \subseteq X$ has codimension at least $2$. Then the pullback functor $\piccat(Z) \to \piccat^\Gamma(X)$ factors through an equivalence
\[ \piccat(Z) \overset{\sim}\longrightarrow \piccat^\Gamma(X)_{good}\]
of symmetric monoidal categories enriched over $\mc{O}_{S_{\liset}}\tn{-mod}$.
\end{prop}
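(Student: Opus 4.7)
The plan is to verify in turn that $f^*$ lands in $\piccat^\Gamma(X)_{good}$, that it is fully faithful, and that every object of $\piccat^\Gamma(X)_{good}$ lies in its essential image. Monoidal and $\mc{O}_{S_{\liset}}\tn{-mod}$-enrichment compatibilities will then follow automatically from the naturality of all constructions and the fact that $f^*$ is monoidal.

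That $f^*M$ is good for $M \in \piccat(Z)$ is immediate, since the $\Gamma$-linearisation on $f^*M$ is trivial on every fibre of $f$, and in particular on $X^\gamma_{(1)}$ for each $\gamma$. For full faithfulness, I would apply $\pi_{X*} = \pi_{Z*}\circ f_*$ together with the projection formula to compute
\[ \ul{\hom}(f^*M, f^*M') = \pi_{Z*}\bigl((M^\vee \otimes M')\otimes f_*\mc{O}_X\bigr)^\Gamma, \]
so the whole question reduces to the identity $(f_*\mc{O}_X)^\Gamma = \mc{O}_Z$ of sheaves on $Z$. On the \'etale locus $U$ this is standard Galois theory, and globally it follows from normality of $Z$: any section of $(f_*\mc{O}_X)^\Gamma$ is simultaneously integral over $\mc{O}_Z$ and an element of the function field $K(Z) = K(X)^\Gamma$, hence already a section of $\mc{O}_Z$.

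The substance of the proof, and the main obstacle, is essential surjectivity. Given $L \in \piccat^\Gamma(X)_{good}$, I would set $M := (f_*L)^\Gamma$ and prove that $M$ is a line bundle on $Z$ and that the counit map $f^*M \to L$ is an isomorphism of $\Gamma$-linearised line bundles. Let $X^\dagger \subseteq X$ be the complement of the pairwise intersections $X^\gamma \cap X^{\gamma'}$ for distinct $\gamma, \gamma' \in \Gamma \setminus \{1\}$; by hypothesis $X \setminus X^\dagger$ has codimension $\geq 2$, and since $f$ is finite between equidimensional regular stacks the same holds for its image $Z \setminus Z^\dagger$ in $Z$. Unique extension of line bundles and their isomorphisms across codimension-two subsets of regular stacks then reduces the problem to checking the two claims over $X^\dagger$ and $Z^\dagger = f(X^\dagger)$.

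For the local analysis I would work \'etale-locally on $Z$. Over $U$ the claim is ordinary \'etale Galois descent. Above a codimension-one ramification point of $Z^\dagger$, an \'etale chart for $X^\dagger$ splits as a disjoint union indexed by $\Gamma/\Gamma_x$ of copies of a single ramified cyclic cover whose deck group is the stabiliser $\Gamma_x$ (which is cyclic on $X^\dagger$ since distinct nontrivial fixed loci are excluded there). Line bundles on these charts are trivial, and the goodness hypothesis is exactly the statement that $\Gamma_x$ acts trivially on $L$ on each copy; the $\Gamma_x$-invariants therefore form the structure sheaf of the base, which simultaneously exhibits $M|_{Z^\dagger}$ as a line bundle and makes the isomorphism $f^*M \cong L$ manifest over $X^\dagger$. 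Extending these data uniquely across the codimension-two complement finishes the proof.
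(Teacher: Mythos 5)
The paper itself defers the proof of this proposition to the author's thesis, so I will assess your argument on its own terms. Your overall architecture is the standard (and surely the intended) one: goodness of $f^*M$ is immediate, full faithfulness reduces via the projection formula to $(f_*\mc{O}_X)^\Gamma = \mc{O}_Z$ (which follows from normality of $Z$ and Galois descent over $U$), and essential surjectivity reduces, by unique extension of line bundles and their isomorphisms across codimension two on regular stacks, to a local computation at codimension-one points of the branch locus, where the pairwise-intersection hypothesis forces stabilisers to be cyclic (in fact of order at most $2$). All of that is fine, modulo the small point that you should take $Z^\dagger = Z \setminus f(X \setminus X^\dagger)$ rather than $f(X^\dagger)$, so that $f^{-1}(Z^\dagger) \subseteq X^\dagger$.

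The genuine gap is the sentence ``the goodness hypothesis is exactly the statement that $\Gamma_x$ acts trivially on $L$ on each copy.'' It is not. After trivialising $L$ on the strictly henselian local chart $\spec B$ at $x$, the $\Gamma_x$-linearisation is multiplication by a unit $u \in B^\times$ satisfying the cocycle condition $u \cdot \gamma^*u = 1$, and goodness says only that $u \equiv 1$ on the fixed divisor $X^\gamma$ through $x$ — not that $u = 1$ on the chart. The invariant sections form $\{s \in B : u\,\gamma^*s = s\}$, which is automatically free of rank one over the DVR downstairs, but the counit $f^*M \to L$ is an isomorphism at $x$ if and only if this module contains a unit of $B$, i.e.\ if and only if $u$ is a coboundary $\gamma^*(v)/v$. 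Since $\mrm{Pic}^{\Gamma_x}(\spec B) = H^1(\Gamma_x, B^\times)$ is generally nonzero (e.g.\ $u = -1$ gives a non-descending linearisation, and is precisely excluded by goodness), the assertion that \emph{good} cocycles are coboundaries is the actual content of the local step and must be proved. In the tame case one can do this by writing $u = 1 + a\pi^n + \cdots$ and observing that the cocycle condition forces the leading term into degrees where $\gamma^* - 1$ is surjective on $\mf{m}^n/\mf{m}^{n+1}$; but note that this computation is sensitive to the residue characteristic dividing $|\Gamma_x| = 2$, a case that genuinely occurs in the paper's applications (the cover $Y \to Y\sslash W$ is wildly ramified along $Y^{s_\alpha}$ in characteristic $2$), so this step needs to be supplied with care rather than asserted.
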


For a proof of Proposition \ref{prop:ramgaloisdescent}, we direct the reader to \cite[Proposition 4.2.12]{davis19a}.

\section{Constructing the fundamental diagram} \label{section:grothendieckspringer}

In this section, we give the construction of the diagram \eqref{eq:introbunggrothendieckspringer}.

In \S\ref{subsection:chevalley}, we state and prove the elliptic Chevalley isomorphism $\mrm{Pic}(\bun_G) \cong \mrm{Pic}^W(Y)_{good}$. We compute $\mrm{Pic}^W(Y)_{good}$ (and hence $\mrm{Pic}(\bun_G)$) in \S\ref{subsection:thetabundle}; the computation allows us to identify an ample generator $\Theta_Y$, which we call the theta bundle. Finally, in \S\ref{subsection:fundiag}, we explain how the elliptic Chevalley isomorphism gives a morphism $\chi \colon \bun_G \to \Theta_Y^{-1}/\mb{G}_m$, which we show fits into a diagram \eqref{eq:introbunggrothendieckspringer} as claimed. Everything we do in this section works equally well for both $\bun_G$ and its rigidification $\bun_{G, rig}$.

\subsection{The elliptic Chevalley isomorphism} \label{subsection:chevalley}

The classical Chevalley isomorphisms $\mf{g}\sslash G \cong \mf{t}\sslash W$ and $G \sslash G \cong T \sslash W$ are essential ingredients in the construction of the additive and multiplicative Grothendieck-Springer resolutions (as diagrams), as they provide the base change maps $\mf{t} \to \mf{g}\sslash G$ and $T \to G\sslash G$. 

In this subsection, we prove an elliptic analogue of these statements, which is the main ingredient in constructing the diagram \eqref{eq:fundiagbody}.

One can think of the classical (say, additive) Chevalley isomorphism as an isomorphism between the ring of regular functions on the stack $\mf{g}/G$ and the ring of $W$-invariant functions on the affine variety $\mf{t}$. So a naive elliptic analogue would be to identify regular functions on the stack $\bun_G$ with $W$-invariant regular functions on some variety. However, since the coarse moduli space of semistable $G$-bundles is projective rather than affine, there are not enough global regular functions on $\bun_G$ to make such a statement particularly useful. Instead, we will give a correspondence between line bundles on $\bun_G$ and certain $W$-linearised line bundles on the abelian variety $Y$, such that the space of global sections of a line bundle on $\bun_G$ is naturally isomorphic to the space of $W$-invariant sections of the corresponding line bundle on $Y$.

\begin{rmk}
The Weyl group $W$ acts naturally on the torus $T$, and hence on the abelian variety $Y$ over $S$. Explicitly, this action is given by
\[ s_\alpha(y) = y - \alpha^\vee(\alpha(y))\]
for $\alpha \in \Phi$, where we use the natural group structure on $Y$, and for $\lambda \in \mb{X}^*(T)$ (resp., $\mu \in \mb{X}_*(T)$), we write $\lambda \colon Y \to \mrm{Pic}^0_S(E)$ (resp., $\mu \colon \mrm{Pic}^0_S(E) \to Y$) for the morphism induced by $\lambda \colon T \to \mb{G}_m$ (resp., $\mu \colon \mb{G}_m \to T$).
\end{rmk}

\begin{defn} \label{defn:picwygood}
Let $L$ be a $W$-linearised line bundle on $Y$. In the notation of \S\ref{subsection:descent}, we say that $L$ is \emph{good} if for every root $\alpha \in \Phi_+$, the morphism
\[ s_\alpha \colon L|_{Y^{s_\alpha}} \longrightarrow L|_{Y^{s_\alpha}}\]
is the identity, where $Y^{s_\alpha} \subseteq Y$ is the fixed locus of $s_\alpha \colon Y \to Y$. We write $\mrm{Pic}^W(Y)_{good} \subseteq \mrm{Pic}^W(Y)$ for the subgroup of good $W$-linearised line bundles, and $\piccat^W(Y)_{good} \subseteq \piccat^W(Y)$ for the corresponding full subcategory.
\end{defn}

\begin{rmk}
Over the smooth locus of $Y \sslash W$, the morphism $Y \to Y \sslash W$ is a ramified Galois covering. Definition \ref{defn:picwygood} is consistent with Definition \ref{defn:ramgaloispicgood} over this locus, since for $w \in W$ we have $Y^w_{(1)} \neq \emptyset$ if and only if $w = s_\alpha$ is the reflection in some root $\alpha \in \Phi_+$.
\end{rmk}

\begin{thm}[{Elliptic Chevalley isomorphism}] \label{thm:bungchevalley}
There are equivalences
\[
\piccat(\bun_G) \simeq \piccat(\bun_{G, rig}) \simeq \piccat^W(Y)_{good}
\]
of symmetric monoidal categories enriched over $\mc{O}_{S_{\liset}}\tn{-mod}$.
\end{thm}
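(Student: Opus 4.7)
The plan is to prove the two equivalences separately, with the bulk of the work going into the second.

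For $\piccat(\bun_G) \simeq \piccat(\bun_{G, rig})$, the morphism $\bun_G \to \bun_{G, rig}$ is a $Z(G)$-gerbe, so pullback identifies $\piccat(\bun_{G, rig})$ with the full subcategory of $\piccat(\bun_G)$ consisting of line bundles of trivial $Z(G)$-weight. It therefore suffices to show that every line bundle on $\bun_G$ has trivial $Z(G)$-weight, which can be done by exhibiting a generating family (for example, determinants of cohomology of vector bundles associated to representations of $G$) whose members have manifestly trivial weight, using that $G$ is simply connected.

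For $\piccat(\bun_{G, rig}) \simeq \piccat^W(Y)_{good}$, the idea is to apply the ramified Galois descent result, Proposition \ref{prop:ramgaloisdescent}, to the natural induction map $\bun_{T, rig}^0 \to \bun_{G, rig}$. Restricted to the open substack $\bun_{G, rig}^{reg,ss} \subseteq \bun_{G, rig}$ of regular semistable bundles --- whose complement has codimension $\geq 2$ by Proposition \ref{prop:unstablecodim}, since $l + 1 \geq 2$ --- this map should be representable (on automorphism groups both sides give $T/Z(G)$), finite of degree $|W|$ (a regular semistable $G$-bundle has exactly $|W|$ reductions to $T$), and generically \'etale, and hence a ramified Galois covering with Galois group $W$ in the sense of Definition \ref{defn:ramifiedgaloiscovering}. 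Proposition \ref{prop:ramgaloisdescent} then identifies $\piccat(\bun_{G, rig}^{reg,ss})$ with $\piccat^W(\bun_{T, rig}^{0, reg})_{good}$. To pass from $\bun_{T, rig}^0$ to $Y$, note that $\bun_{T, rig}^0 \to Y$ is a $B(T/Z(G))$-gerbe and $W$ acts without nonzero invariants on the character group $\mb{X}^*(T/Z(G)) = \mb{Z}\Phi$; hence pullback identifies $\piccat^W(\bun_{T, rig}^{0, reg})_{good}$ with $\piccat^W(Y^{reg})_{good}$ on the relevant open locus. Finally, extend across the codimension-$\geq 2$ complements in $\bun_{G, rig}$ and in $Y$ using regularity of both stacks.

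The main obstacle is verifying the hypotheses of Proposition \ref{prop:ramgaloisdescent}, particularly the codimension-$\geq 2$ condition on intersections $Y^{s_\alpha} \cap Y^{s_\beta}$ of distinct reflection fixed loci, which should follow from the product structure $Y \cong \hom(\mb{X}^*(T), \mrm{Pic}^0_S(E))$ together with the explicit description of the reflection action. A subsidiary subtlety is the representability and finiteness of $\bun_{T, rig}^{0, reg} \to \bun_{G, rig}^{reg,ss}$, which depends on carefully matching automorphism groups after rigidification --- this is precisely where working with $\bun_{G, rig}$ rather than $\bun_G$ becomes essential. The compatibility of global sections $H^0(Y, L)^W \cong H^0(\bun_G, L_{\bun_G})$ then follows from the standard formula for sections under ramified Galois descent (sections on the base are $W$-invariant sections on the cover), applied over the regular semistable locus and extended by the regularity argument.
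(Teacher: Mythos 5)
Your overall architecture (restrict to the regular semistable locus, apply ramified Galois descent with group $W$, then pass to $Y$ and extend across codimension $\geq 2$) matches the paper's, but the covering you propose to descend along is the wrong one, and this is a genuine gap. You take the induction map $\bun_{T,rig}^0 \to \bun_{G,rig}$ over $\bun_{G,rig}^{ss,reg}$ and assert it is finite of degree $|W|$ because "a regular semistable $G$-bundle has exactly $|W|$ reductions to $T$." This fails: the regular semistable bundles that are not regular semisimple admit \emph{no} reduction to $T$ at all. Already for $G = SL_2$, the regular bundle over a $2$-torsion point $y \in Y$ is the nonsplit self-extension of the corresponding $2$-torsion line bundle (Atiyah's bundle, twisted), which is indecomposable; conversely, every $G$-bundle of the form $\xi_T \times^T G$ with $y$ not strictly regular is \emph{not} regular (its canonical $B$-reduction has trivial associated $U_{-\alpha}$-bundles for $\alpha \in \Delta_y$, violating condition (3) of Proposition \ref{prop:regularautomorphisms}). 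So the preimage of $\bun_{G,rig}^{ss,reg}$ in $\bun_{T,rig}^0$ is exactly $\bun_{T,rig}^{0,sreg}$, whose image $\bun_{G,rig}^{ss,sreg}$ misses a \emph{divisor} in $\bun_{G,rig}^{ss,reg}$ (see Lemma \ref{lem:regssexistence} and the proof of Proposition \ref{prop:regcodim}). Your map is therefore neither surjective nor finite, Definition \ref{defn:ramifiedgaloiscovering} does not apply, and the complement you would need to extend line bundles across has codimension $1$, so the regularity argument cannot close the gap.

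The fix — and this is what the paper does — is to replace $T$-reductions by degree-$0$ $B$-reductions: the covering is $\tbun_G^{ss,reg} = \psi^{-1}(\bun_G^{ss,reg}) \subseteq \bun_B^0$, which \emph{is} finite and surjective onto $\bun_G^{ss,reg}$ (Proposition \ref{prop:regramgalois}), agreeing with your $T$-bundle covering only over the strictly regular locus and ramifying over its complement; the "good" condition is precisely what controls descent through that ramification. This substitution costs an extra nontrivial step that your plan does not account for: $\mrm{Bl}_{B,Y}^{reg} \colon \tbun_G^{ss,reg} \to Y$ is not a gerbe over $Y \setminus Y^{sreg}$ (the fibres there are of the form $\bun_{\mc{U}_{-\alpha}}(E_s)/T$), so identifying $\piccat^W(\tbun_G^{ss,reg})_{good}$ with $\piccat^W(Y)_{good}$ requires a local computation at the generic points of the $Y^{s_\alpha}$ (Proposition \ref{prop:picwpullback} and Lemma \ref{lem:tbunreglocal}), not just the $\mb{X}^*(T/Z(G))^W = 0$ argument you give. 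Separately, your proof of $\piccat(\bun_G) \simeq \piccat(\bun_{G,rig})$ via a generating family of determinant bundles presupposes a generation statement for $\mrm{Pic}(\bun_G)$ that in this paper is only obtained as a \emph{consequence} of the theorem (Corollary \ref{cor:picbung}); the paper instead runs the same descent chain for both stacks and observes the composite is pullback, which avoids any circularity.
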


\begin{rmk}
In more down to earth terms, Theorem \ref{thm:bungchevalley} states that there are isomorphisms
\[ \mrm{Pic}(\bun_G) \cong \mrm{Pic}(\bun_{G, rig}) \cong \mrm{Pic}^W(Y)_{good} \]
of abelian groups, and isomorphisms
\[ {\pi_{\bun_G}}{\vphantom{p}}_*L_{\bun_G} \cong {\pi_{\bun_{G, rig}}}{\vphantom{p}}_* L_{\bun_{G, rig}} \cong ({\pi_Y}{\vphantom{p}}_*L)^W \]
of sheaves of $\mc{O}$-modules on $S_{\liset}$, compatible with tensor products, for $L \in \mrm{Pic}^W(Y)_{good}$ corresponding to $L_{\bun_G} \in \mrm{Pic}(\bun_G)$ and $L_{\bun_{G, rig}} \in \mrm{Pic}(\bun_{G, rig})$.
\end{rmk}

\begin{proof}[Proof of Theorem \ref{thm:bungchevalley}]
We give the outline of the proof here, and fill in the details in the rest of the subsection.

First, by Lemma \ref{lem:bigpicrestriction} and Proposition \ref{prop:unstablecodim}, the restriction functors
\[ \piccat(\bun_G) \longrightarrow \piccat(\bun_G^{ss}) \quad \text{and} \quad \piccat(\bun_{G, rig}) \longrightarrow \piccat(\bun_{G, rig}^{ss})\]
are equivalences of symmetric monoidal categories enriched over $\mc{O}_{S_{\liset}}\tn{-mod}$. So it suffices to prove the theorem with $\bun_G^{ss}$ and $\bun_{G, rig}^{ss}$ in place of $\bun_G$ and $\bun_{G, rig}$.

Consider the commutative diagram
\[
\begin{tikzcd}
\tbun_G^{ss, reg} \ar[r, hook] \ar[d] & \tbun_G^{ss} \ar[d] \ar[r, "\mrm{Bl}_B^{ss}"] & \bun_T^0 \ar[r] & Y \\
\bun_G^{ss, reg} \ar[r, hook] & \bun_G^{ss},
\end{tikzcd}
\]
and its rigidification
\[
\begin{tikzcd}
\tbun_{G, rig}^{ss, reg} \ar[r, hook] \ar[d] & \tbun_{G, rig}^{ss} \ar[d] \ar[r, "\mrm{Bl}_{B, rig}^{ss}"] & \bun_{T, rig}^0 \ar[r ] & Y \\
\bun_{G, rig}^{ss, reg} \ar[r, hook] & \bun_{G, rig}^{ss},
\end{tikzcd}
\]
where $\bun_G^{ss, reg} \subseteq \bun_G^{ss}$ and $\tbun_G^{ss, reg} \subseteq \tbun_G^{ss}$ are the big open substacks of regular semistable bundles (see Definition \ref{defn:regularbundle} and Proposition \ref{prop:regcodim}). By Lemma \ref{lem:bigpicrestriction} again, restriction of line bundles gives equivalences
\[ \piccat(\bun_G^{ss}) \overset{\sim}\longrightarrow \piccat(\bun_G^{ss, reg}) \quad \text{and} \quad \piccat(\bun_{G, rig}^{ss}) \overset{\sim}\longrightarrow \piccat(\bun_{G, rig}^{ss, reg}).\]
By Proposition \ref{prop:regramgalois}, the morphisms $\tbun_{G, rig}^{ss, reg} \to \bun_{G, rig}^{ss, reg}$ and $\tbun_G^{ss, reg} \to \bun_G^{ss, reg}$ are ramified Galois coverings with Galois group $W$ satisfying the hypotheses of Proposition \ref{prop:ramgaloisdescent} such that the Galois action covers the natural $W$-action on $Y$. So there are equivalences
\[ \piccat(\bun_G^{ss, reg}) \overset{\sim}\longrightarrow \piccat^W(\tbun_G^{ss, reg})_{good} \quad \text{and} \quad \piccat(\bun_{G, rig}^{ss, reg}) \overset{\sim}\longrightarrow \piccat^W(\tbun_{G, rig}^{ss, reg})_{good} \]
of symmetric monoidal categories enriched over $\mc{O}_{S_{\liset}}\tn{-mod}$. But by Proposition \ref{prop:picwpullback} the natural pullback functors give equivalences
\[ \piccat^W(Y)_{good} \overset{\sim}\longrightarrow \piccat^W(\tbun_G^{ss, reg})_{good} \]
and
\[ \piccat^W(Y)_{good} \overset{\sim}\longrightarrow \piccat^W(\tbun_{G, rig}^{ss, reg})_{good}, \]
which completes the proof.
\end{proof}

\begin{rmk}
From the proof, it is clear that the equivalence
\[ \piccat(\bun_{G, rig}) \overset{\sim}\longrightarrow \piccat(\bun_G) \]
of Theorem \ref{thm:bungchevalley} is just the obvious pullback functor.
\end{rmk}

The rest of this subsection is concerned with proving the various propositions and lemmas quoted in the proof of Theorem \ref{thm:bungchevalley}. We begin by introducing and studying the substack of regular semistable bundles.

\begin{defn} \label{defn:regularbundle}
We say that a semistable $G$-bundle $\xi_G \in \bun_G^{ss}$ is \emph{regular} if $\dim \psi^{-1}(\xi_G) = 0$. We write $\bun_G^{ss, reg} \subseteq \bun_G^{ss}$ for the open substack of regular semistable bundles. We also write
\[ \tbun_G^{ss, reg} = \psi^{-1}(\bun_G^{ss, reg}).\]
\end{defn}

\begin{rmk}
There is another notion of regular semistable bundle in use in the literature, namely that of a semistable principal bundle whose automorphism group has minimal dimension $l = \rank G$. We will see later on (Proposition \ref{prop:regularautomorphisms}) that this notion agrees with ours.
\end{rmk}

In classical Springer theory, the simplest regular elements to describe are the regular semisimple ones. The same is true in our context.
 
\begin{defn}
We say that a point $y \colon \spec k \to Y$ over $s \colon \spec k \to S$ is \emph{strictly regular} if for every root $\alpha \in \Phi_+$, we have $\alpha(y) \neq 0 \in \mrm{Pic}^0(Y_s)$. We write $Y^{sreg}$ for the open subset of strictly regular points and $\bun_T^{0, sreg}$ and $\tbun_G^{ss, sreg}$ for the preimages in $\bun_T^0$ and $\tbun_G^{ss}$ respectively. We call a $G$-bundle $\xi_G \in \bun_G^{ss}$ \emph{strictly regular}, or \emph{regular semisimple}, if it lies in the image $\bun_G^{ss, sreg} = \psi(\tbun_G^{ss, sreg})$.
\end{defn}

\begin{lem} \label{lem:sregiso}
The morphism $\tbun_G^{ss, sreg} \to \bun_T^{0, sreg}$ is an isomorphism.
\end{lem}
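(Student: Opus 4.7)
The plan is to identify the morphism with the restriction of the canonical projection $p \colon \bun_B^0 \to \bun_T^0$ and to invert it using the splitting $T \hookrightarrow B$ of $B \to T$. Indeed, by the preceding proposition $\tbun_G^{ss} = \bun_B^0$ as open substacks of $\tbun_G$, and by Proposition \ref{prop:kmblowdown} the blow-down $\mrm{Bl}_B$ restricts on $\bun_B^0$ to this canonical $p$. The splitting $T \hookrightarrow B$ induces a section $s \colon \bun_T^0 \to \bun_B^0$ sending $\xi_T$ to $\xi_T \times^T B$; tautologically $p \circ s = \id$. It therefore suffices to show that $s \circ p \cong \id$ on the strictly regular locus $\bun_B^{0, sreg}$, i.e., that any degree $0$ $B$-bundle $\xi_B$ whose underlying $T$-bundle $\xi_T$ is strictly regular admits a unique isomorphism $\xi_B \cong \xi_T \times^T B$ inducing the identity on $T$-bundles.

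Writing $R_u(B)$ for the unipotent radical, this reduces to the triviality of the stack of torsors over the twisted sheaf of groups $\xi_T \times^T R_u(B)$ on $E_U = U \times_S E$, for any test $S$-scheme $U$ carrying a strictly regular $\xi_T$. Concretely, this stack is naturally equivalent to the fibre of $p$ over $\xi_T$, and its triviality is the combined assertion of the relative, non-abelian vanishing of $H^0$ and $H^1$ of $\xi_T \times^T R_u(B)$ on $E_U \to U$: vanishing of $H^1$ produces the reduction of structure group from $B$ back to $T$, and vanishing of $H^0$ makes that reduction canonical.

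To prove the vanishing, I filter $R_u(B)$ by its lower central series (equivalently, by root height), so that the successive subquotients are direct sums of root subgroups for $\alpha \in \Phi_-$. After twisting by $\xi_T$ these subquotients become direct sums of the line bundles $\alpha(\xi_T)$, each of which, by strict regularity of $\xi_T$, is a nontrivial family of degree $0$ line bundles on the fibres of $E \to S$ and therefore has vanishing relative $H^0$ and $H^1$. Inducting up the filtration via the long exact sequences of non-abelian cohomology associated to the central extensions then yields the vanishing for $\xi_T \times^T R_u(B)$ itself.

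The main obstacle is the careful handling of this non-abelian cohomology argument in the relative setting over a general base $S$; however, since every stage of the filtration is a central extension by a direct sum of line bundles with trivial relative cohomology (vanishing automatically in degree $\geq 2$ by relative dimension), the standard arguments for cohomology of unipotent groups adapt to give the result.
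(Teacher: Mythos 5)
Your proof is correct and follows essentially the same route as the paper: the paper reduces to triviality of the geometric fibres of $\bun_B^{0,sreg} \to \bun_T^{0,sreg}$ via Lemma \ref{lem:unipotentreduction} (applied with $U = \{1\}$), whose proof is exactly your filtration of $R_u(B)$ by root height together with the long exact sequence in nonabelian cohomology and the vanishing of $H^0$ and $H^1$ of nontrivial degree~$0$ line bundles. The only cosmetic difference is that the paper invokes smoothness of the morphism to reduce to geometric fibres rather than arguing relatively over a test scheme.
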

\begin{proof}
Since $\tbun_G^{ss, sreg} \to \bun_T^{0, sreg}$ is smooth, it suffices to show that each geometric fibre is trivial. But this is clear from Lemma \ref{lem:unipotentreduction} below, so we are done.
\end{proof}

\begin{lem} \label{lem:unipotentreduction}
Fix a geometric point $s \colon \spec k \to S$ and a degree $0$ $T$-bundle $\xi_T$ on $E_s$ corresponding to $y \in Y_s$, and let $U \subseteq R_u(B)$ be a unipotent closed subgroup scheme that is invariant under conjugation by $T$. Assume that for all $\alpha \in \Phi_-$ such that $\alpha(y) = 0$, we have $U_\alpha \subseteq U$, where $U_\alpha = \mb{G}_a$ is the root subgroup corresponding to $\alpha$. Then the induced bundle morphism
\[ \bun_{T U}(E_s)_{\xi_T} \longrightarrow \bun_{B}(E_s)_{\xi_T} \]
is an isomorphism, where the subscript denotes the fibre over $\xi_T$ of the natural morphism to $\bun_{T}(E_s)$.
\end{lem}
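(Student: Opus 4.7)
The plan is to combine a filtration argument with the vanishing of cohomology of non-trivial degree $0$ line bundles on $E_s$.

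By standard Lie-theoretic arguments, I first construct a chain
\[ U = U_0 \triangleleft U_1 \triangleleft \cdots \triangleleft U_N = R_u(B) \]
of $T$-invariant closed subgroup schemes with $U_{i-1}$ normal in $U_i$ and each quotient $U_i/U_{i-1}$ isomorphic to the root subgroup $U_{\alpha_i} \cong \mb{G}_a$ for some $\alpha_i \in \Phi_-$ with $U_{\alpha_i} \not\subseteq U$. Such a chain exists by ordering the roots $\alpha_1, \ldots, \alpha_N$ of $\Phi_- \setminus \{\alpha : U_\alpha \subseteq U\}$ so that $\mrm{ht}(\alpha_1) \leq \cdots \leq \mrm{ht}(\alpha_N)$ (most negative height first): since $\mrm{ht}(\alpha_i + \beta) < \mrm{ht}(\alpha_i)$ for any root $\beta \in \Phi_-$, every commutator $[U_{\alpha_i}, U_\beta]$ with $\beta$ a root of $U_{i-1}$ lands in $U_{i-1}$, so normality holds at each step. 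By the hypothesis of the lemma, $\alpha_i(y) \neq 0$ for every $i$.

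Next, I pass to torsors on $E_s$. Using $B = R_u(B) \rtimes T$ and $TU = U \rtimes T$, the fibres over $\xi_T$ identify canonically with stacks of torsors under the twisted group schemes $\mc{U}_i = \xi_T \times^T U_i$ on $E_s$; in particular $\bun_B(E_s)_{\xi_T}$ corresponds to $\mc{U}_N$ and $\bun_{TU}(E_s)_{\xi_T}$ to $\mc{U}_0$. The above filtration yields a chain of group scheme extensions on $E_s$ with successive quotients $\mc{U}_i/\mc{U}_{i-1} \cong \mc{L}_{\alpha_i}$, where $\mc{L}_{\alpha_i} = \xi_T \times^T \mb{Z}_{\alpha_i}$ is the associated degree $0$ line bundle viewed as an additive group scheme over $E_s$. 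By induction, it is enough to show that each transition map of classifying stacks is an equivalence. For a fixed $i$ and a $\mc{U}_i$-torsor $P$ on $E_s$, the $2$-fibre of the transition over $P$ is the groupoid of sections of the induced $\mc{L}_{\alpha_i}$-torsor $P/\mc{U}_{i-1} \to E_s$. Since $\alpha_i(y) \neq 0$, $\mc{L}_{\alpha_i}$ is a non-trivial degree $0$ line bundle on the elliptic curve $E_s$, so $H^0(E_s, \mc{L}_{\alpha_i}) = H^1(E_s, \mc{L}_{\alpha_i}) = 0$. The vanishing of $H^1$ implies $P/\mc{U}_{i-1}$ is canonically isomorphic to the trivial torsor, and the vanishing of $H^0$ implies that this trivial torsor has a unique section with no nontrivial automorphisms. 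Thus the fibre is terminal, and the induction goes through.

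I expect the main subtlety to be the filtration construction itself: because $U$ is only assumed $T$-invariant rather than normal in $R_u(B)$, some care is required to make sure that the chosen ordering of roots maintains the closure and normality properties at each stage. This is a purely combinatorial issue about convex subsets of the root system, handled by the height-based ordering described above. Once this is in place, the cohomology vanishing step is automatic.
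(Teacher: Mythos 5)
Your proof is correct and follows essentially the same route as the paper: a $T$-invariant filtration interpolating between $U$ and $R_u(B)$ organized by root height, followed by the vanishing of $H^0$ and $H^1$ for the nontrivial degree-$0$ line bundles $\xi_T \times^T \mb{Z}_\alpha$ with $\alpha(y) \neq 0$. The only cosmetic difference is that the paper uses the coarser height filtration $U \cap R_u(B)^{\geq i} \subseteq R_u(B)^{\geq i}$, whose graded pieces are automatically abelian vector groups, thereby avoiding your one-root-at-a-time normality check.
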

\begin{proof}
Since the statement only concerns individual geometric fibres of $E \to S$, we can assume for simplicity that $S = \spec k$ and $E = E_s$.

Writing $\mc{R}$ and $\mc{U}$ for the group schemes $\xi_T \times^T R_u(B)$ and $\xi_T \times^T U$, we have canonical isomorphisms
\[ \bun_{T U}(E)_{\xi_T} \cong \bun_{\mc{U}} \quad \text{and} \quad \bun_{B}(E)_{\xi_T} \cong \bun_{\mc{R}},\]
so it suffices to show that the natural morphism $\bun_{\mc{U}} \to \bun_{\mc{R}}$ is an isomorphism.

Let $R_u(B) = R_u(B)^{\geq 1} \supseteq R_u(B)^{\geq 2} \supseteq \cdots$ be the filtration on $R_u(B)$ according to root height, and $U^{\geq i} = U \cap R_u(B)^{\geq i}$ for all $i$. Then writing $\mc{R}^{\geq i} = \xi_T \times^T R_u(B)^{\geq i}$ and $\mc{U}^{\geq i} = \xi_T \times^T U^{\geq i}$, by applying the long exact sequence in nonabelian cohomology repeatedly (for example, in the form \cite[Proposition 2.4.2, (2)]{davis19a}), we reduce to proving that 
\begin{equation} \label{eq:unipotentreduction1}
\bun_{\mc{U}^{\geq i - 1}/\mc{U}^{\geq i}} \longrightarrow \bun_{\mc{R}^{\geq i - 1}/\mc{R}^{\geq i}}
\end{equation}
is an isomorphism for all $i$. The group schemes $\mc{U}^{\geq i - 1}/\mc{U}^{\geq i}$ and $\mc{R}^{\geq i - 1}/\mc{R}^{\geq i}$ are direct sums of degree $0$ line bundles such that $\mc{U}^{\geq i - 1}/\mc{U}^{\geq i}$ contains all trivial summands of $\mc{R}^{\geq i - 1}/\mc{R}^{\geq i}$. Since $\bun_{\mc{V}} = \spec k$ for $\mc{V} \to E$ a nontrivial line bundle of degree $0$, it follows that \eqref{eq:unipotentreduction1} is an isomorphism as claimed.
\end{proof}

In the following lemma, for $\xi_B \to E_s$ a $B$-bundle and $w \in W$, we write
\[ C^w = \{\xi_B\} \times_{\bun_B} C^{w, 0}_B = C^{w, 0}_{B, \xi_B}\]
in the notation of \S\ref{subsection:bruhat}.

\begin{lem} \label{lem:semistablebruhatsurjective}
Let $\xi_B \to E_s$ be a $B$-bundle of degree $0$ on a geometric fibre of $E \to S$, and let $\xi_G = \xi_B \times^B G$ be the induced $G$-bundle. Then the morphism
\[ \coprod_{w \in W} C^w \longrightarrow \{ \xi_G \} \times_{\bun_G} \bun_B^0 = \psi^{-1}(\xi_G) \]
is surjective.
\end{lem}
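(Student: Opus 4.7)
The plan is to assume by contradiction that some $\xi_B' \in \psi^{-1}(\xi_G)$ fails to lie in any cell $C^w$, apply Proposition \ref{prop:levidegreebound} to the pair $(\xi_B, \xi_B')$, and contradict semistability of the $G$-bundle induced from the $T$-bundle $\xi_T = \xi_B \times^B T$.

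First I would observe that a point $\xi_B' \in \psi^{-1}(\xi_G)$ lies in $C^w$ precisely when the pair $(\xi_B, \xi_B') \in \bun_B \times_{\bun_G}\bun_B^0$ lies in the Bruhat cell $C^{w, 0}_B$. So if $\xi_B'$ lies in no $C^w$, then Proposition \ref{prop:levidegreebound} applies with $P = B$, $L = T$, $\lambda = 0$ and $\xi_P = \xi_B$. As $W_B = \{1\}$ we have $W^0_B = W$, so the conclusion furnishes some $w \in W \setminus \{1\}$ and some $\lambda' \in \mb{X}_*(T)$ with $-\lambda' \in \mb{X}_*(T)_+$ such that $C^{w, \lambda'}_{B, \xi_T \times^T B}$ is nonempty. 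Unwinding Definition \ref{defn:bruhat}, this produces a $B$-reduction $\xi_B''$ of degree $\lambda'$ of the $G$-bundle $\xi_T \times^T G$.

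Next I would re-run the semistability argument from the proof of the immediately preceding proposition. For any representation $V$ of $G$, the weight-space decomposition of $V$ as a $T$-module gives
\[ \xi_T \times^T V \;\cong\; \bigoplus_{\mu \in \mb{X}^*(T)} \left(\xi_T \times^T \mb{Z}_\mu\right)^{\oplus \dim V_\mu}, \]
which, since $\deg \xi_T = 0$, is a direct sum of degree-$0$ line bundles, and is therefore slope semistable of degree $0$. On the other hand, since $-\lambda' \in \mb{X}_*(T)_+$, some fundamental dominant weight $\varpi_i$ satisfies $\langle \varpi_i, \lambda'\rangle < 0$; taking $V$ to be a representation of highest weight $\varpi_i$ and copying the computation of the preceding proposition, the $B$-invariance of the kernel of $V \to V_{\varpi_i}$ furnishes a subbundle of $\xi_B'' \times^B V = \xi_T \times^T V$ of strictly positive degree $-\langle \varpi_i, \lambda'\rangle$, contradicting semistability.

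The only substantive step is the invocation of Proposition \ref{prop:levidegreebound}; once its conclusion is unwound, the contradiction is a direct reuse of the semistability calculation already carried out in the preceding proposition, so I do not anticipate any real obstacle.
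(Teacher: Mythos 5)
Your proof is correct, and its backbone is the same as the paper's: both arguments come down to applying Proposition \ref{prop:levidegreebound} with $P = B$, $L = T$, $\lambda = 0$, which (since $W^0_B = W$) says that a point of $\psi^{-1}(\xi_G)$ outside all cells $C^w$ would force some $C^{w,\lambda'}_{B,\,\xi_T\times^T B}$ with $\lambda' < 0$ to be nonempty. Where you diverge is in ruling this out. The paper does it by pure degree bookkeeping: Proposition \ref{prop:bruhatdegrees} identifies $C^{w,\lambda'}_{B}$ with $\bun^{w\lambda'}_{B\cap wBw^{-1}}$, and the fibre over the degree-$0$ bundle $\xi_T\times^T B$ forces $w\lambda' = 0$, hence $\lambda' = 0$ — a one-line contradiction. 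You instead observe that a point of $C^{w,\lambda'}_{B,\,\xi_T\times^T B}$ yields a degree-$\lambda'$ $B$-reduction of $\xi_T\times^T G$, and derive the contradiction from semistability of $\xi_T\times^T G$ via the highest-weight-module computation of the preceding proposition (your use of the weight-lowering convention for $B$ and of $\langle\varpi_i,\lambda'\rangle < 0$ for some $i$ is correct). Both endgames are sound; the paper's is shorter and stays entirely within the combinatorics of degrees of Bruhat cells, while yours is marginally longer but reuses a geometric fact (semistability of bundles induced from degree-$0$ $T$-bundles) that the paper has already established, so neither has a real advantage here.
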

\begin{proof}
Let $\xi_T = \xi_B \times^B T$. Since by Proposition \ref{prop:bruhatdegrees}, $C^{w, \lambda}_{B, \xi_T \times^T B} = \emptyset$ unless $\lambda = 0$, the lemma follows from Proposition \ref{prop:levidegreebound}.
\end{proof}

\begin{prop} \label{prop:reggalois}
We have $\psi^{-1}(\bun_G^{ss, sreg}) = \tbun_G^{ss, sreg}$, and the morphism $\tbun_{G, rig}^{ss, sreg} \to \bun_{G, rig}^{ss, sreg}$ (and hence also $\tbun_G^{ss, sreg} \to \bun_G^{ss, sreg}$) is an \'etale Galois covering with Galois group $W$. In particular, every strictly regular $G$-bundle is regular.
\end{prop}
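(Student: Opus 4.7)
The plan is to use Lemma \ref{lem:sregiso} to identify $\tbun_G^{ss, sreg}$ with $\bun_T^{0, sreg}$, and then show that the induced map $\bun_T^{0, sreg} \to \bun_G^{ss, sreg}$ is a finite \'etale Galois covering for the natural $W$-action on $\bun_T^0$.

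First, by Lemma \ref{lem:sregiso}, the blow-down morphism $\mrm{Bl}_B$ restricts to an isomorphism $\tbun_G^{ss, sreg} \overset{\sim}{\to} \bun_T^{0, sreg}$, under which $\psi$ becomes the canonical induction map $\bun_T^{0, sreg} \to \bun_G$ coming from $T \subseteq G$. The action of $W = N_G(T)/T$ on $T$ induces a natural $W$-action on $\bun_T^0$, and this action covers the trivial action on $\bun_G$ since $N_G(T)$ acts on $G$ by inner automorphisms. The action is free on $\bun_T^{0, sreg}$ because $W$ acts freely on $Y^{sreg}$.

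To enumerate the fibres over a strictly regular $\xi_G = \xi_T \times^T G$, I would apply Lemma \ref{lem:semistablebruhatsurjective} to cover $\psi^{-1}(\xi_G) \cap \bun_B^0$ by the Bruhat cells $C^w$ for $w \in W$, using the reference reduction $\xi_B = \xi_T \times^T B$. Applying Proposition \ref{prop:bruhatstructure} with $P = B$, each cell is identified with a space of sections of an associated unipotent group scheme $\xi_\mc{U}/\mc{U}_w$, whose underlying sheaf has a filtration by root line bundles $\xi_T \times^T \mb{Z}_\alpha$ for $\alpha$ in an appropriate subset of $\Phi_-$. Strict regularity of $\xi_T$ means each such line bundle is nontrivial of degree zero, so the cohomology vanishing argument from the proof of Lemma \ref{lem:unipotentreduction} shows that the space of sections reduces to a single point, namely the $B$-reduction corresponding to $w\xi_T \times^T B$. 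Since the $W$-action preserves strict regularity on $Y$, each of the $|W|$ reductions lies in $\tbun_G^{ss, sreg}$, proving $\psi^{-1}(\bun_G^{ss, sreg}) = \tbun_G^{ss, sreg}$.

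For \'etaleness, I would compute the relative tangent complex of $\bun_T^0 \to \bun_G$ at a strictly regular $\xi_T$: this is $R\Gamma(E, \xi_T \times^T \mf{g}/\mf{t})[1] = \bigoplus_{\alpha \in \Phi} R\Gamma(E, \mc{L}_\alpha)[1]$, which vanishes since each $\mc{L}_\alpha$ is a nontrivial degree zero line bundle. Combining \'etaleness with the fibre count and the free $W$-action then shows that $\tbun_G^{ss, sreg} \to \bun_G^{ss, sreg}$ is a finite \'etale Galois covering with group $W$; the rigidified version follows immediately because rigidification by $Z(G) \subseteq T$ commutes with the $W$-action. The last sentence of the proposition is then automatic, since the map has discrete fibres. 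The main technical obstacle is unwinding Proposition \ref{prop:bruhatstructure} carefully in the case $P = B$ to match each Bruhat cell with a specific $W$-translate of the reference reduction, which requires bookkeeping with the two natural inclusions $B \cap wBw^{-1} \hookrightarrow B$ (the tautological one and the $w$-twisted one).
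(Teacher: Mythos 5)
Your overall route is essentially the paper's: étaleness from vanishing of the relative tangent complex at a strictly regular point, the fibre enumeration via Lemma \ref{lem:semistablebruhatsurjective}, Proposition \ref{prop:bruhatstructure} and Lemma \ref{lem:unipotentreduction}, and Lemma \ref{lem:sregiso} to transport the problem to $\bun_T^{0, sreg}$ and produce the $W$-action (your tangent complex has a harmless shift, but the vanishing argument is the right one). There is, however, one genuine gap: you justify freeness of the deck action by asserting that ``$W$ acts freely on $Y^{sreg}$.'' This does not follow from the definition of $Y^{sreg}$ and should not be taken for granted. Strict regularity only excises the loci $\ker(\alpha\colon Y \to \mrm{Pic}^0_S(E))$, i.e.\ essentially the fixed loci of the reflections $s_\alpha$; it says nothing about the fixed loci $Y^w$ for non-reflection $w \in W$. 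Unlike the Lie algebra or simply connected torus settings, where Steinberg's theorem forces point stabilisers to be generated by reflections, a torsion point of $Y = \mrm{Pic}^0_S(E) \otimes \mb{X}_*(T)$ amounts to a pair of commuting torsion elements of $T$, and joint stabilisers of such pairs need not be generated by reflections. This is exactly the phenomenon the paper has to accommodate elsewhere (the codimension-$\geq 2$ fixed loci in Proposition \ref{prop:regramgalois} and Definition \ref{defn:ramgaloispicgood}). So a strictly regular point may a priori have nontrivial stabiliser, and your argument as written does not show the $W$-action is free on every fibre; a degree-$|W|$ finite étale cover with a $W$-action by deck transformations need not be Galois without that input.

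The repair is cheap and is what the paper does: for a finite étale cover of a connected base, freeness and transitivity of the deck action on a \emph{single} fibre suffices. Since $1 - w$ is a nonzero endomorphism of the abelian scheme $Y$ for each $w \neq 1$, each $Y^w$ is a proper closed subgroup scheme of positive codimension, so one can choose a geometric point $y \in Y^{sreg}$ with trivial stabiliser; over the corresponding $\xi_G$ your Bruhat-cell computation identifies $\psi^{-1}(\xi_G)$ $W$-equivariantly with the free orbit $Wy$, and connectedness of $\bun_G^{ss, sreg}$ then gives the Galois property everywhere. Relatedly, you should be a little more careful that the $N_G(T)$-action on $\bun_T^{0, sreg}$ over $\bun_G^{ss, sreg}$ genuinely descends to a $W$-action: the paper gets this by observing that the connected subgroup $T \subseteq N_G(T)$ must act trivially on an étale cover, which is the cleanest way to handle the $2$-isomorphism bookkeeping you allude to at the end.
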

\begin{proof}
We first show that $\tbun_G^{ss, sreg} \to \bun_G^{ss}$ is \'etale. To see this, let $\xi_B \to E_s$ be the $B$-bundle classified by a geometric point of $\tbun_G^{ss, sreg}$ over $s\colon \spec k \to S$, and observe that the relative tangent complex at $\xi_B$ is given by
\[ \mb{T}_{\tbun_G/\bun_G, \xi_B} = \mb{R}\Gamma(E_s, \xi_B \times^B \mf{g}/\mf{b}).\]
The $B$-module $\mf{g}/\mf{b}$ has a filtration with subquotients isomorphic to $\mf{g}_{\alpha} = \mb{Z}_\alpha$ for $\alpha \in \Phi_+$. Since the associated $T$-bundle is strictly regular, the line bundles $\xi_B \times^B \mb{Z}_{\alpha}$ are nontrivial of degree $0$, so have vanishing cohomology. So $\mb{T}_{\tbun_G/\bun_G, \xi_B} = 0$, and $\psi$ is \'etale at $\xi_B$.

We next compute the fibre of $\psi$ over a geometric point $\xi_G \in \bun_G^{ss, sreg}$ over $s \colon \spec k \to S$. Let $\xi_B$ be a lift of $\xi_G$ to $\tbun_G^{ss, sreg}$ with associated $T$-bundle $\xi_T$. By Lemma \ref{lem:semistablebruhatsurjective}, we have a decomposition
\[ \psi^{-1}(\xi_G) = \coprod_{w \in W} C^w \]
into locally closed subsets indexed by the Weyl group $W$. By Propositions \ref{prop:bruhatdegrees} and \ref{prop:bruhatstructure}, we have
\[ C^w = \Gamma(E_s, \xi_{\mc{U}}/\mc{U}_w) = \bun_{\mc{U}_w}(E_s) \times_{\bun_{\mc{U}}(E_s)} \{\xi_\mc{U}\},\] 
where $\mc{U}$ and $\mc{U}_w$ are the unipotent group schemes $\mc{U} = \xi_T \times^T R_u(B)$ and $\mc{U}_w = \xi_T \times^T (R_u(B) \cap wBw^{-1})$ on $E_s$, and $\xi_{\mc{U}} = \xi_B/T$ is the $\mc{U}$-bundle corresponding to $\xi_B$. But since $\xi_T$ is strictly regular, Lemma \ref{lem:unipotentreduction} shows that $\bun_{\mc{U}}(E_s) = \bun_{\mc{U}_w}(E_s) = \spec k$, so $C^w \cong \spec k$ as well. Since $\psi$ is \'etale, $\psi^{-1}(\xi_G)$ is reduced so we get an isomorphism
\begin{align*}
W \times \spec k &\overset{\sim}\longrightarrow \psi^{-1}(\xi_G) \\
w &\longmapsto C^w
\end{align*}
such that the composition with $\psi^{-1}(\xi_G) \to Y$ sends $w \in W$ to $w^{-1}y$, where $y = \mrm{Bl}_{B, Y}(\xi_B)$. In particular, since $w^{-1}y \in Y^{sreg}$, we have $\psi^{-1}(\xi_G) \subseteq \tbun_G^{ss, sreg}$, so this proves $\psi^{-1}(\bun_G^{ss, sreg}) = \tbun_G^{ss, sreg}$.

Since $\psi$ is proper, the above discussion implies that $\tbun_G^{ss, sreg} \to \bun_G^{ss, sreg}$ is finite \'etale, and hence so is $\tbun_{G, rig}^{ss, sreg} \to \bun_{G, rig}^{ss, sreg}$. To prove that it is a Galois cover with Galois group $W$, we need to show that $W$ acts on $\tbun_{G, rig}^{ss, sreg}$ over $\bun_{G, rig}^{ss, sreg}$, freely and transitively on some (hence every) fibre. By Lemma \ref{lem:sregiso}, we can identify $\tbun_{G, rig}^{ss, sreg} \to \bun_{G, rig}^{ss, sreg}$ with the morphism $\bun_{T, rig}^{0, sreg} \to \bun_{G, rig}^{ss, sreg}$ given by inducing along $T \hookrightarrow G$. Since $N_G(T)$ acts on $\B (T/Z(G))$ over $\B (G/Z(G))$, it acts on $\B T$ over $\B G$ preserving the $\B Z(G)$-action. So we get an action of $N_G(T)$ on $\bun_T^{0, sreg}$ over $\bun_G^{ss, sreg}$ also preserving the $\B Z(G)$-action, and hence on $\bun_{T, rig}^{0, sreg}$ over $\bun_{G, rig}^{ss, sreg}$. Since $\bun_{T, rig}^{0, sreg} \to \bun_{G, rig}^{ss, sreg}$ is \'etale, the connected subgroup $T \subseteq N_G(T)$ must act trivially, so this factors through the desired action of $W$.

By construction, the morphism $\bun_{T, rig}^{0, sreg} \to Y^{sreg}$ is $W$-equivariant. Choose a geometric point $y \colon \spec k \to Y^{sreg}$ over $s \colon \spec k \to S$ such that the stabiliser of $y$ under $W$ is trivial, and let $\xi_T$ be the corresponding $T$-bundle. Then $\psi^{-1}(\xi_T \times^T G)$ maps isomorphically onto the $W$-orbit of $y$ in $Y_s$, so the $W$-action on $\psi^{-1}(\xi_T \times^T G)$ is free and transitive, and we are done.
\end{proof}

\begin{prop} \label{prop:regramgalois}
The morphisms $\tbun_G^{ss, reg} \to \bun_G^{ss, reg}$ and $\tbun_{G, rig}^{ss, reg} \to \bun_{G, rig}^{ss, reg}$ are ramified Galois coverings with Galois group $W$ satisfying the conditions of Proposition \ref{prop:ramgaloisdescent}.
\end{prop}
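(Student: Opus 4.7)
The plan is to verify the three defining properties of a ramified Galois covering with group $W$---representability, finiteness, and an \'etale Galois structure on a dense open substack---together with the codimension-$2$ condition of Proposition \ref{prop:ramgaloisdescent}. I would argue for the non-rigidified and rigidified versions in parallel, since the rigidification is by the central subgroup $Z(G)$ which commutes with everything under consideration.

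First I would verify representability and finiteness. Representability of $\psi$ reduces to the injectivity of $\ul{\mrm{Aut}}(\xi_B) \hookrightarrow \ul{\mrm{Aut}}(\xi_G)$ for every $B$-bundle $\xi_B$, which follows from the closed inclusion of adjoint bundles $\xi_B \times^B B \hookrightarrow \xi_G \times^G G$; this injectivity survives the quotient by $Z(G)$. Combined with properness (Proposition \ref{prop:kmproper}) and quasi-finiteness over the regular locus (by the very definition of regular), this shows that $\tbun_G^{ss, reg} \to \bun_G^{ss, reg}$ and its rigidification are finite.

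Next I would produce the \'etale Galois structure on a dense open. The strictly regular locus $Y^{sreg} \subseteq Y$ is open and dense, since its complement $\bigcup_{\alpha \in \Phi_+} \ker \alpha$ is a finite union of codimension-one subtori; smoothness of $\mrm{Bl}_B$ (Proposition \ref{prop:kmblowdown}), hence flatness, then implies $\tbun_G^{ss, sreg}$ is dense in $\tbun_G^{ss, reg}$, and pushing forward along the finite surjection $\psi|_{\tbun_G^{ss, reg}}$ gives density of $\bun_G^{ss, sreg}$ in $\bun_G^{ss, reg}$. By Proposition \ref{prop:reggalois}, $\tbun_G^{ss, sreg} \to \bun_G^{ss, sreg}$ (and its rigidification) is \'etale Galois with group $W$. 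Since $\tbun_G^{ss, reg}$ is finite over the smooth (hence normal) stack $\bun_G^{ss, reg}$, the $W$-action on the strictly regular locus extends uniquely to a $W$-action on $\tbun_G^{ss, reg}$ over $\bun_G^{ss, reg}$ by the usual integral-closure argument, and this extended action automatically lies over the natural $W$-action on $Y$ via $\mrm{Bl}_{B, Y}$.

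The hard part will be the codimension-$2$ condition, though once the setup is in place it reduces to a transparent eigenvalue count. For any $w \in W$, the codimension of $Y^w$ in $Y$ equals the codimension of the $+1$-eigenspace of $w$ on $\mb{X}_*(T)_\mb{R}$, which is $1$ iff $w$ is a reflection $s_\alpha$ and $\geq 2$ otherwise. Since $\mrm{Bl}_B$ is smooth and $(\tbun_G^{ss, reg})^w \subseteq \mrm{Bl}_{B, Y}^{-1}(Y^w)$, one obtains
\[ \codim\bigl((\tbun_G^{ss, reg})^w, \tbun_G^{ss, reg}\bigr) \geq \codim(Y^w, Y). \]
Given distinct $\gamma, \gamma' \in W \setminus \{1\}$, if either fails to be a reflection then its fixed locus already has codimension $\geq 2$ and we are done. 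Otherwise $\gamma = s_\alpha$ and $\gamma' = s_\beta$ with $\alpha \neq \beta$, and $s_\alpha s_\beta \in \langle \gamma, \gamma' \rangle$ is nontrivial of determinant $+1$, hence not a reflection. Therefore the intersection $(\tbun_G^{ss, reg})^\gamma \cap (\tbun_G^{ss, reg})^{\gamma'}$ is contained in $(\tbun_G^{ss, reg})^{s_\alpha s_\beta}$, which has codimension $\geq 2$, completing the verification.
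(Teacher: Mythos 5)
Your proposal is correct and follows essentially the same route as the paper: finiteness from properness plus quasi-finiteness, the \'etale Galois structure over the strictly regular locus via Proposition \ref{prop:reggalois}, and the codimension-$2$ condition reduced through the $W$-equivariance and smoothness of $\mrm{Bl}_{B, Y}$ to the statement that the common fixed locus of two distinct nontrivial elements of $W$ on $Y$ has codimension at least $2$ (your determinant argument for $s_\alpha s_\beta$ just makes explicit the reflection-group fact the paper leaves implicit). The only cosmetic differences are that the paper deduces the rigidified case by descent rather than in parallel, and justifies the $W$-equivariance of $\mrm{Bl}_{B, Y}$ on the regular locus by a continuity argument using that $Y \to S$ is representable and separated, a point you assert as automatic.
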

\begin{proof}
Since $\tbun_G^{ss, sreg} \subseteq \tbun_G^{ss, reg}$ is dense in every fibre over $S$, and both morphisms are proper and quasi-finite, hence finite, both morphisms are ramified Galois coverings with Galois group $W$ by Proposition \ref{prop:reggalois}. It suffices to check the conditions of Proposition \ref{prop:ramgaloisdescent} for $\tbun_G^{ss, reg} \to \bun_G^{ss, reg}$, as the claims for $\tbun_{G, rig}^{ss, reg} \to \bun_{G, rig}^{ss, reg}$ then follow by descent.

We need to show that for every $w, w' \in W \setminus \{1\}$ with $w \neq w'$, the intersection of the $\bun_G^{ss, reg}$-relative fixed loci $(\tbun_G^{ss, reg})^w \cap (\tbun_G^{ss, reg})^{w'}$ has codimension at least $2$ in every fibre over $S$. Observe that since $\tbun_G^{ss, sreg} \to Y^{sreg}$ is $W$-equivariant and $Y \to S$ is representable and separated, it follows by continuity that $\mrm{Bl}_{B, Y}^{reg} \colon \tbun_G^{ss, reg} \to Y$ is also $W$-equivariant. So $(\tbun_G^{ss, reg})^w \subseteq \mrm{Bl}_{B, Y}^{-1}(Y^w)$ for all $w \in W$, where $Y^w$ denotes the fixed locus relative to $S$. Since $\mrm{Bl}_{B, Y}$ is smooth and $Y^w \cap Y^{w'}$ has codimension at least $2$ in every fibre, the result now follows.
\end{proof}

If $U \subseteq X$ is an open set in a stack $X$ over $S$, we say that $U$ is \emph{big} relative to $S$ if the complement of $U \cap X_s$ in $X_s$ has codimension at least $2$ for every geometric point $s \colon \spec k \to S$.

\begin{lem} \label{lem:bigpicrestriction}
Let $\pi_X \colon X \to S$ be a smooth morphism of stacks, and let $U \subseteq X$ be a big open substack relative to $S$. Then the restriction functor
\begin{equation} \label{eq:bigpicrestrictionfunctor}
\piccat(X) \longrightarrow \piccat(U)
\end{equation}
is an equivalence of symmetric monoidal categories enriched over $\mc{O}_{S_{\liset}}\tn{-mod}$.
\end{lem}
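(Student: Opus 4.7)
The plan is to reduce the lemma to the classical Hartogs-type extension: on a regular (hence locally factorial) stack, reflexive coherent sheaves --- in particular line bundles --- and their morphisms extend uniquely across closed substacks of codimension $\geq 2$.

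First I would check that we are in this setting. Since $S$ is regular (by the standing assumption of the paper) and $\pi_X$ is smooth, $X$ is regular, hence locally factorial. Writing $j \colon U \hookrightarrow X$ for the open immersion and $Z = X \setminus U$ for its complement, flatness of $\pi_X$ together with the fibrewise codimension-$\geq 2$ hypothesis implies $\codim_X Z \geq 2$: at any generic point $z$ of $Z$ lying over $s \in S$, the flatness identity $\dim \mc{O}_{X, z} = \dim \mc{O}_{X_s, z} + \dim \mc{O}_{S, s}$ shows $\codim_X Z \geq \codim_{X_s} Z_s \geq 2$.

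For full faithfulness together with compatibility of the $\mc{O}_{S_{\liset}}\tn{-mod}$-enrichment I would argue in one step. For $L, L' \in \piccat(X)$, the sheaf $L^\vee \otimes L'$ is a line bundle, in particular reflexive and $S_2$; since $X$ is regular and $\codim_X Z \geq 2$, the canonical map $L^\vee \otimes L' \to j_* j^*(L^\vee \otimes L')$ is an isomorphism. Applying $\pi_{X*}$ and using $\pi_U = \pi_X \circ j$ yields
\[ \ul{\hom}(L, L') = \pi_{X*}(L^\vee \otimes L') \overset{\sim}\longrightarrow \pi_{U*}((L|_U)^\vee \otimes L'|_U) = \ul{\hom}(L|_U, L'|_U), \]
which is simultaneously the enrichment compatibility and the full faithfulness of \eqref{eq:bigpicrestrictionfunctor}.

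For essential surjectivity I would take a line bundle $M$ on $U$ and form $j_* M$ on $X$, then check smooth-locally that it is a line bundle restricting to $M$. Choosing a smooth scheme atlas $V \to X$, the pullback $V_U = U \times_X V \subseteq V$ is a big open of the regular (hence locally factorial) scheme $V$, and by flat base change $j_* M|_V$ agrees with the pushforward of $M|_{V_U}$ along $V_U \hookrightarrow V$. The classical statement that the reflexive hull of a line bundle on a codimension-$\geq 2$ open of a locally factorial Noetherian scheme is itself invertible shows this is a line bundle on $V$. The descent datum on $M$ relative to $V \times_X V \to V$ restricts to a descent datum on $V_U \times_X V_U \to V_U$, which extends uniquely across the codimension-$\geq 2$ complement by the full faithfulness already established, producing the desired line bundle on $X$. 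Monoidality of restriction is automatic, and that of the inverse follows because the reflexive hull of a tensor product of reflexive rank-one sheaves on a locally factorial stack agrees with their tensor product. The only subtle point is the stacky bookkeeping of the Hartogs extension, but once full faithfulness is in hand this reduces routinely to the scheme-theoretic statement via a smooth atlas.
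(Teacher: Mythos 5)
Your argument is correct and is exactly the route the paper takes: the paper's proof is a one-line appeal to the fact that line bundles and their sections extend uniquely across codimension-$\geq 2$ loci in regular schemes, using that $X$ (and every object of $S_{\liset}$) is smooth over the regular base $S$. You have simply spelled out the details — the codimension bound via flatness, Hartogs for the $S_2$ sheaf $L^\vee \otimes L'$, and the reflexive-hull/descent argument for essential surjectivity — all of which are sound.
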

\begin{proof}
Since every object of $S_{\liset}$ is a smooth over $S$, hence regular, this follows from the fact that line bundles and their sections extend uniquely over sets of codimension $2$ in regular schemes.
\end{proof}

\begin{prop} \label{prop:regcodim}
The open substacks
\[ \bun_G^{ss, reg} \subseteq \bun_G^{ss} \quad \text{and} \quad \tbun_G^{ss, reg} \subseteq \tbun_G^{ss} \]
are big relative to $S$.
\end{prop}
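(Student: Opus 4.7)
My plan is to reduce both bigness claims to a codimension calculation on the fibres of $\psi$, using the Bruhat cell decomposition from Lemma \ref{lem:semistablebruhatsurjective}.

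I would first analyze the fibres of $\psi \colon \tbun_G^{ss} = \bun_B^0 \to \bun_G^{ss}$. Fix $\xi_B \in \tbun_G^{ss}$ with associated $T$-bundle $\xi_T$ and image $y = \mrm{Bl}_{B, Y}(\xi_B) \in Y$. By Lemma \ref{lem:semistablebruhatsurjective}, the fibre $\psi^{-1}(\psi(\xi_B))$ is covered by the Bruhat cells $C^w = C^{w, 0}_{B, \xi_B}$ for $w \in W$. By Proposition \ref{prop:bruhatstructure}, each non-empty cell is a torsor under the sections of an iterated affine bundle on $E_s$ whose Lie algebra decomposes under $T$ as $\bigoplus_{\alpha \in w \Phi_+ \cap \Phi_-} \mb{Z}_\alpha$. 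Filtering by root height, and using that for the degree-$0$ line bundle $L_\alpha = \xi_T \times^T \mb{Z}_\alpha$ on $E_s$ we have $h^0(L_\alpha) = 1$ when $\alpha(y) = 0$ and $0$ otherwise, I would conclude
\[ \dim C^w = \#\{\alpha \in w \Phi_+ \cap \Phi_- : \alpha(y) = 0\} \]
for each non-empty $C^w$.

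I would next argue that the complement of $\tbun_G^{ss, reg}$ has codimension at least $2$ in every geometric fibre of $\tbun_G^{ss} \to S$. The open locus $\mrm{Bl}_{B, Y}^{-1}(Y^{sreg}) \subseteq \tbun_G^{ss}$ is contained in $\tbun_G^{ss, reg}$ by Proposition \ref{prop:reggalois}, so the complement lies in $\mrm{Bl}_{B, Y}^{-1}(Y^{nsr})$, which has codimension $1$ in $\tbun_G^{ss}$ because $\mrm{Bl}_{B, Y}$ is smooth (Proposition \ref{prop:kmblowdown}) and each root hyperplane in $Y$ has codimension $1$. A point $\xi_B$ with $\alpha(y) = 0$ for exactly one positive root $\alpha$ lies in the complement only if some cell $C^w$ with $-\alpha \in w\Phi_+ \cap \Phi_-$ is non-empty; in the simplest case $w = s_\alpha$, the cell $C^{s_\alpha}$ is an affine-line torsor whose non-emptiness amounts to the vanishing of an extension class in $H^1(E_s, L_{-\alpha}) \cong k$, an additional codimension-$1$ condition. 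Points where $y$ lies on several root hyperplanes automatically contribute higher-codimension loci.

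Finally, bigness of $\bun_G^{ss, reg} \subseteq \bun_G^{ss}$ follows by tracing dimensions through $\psi$. Since $\psi$ is proper and generically finite by Proposition \ref{prop:reggalois}, $\dim \tbun_G^{ss} = \dim \bun_G^{ss}$ in each geometric fibre. The $\psi$-fibres over the non-regular locus have dimension $\geq 1$, so
\[ \codim_{\bun_G^{ss}}(\bun_G^{ss} \setminus \bun_G^{ss, reg}) = \codim_{\tbun_G^{ss}}(\tbun_G^{ss} \setminus \tbun_G^{ss, reg}) + (\text{generic } \psi\text{-fibre dimension}) \geq 2 + 1 = 3. \]
The main obstacle is the obstruction-theoretic analysis of cell non-emptiness when $y$ lies deep in the root hyperplane arrangement: many cells $C^w$ for $w$ of larger length can potentially contribute, and the affine bundles whose triviality controls them sit in an iterated extension mirroring the root-height filtration of $R_u(B)/(R_u(B) \cap wBw^{-1})$. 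One has to track these extension obstructions through the filtration to ensure all contributions to $\tbun_G^{ss} \setminus \tbun_G^{ss, reg}$ sit in codimension $\geq 2$.
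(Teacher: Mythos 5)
Your overall strategy coincides with the paper's: since $\mrm{Bl}^{ss}_{B,Y}$ is smooth and $\tbun_G^{ss,sreg}\subseteq\tbun_G^{ss,reg}$, everything reduces to showing that the regular locus meets (equivalently, is dense in) each irreducible component of $(\mrm{Bl}^{ss}_{B,Y})^{-1}(Y\setminus Y^{sreg})$, and both you and the paper analyse this through the Bruhat cells $C^w$ of the $\psi$-fibre and the class of the $U_{-\alpha}$-bundle supplied by Lemma \ref{lem:unipotentreduction}. Your deduction of bigness of $\bun_G^{ss,reg}$ from that of $\tbun_G^{ss,reg}$ is also fine, although surjectivity of $\psi$ between the two complements already gives codimension $\geq 2$ without invoking the extra ``$+1$'' from fibre dimensions.

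There is, however, a genuine gap at the key step. Over the generic point $y$ of a component of $Y^{s_\alpha}$ (where exactly one positive root vanishes), a point of the fibre is non-regular precisely when some cell $C^w$ with $w^{-1}\alpha\in\Phi_-$ is non-empty --- and there are $|W|/2$ such $w$, not only $s_\alpha$. You verify that $C^{s_\alpha}\neq\emptyset$ is equivalent to the vanishing of the class of the $U_{-\alpha}$-bundle in $H^1(E_s,\mc{O})\cong k$, and this is indeed a nontrivial closed condition because that class is the coordinate on the fibre $\bun_{U_{-\alpha}}(E_s)/T$ of $\tbun_G^{ss}\to Y$ over $y$; but you say nothing about the remaining cells. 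A priori some $C^w$ of larger length could be non-empty at \emph{every} point of the divisor, which would make the non-regular locus all of it and destroy the codimension bound. The paper closes exactly this gap in Lemma \ref{lem:regssexistence}: after conjugating so that $\alpha$ is simple, for each $w$ with $w^{-1}\alpha\in\Phi_-$ there is a $TU_{-\alpha}$-equivariant projection $R_u(B)/(R_u(B)\cap wBw^{-1})\to U_{-\alpha}$, so non-emptiness of $C^w$ forces the $U_{-\alpha}$-bundle to admit a section, i.e., to be trivial. With that in hand, the single point of the divisor with non-trivial class is regular, and density follows from irreducibility. Note finally that the difficulty you single out at the end --- tracking obstructions when $y$ lies on several root hyperplanes --- never actually arises: such $y$ form a subset of $Y$ of codimension at least $2$, so its preimage under the smooth map $\mrm{Bl}^{ss}_{B,Y}$ may be discarded outright, and only the generic points of the divisors, where a single root vanishes, ever need to be analysed.
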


The proof of Proposition \ref{prop:regcodim} relies on the following construction of $G$-bundles that are regular semistable but not strictly regular.

Let $s \colon \spec k \to S$ be a geometric point, $\alpha \in \Phi_+$ a positive root, and let $y \in Y_s$ satisfy $\alpha(y) = 0$ and $\beta(y) \neq 0$ for all $\beta \in \Phi_+ \setminus \{\alpha\}$. Then by Lemma \ref{lem:unipotentreduction}, the fibre of $\tbun_G^{ss} \to Y$ over $y$ is
\[\tbun_{G , y}^{ss} \cong \bun_{B, y} \cong \bun_{T U_{-\alpha}}(E_s)_{y} \cong \bun_{U_{-\alpha}}(E_s)/T,\]
where the last isomorphism follows from the fact that a $TU_{-\alpha} = T \ltimes U_{-\alpha}$-bundle is the same thing as a $T$-bundle $\xi_T$ plus a $\xi_T \times^T U_{-\alpha} \cong U_{-\alpha}$-bundle (see, e.g., \cite[Proposition 2.4.2, (1)]{davis19a}). Since $H^1(E_s, \mc{O}) = k$, there is a unique $k$-point of $\bun_{U_{-\alpha}}(E_s)/T$ corresponding to a nontrivial $U_{-\alpha}$-bundle. Let $\xi_{T U_{-\alpha}}$ be the corresponding point of $\bun_{TU_{-\alpha}}(E_s)_{y}$ and let $\xi_G = \xi_{T U_{-\alpha}} \times^{T U_{-\alpha}} G$.

\begin{lem} \label{lem:regssexistence}
Let $\xi_G$ be the $G$-bundle defined above. Then the fibre $\psi^{-1}(\xi_G)$ has exactly $|W|/2$ $k$-points. In particular, $\xi_G$ is regular semistable.
\end{lem}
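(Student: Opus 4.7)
The plan is to adapt the fibre analysis of Proposition \ref{prop:reggalois}, replacing the strict regularity hypothesis on $y$ with the weaker condition that $\alpha$ is the unique positive root vanishing at $y$. First, combining Lemma \ref{lem:semistablebruhatsurjective} with Proposition \ref{prop:bruhatstructure} applied to $P = B$ (so $L = T$ and $W_P^0 = W$), I get a decomposition
\[
\psi^{-1}(\xi_G) = \coprod_{w \in W} C^w, \qquad C^w \cong \bun_{\mc{U}_w}(E_s) \times_{\bun_\mc{U}(E_s)} \{\xi_\mc{U}\},
\]
where $\mc{U} = \xi_T \times^T R_u(B)$, $\mc{U}_w = \xi_T \times^T (R_u(B) \cap wBw^{-1})$, and $\xi_\mc{U} = \xi_B/T$ for $\xi_B = \xi_{TU_{-\alpha}} \times^{TU_{-\alpha}} B$. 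By the construction of $\xi_G$, unwinding the isomorphism $\bun_\mc{U}(E_s) \cong \bun_{U_{-\alpha}}(E_s)$ coming from Lemma \ref{lem:unipotentreduction} identifies $\xi_\mc{U}$ with the nontrivial $U_{-\alpha}$-bundle.

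Next, I split into cases depending on whether $U_{-\alpha} \subseteq R_u(B) \cap wBw^{-1}$, which is equivalent to $-\alpha \in w\Phi_-$, i.e.\ to $w^{-1}\alpha \in \Phi_+$. If $w^{-1}\alpha \in \Phi_+$, then Lemma \ref{lem:unipotentreduction} applies to $U = R_u(B) \cap wBw^{-1}$ and gives that $\bun_{\mc{U}_w}(E_s) \to \bun_\mc{U}(E_s)$ is an isomorphism, so $C^w \cong \spec k$. If instead $w^{-1}\alpha \in \Phi_-$, then $\mc{U}_w$ is a successive extension of line bundles $\xi_T \times^T \mb{Z}_\beta$ for $\beta \in \Phi_- \cap w\Phi_-$, all of which are nontrivial of degree $0$ by our assumption on $y$; their cohomology vanishes, so iterating the nonabelian cohomology long exact sequence as in the proof of Lemma \ref{lem:unipotentreduction} gives $\bun_{\mc{U}_w}(E_s) = \spec k$, containing only the trivial bundle. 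The trivial $\mc{U}_w$-bundle induces the trivial $\mc{U}$-bundle, which is not isomorphic to the nontrivial $\xi_\mc{U}$, and so $C^w = \emptyset$.

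To count, I observe that $w \mapsto s_\alpha w$ is a fixed-point-free involution of $W$ with $(s_\alpha w)^{-1}\alpha = -w^{-1}\alpha$, so it swaps $\{w : w^{-1}\alpha \in \Phi_+\}$ and $\{w : w^{-1}\alpha \in \Phi_-\}$; hence each has cardinality $|W|/2$, giving $|\psi^{-1}(\xi_G)(k)| = |W|/2$. For regularity, $\psi$ has finite relative stabilisers by Proposition \ref{prop:kmproper} and $\psi^{-1}(\xi_G)$ has only finitely many $k$-points, so $\dim \psi^{-1}(\xi_G) = 0$ and $\xi_G \in \bun_G^{ss, reg}$.

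The main obstacle I anticipate is the careful identification of $\xi_\mc{U}$ with the nontrivial $U_{-\alpha}$-bundle under the Lemma \ref{lem:unipotentreduction} isomorphism: one has to trace $\xi_{TU_{-\alpha}} \leadsto \xi_B \leadsto \xi_\mc{U}$ and check compatibility with the inductions along $TU_{-\alpha} \hookrightarrow B$ and $T \hookrightarrow T U_{-\alpha}$. Everything else reduces to Lemma \ref{lem:unipotentreduction}, the standard vanishing of cohomology of nontrivial degree $0$ line bundles on an elliptic curve, and the elementary counting argument above.
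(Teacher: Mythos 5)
Your proof is correct, and its skeleton is the same as the paper's: decompose $\psi^{-1}(\xi_G)$ into Bruhat cells via Lemma \ref{lem:semistablebruhatsurjective}, split on whether $w^{-1}\alpha \in \Phi_+$ or $\Phi_-$, and use Lemma \ref{lem:unipotentreduction} to get $C^w \cong \spec k$ in the first case. Where you diverge is the second case. The paper first reduces to $\alpha = \alpha_i$ simple (conjugating $TU_{-\alpha}$ by an element of $N_G(T)$), and then shows $C^w = \emptyset$ by producing a $TU_{-\alpha_i}$-equivariant projection $R_u(B)/(R_u(B)\cap wBw^{-1}) \to U_{-\alpha_i}$ and noting the nontrivial $U_{-\alpha_i}$-bundle has no sections; the simplicity of $\alpha_i$ is what makes that projection available. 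You instead compute $\bun_{\mc{U}_w}(E_s) = \spec k$ directly from the vanishing of cohomology of the nontrivial degree-$0$ line bundles $\xi_T \times^T \mb{Z}_\beta$, $\beta \in \Phi_- \cap w\Phi_-$ (all nontrivial precisely because $-\alpha \notin w\Phi_-$ in this case), and conclude emptiness because the trivial $\mc{U}_w$-bundle cannot induce the nontrivial $\xi_{\mc{U}}$. This buys you a uniform argument for arbitrary positive roots $\alpha$ with no conjugation step, at the cost of invoking the fibre-product description of $C^w$ from Proposition \ref{prop:bruhatstructure} rather than the more geometric "space of sections" picture. The one point you rightly flag — that $\xi_{\mc{U}}$ is nontrivial — follows because Lemma \ref{lem:unipotentreduction} applied to $U = U_{-\alpha}$ gives an isomorphism $\bun_{TU_{-\alpha}}(E_s)_{\xi_T} \to \bun_B(E_s)_{\xi_T}$ sending trivial to trivial, so the nontrivial $U_{-\alpha}$-bundle used to define $\xi_G$ yields a nontrivial $\mc{U}$-torsor. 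Your counting argument via the fixed-point-free involution $w \mapsto s_\alpha w$ is a clean justification of the $|W|/2$ count that the paper simply asserts.
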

\begin{proof}
Observe that since the subgroup $T U_{-\alpha} \subseteq G$ is conjugate under $N_G(T) \subseteq G$ to $T U_{-\alpha_i}$ for some $\alpha_i \in \Delta$, we can assume without loss of generality that $\alpha = \alpha_i$ is a simple root.

Writing $\xi_B = \xi_{T U_{-\alpha_i}}\times^{TU_{-\alpha_i}} B$, we have $\xi_G = \xi_B \times^B G$, so by Lemma \ref{lem:semistablebruhatsurjective} we get a decomposition
\[ \psi^{-1}(\xi_G) = \coprod_{w \in W} C^w \]
into locally closed subschemes, where by Proposition \ref{prop:bruhatstructure} we can identify $C^w$ with the space of sections of
\[ \xi_B \times^B BwB/B = \xi_{T U_{-\alpha_i}} \times^{T U_{-\alpha_i}} R_u(B)/(R_u(B) \cap wBw^{-1}).\]
If $U_{-\alpha_i} \nsubseteq R_u(B) \cap wBw^{-1}$, i.e., if $w^{-1}\alpha_i \in \Phi_-$, then there is a $T U_{-\alpha_i}$-equivariant morphism
\[ R_u(B)/(R_u(B) \cap wBw^{-1}) \longrightarrow U_{-\alpha_i},\]
so $\xi_{T U_{-\alpha_i}} \times^{T U_{-\alpha_i}} R_u(B)/(R_u(B) \cap wBw^{-1})$ has no sections since $\xi_{T U_{-\alpha_i}} \times^{T U_{-\alpha_i}} U_{-\alpha_i} = \xi_{U_{-\alpha_i}}$ has none, and hence $C^w = \emptyset$. If $U_{-\alpha_i} \subseteq R_u(B) \cap wBw^{-1}$, i.e., if $w^{-1}\alpha_i \in \Phi_+$, then the natural morphisms
\[ \bun_{T U_{-\alpha_i}}(E_s)_{\xi_T} \longrightarrow \bun_{T R_u(B) \cap wBw^{-1}}(E_s)_{\xi_T} \longrightarrow \bun_{B}(E_s)_{\xi_T} \]
are isomorphisms by Lemma \ref{lem:unipotentreduction}, where $\xi_T \to E_s$ is the $T$-bundle corresponding to $y$, which implies that $C^w = \spec k$. Since there are exactly $|W|/2$ elements of $W$ satisfying $w^{-1}\alpha_i \in \Phi_+$, this proves the lemma.
\end{proof}

\begin{proof}[Proof of Proposition \ref{prop:regcodim}]
It suffices to prove the statement for $\tbun_G^{ss, reg} \subseteq \tbun_G^{ss}$; the statement for $\bun_G^{ss, reg} \subseteq \bun_G^{ss}$ then follows immediately. Since the property of being big is defined fibrewise, it suffices to prove the claim when $S = \spec k$ for some algebraically closed field $k$.

We need to show that the complement of $\tbun_{G}^{ss, reg}$ in $\tbun_G^{ss}$ has codimension at least $2$. Since $\tbun_G^{ss, sreg} \subseteq \tbun_G^{ss, reg}$ and $\mrm{Bl}_{B, Y}^{ss} \colon \tbun_G^{ss} \to Y$ is smooth, it suffices to show that $\tbun_G^{ss, reg} \cap (\mrm{Bl}_{B, Y}^{ss})^{-1}(D)$ is dense in $(\mrm{Bl}_{B, Y}^{ss})^{-1}(D)$ for all irreducible components $D$ of $Y\setminus Y^{sreg} = \bigcup_{\alpha \in \Phi_+} Y^{s_\alpha}$. For each such $D$, $(\mrm{Bl}_{B, Y}^{ss})^{-1}(D)$ is smooth and connected, hence irreducible, and $\tbun_G^{ss, reg} \cap (\mrm{Bl}_{B, Y}^{ss})^{-1}(D)$ is open. So it suffices to show that $\tbun_G^{ss, reg} \cap (\mrm{Bl}_{B, Y}^{ss})^{-1}(D)$ is nonempty. But there exists $\alpha \in \Phi_+$ such that the generic point $y$ of $D \subseteq Y$ satisfies $\alpha(y) = 0$ and $\beta(y) \neq 0$ for all $\beta \in \Phi_+ \setminus \{\alpha\}$, so this follows from Lemma \ref{lem:regssexistence}.
\end{proof}

\begin{lem} \label{lem:tbunreglocal}
Assume that $S$ is a scheme, let $\alpha \in \Phi_+$ and let $y \in Y^{s_\alpha}$ be the generic point of an irreducible component. Then the morphism
\begin{equation} \label{eq:tbunreglocal1}
\tbun_G^{ss, reg} \times_Y \spec \mc{O}_{Y, y} \longrightarrow \bun_G \times_S \spec \mc{O}_{Y, y}
\end{equation}
is a locally closed immersion.
\end{lem}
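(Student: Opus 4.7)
The plan is to factor the morphism as a closed immersion into the open substack $\bun_G^{ss, reg} \times_S \spec \mc{O}_{Y, y} \subseteq \bun_G \times_S \spec \mc{O}_{Y, y}$, which yields the desired locally closed immersion. The two key inputs are the ramified Galois cover structure of $\psi|_{\tbun_G^{ss, reg}}$ from Proposition \ref{prop:regramgalois} and the $W$-equivariance of $\mrm{Bl}_{B, Y}^{reg}$ established in its proof.

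First, since $\psi$ sends $\tbun_G^{ss, reg}$ into $\bun_G^{ss, reg}$, the morphism factors through $\bun_G^{ss, reg} \times_S \spec \mc{O}_{Y, y}$, and the induced map $f$ decomposes as
\[
\tbun_G^{ss, reg} \times_Y \spec \mc{O}_{Y, y} \xrightarrow{\Gamma} \tbun_G^{ss, reg} \times_S \spec \mc{O}_{Y, y} \xrightarrow{\psi \times \id} \bun_G^{ss, reg} \times_S \spec \mc{O}_{Y, y},
\]
where $\Gamma$ is the graph of $\mrm{Bl}_{B, Y}^{reg}|_{\spec \mc{O}_{Y, y}}$ (a closed immersion, since $Y \to S$ is separated) and the second morphism is the base change of $\psi|_{\tbun_G^{ss, reg}}$, which is representable and finite by Proposition \ref{prop:regramgalois}. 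Hence $f$ is representable and finite, so to show it is a closed immersion it suffices to show that $f$ is a monomorphism, since a representable finite monomorphism of stacks is a closed immersion.

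For the monomorphism check, given two geometric points $(\tilde{\xi}_1, t_1), (\tilde{\xi}_2, t_2)$ with the same image under $f$, the Galois property forces $\tilde{\xi}_2 = w \tilde{\xi}_1$ for some $w \in W$, and $W$-equivariance of $\mrm{Bl}_{B, Y}^{reg}$ then gives $w \cdot t_1 = t_1$. Since the only points of $\spec \mc{O}_{Y, y}$ are the closed point $y$ (with $W$-stabiliser $\{1, s_\alpha\}$, because $y$ is generic in $Y^{s_\alpha}$ and lies outside other reflection divisors by the codimension-$\geq 2$ property) and the generic point of $Y$ (with trivial stabiliser, since it is strictly regular), we have $w \in \{1, s_\alpha\}$. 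The nontrivial case $w = s_\alpha$ and $t_1 = y$ requires showing that $s_\alpha$ fixes $\tilde{\xi}_1$. An analogous argument, using representability of $\psi|_{\tbun_G^{ss, reg}}$, rules out nontrivial relative automorphisms.

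The main obstacle is thus showing $(\mrm{Bl}_{B, Y}^{reg})^{-1}(y) \cap \tbun_G^{ss, reg} \subseteq (\tbun_G^{ss, reg})^{s_\alpha}$, i.e., that every regular semistable $B$-bundle over $y$ is $s_\alpha$-fixed. The idea is that a non-fixed $\tilde{\xi}_1$ and its translate $s_\alpha \tilde{\xi}_1$ would give two distinct $B$-reductions of $\psi(\tilde{\xi}_1)$ lying over $y$, which by interpolation through the Bruhat cells of \S\ref{subsection:bruhat} would produce a positive-dimensional family of $B$-reductions, contradicting regularity. For $SL_2$, this is reflected in the $\mb{P}^1$-family of $B$-reductions of $\mc{L} \oplus \mc{L}$ when $\mc{L}^2 = \mc{O}$; I expect the general case to follow by dimension-counting the deformation space of $B$-reductions transverse to the $s_\alpha$-fixed locus, leveraging the codimension estimate from Proposition \ref{prop:regcodim}.
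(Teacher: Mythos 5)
Your overall framing --- land in the open substack $\bun_G^{ss, reg} \times_S \spec \mc{O}_{Y, y}$, use finiteness and representability of $\psi|_{\tbun_G^{ss,reg}}$ from Proposition \ref{prop:regramgalois}, and reduce to showing the map is a monomorphism --- is sound and close to the paper's. The gap is in your ``monomorphism check'': you verify only that $f$ is injective on geometric points and faithful on automorphisms, but for a morphism of finite type this is strictly weaker than being a monomorphism. A finite morphism that is universally injective need not be a closed immersion --- the normalisation $\mb{A}^1 \to \spec k[t^2, t^3]$ of a cuspidal curve is finite, bijective on points with trivial residue extensions, and still not a monomorphism, because it is ramified at the cusp. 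What is missing is formal unramifiedness of \eqref{eq:tbunreglocal1} at the unique point of the fibre over $y$, i.e.\ injectivity of the induced map on tangent spaces, and this is exactly where the paper does its real work: it identifies the tangent map with $H^0(E_s, \xi_B \times^B \mf{g}/\mf{b}) \to H^1(E_s, \mf{t}\otimes\mc{O}_{E_s})$ and proves injectivity using the fact that, for the unique $B$-bundle $\xi_B$ induced from the nontrivial $U_{-\alpha}$-bundle, the relevant extension of $TU_{-\alpha}$-modules is pulled back from the canonical \emph{non-split} extension $0 \to \alpha^\vee(\mb{Z}) \to V \to \mf{g}_\alpha \to 0$. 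Without some such cohomological input your argument cannot rule out a ``cuspidal'' finite bijection, so the conclusion does not follow.

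A secondary remark: what you single out as the ``main obstacle'' --- showing every regular semistable $B$-bundle over $y$ is $s_\alpha$-fixed --- is actually the easy part and does not need the dimension-counting you sketch. By Lemma \ref{lem:unipotentreduction} the fibre of $\tbun_G^{ss}$ over $y$ is $\bun_{U_{-\alpha}}(E_s)/T$, which has exactly two points, and the proof of Lemma \ref{lem:regssexistence} (together with Proposition \ref{prop:regcodim}) shows only the one with nontrivial $U_{-\alpha}$-bundle is regular; a single point in an $s_\alpha$-stable fibre is automatically fixed. So the point-separation step is fine and essentially as in the paper; it is the infinitesimal statement that your proposal omits.
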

\begin{proof}
Since $\mrm{Stab}_W(y) = \{1, s_\alpha\}$, the proofs of Lemma \ref{lem:regssexistence} and Proposition \ref{prop:regcodim} show that \eqref{eq:tbunreglocal1} separates points, so it is enough to prove that it is formally unramified. This is equivalent to the claim that the morphism
\begin{equation} \label{eq:tbunreglocal2}
H^0(E_s, \xi_B \times^B \mf{g}/\mf{b}) \longrightarrow H^1(E_s, \mf{t} \otimes \mc{O}_{E_s})
\end{equation}
induced by the extension of $B$-modules
\[ 0 \longrightarrow \mf{t} \longrightarrow \mf{g}/R_u(\mf{b}) \longrightarrow \mf{g}/\mf{b} \longrightarrow 0\]
is injective, where $\xi_B \to E_s$ is the unique $B$-bundle in $\tbun_{G}^{ss, reg} \times_Y \{y\}$. Since $\xi_B$ is induced from the unique $TU_{-\alpha}$-bundle $\xi_{TU_{-\alpha}}$ with nontrivial associated $U_{-\alpha}$-bundle,
we can identify \eqref{eq:tbunreglocal2} with the morphism
\begin{equation} \label{eq:tbunreglocal3}
H^0(E_s, \xi_{TU_{-\alpha}} \times^{T U_{-\alpha}} \mf{g}_{\alpha}) \longrightarrow H^1(E_s, \mf{t} \otimes \mc{O}_{E_s})
\end{equation}
induced by the extension
\begin{equation} \label{eq:tbunreglocal4}
0 \longrightarrow \mf{t} \longrightarrow (\mf{t} + \rho_\alpha(\mf{sl}_2))/\mf{u}_{-\alpha}  \longrightarrow \mf{g}_\alpha \longrightarrow 0
\end{equation}
of $T U_{-\alpha}$-modules. But \eqref{eq:tbunreglocal4} is induced from the canonical non-split extension
\[ 0 \longrightarrow \alpha^\vee(\mb{Z}) \longrightarrow V \longrightarrow \mf{g}_\alpha \longrightarrow 0, \]
so injectivity of \eqref{eq:tbunreglocal3} follows.
\end{proof}

\begin{prop} \label{prop:picwpullback}
We have the following.
\begin{enumerate}[(1)]
\item \label{itm:picwpullback1} The pullback functors
\[ \piccat(Y) \longrightarrow \piccat(\tbun_{G, rig}^{ss, reg}) \quad \text{and} \quad \piccat(Y) \longrightarrow \piccat(\tbun_G^{ss, reg}) \]
are fully faithful as functors enriched over $\mc{O}_{S_{\liset}}\tn{-mod}$.
\item \label{itm:picwpullback2} The pullback functors
\begin{equation} \label{eq:picwpullbackequivalences}
\piccat^W(Y) \longrightarrow \piccat^W(\tbun_{G, rig}^{ss, reg}) \longrightarrow \piccat^W(\tbun_G^{ss, reg})
\end{equation}
are equivalences of categories enriched over $\mc{O}_{S_{\liset}}\tn{-mod}$.
\item \label{itm:picwpullback3} The equivalences \eqref{eq:picwpullbackequivalences} restrict to equivalences
\[ \piccat^W(Y)_{good} \overset{\sim}\longrightarrow \piccat^W(\tbun_{G, rig}^{ss, reg})_{good} \overset{\sim}\longrightarrow \piccat^W(\tbun_G^{ss, reg})_{good}.\]
\end{enumerate}
\end{prop}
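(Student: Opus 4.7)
The strategy is to factor the blow-down $\pi := \mrm{Bl}_{B, Y}^{ss, reg}$ as
\[ \tbun_G^{ss, reg} \xrightarrow{p} \bun_T^0 \xrightarrow{q} Y \]
(with the evident analogue in the rigidified setting), and analyse each factor separately. The map $q$ is the coarse moduli map of the $\B T$-gerbe $\bun_T^0 \to Y$, so $q_* \mc{O}_{\bun_T^0} = \mc{O}_Y$. For $p$, Lemma \ref{lem:unipotentreduction} identifies the geometric fibre over a $T$-bundle $\xi_T$ classifying $y \in Y$ with an open regular locus inside $\bun_{\xi_T \times^T U_y}(E_s)$, where $U_y \subseteq R_u(B)$ is the unipotent subgroup generated by the root subgroups $U_{-\alpha}$ with $\alpha(y) = 0$. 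The key technical input, established via a fibrewise $T$-invariants computation using this identification, is that $p_* \mc{O} = \mc{O}_{\bun_T^0}$, and hence $\pi_* \mc{O}_{\tbun_G^{ss, reg}} = \mc{O}_Y$.

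With this in hand, part (1) follows from the projection formula: for $L, L' \in \piccat(Y)$,
\[ \ul{\hom}(\pi^* L, \pi^* L') = {\pi_Y}_* \pi_* \pi^*(L^\vee \otimes L') = {\pi_Y}_*(L^\vee \otimes L' \otimes \pi_* \mc{O}) = \ul{\hom}(L, L') \]
naturally in $L$ and $L'$, giving the enrichment over $\mc{O}_{S_{\liset}}\tn{-mod}$. The same argument works verbatim for the rigidified $\pi_{rig}$.

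For part (2), the fibres of $p$ are open substacks of unipotent bundle stacks and hence have trivial Picard group, so $p^*$ is essentially surjective on line bundles; combined with full faithfulness from (1), this gives an equivalence $p^* \colon \piccat(\bun_T^0) \overset{\sim}\longrightarrow \piccat(\tbun_G^{ss, reg})$. Essential surjectivity of $\pi^*$ on $W$-equivariant line bundles then reduces to descending them from $\bun_T^0$ to $Y$, for which the $\B T$-gerbe $q$ gives an exact sequence
\[ 0 \longrightarrow \mrm{Pic}^W(Y) \longrightarrow \mrm{Pic}^W(\bun_T^0) \longrightarrow \mb{X}^*(T)^W, \]
whose last term records the $T$-character on gerbe fibres. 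Since $G$ is simply connected and simple, $W$ acts on $\mb{X}^*(T)_\mb{Q}$ as an irreducible reflection representation with no nonzero fixed vectors, so $\mb{X}^*(T)^W = 0$ and the descent is automatic; the analogous $\mb{X}^*(T/Z(G))^W = 0$ handles the rigidified case.

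For part (3), one checks that the equivalence of (2) respects goodness. By Lemma \ref{lem:tbunreglocal}, near a generic point of each component of $Y^{s_\alpha}$, the preimage $\pi^{-1}(Y^{s_\alpha})$ coincides generically with the codimension one part of the $s_\alpha$-fixed locus in $\tbun_G^{ss, reg}$ (relative to $\bun_G^{ss, reg}$), so the trivialisation condition in Definition \ref{defn:ramgaloispicgood} pulls back to the trivialisation condition on $Y^{s_\alpha}$ in Definition \ref{defn:picwygood}. The main obstacle is the fibrewise analysis underpinning part (1): for $y$ in deep strata of $Y$ where multiple roots vanish simultaneously, the fibre of $p$ carries potential $T$-invariant monomials arising from linear relations among the vanishing roots in $\mb{X}^*(T)$, and a careful use of the regularity condition is required to show that no nonconstant invariants survive.
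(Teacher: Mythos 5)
Your overall route is the same as the paper's (full faithfulness via $\pi_*\mc{O} = \mc{O}_Y$ for $\pi = \mrm{Bl}_{B,Y}^{reg}$, essential surjectivity via triviality over the strictly regular locus plus a boundary-divisor correction, goodness via Lemma \ref{lem:tbunreglocal}), but there is a genuine gap, and it is precisely the point you flag at the end as ``the main obstacle''. You propose to establish $\pi_*\mc{O} = \mc{O}_Y$ by a fibrewise invariant-theory computation over \emph{all} strata of $Y$, including the deep strata where several roots vanish. This is both unnecessary and insufficient: unnecessary because $Y$ is smooth (hence normal) and $\pi$ is smooth, so $\pi_*\mc{O}$ and the extension of line bundles are determined on the complement of any codimension-$2$ subset of $Y$ --- and the locus where two or more roots vanish has codimension $\geq 2$; insufficient because computing rings of functions on individual geometric fibres does not compute $\pi_*\mc{O}$ without a base-change argument, and over the deep strata the fibres of $p$ genuinely carry nonconstant functions (coordinates on the various $H^1$'s and their inverses on the regular locus), so the ``careful use of regularity'' you defer to is doing real, unexamined work. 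The paper's proof avoids all of this by localising at the generic points $y$ of the components of $Y \setminus Y^{sreg}$, where exactly one root vanishes and $\tbun_G^{ss} \times_Y \spec\mc{O}_{Y,y}$ is explicitly a quotient of a line bundle with $T$ acting by weight $-\alpha$; you should make this codimension-$2$ reduction the organising principle of part (1) rather than an afterthought.

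Two further points. In part (2), ``the fibres of $p$ are open substacks of unipotent bundle stacks and hence have trivial Picard group, so $p^*$ is essentially surjective'' is a non sequitur: triviality of the Picard groups of fibres does not by itself give surjectivity of pullback on $\mrm{Pic}$ of the total space. What is actually needed (and what the paper proves) is that every irreducible divisor in the complement of $\tbun_G^{ss,sreg}$ is the \emph{full} preimage of an irreducible divisor in $Y$ --- i.e.\ irreducibility and nonemptiness of $(\mrm{Bl}_{B,Y}^{reg})^{-1}(D)$ for each boundary divisor $D$, extracted from the proof of Proposition \ref{prop:regcodim} --- so that one can write any $L$ as $\pi^*L_0(\sum n_i \bar D_i)$ with each $\bar D_i$ pulled back. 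Finally, attributing $p_*\mc{O} = \mc{O}_{\bun_T^0}$ to a ``$T$-invariants computation'' conflates the two factors of your factorisation: the $T$-invariants enter through the gerbe $q\colon \bun_T^0 \to Y$, not through $p$; this is cosmetic but suggests the division of labour between $p$ and $q$ has not been thought through.
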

\begin{proof}
Note that since all enriched categories in the statement satisfy flat descent on $S$, we can assume without loss of generality that $S$ is a connected regular scheme and that $E \to S$ has a section $O_E \colon S \to E$.

To prove \eqref{itm:picwpullback1}, note that by Proposition \ref{prop:regcodim} and Lemma \ref{lem:bigpicrestriction}, it suffices to prove that the enriched functors
\[ \piccat(Y) \longrightarrow \piccat(\tbun_{G, rig}^{ss}) \quad \text{and} \quad \piccat(Y) \longrightarrow \piccat(\tbun_G^{ss}) \]
are fully faithful. We will in fact show that
\begin{equation} \label{eq:picwpullback1}
\mrm{Bl}_{B, Y, rig}^{ss}{\vphantom{p}}_*\mc{O}_{\tbun_{G, rig}^{ss}} = \mrm{Bl}_{B, Y}^{ss}{\vphantom{p}}_*\mc{O}_{\tbun_G^{ss}} = \mc{O}_Y
\end{equation}
as sheaves of $\mc{O}$-modules on $Y_{\liset}$, where $\mrm{Bl}_{B, Y, rig}^{ss}$ and $\mrm{Bl}_{B, Y}^{ss}$ are the morphisms from $\tbun_{G, rig}^{ss}$ and $\tbun_G^{ss}$ to $Y$ given by composing $\mrm{Bl}_{B, rig}^{ss}$ and $\mrm{Bl}_B^{ss}$ with the coarse moduli space maps. Fully faithfulness then follows since
\[ \ul{\hom}((\mrm{Bl}_{B, Y}^{ss})^*L_1, (\mrm{Bl}_{B, Y}^{ss})^*L_2) = \pi_Y{\vphantom{p}}_* \mrm{Bl}_{B, Y}^{ss}{\vphantom{p}}_*(\mrm{Bl}_{B, Y}^{ss})^*(L_1^{-1} \otimes L_2) = \pi_Y{\vphantom{p}}_*(L_1^{-1}\otimes L_2 \otimes \mrm{Bl}_{B, Y}^{ss}{\vphantom{p}}_*\mc{O})\]
(and similarly for $\mrm{Bl}_{B, Y, rig}^{ss}$).

Note that since $Y$ is normal and the morphisms $\tbun_G^{ss} \to \tbun_{G, rig}^{ss} \to Y$ are smooth, it suffices to prove \eqref{eq:picwpullback1} on the complement of a set of codimension at least $2$. By Lemma \ref{lem:sregiso}, \eqref{eq:picwpullback1} holds on $Y^{sreg}$, so it remains to check that it holds after pulling back to $\spec \mc{O}_{Y, y}$ for every generic point $y$ of an irreducible component of $Y\setminus Y^{sreg}$.

Let $\alpha \in \Phi_+$ be the unique root such that $s_\alpha(y) = y$. Then we have canonical isomorphisms
\[ \tbun_G^{ss} \times_Y \spec \mc{O}_{Y, y} \cong \bun_{TU_{-\alpha}} \times_Y \spec \mc{O}_{Y, y} \cong \bun_{\mc{U}_{-\alpha}/\spec \mc{O}_{Y, y}}(E)/T\]
and
\[ \tbun_{G, rig}^{ss} \times_Y \spec \mc{O}_{Y, y} \cong \bun_{\mc{U}_{-\alpha}/\spec \mc{O}_{Y, y}}(E)/(T/Z(G)),\]
where $\mc{U}_{-\alpha} = \xi_{T, \mc{O}_{Y, y}} \times^T U_{-\alpha}$ for $\xi_{T, \mc{O}_{Y, y}}$ the pullback to $\spec \mc{O}_{Y, y} \times_S E$ of the universal $T$-bundle on $Y \times_S E$ trivialised along $Y \times_S \{O_E\}$.

The long exact sequence in cohomology associated to the exact sequence
\[ 0 \longrightarrow \mc{U}_{-\alpha}(-O_E) \longrightarrow \mc{U}_{-\alpha} \longrightarrow {O_E}_*O_E^*\mc{U}_{-\alpha} \longrightarrow 0 \]
shows that the morphism
\begin{equation} \label{eq:picwpullback2}
\bun_{\mc{U}_{-\alpha}(-O_E)/\spec \mc{O}_{Y, y}}(E) \longrightarrow \bun_{\mc{U}_{-\alpha}/\spec \mc{O}_{Y, y}}(E)
\end{equation}
is a torsor under the line bundle $O_E^*\mc{U}_{-\alpha}$ on $\spec \mc{O}_{Y, y}$. Since $\mb{R}^0p_E{\vphantom{p}}_*\mc{U}_{-\alpha}(-O_E) = 0$, the domain of \eqref{eq:picwpullback2} can be identified with the line bundle $\mb{R}^1p_E{\vphantom{p}}_*\mc{U}_{-\alpha}(-O_E)$ over $\spec \mc{O}_{Y, y}$ on which $T$ acts with weight $-\alpha$. (Here $p_E \colon \spec \mc{O}_{Y, y} \times_S E \to \spec \mc{O}_{Y, y}$ is the natural projection.) So the claim follows by direct computation using the \v{C}ech complex for the covering \eqref{eq:picwpullback2} to compute the pushforward of $\mc{O}$ and taking $T$ and $(T/Z(G))$-invariants.

To prove \eqref{itm:picwpullback2}, note that \eqref{itm:picwpullback1} implies that the functors
\[ \piccat^W(Y) \longrightarrow \piccat^W(\tbun_{G, rig}^{ss, reg}) \quad \text{and} \quad \piccat^W(Y) \longrightarrow \piccat^W(\tbun_G^{ss, reg}) \]
are fully faithful as enriched functors, and so it is enough to prove that they are essentially surjective.

Fix a $W$-linearised line bundle $L$ on $\tbun_G^{ss, reg}$. Restricting $L$ to $\tbun_G^{ss, sreg} = \bun_T^{0, sreg} = Y^{sreg} \times \B T$ and using the isomorphism
\[ \mrm{Pic}^W(Y^{sreg} \times \B T) \cong \mrm{Pic}^W(Y^{sreg}) \oplus \mb{X}^*(T)^W = \mrm{Pic}^W(Y^{sreg})\]
gives a $W$-linearised line bundle on $Y^{sreg}$, which can be extended (non-uniquely) to a line bundle $L_0$ on $Y$. By construction,
\[ L = ((\mrm{Bl}_{B, Y}^{reg})^*L_0)\left(\sum_i n_i \bar{D}_i\right), \]
where $n_i \in \mb{Z}$ and $\bar{D}_i \subseteq \tbun_G^{ss, reg}$ are irreducible divisors in the complement of $\tbun_G^{ss,sreg}$. From the proof of Proposition \ref{prop:regcodim}, for any irreducible divisor $D$ in $Y$ in the complement of $Y^{sreg}$, $(\mrm{Bl}_{B, Y}^{reg})^{-1}(D)$ is nonempty and irreducible. So we must have $\bar{D}_i = (\mrm{Bl}_{B, Y}^{reg})^{-1}(D_i)$ for some divisors $D_i$ on $Y$, and hence $L = (\mrm{Bl}_{B, Y}^{reg})^*(L')$, where
\[ L' = L_0\left(\sum_i n_i D_i\right). \]
Now \eqref{itm:picwpullback1} implies that the $W$-linearisation on $L$ necessarily descends to a $W$-linearisation on $L'$, so we have shown that $\piccat^W(Y) \to \piccat^W(\tbun_G^{ss, reg})$ is essentially surjective. An identical argument shows that $\piccat^W(Y) \to \piccat^W(\tbun_{G, rig}^{ss, reg})$ is essentially surjective, so this proves \eqref{itm:picwpullback2}.

To prove \eqref{itm:picwpullback3}, first note that a $W$-linearised line bundle on $\tbun_{G, rig}^{ss, reg}$ is good if and only if its pullback to $\tbun_G^{ss, reg}$ is good. So it suffices to prove that a $W$-linearised line bundle $L$ on $Y$ is good in the sense of Definition \ref{defn:picwygood} if and only if the $W$-linearised line bundle $(\mrm{Bl}_{B, Y}^{reg})^*L$ on $\tbun_G^{ss, reg}$ is good in the sense of Definition \ref{defn:ramgaloispicgood}.

If $w \in W$, then $(\tbun_G^{ss, reg})^w \subseteq (\mrm{Bl}_{B, Y}^{reg})^{-1}(Y^w)$, so either $w = s_\alpha$ for some $\alpha \in \Phi_+$ or $(\tbun_G^{ss, reg})^w \subseteq \tbun_G^{ss, reg}$ has codimension at least $2$. It is clear from this and the definitions that if $L$ is good then $(\mrm{Bl}_{B, Y}^{reg})^*L$ is good also. Conversely, suppose that $(\mrm{Bl}_{B, Y}^{reg})^*L$ is good. To show that $L$ is good, it suffices to show that for every $\alpha \in \Phi_+$ and every generic point $y$ of an irreducible component of $Y^{s_\alpha}$, the morphism
\[ s_\alpha \colon L|_{(\spec \mc{O}_{Y, y})^{s_\alpha}} \longrightarrow L|_{(\spec \mc{O}_{Y, y})^{s_\alpha}} \]
is the identity. Since $(\mrm{Bl}_{B, Y}^{reg})^*L$ is good, this holds after pulling back along the morphism
\begin{equation} \label{eq:picwpullback3}
(\tbun_G^{ss, reg} \times_Y \spec \mc{O}_{Y, y})^{s_\alpha} \longrightarrow (\spec \mc{O}_{Y, y})^{s_\alpha},
\end{equation}
where the fixed locus on the left is relative to $\bun_G$, and the fixed locus on the right is relative to $Y$. But Lemma \ref{lem:tbunreglocal} implies that
\[ (\tbun_G^{ss, reg} \times_Y \spec \mc{O}_{Y, y})^{s_\alpha} = \tbun_G^{ss, reg} \times_Y (\spec \mc{O}_{Y, y})^{s_\alpha}\]
so \eqref{eq:picwpullback3} is smooth and surjective. So the result follows by descent.
\end{proof}

\subsection{The theta bundle} \label{subsection:thetabundle}

In this subsection, we compute the group $\mrm{Pic}^W(Y)_{good} = \mrm{Pic}(\bun_G)$. The computations show that $\mrm{Pic}^W(Y)_{good}$ is generated by $\mrm{Pic}(S)$ and a single ample line bundle $\Theta_Y$. This corresponds to a line bundle $\Theta_{\bun_G}$ on $\bun_G$, which we call the \emph{theta bundle}.

\begin{defn}
Let $L$ be any line bundle on $Y$. The \emph{quadratic class of $L$} is the function
\begin{align*}
q(L)\colon \mb{X}_*(T) &\longrightarrow \mb{Z} \\
\lambda &\longmapsto \deg_{\mrm{Pic}^0(E_s)}(\lambda^*L)
\end{align*}
where the degree is taken over any geometric fibre of the relative Picard scheme $\mrm{Pic}^0_S(E) \to S$.
\end{defn}

The following lemma motivates the terminology ``quadratic class''.

\begin{lem} \label{lem:chernquadratic}
Let $L$ be a line bundle on $Y$. Then $q(L)$ is a quadratic form, i.e., $q(L)(-\lambda) = q(L)(\lambda)$ for all $\lambda \in \mb{X}_*(T)$ and the map
\begin{align*}
Q(L) \colon \mb{X}_*(T) \times \mb{X}_*(T) &\longrightarrow \mb{Q} \\
(\lambda, \mu) &\longmapsto \frac{1}{2}(q(L)(\lambda + \mu) - q(L)(\lambda) - q(L)(\mu))
\end{align*}
is symmetric and bilinear.
\end{lem}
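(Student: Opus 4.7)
The plan is to derive both the symmetry $q(L)(-\lambda) = q(L)(\lambda)$ and the symmetric bilinearity of $Q(L)$ from Mumford's theorem of the cube applied to the abelian $S$-scheme $Y$, together with the evident equality $q(L)(0) = 0$: the morphism $0 \colon \mrm{Pic}^0_S(E) \to Y$ corresponding to the zero cocharacter factors through the identity section $S \to Y$, so $0^*L$ is pulled back from $S$ and has degree zero on every geometric fibre of $\mrm{Pic}^0_S(E) \to S$.

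For the symmetry, I would use that the assignment $\mu \mapsto (\mu \colon \mrm{Pic}^0_S(E) \to Y)$ is a group homomorphism from $\mb{X}_*(T)$ to $\hom_{S\tn{-gp}}(\mrm{Pic}^0_S(E), Y)$, since it is induced by the group structure on cocharacters together with the functoriality of the relative Picard construction. In particular, the cocharacter $-\lambda$ is realised as the composition $\lambda \circ [-1]_{\mrm{Pic}^0_S(E)}$, which gives $(-\lambda)^*L \cong [-1]_{\mrm{Pic}^0_S(E)}^*(\lambda^*L)$. As $[-1]$ is an automorphism of the relative elliptic curve $\mrm{Pic}^0_S(E) \to S$, it preserves the degree of line bundles on every geometric fibre, yielding $q(L)(-\lambda) = q(L)(\lambda)$.

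For the bilinearity, I would pick three arbitrary cocharacters $\lambda_1, \lambda_2, \lambda_3 \in \mb{X}_*(T)$, viewed as morphisms $\lambda_i \colon \mrm{Pic}^0_S(E) \to Y$. Since $L$ is a line bundle on the abelian $S$-scheme $Y$ (a product of $l$ copies of $\mrm{Pic}^0_S(E)$), the theorem of the cube provides a canonical trivialisation of the line bundle
\[ (\lambda_1+\lambda_2+\lambda_3)^*L \otimes \bigotimes_{i<j} (\lambda_i+\lambda_j)^*L^{-1} \otimes \bigotimes_{i} \lambda_i^*L \]
on $\mrm{Pic}^0_S(E)$. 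Taking degrees on any geometric fibre of $\mrm{Pic}^0_S(E) \to S$ gives the relation
\[ q(L)(\lambda_1+\lambda_2+\lambda_3) - \sum_{i<j} q(L)(\lambda_i+\lambda_j) + \sum_{i} q(L)(\lambda_i) = 0. \]
Setting $B(\lambda, \mu) := q(L)(\lambda+\mu) - q(L)(\lambda) - q(L)(\mu) = 2Q(L)(\lambda, \mu)$ and using $q(L)(0) = 0$, a straightforward regrouping rewrites the displayed identity as $B(\lambda_1+\lambda_2, \lambda_3) = B(\lambda_1, \lambda_3) + B(\lambda_2, \lambda_3)$, i.e.\ additivity of $B$ in its first argument. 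Symmetry of $B$ (and hence of $Q(L)$) is immediate from the defining formula, so bilinearity of $Q(L)$ follows.

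The only nontrivial input is the theorem of the cube in the relative setting, which applies because $Y$ is an abelian $S$-scheme; everything else is formal bookkeeping. No serious obstacle is anticipated, though one has to be mildly careful that the cube-theorem trivialisation is an honest isomorphism of line bundles (not merely an equality of isomorphism classes), in order to pass to degrees on a fibre.
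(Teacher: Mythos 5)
Your proof is correct and follows essentially the same route as the paper's: the identity $q(L)(-\lambda)=q(L)(\lambda)$ comes from $(-\lambda)^* L \cong [-1]^*\lambda^* L$ and the fact that $[-1]$ preserves fibrewise degrees, and bilinearity of $Q(L)$ is extracted from the degree-zero consequence of the theorem of the cube applied to the three morphisms $\lambda_i \colon \mrm{Pic}^0_S(E) \to Y$. The only (immaterial) differences are that the paper reduces to $S = \spec k$ at the outset rather than restricting to a fibre at the end, and your worry about the cube trivialisation being a canonical isomorphism is unnecessary, since the degree depends only on the isomorphism class.
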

\begin{proof}
Since $q(L)$ is computed on a geometric fibre, we can assume for simplicity that $S = \spec k$ for some algebraically closed field $k$. We have
\[ q(L)(-\lambda) = \deg_{\mrm{Pic}^0(E)} (-\lambda)^*L = \deg_{\mrm{Pic}^0(E)} [-1]^*\lambda^*L = \deg_{\mrm{Pic}^0(E)}\lambda^*L = q(L)(\lambda)\]
since $[-1]\colon \mrm{Pic}^0(E) \to \mrm{Pic}^0(E)$ has degree $1$.

For the map $Q(L)$, symmetry is obvious. Bilinearity is equivalent to $Q(L)(0, 0) = 0$, which is true by inspection, and the statement that for all $\lambda, \mu, \nu \in \mb{X}_*(T)$ the line bundle on $\mrm{Pic}^0(E)$
\[ (\lambda + \mu + \nu)^*L \otimes (\lambda + \mu)^*L^{-1} \otimes (\lambda + \nu)^*L^{-1} \otimes (\mu + \nu)^*L^{-1} \otimes \lambda^*L \otimes \mu^*L \otimes \nu^*L \]
has degree $0$. But this line bundle is trivial by the theorem of the cube \cite[\S II.6]{mumford70}, so we are done.
\end{proof}

\begin{rmk}
Let $L$ be a line bundle on $Y$. Note that Lemma \ref{lem:chernquadratic} implies that $q(L)(0) = - Q(L)(0, 0) = 0$, and hence
\[ q(L)(\lambda) = -\frac{1}{2}(q(L)(\lambda - \lambda) - q(L)(\lambda) - q(L)(-\lambda)) = -Q(L)(\lambda, -\lambda) = Q(L)(\lambda, \lambda).\]
So the datum of the function $q(L) \colon \mb{X}_*(T) \to \mb{Z}$ is equivalent to the datum of the bilinear form $Q(L) \in \hom(\sym^2(\mb{X}_*(T)), \mb{Q})$. For this reason, we will often refer to $Q(L)$ as the quadratic class of $L$.
\end{rmk}

\begin{rmk}
One might hope that the quadratic class $q(L)$ determines the first Chern class $c_1(L)$. This is not true in general: for example, it fails for the elliptic curve $E = \mb{C}/(\mb{Z} + i\mb{Z})$ over $S = \spec \mb{C}$, the group $G = SL_3$ and the line bundle $L$ constructed as follows. Identify $\mrm{Pic}^0(E)$ with $E$ and hence $Y$ with $E \times E$ via the canonical principal polarisation. Let $P$ be the line bundle on $E \times E$ defining the polarisation on $E$, and let $L = (\id, i)^*P$ be the pullback of $P$ under the automorphism $(\id, i)\colon E \times E \to E \times E$. Then $q(L) = 0$ but $c_1(L) \neq 0$.
\end{rmk}

\begin{lem}
Let $L$ be a good $W$-linearised line bundle on $Y$. Then $Q(L)$ lies in the image of
\[ \sym^2(\mb{X}^*(T))^W \subseteq \hom(\sym^2(\mb{X}_*(T)), \mb{Z}) \subseteq \hom(\sym^2(\mb{X}_*(T)), \mb{Q})\]
under the inclusion sending $\lambda \mu \in \sym^2(\mb{X}^*(T))$ to the bilinear map
\begin{align*}
\mb{X}_*(T) \times \mb{X}_*(T) &\longrightarrow \mb{Z} \\
(\lambda', \mu') &\longmapsto \langle \lambda, \lambda'\rangle \langle \mu, \mu'\rangle  + \langle \lambda, \mu' \rangle \langle \mu, \lambda' \rangle.
\end{align*}
\end{lem}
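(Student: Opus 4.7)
The plan is to verify two things: $Q(L)$ is $W$-invariant as a bilinear form on $\mb{X}_*(T)$, and the associated quadratic form $q(L)(\lambda) = Q(L)(\lambda,\lambda)$ takes values in $2\mb{Z}$. These together suffice, because a direct calculation on generators shows that the stated inclusion identifies $\sym^2(\mb{X}^*(T))$ with the subgroup of $\hom(\sym^2(\mb{X}_*(T)),\mb{Z})$ consisting of symmetric bilinear forms with even diagonal; the identification is $W$-equivariant, and integer-valuedness is automatic once the diagonal is even since $2Q(L)(\lambda,\mu) = q(L)(\lambda+\mu) - q(L)(\lambda) - q(L)(\mu)$. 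The $W$-invariance of $Q(L)$ is immediate: for $w \in W$, the isomorphism $w^*L \simeq L$ coming from the linearisation gives $q(L)(w\lambda) = \deg \lambda^*w^*L = \deg \lambda^*L = q(L)(\lambda)$.

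The heart of the argument is to establish $q(L)(\alpha^\vee) \in 2\mb{Z}$ for every root $\alpha$. Using the formula $s_\alpha(y) = y - \alpha^\vee(\alpha(y))$, one computes $s_\alpha \circ \alpha^\vee = \alpha^\vee \circ [-1]_{\mrm{Pic}^0_S(E)}$, so the $W$-linearisation of $L$ endows the pullback $M := \alpha^{\vee*}L$ on $\mrm{Pic}^0_S(E)$ with a $[-1]$-linearisation. The preimage $(\alpha^\vee)^{-1}(Y^{s_\alpha}) = \ker(\alpha\circ\alpha^\vee) = \ker[2] = \mrm{Pic}^0_S(E)[2]$ equals the relative $2$-torsion, and the good condition at $s_\alpha$ forces the $[-1]$-action on $M$ to be trivial on these fibres. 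By Kempf's descent lemma applied to the quotient map $f \colon \mrm{Pic}^0_S(E) \to \mrm{Pic}^0_S(E)/[-1]$ (which has degree $2$ onto the fibrewise $\mb{P}^1$ target), $M$ descends to a line bundle $N$ with $M \cong f^*N$, whence $q(L)(\alpha^\vee) = \deg M = 2\deg N \in 2\mb{Z}$ on each geometric fibre of $S$.

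To conclude for an arbitrary $\lambda = \sum_i n_i\alpha_i^\vee \in \mb{X}_*(T)$, I expand
\[ q(L)(\lambda) = \sum_i n_i^2\, q(L)(\alpha_i^\vee) + 2\sum_{i<j} n_i n_j \, Q(L)(\alpha_i^\vee, \alpha_j^\vee); \]
the first sum already lies in $2\mb{Z}$, so it remains to show $Q(L)(\alpha_i^\vee, \alpha_j^\vee) \in \mb{Z}$, equivalently $q(L)(\alpha_i^\vee+\alpha_j^\vee) \in 2\mb{Z}$, for every pair of simple coroots. If $\alpha_i$ and $\alpha_j$ are adjacent in the Dynkin diagram, a reduction to the rank-$2$ subsystem they generate (necessarily $A_2$, $B_2$, or $G_2$) verifies directly that $\alpha_i^\vee + \alpha_j^\vee$ is itself a coroot, so the previous step applies. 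If instead $\alpha_i$ and $\alpha_j$ are orthogonal, the argument runs verbatim with $w := s_{\alpha_i}s_{\alpha_j} \in W$ in place of $s_\alpha$: this $w$ acts as $[-1]$ on $\alpha_i^\vee + \alpha_j^\vee$, the preimage-of-$2$-torsion computation is formally identical, and triviality of the $w$-action on $L$ over these fibres follows by composing the trivial actions of $s_{\alpha_i}$ and $s_{\alpha_j}$ separately (both of which fix the relevant points, since $\alpha_i(\lambda(x)) = \alpha_j(\lambda(x)) = 2x = 0$ for $x \in \mrm{Pic}^0_S(E)[2]$). The principal obstacle is the Kempf descent step, which requires the good condition in precisely the form stipulated so as to ensure trivial stabiliser action at the ramification locus.
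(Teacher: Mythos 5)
Your argument is correct, and the diagonal step is essentially the paper's: the identity $s_\alpha\circ\alpha^\vee=\alpha^\vee\circ[-1]$ together with goodness along $Y^{s_\alpha}$ forces $(\alpha^\vee)^*L$ to descend along the degree-$2$ quotient of $\mrm{Pic}^0_S(E)$ by $[-1]$, whence $q(L)(\alpha^\vee)\in 2\mb{Z}$; this is exactly what the paper packages as Lemmas \ref{lem:goodcriterion} and \ref{lem:gooda1} (your "Kempf descent" is their descent to the $\mb{P}^1$-bundle $\mb{P}_S(\pi_*\mc{O}(2\cdot O))^\vee$, and the linearisation you pull back is already normalised at the origin since $O$ is $2$-torsion, so the two normalisations agree). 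Where you genuinely diverge is the off-diagonal entries. The paper gets $Q(L)(\alpha_i^\vee,\alpha_j^\vee)\in\mb{Z}$ in one line from $W$-invariance: applying $s_i$ to both arguments gives $2Q(L)(\alpha_i^\vee,\alpha_j^\vee)=\langle\alpha_i,\alpha_j^\vee\rangle\, Q(L)(\alpha_i^\vee,\alpha_i^\vee)$, and the right-hand side is already known to lie in $2\mb{Z}$. You instead prove the equivalent statement $q(L)(\alpha_i^\vee+\alpha_j^\vee)\in 2\mb{Z}$ by a case split: for adjacent simple roots you check in $A_2$, $B_2$, $G_2$ that $\alpha_i^\vee+\alpha_j^\vee$ is again a (primitive) coroot and reuse the diagonal step, and for orthogonal ones you run the same descent with $w=s_{\alpha_i}s_{\alpha_j}$ in place of a reflection, using the cocycle condition at points of $Y^{s_{\alpha_i}}\cap Y^{s_{\alpha_j}}$ to get triviality on the $2$-torsion. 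Both cases are sound (the rank-$2$ fact does hold, and $w^2=1$ so the pulled-back isomorphism really is a $\mb{Z}/2$-linearisation), but the invariance trick renders the case analysis unnecessary; the only thing your longer route buys is the stronger intermediate statement that $q(L)(\beta^\vee)\in 2\mb{Z}$ for \emph{every} root $\beta$, which in any case follows a posteriori from the lemma itself.
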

\begin{proof}
As in Lemma \ref{lem:chernquadratic}, we may assume $S = \spec k$. Since $Q(L)$ is manifestly $W$-invariant, by elementary linear algebra, it is enough to show that if $\alpha_i^\vee, \alpha_j^\vee \in \Delta^\vee$ are simple coroots, then $Q(L)(\alpha_i^\vee, \alpha_i^\vee) \in 2\mb{Z}$ and $Q(L)(\alpha_i^\vee, \alpha_j^\vee) \in \mb{Z}$.

If $\alpha_i^\vee \in \Delta^\vee$, then Lemma \ref{lem:goodcriterion} below implies that
\[ Q(L)(\alpha_i^\vee, \alpha_i^\vee) = \deg_{\mrm{Pic}^0(E)} (\alpha_i^\vee)^*L \in 2\mb{Z}.\]
If $\alpha_j^\vee \in \Delta^\vee$ is another simple coroot, then invariance of $Q(L)$ under $s_i \in W$ implies that
\[ Q(L)(\alpha_i^\vee, \alpha_j^\vee) = \frac{Q(L)(\alpha_i^\vee, \alpha_i^\vee)}{2} \langle \alpha_i, \alpha_j^\vee \rangle \in \mb{Z},\]
so we are done.
\end{proof}

In the following lemma, we write $O_{\mrm{Pic}^0_S(E)}$ for the origin in $\mrm{Pic}^0_S(E)$, in order to distinguish it from the zero divisor $0$. We will also write $\pi_Y \colon Y \to S$ for the structure morphism and $O_Y \colon S \to Y$ for the section corresponding to the trivial $T$-bundle.

\begin{lem} \label{lem:goodcriterion}
Let $L$ be a $W$-linearised line bundle on $Y$ such that the action of $W$ on $O_Y^*L$ is trivial. Then the $W$-linearisation on $L$ is good if and only if for all simple coroots $\alpha_i^\vee$, $(\alpha_i^\vee)^*L \cong \pi_{\mrm{Pic}^0_S(E)}^*L' (2d \cdot O_{\mrm{Pic}^0_S(E)})$ for some $d \in \mb{Z}$ and some line bundle $L'$ on $S$.
\end{lem}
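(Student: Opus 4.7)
The plan is to reduce goodness of $L$ to a condition on each pullback $(\alpha_i^\vee)^*L$ and then classify the resulting line bundles on the relative elliptic curve $A := \mrm{Pic}^0_S(E)$. First, by the $W$-equivariance of the linearisation, every positive root is $W$-conjugate to a simple root, so goodness of $L$ is equivalent to requiring that $s_i$ act trivially on $L|_{Y^{s_i}}$ for each simple root $\alpha_i$. Since $G$ is simply connected, each simple coroot $\alpha_i^\vee$ is primitive in $\mb{X}_*(T)$, so $\alpha_i^\vee \colon A \to Y$ is a closed immersion and $Y^{s_i} = \ker(\alpha_i \colon Y \to A)$. Choosing a splitting $\mb{X}_*(T) = K_i \oplus \mb{Z}\nu$ with $K_i = \ker\langle\alpha_i, -\rangle$ and $\langle\alpha_i, \nu\rangle = d_i$, the positive integer $d_i$ divides $\langle\alpha_i, \alpha_i^\vee\rangle = 2$, so $d_i \in \{1, 2\}$; correspondingly $Y^{s_i} \cong (A \otimes K_i) \times A[d_i]$ with identity component $A \otimes K_i$. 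Writing $\alpha_i^\vee = \kappa + (2/d_i)\nu$ with $\kappa \in K_i$, the image $\alpha_i^\vee(A[2])$ meets every connected component of $Y^{s_i}$.

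Since $s_i$ fixes $Y^{s_i}$ pointwise, the automorphism $s_i$ of $L|_{Y^{s_i}}$ is multiplication by a locally constant $\{\pm 1\}$-valued function, and the hypothesis that $W$ acts trivially on $O_Y^*L$ forces this function to equal $+1$ on the identity component. Combined with the component-meeting property above, this shows that goodness at $s_i$ is equivalent to the statement that the $[-1]$-linearisation on $M := (\alpha_i^\vee)^*L$, normalised to be trivial at $O_A$, acts trivially on $M|_{A[2]}$.

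The lemma is thus reduced to showing that line bundles $M$ on $A$ with normalised $[-1]$-linearisation acting trivially on $A[2]$ are precisely those of the form $\pi_A^*L' \otimes \mc{O}(2dO_A)$. I would prove this using the decomposition $M \cong \pi_A^*L' \otimes L_\sigma \otimes \mc{O}(nO_A)$ afforded by the zero section $O_A$ together with the self-duality $A \cong \mrm{Pic}^0_{A/S}$, where $L' = O_A^*M$, $\sigma \in A(S)$ and $n$ is the fibrewise degree. Symmetry of $M$ forces $\sigma \in A[2](S)$, and a standard calculation identifies the normalised $[-1]$-action at $Q \in A[2]$ with $(-1)^n \cdot e_2(\sigma, Q)$, where $e_2 \colon A[2] \times_S A[2] \to \mu_{2, S}$ is the Weil pairing (and $e_2(\sigma, O_A) = 1$). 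Evaluating at $Q = O_A$ forces $n$ to be even, after which non-degeneracy of $e_2$ on $A[2]$ forces $\sigma = O_A$, giving the claimed form.

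The main obstacle is the rigorous treatment of the Weil pairing computation in the relative setting, especially in residue characteristic $2$ where $A[2]$ may fail to be étale. I expect to handle this by reducing to geometric fibres, where the computation is classical, and propagating the conclusion via flatness of the relevant line bundles on $A$ together with the structure theory of $\mrm{Pic}(A)$.
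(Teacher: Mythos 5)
Your first reduction --- goodness is detected on simple reflections, $Y^{s_i}$ is identified inside $Y$ via a splitting of $\mb{X}_*(T)$, and the $s_i$-action on $L|_{Y^{s_i}}$ is pulled back from the finite factor so that it suffices to test the normalised $[-1]$-linearisation of $M = (\alpha_i^\vee)^*L$ on $A[2]$, $A = \mrm{Pic}^0_S(E)$ --- is essentially the paper's argument (the paper works with the isogeny $(A \otimes \mb{X}_*(T)^{s_i}) \times_S A \to Y$ rather than a splitting, but the content is the same). One caveat: ``multiplication by a locally constant $\{\pm 1\}$-valued function'' is not accurate in residue characteristic $2$, where $A[2]$ can be non-reduced and a unit squaring to $1$ need not be locally $\pm 1$; the robust statement is that the unit is pulled back along the projection of $Y^{s_i}$ onto its finite factor, of which $\alpha_i^\vee|_{A[2]}$ is a section.

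The genuine problems are in the classification of symmetric $M$ on $A$. First, your formula $(-1)^n e_2(\sigma, Q)$ for the normalised action is wrong at $Q = O_A$: by the very definition of the normalisation the action at the origin is $+1$ for every symmetric $M$, independently of $n$ (Mumford's multiplicity formula gives $(-1)^{m_Q(nO_A) - m_{O_A}(nO_A)}$, which is $1$ at $Q = O_A$ and equals $(-1)^n$ only at $Q \neq O_A$). So ``evaluating at $Q = O_A$ forces $n$ to be even'' proves nothing, and the subsequent deduction of $\sigma = O_A$ presupposes it. The parity of $n$ must instead be extracted at the nonzero $2$-torsion points: for $n$ odd one would need $e_2(\sigma, Q) = -1$ for all three nonzero $Q$, which is impossible since $Q \mapsto e_2(\sigma, Q)$ is a character of $(\mb{Z}/2)^2$. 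Second, and more structurally, the Weil-pairing computation degenerates in residue characteristic $2$, which cannot be excluded here (the paper allows $S = M_{1,1}$ over $\mb{Z}$): $A[2]$ may be a single non-reduced point, there are then no ``nonzero $2$-torsion points'' to evaluate at, and ``reduce to geometric fibres where the computation is classical'' does not apply. The paper's Lemma \ref{lem:gooda1} avoids all of this with a characteristic-free argument: the normalised linearisation is trivial on $M|_{A[2]}$ if and only if $M$ descends along $A \to \mb{P}_S(\pi_*\mc{O}(2 \cdot O_A))^\vee$, a $\mb{P}^1$-bundle whose Picard group is $\mrm{Pic}(S) \oplus \mb{Z}$, which immediately yields $M \cong \pi^*L'(2d \cdot O_A)$. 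You should either adopt that descent argument or supply a separate treatment of the characteristic-$2$ fibres.
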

\begin{proof}
Since every reflection in $W$ is conjugate to a simple reflection, the line bundle $L$ is good if and only if
\[ s_i \colon L|_{Y^{s_i}} \longrightarrow L|_{Y^{s_i}} \]
is the identity for all simple reflections $s_i$.

Fix a simple reflection $s_i$. Observe that since $\mb{X}_*(T)^{s_i}_\mb{Q} + \mb{Q}\alpha_i^\vee = \mb{X}_*(T)_\mb{Q}$, the morphism
\[ f\colon \mrm{Pic}^0_S(E) \otimes_\mb{Z}\mb{X}_*(T)^{s_i} \times_S \mrm{Pic}^0_S(E) = \mrm{Pic}^0_S(E) \otimes_\mb{Z}(\mb{X}_*(T)^{s_i} + \mb{Z}\alpha_i^\vee) \longrightarrow \mrm{Pic}^0_S(E) \otimes_{\mb{Z}} \mb{X}_*(T) = Y \]
is an isogeny of abelian varieties over $S$, and $f \circ s_i = r \circ f$, where
\[ r = (\id, [-1])\colon \mrm{Pic}^0_S(E) \otimes_{\mb{Z}} \mb{X}_*(T)^{s_i} \times_S \mrm{Pic}^0_S(E) \longrightarrow \mrm{Pic}^0_S(E) \otimes_{\mb{Z}} \mb{X}_*(T)^{s_i} \times_S \mrm{Pic}^0_S(E).\]
Since $r$ acts trivially on $\ker(f)$, it follows that
\[ f^{-1}(Y^{s_i}) = \mrm{Pic}^0_S(E) \otimes_{\mb{Z}} \mb{X}_*(T)^{s_i} \times_S \mrm{Pic}^0_S(E)[2] \]
and so the action of $s_i$ on $L|_{Y^{s_i}}$ is trivial if and only if the action of $r$ on
\[ f^*L|_{\mrm{Pic}^0_S(E) \otimes \mb{X}_*(T)^{s_i} \times_S \mrm{Pic}^0_S(E)[2]}\]
is trivial. Since the action of $r$ is given by a global regular function on $\mrm{Pic}^0_S(E) \otimes \mb{X}_*(T)^{s_i} \times_S \mrm{Pic}^0_S(E)[2]$, which is necessarily pulled back from a regular function on $\mrm{Pic}^0_S(E)[2]$, it suffices to check that $r$ acts as the identity on the fibre of
\[ \mrm{Pic}^0_S(E) \otimes_\mb{Z} \mb{X}_*(T)^{s_i} \times_S \mrm{Pic}^0_S(E)[2] \longrightarrow \mrm{Pic}^0_S(E) \otimes_\mb{Z} \mb{X}_*(T)^{s_i} \]
over any section of the structure map to $S$. Taking the fibre over the natural origin, the restriction of $f$ here is $\alpha_i^\vee \colon \mrm{Pic}^0_S(E) \to Y$. So by Lemma \ref{lem:gooda1} below, $r$ acts as the identity if and only if $(\alpha_i^\vee)^*L = \pi_{\mrm{Pic}^0_S(E)}^*L'(2d \cdot O_{\mrm{Pic}^0_S(E)})$, which proves the lemma.
\end{proof}

\begin{lem} \label{lem:gooda1}
Let $L$ be a line bundle on $\mrm{Pic}_S^0(E)$ with $[-1]^*L \cong L$, and let $\sigma \colon [-1]^*L \to L$ be the unique isomorphism acting as the identity on the pullback of $L$ along the section $O_{\mrm{Pic}_S^0(E)} \colon S \to \mrm{Pic}_S^0(E)$ corresponding to the trivial line bundle on $E$. Then $\sigma$ acts as the identity on $L|_{\mrm{Pic}_S^0(E)[2]}$ if and only if $L = \pi^*L' \otimes \mc{O}(2d\cdot O_{\mrm{Pic}_S^0(E)})$ for some $d \in \mb{Z}$ and some line bundle $L'$ on $S$, where $\pi\colon \mrm{Pic}^0_S(E) \to S$ is the structure morphism.
\end{lem}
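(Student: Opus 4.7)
The strategy is to prove sufficiency directly and necessity by reducing to a fiberwise computation on a single elliptic curve and then globalizing via the see-saw principle.

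For sufficiency, write $\mc{O}(2d \cdot O_{\mrm{Pic}_S^0(E)}) = \mc{O}(d \cdot O_{\mrm{Pic}_S^0(E)})^{\otimes 2}$ and observe that on a tensor square $M^{\otimes 2}$ of any symmetric line bundle $M$, the normalized isomorphism is $\sigma_M^{\otimes 2}$ (by uniqueness of the normalization), which at any fixed point $p$ of $[-1]$ acts by $(\sigma_M|_p)^2$. Since $\sigma_M|_p \in \{\pm 1\}$ (because $\sigma_M \circ [-1]^*\sigma_M$ is an automorphism of $M$ pinned to the identity at $O_{\mrm{Pic}_S^0(E)}$, hence equal to $\id_M$), this always equals $+1$. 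The $\pi^* L'$ factor contributes $\sigma = \id$ tautologically, so $\sigma$ is trivial on the 2-torsion in this direction.

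For necessity, I would first reduce to $S = \spec k$ for an algebraically closed field of characteristic not $2$, so $A := \mrm{Pic}_S^0(E)$ is a single elliptic curve. Any symmetric line bundle $L$ of degree $n$ on $A$ can be written uniquely, up to isomorphism, as $L \cong \mc{O}(n \cdot O_A) \otimes \mc{O}([p] - [O_A])$ for some $p \in A[2]$, and the idea is to compute the $\sigma$-sign on each factor at a 2-torsion point $q$. The key computation is that the normalized $\sigma$ on $\mc{O}(O_A)$ acts as $-1$ at every non-trivial 2-torsion point: the ``naive'' isomorphism $\iota$ identifying $[-1]^*\mc{O}(O_A)$ with $\mc{O}(O_A)$ as subsheaves of the constant sheaf of rational functions fails to be normalized at $O_A$, because any local parameter $t$ at $O_A$ satisfies $[-1]^*t = -t + O(t^2)$, so $\iota$ carries the local trivialization $1/t$ to $-1/t + O(1)$; this shows $\iota|_{O_A}$ acts as $-1$ on the fiber, forcing $\sigma = -\iota$ and hence $\sigma|_q = -1$ at every $q \in A[2] \setminus \{O_A\}$. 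Iterating yields $\sigma|_q = (-1)^n$ on $\mc{O}(n \cdot O_A)$ at such $q$, and a standard Mumford-theta-group computation gives $\sigma|_q = e_2(p, q)$ on $\mc{O}([p] - [O_A])$ with $e_2$ the Weil pairing. Triviality of $\sigma$ on all of $A[2]$ then forces $(-1)^n e_2(p, q) = 1$ for every $q \in A[2]$; since $e_2(p, \cdot)$ is a non-trivial character of $A[2]$ unless $p = O_A$, we must have $p = O_A$ and $n$ even, i.e., $L \cong \mc{O}(2d \cdot O_A)$ with $d = n/2$.

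To globalize, let $d = \frac{1}{2}\deg L_s$ (constant in $s$ since $S$ is connected) and set $M := L \otimes \mc{O}(-2d \cdot O_{\mrm{Pic}_S^0(E)})$. By the fiberwise analysis, $M$ is trivial on every geometric fiber of $\pi$, so the see-saw principle for the abelian scheme $\mrm{Pic}_S^0(E) \to S$ (equivalently, the fact that $\pi_* M$ is invertible with counit $\pi^* \pi_* M \overset{\sim}\to M$) yields $M \cong \pi^* L'$ for some line bundle $L'$ on $S$, completing the proof. The main obstacle is the sign computation for $\mc{O}(O_A)$: this is precisely where the normalization at $O_A$ ``costs'' a factor of $-1$ at the other 2-torsion points and thereby forces $\deg L$ to be even in the conclusion.
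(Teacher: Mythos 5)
Your argument is a genuinely different route from the paper's. The paper proves the lemma in two lines by descent along the degree-$2$ quotient map $f \colon \mrm{Pic}^0_S(E) \to \mb{P} = \mb{P}_S(\pi_*\mc{O}(2\cdot O_{\mrm{Pic}^0_S(E)}))^\vee$: the condition that $\sigma$ be the identity on $L|_{\mrm{Pic}^0_S(E)[2]}$ is exactly the descent condition for the $\mb{Z}/2$-linearisation, and $\mrm{Pic}(\mb{P}) = \mrm{Pic}(S) \oplus \mb{Z}$ finishes the job. You instead compute the symmetric-line-bundle signs $e_*$ at the $2$-torsion directly (the multiplicativity of normalized isomorphisms, the local-parameter computation giving $\sigma = -\iota$ on $\mc{O}(O_A)$, and the Weil-pairing formula for the degree-$0$ part) and then globalize by see-saw. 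Your computations are correct where they apply, and the sufficiency direction is in fact valid in all characteristics once you phrase it scheme-theoretically: $\sigma_M|_{A[2]} \circ \sigma_M|_{A[2]} = \id$ because $[-1]$ restricts to the identity on the fixed-point scheme $A[2]$, so $\sigma_{M^{\otimes 2}}|_{A[2]} = (\sigma_M|_{A[2]})^2 = \id$ with no appeal to $\pm 1$ on fibers.

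The genuine gap is in the necessity direction in residue characteristic $2$, which is in scope here: $S$ is an arbitrary connected regular stack (e.g.\ $M_{1,1}$ over $\spec\mb{Z}$, or $\spec k$ with $\mathrm{char}\,k = 2$), so your reduction to an algebraically closed field of characteristic $\neq 2$ is not available in general. More seriously, the statement concerns the restriction of $\sigma$ to the \emph{scheme} $\mrm{Pic}^0_S(E)[2]$, which in characteristic $2$ is non-reduced of length $4$ with only one or two geometric points. Your argument tests $\sigma$ only at geometric points and needs four distinct $2$-torsion points for the counting step ``$(-1)^n e_2(p,-) \equiv 1$ forces $p = O$ and $n$ even.'' In the supersingular case $A[2]$ has a single geometric point, namely $O_A$, where $\sigma$ is $+1$ by normalization, so the reduced-point computation gives no information at all: the non-identity behaviour of $\sigma$ on $\mc{O}(O_A)|_{A[2]}$ lives entirely in the nilpotents. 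The paper's descent argument sees this automatically (an odd-degree bundle cannot descend to the $\mb{P}^1$-bundle); to repair your approach you would have to compute $\sigma|_{A[2]}$ as a unit in the Artinian ring $\mc{O}_{A[2]}$ rather than fiberwise, or else restrict the lemma to $\mb{Z}[1/2]$, which would weaken the results downstream (Lemma 3.2.5 and Proposition 3.2.8).
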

\begin{proof}
The morphism $\sigma$ acts as the identity on $L|_{\mrm{Pic}^0_S(E)[2]}$ if and only if $L = f^*L''$ for some line bundle $L''$ on the $\mb{P}^1$-bundle $\mb{P} = \mb{P}_S(\pi_*\mc{O}(2 \cdot O_{\mrm{Pic}^0_S(E)}))^\vee$ over $S$, where $f$ is the canonical morphism. But every line bundle $L''$ on $\mb{P}$ is of the form $L'' = \pi_\mb{P}^*L'(d \cdot f(O_{\mrm{Pic}^0_S(E)}))$ for some $d \in \mb{Z}$ and some line bundle $L'$ on $S$, so $f^*L'' = \pi^*L' (2d \cdot O_{\mrm{Pic}_S^0(E)})$ and we are done.
\end{proof}

\begin{lem} \label{lem:symdualgensandrels}
Let $M$ be a finitely generated free abelian group. Then the abelian group $\sym^2(M)^\vee = \hom_\mb{Z}(\sym^2(M), \mb{Z})$ is generated by
\begin{align*}
\lambda^2\colon \sym^2(M) &\longrightarrow \mb{Z} \\
mn &\longmapsto \lambda(m)\lambda(n)
\end{align*}
for $\lambda \in M^\vee$, with relations
\[ \lambda^2 = (-\lambda)^2 \quad \text{and} \quad (\lambda + \mu + \nu)^2 - (\lambda + \mu)^2 -(\lambda + \nu)^2 - (\mu + \nu)^2 + \lambda^2 + \mu^2 + \nu^2 = 0 \]
for $\lambda, \mu, \nu \in M^\vee$.
\end{lem}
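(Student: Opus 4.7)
The plan is to show that the abelian group $Q$ defined by the presentation in the lemma is isomorphic to $\sym^2(M)^\vee$ via the natural map $\varphi \colon Q \to \sym^2(M)^\vee$ sending $[\lambda] \mapsto \lambda^2$ (well defined since $\lambda^2$ manifestly satisfies both stated relations in $\sym^2(M)^\vee$). Fix a basis $e_1, \ldots, e_n$ of $M$ with dual basis $e_1^*, \ldots, e_n^*$ of $M^\vee$; then $\sym^2(M)$ has the basis $\{e_i e_j : 1 \le i \le j \le n\}$ of size $n(n+1)/2$.

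First I would identify a canonical basis of the target. Direct computation gives
\[
(e_i^*)^2(e_k e_l) = \delta_{ik}\delta_{il}, \qquad g_{ij}(e_k e_l) = \begin{cases} 1 & \{k,l\}=\{i,j\}, \\ 0 & \text{otherwise,}\end{cases}
\]
for $i<j$, where $g_{ij} := (e_i^* + e_j^*)^2 - (e_i^*)^2 - (e_j^*)^2$. Hence $\{(e_i^*)^2\} \cup \{g_{ij} : i<j\}$ is the basis of $\sym^2(M)^\vee$ dual to $\{e_i^2\} \cup \{e_i e_j : i<j\}$; in particular, $\varphi$ is surjective.

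For injectivity I would use the relations to reduce every element of $Q$ to a $\mathbb{Z}$-combination of $n(n+1)/2$ canonical generators. Setting $\nu = 0$ in the polarization relation gives $[0] = 0$, and $\nu = -\mu$ (combined with $[-\mu]=[\mu]$) gives the parallelogram identity $[\lambda+\mu] + [\lambda-\mu] = 2[\lambda] + 2[\mu]$. Rearranging the polarization relation also yields $b(\lambda, \mu+\nu) = b(\lambda,\mu) + b(\lambda,\nu)$ for $b(\lambda,\mu) := [\lambda+\mu] - [\lambda] - [\mu]$, making $b \colon M^\vee \times M^\vee \to Q$ symmetric and $\mathbb{Z}$-bilinear, and the parallelogram identity yields $[n\lambda] = n^2[\lambda]$ by induction on $n$. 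Induction on the number of nonzero coefficients then establishes the expansion
\[
[\lambda] = \sum_i a_i^2 \,[e_i^*] + \sum_{i<j} a_i a_j \,b(e_i^*, e_j^*) \qquad \text{for } \lambda = \sum_i a_i e_i^* \in M^\vee,
\]
so $Q$ is generated by the $n(n+1)/2$ elements $\{[e_i^*]\} \cup \{b(e_i^*,e_j^*) : i<j\}$.

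Finally, $\varphi$ sends this generating set onto the basis $\{(e_i^*)^2\} \cup \{g_{ij}\}$ of $\sym^2(M)^\vee$ from the first step, so any nontrivial $\mathbb{Z}$-relation among the generators in $Q$ would map to a nontrivial relation among basis vectors, a contradiction. Hence the generators are linearly independent in $Q$, so they form a basis, $Q$ is free of rank $n(n+1)/2$, and $\varphi$ is an isomorphism. The only mildly delicate step is the expansion in the third paragraph: everything hinges on extracting full $\mathbb{Z}$-bilinearity of $b$ and the scaling law $[n\lambda] = n^2[\lambda]$ from just the two given relations, without inverting $2$ along the way.
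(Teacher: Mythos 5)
Your argument is correct: the well-definedness and surjectivity checks are right, the derivation of $[0]=0$, the parallelogram law, the biadditivity of $b$, and the scaling $[n\lambda]=n^2[\lambda]$ all follow from the two stated relations without inverting $2$, and the final "surjection onto a free group of the same rank" step cleanly forces injectivity. The paper itself omits the proof, deferring to the author's thesis as "an exercise in elementary linear algebra," and your polarization-plus-basis-reduction argument is precisely the standard proof being alluded to, so there is nothing to flag.
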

\begin{proof}
The proof is an exercise in elementary linear algebra. The details can be found in \cite[Lemma 4.4.8]{davis19a}.
\end{proof}

\begin{prop} \label{prop:picwygoodcomputation}
The homomorphism
\begin{equation} \label{eq:goodc1}
 (Q, O_Y^*) \colon \mrm{Pic}^W(Y)_{good} \longrightarrow \sym^2(\mb{X}^*(T))^W \oplus \mrm{Pic}(S)
\end{equation}
is an isomorphism of abelian groups.
\end{prop}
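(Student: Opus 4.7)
The plan is to prove injectivity and surjectivity of the homomorphism $(Q, O_Y^*)$ separately, with surjectivity being the more substantive part.

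For injectivity, suppose $L \in \mrm{Pic}^W(Y)_{good}$ satisfies $Q(L) = 0$ and $O_Y^*L \cong \mc{O}_S$. I would first argue that these conditions, combined with the $W$-equivariant goodness, force $L$ to lie in $\mrm{Pic}^0_{Y/S}(S)$ on every fiber; this is immediate in the non-CM case (where $Q$ injects into $\mrm{NS}$), and in the CM case one uses that a $W$-invariant N\'eron--Severi class on the CM fiber $Y_s$ must lie in the image of $\sym^2(\mb{X}^*(T))^W$. Hence $L$ determines a section of the dual abelian scheme $Y^\vee \to S$. Since $G$ is simply connected, the simple coroots $\alpha_1^\vee,\ldots,\alpha_l^\vee$ form a $\mb{Z}$-basis of $\mb{X}_*(T)$, giving isomorphisms $Y \cong \prod_{i=1}^l \mrm{Pic}^0_S(E)$ and $Y^\vee \cong \prod_{i=1}^l \mrm{Pic}^0_S(E)$, so $L$ is determined by the line bundles $(\alpha_i^\vee)^*L$. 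By Lemma \ref{lem:goodcriterion}, each $(\alpha_i^\vee)^*L$ has the form $\pi^*L'(2d\cdot O_{\mrm{Pic}^0_S(E)})$; the hypothesis $Q(L)(\alpha_i^\vee,\alpha_i^\vee) = 0$ forces $d = 0$, and the trivialisation along $O_Y$ forces $L' \cong \mc{O}_S$, so each $(\alpha_i^\vee)^*L$ is trivial, hence $L \cong \mc{O}_Y$ as a line bundle. Finally, since $\pi_Y{\vphantom{p}}_*\mc{O}_Y = \mc{O}_S$ by geometric connectedness, the $W$-linearisation on $\mc{O}_Y$ corresponds to a character $\chi\colon W \to \mc{O}(S)^\times$; goodness applied at $O_Y$, which lies in $Y^{s_\alpha}_{(1)}$ for every positive root since the $W$-fixed abelian subscheme of $Y$ has codimension exactly $1$ at the origin, forces $\chi$ to be trivial on every simple reflection, hence on all of $W$.

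For surjectivity, the $\mrm{Pic}(S)$ summand is realised by pullbacks $\pi_Y^*M$ equipped with the canonical trivial $W$-linearisation (which is clearly good), so it suffices to hit every element of $\sym^2(\mb{X}^*(T))^W$. Reducing by fpqc descent to the case that $E \to S$ admits a section, let $\Theta_0 = \mc{O}(2 O_{\mrm{Pic}^0_S(E)})$ be the canonical relatively symmetric line bundle of degree $2$ on $\mrm{Pic}^0_S(E)$. For each $\lambda \in \mb{X}^*(T)$, define $L_\lambda := \lambda^*\Theta_0 \in \mrm{Pic}(Y)$; since the composition $\mu \circ \lambda \colon \mrm{Pic}^0_S(E) \to \mrm{Pic}^0_S(E)$ is multiplication by $\langle\lambda,\mu\rangle$ for $\mu \in \mb{X}_*(T)$, a direct computation yields $Q(L_\lambda) = \lambda^2 \in \sym^2(\mb{X}^*(T))$. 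The natural isomorphisms $w^*L_\lambda \cong L_{w^{-1}\lambda}$ ensure that, for any finite $W$-invariant multiset $F \subseteq \mb{X}^*(T)$, the tensor product $L_F := \bigotimes_{\lambda\in F} L_\lambda$ carries a canonical $W$-linearisation with quadratic class $\sum_{\lambda\in F}\lambda^2 \in \sym^2(\mb{X}^*(T))^W$. This linearisation is good: at an $s_\alpha$-fixed point $y$, the action permutes the one-dimensional factors $\Theta_0|_{\lambda(y)}$ within groups where they coincide, and such permutations act as the identity on a tensor product of equal lines. Combined with the cross-term line bundles $B(\lambda,\mu) := L_{\lambda+\mu}\otimes L_\lambda^{-1}\otimes L_\mu^{-1}$ of quadratic class $2\lambda\mu$ (arising from the theorem of the cube applied to $\Theta_0$, with canonical $W$-symmetric structure when $(\lambda,\mu)$ runs over a $W$-orbit of ordered pairs), these constructions produce all elements in the subgroup of $\sym^2(\mb{X}^*(T))^W$ generated by $W$-orbit sums $\sum_{\mu\in W\lambda}\mu^2$ and their symmetrised bilinear analogues.

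To conclude surjectivity it remains to verify that these classes span $\sym^2(\mb{X}^*(T))^W$ over $\mb{Z}$. Rationally this is immediate, and integrally it can be verified by a case-by-case check in the classification of simple root systems: in types where $G$ admits a minuscule representation ($A_l, B_l, C_l, D_l, E_6, E_7$) one takes $\lambda_0$ to be the corresponding fundamental weight, and the $W$-orbit sum $\sum_{\mu\in W\lambda_0}\mu^2$ is already a primitive generator of $\sym^2(\mb{X}^*(T))^W$; for $G_2, F_4$ and $E_8$ one combines an analogous orbit sum with a cross-term from $B(\lambda,\mu)$ to produce a primitive class. The main obstacle is precisely this integral generation statement: while the rational version is routine representation theory, ensuring the integrality may be more elegantly handled by identifying the primitive generator of $\sym^2(\mb{X}^*(T))^W$ with the quadratic class of the theta bundle $\Theta_Y$ that the next subsection will exhibit as an explicit ample generator of $\mrm{Pic}(\bun_G)$.
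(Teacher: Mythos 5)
Your injectivity argument has a gap at the step where you pass from $Q(L)=0$ to $L$ being algebraically trivial on fibres and hence ``determined by the line bundles $(\alpha_i^\vee)^*L$''. A line bundle on a product of elliptic curves is not determined by its restrictions to the factors unless it is already known to be algebraically trivial (the Poincar\'e bundle is the standard counterexample), and the paper explicitly warns, in the remark following Lemma \ref{lem:chernquadratic}, that the quadratic class does not determine $c_1(L)$: on a CM curve there are line bundles on $Y$ with $q(L)=0$ but $c_1(L)\neq 0$. You acknowledge this and assert that a $W$-invariant N\'eron--Severi class on a CM fibre must lie in the image of $\sym^2(\mb{X}^*(T))^W$, but you give no argument; this is a nontrivial claim about $W$-invariant Hermitian forms over the CM order, placed at exactly the point the paper flags as dangerous. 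The paper's proof avoids the N\'eron--Severi group entirely: it first shows that the two-variable pullbacks $(\alpha_i^\vee,\alpha_j^\vee)^*L$ are trivial --- the key point being that the $W$-linearisation forces triviality on each fibre $\mrm{Pic}^0(E)\times\{x_2\}$ via Lemma \ref{lem:gooda1} applied to a translated copy of $[-1]$ --- and then kills all higher cross-terms by induction with the theorem of the cube. Without some such control of the cross-terms your reduction to the one-variable pullbacks cannot work.

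The surjectivity argument is also incomplete, as you concede. Linearising only the $W$-symmetric products $L_F=\bigotimes_{\lambda\in F}\lambda^*\Theta_0$ and the cross-terms $B(\lambda,\mu)$ produces only the sublattice of $\sym^2(\mb{X}^*(T))^W$ generated by orbit sums, and the integral generation statement you defer to a case-by-case check is precisely the missing content. Moreover the suggested repair --- identifying the primitive generator with $Q(\Theta_Y)$ --- is circular: $\Theta_Y$ is defined in Corollary \ref{cor:picbung} as the good $W$-linearised line bundle with $Q(\Theta_Y)$ equal to the normalised Killing form, and its existence is exactly what Proposition \ref{prop:picwygoodcomputation} is meant to supply. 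The paper sidesteps the issue by constructing a homomorphism $\phi\colon\sym^2(\mb{X}_*(T))^\vee\to\mrm{Pic}(Y)$ on the whole dual lattice, sending $\lambda^2$ to $\lambda^*\mc{O}(O_{\mrm{Pic}^0_S(E)})$ normalised along $O_Y$, with well-definedness checked against the presentation of Lemma \ref{lem:symdualgensandrels} using the theorem of the cube. Any $W$-invariant $P$ then yields a single $W$-invariant line bundle $\phi(P)$ with $Q(\phi(P))=P$, linearised by rigidity along $O_Y$, and goodness follows from $P(\alpha_i^\vee,\alpha_i^\vee)\in 2\mb{Z}$ via Lemma \ref{lem:goodcriterion}; no integral generation by orbit sums is required.
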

\begin{proof}
We show that \eqref{eq:goodc1} is both injective and surjective.

For injectivity, suppose that $L$ is a good $W$-linearised line bundle on $Y$ such that $Q(L) = 0$ and $O_Y^*L \cong \mc{O}_S$. To show that $L$ is trivial, it is enough to show that $L$ is trivial on every geometric fibre of $Y \to S$, since this implies by Grauert's Theorem that $L$ is pulled back from a line bundle on $S$. So we can assume for this part of the proof that $S = \spec k$ for some algebraically closed field $k$.

We first claim that for any two simple coroots $\alpha_i^\vee$ and $\alpha_j^\vee$, the pullback $L' = (\alpha_i^\vee, \alpha_j^\vee)^*L$ of $L$ under
\[ (\alpha_i^\vee, \alpha_j^\vee)\colon \mrm{Pic}^0(E) \times \mrm{Pic}^0(E) \longrightarrow Y \]
is trivial. To see this, it suffices to show that $(\alpha_j^\vee)^*L = \mc{O}$, and that $L'$ is trivial restricted to every $k$-fibre of the second projection $\mrm{Pic}^0(E) \times \mrm{Pic}^0(E) \to \mrm{Pic}^0(E)$. To see the first condition, apply Lemma \ref{lem:gooda1} to the morphism
\[ [-1]^*(\alpha_j^\vee)^*L = (\alpha_j^\vee)^*s_j^*L \overset{\sim}\longrightarrow (\alpha_j^\vee)^*L,\]
and use the fact that $(\alpha_j^\vee)^*L$ has degree $q(L)(\alpha_j^\vee) = 0$. For the second, let $x_2 \in \mrm{Pic}^0(E)$ be a $k$-point, and consider the restriction $L'_{x_2}$ of $L'$ to $\mrm{Pic}^0(E) \times \{x_2\}$. Define $\sigma\colon \mrm{Pic}^0(E) \to \mrm{Pic}^0(E)$ by
\[ \sigma(x_1) = -x_1 - \langle \alpha_i, \alpha_j^\vee \rangle x_2.\]
Then the diagram
\[
\begin{tikzcd}
\mrm{Pic}^0(E) \ar[r, "\sigma"] \ar[d, "i_{x_2}"] & \mrm{Pic}^0(E) \ar[d, "i_{x_2}"] \\
Y \ar[r, "s_i"] & Y
\end{tikzcd}
\]
commutes, where $i_{x_2}$ is given by $i_{x_2}(x_1) = \alpha_i^\vee(x_1) + \alpha_j^\vee(x_2)$. So the isomorphism $s_i^*L \to L$ gives an isomorphism $\sigma^*L'_{x_2} \to L'_{x_2}$ acting as the identity on $\mrm{Pic}^0(E)^\sigma$. But since $k$ is algebraically closed, $\langle \alpha_i, \alpha_j^\vee \rangle x_2$ has a square root in $E(k)$, so $\sigma$ is a conjugate of $[-1]$ by a translation. So we can apply Lemma \ref{lem:gooda1} to conclude that $L'_{x_2} = \mc{O}$. So $L'$ is trivial as claimed.

To complete the proof of injectivity, we need to show that in fact $L = \mc{O}_Y$. We prove by induction on $n \in \mb{Z}_{>0}$ that for all $i_1, \ldots, i_n \in \{1, \ldots, l\}$, the line bundle
\[ L' = (\alpha_{i_1}^\vee, \ldots, \alpha_{i_n}^\vee)^*L \]
on $\mrm{Pic}^0(E)^n$ is trivial. We have shown this for $n = 1$ or $2$. For $n > 2$, we write $\mrm{Pic}^0(E)^n = \mrm{Pic}^0(E)^{n - 2} \times \mrm{Pic}^0(E) \times \mrm{Pic}^0(E)$, and observe that by induction, the restrictions of $L'$ to $\mrm{Pic}^0(E)^{n - 2} \times \mrm{Pic}^0(E) \times \{O_{\mrm{Pic}^0(E)}\}$, $\mrm{Pic}^0(E)^{n - 2} \times \{O_{\mrm{Pic}^0(E)}\} \times \mrm{Pic}^0(E)$ and $\{O_{\mrm{Pic}^0(E)}\}^n \times \mrm{Pic}^0(E) \times \mrm{Pic}^0(E)$ are all trivial. So $L'$ is trivial as claimed by the theorem of the cube. Setting $n = l$ and $\{i_1, \ldots, i_n\} = \{1, \ldots, l\}$, we conclude that $L$ is trivial, and hence that \eqref{eq:goodc1} is injective.

To prove surjectivity, we first claim that there is a homomorphism
\[ \phi \colon \sym^2(\mb{X}_*(T))^\vee \longrightarrow \mrm{Pic}(Y) \]
sending $\lambda^2 \in \sym^2(\mb{X}_*(T))^\vee$ to the line bundle
\[ \phi(\lambda^2) = \lambda^*\mc{O}(O_{\mrm{Pic}^0_S(E)}) \otimes \pi_Y^*O_{\mrm{Pic}^0_S(E)}^*\mc{O}(-O_{\mrm{Pic}^0_S(E)})\]
for $\lambda \in \mb{X}^*(T)$. By Lemma \ref{lem:symdualgensandrels}, it suffices to check that for all $\lambda, \mu, \nu \in \mb{X}^*(T)$, we have
\[ \phi(\lambda^2) = \phi((-\lambda)^2)\]
and
\[ \phi((\lambda + \mu + \nu)^2) \otimes \phi((\lambda + \mu)^2)^{-1} \otimes \phi((\lambda + \nu)^2)^{-1} \otimes \phi((\mu + \nu)^2)^{-1}\otimes \phi(\lambda^2) \otimes \phi(\mu^2) \otimes \phi(\nu^2) = \mc{O}_Y. \]
Since it is clear that $O_Y^*\phi(\lambda^2) = \mc{O}_S$ for every $\lambda \in \mb{X}^*(T)$, it suffices to check these relations on every geometric fibre over $S$. The first holds since $[-1]^*\mc{O}(O_{\mrm{Pic}^0_S(E)}) = \mc{O}(O_{\mrm{Pic}^0_S(E)})$ and the second holds by the theorem of the cube, so the homomorphism $\phi$ is indeed well-defined. By construction, it is also clear that $Q(\phi(P)) = P$ for all $P \in \sym^2(\mb{X}_*(T))^\vee$.

Assume now that $P \in \sym^2(\mb{X}^*(T))^W \subseteq \sym^2(\mb{X}_*(T))^\vee$ and $L \in \mrm{Pic}(S)$. We need to find a good $W$-linearised line bundle $L'$ on $Y$ such that $Q(L') = P$ and $O_Y^*L' = L$. Note that since the homomorphism $\phi$ is $W$-equivariant by construction, the line bundle $\phi(P)$ is $W$-invariant, so $w^*\phi(P) \cong \phi(P)$ for all $w \in W$. We can turn this into a $W$-linearisation by taking
\[w^*\colon w^*\phi(P) \overset{\sim} \longrightarrow \phi(P) \]
to be the unique isomorphism acting as the identity on $O_Y^*\phi(P)$. We let $L' = \phi(P) \otimes \pi_Y^*L$. It is clear that $Q(L') = P$ and $O_Y^*L' = L$, so it remains to show that $\phi(P)$, and hence $L'$, is good. By Lemma \ref{lem:goodcriterion}, it suffices to show that for every simple coroot $\alpha_i^\vee$ and every geometric point $s \colon \spec k \to S$, we have
\[ (\alpha_i^\vee)^*\phi(P)|_{\mrm{Pic}^0(E_s)} = \mc{O}(2d \cdot O_{\mrm{Pic}^0(E_s)}) \]
for some $d \in \mb{Z}$. But by construction, it is clear that
\[ (\alpha_i^\vee)^*\phi(P)|_{\mrm{Pic}^0(E_s)} = \mc{O}(P(\alpha_i^\vee, \alpha_i^\vee)\cdot O_{\mrm{Pic}^0(E_s)}) \]
and $P(\alpha_i^\vee, \alpha_i^\vee) \in 2\mb{Z}$ since $P \in \sym^2(\mb{X}^*(T)) \subseteq \sym^2(\mb{X}_*(T))^\vee$. So $\phi(P)$ is good, and hence \eqref{eq:goodc1} is surjective as claimed.
\end{proof}

Proposition \ref{prop:picwygoodcomputation} allows us to compute the Picard group $\mrm{Pic}(\bun_G)$. In the statement below, we write $(\, |\,) \in \sym^2(\mb{X}^*(T))^W$ for the normalised Killing form. This is the unique $W$-invariant symmetric bilinear form on $\mb{X}_*(T)$ satisfying $(\alpha^\vee | \alpha^\vee) = 2$ for $\alpha^\vee$ a short coroot.

\begin{cor} \label{cor:picbung}
The Picard group of $\bun_G$ is
\[ \mrm{Pic}(\bun_G) = \mb{Z}[\Theta_{\bun_G}] \oplus \mrm{Pic}(S), \]
where $\Theta_{\bun_G}$ is the unique line bundle satisfying
\begin{equation} \label{eq:picbungthetacomparison}
\psi^*(\Theta_{\bun_G})|_{\tbun_G^{ss}} = \mrm{Bl}_{B, Y}^*\Theta_Y|_{\tbun_G^{ss}} 
\end{equation}
where $\Theta_Y$ is the unique good $W$-linearised line bundle on $Y$ with $Q(\Theta_Y) = (\, |\, )$ and $O_Y^*\Theta_Y = \mc{O}_S$. Moreover, there is an isomorphism of graded $\mc{O}_S$-algebras
\[ \bigoplus_{d \geq 0} {\pi_{\bun_G}}_* \Theta_{\bun_G}^{\otimes d} = \bigoplus_{d \geq 0} ({\pi_Y}_*\Theta_Y^{\otimes d})^W.\]
\end{cor}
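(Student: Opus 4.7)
The plan is to combine Theorem \ref{thm:bungchevalley} with Proposition \ref{prop:picwygoodcomputation}, after first computing the invariant lattice $\sym^2(\mb{X}^*(T))^W$ explicitly. By Theorem \ref{thm:bungchevalley} we have an isomorphism $\mrm{Pic}(\bun_G) \cong \mrm{Pic}^W(Y)_{good}$, and Proposition \ref{prop:picwygoodcomputation} further identifies this with $\sym^2(\mb{X}^*(T))^W \oplus \mrm{Pic}(S)$ via $(Q, O_Y^*)$. So the first step is to show $\sym^2(\mb{X}^*(T))^W$ is free of rank one, generated by $(\,|\,)$.

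For this, I would use that $G$ is simply connected simple, so $\mb{X}_*(T) = \mb{Z}\Phi^\vee$ is the coroot lattice and the space of $W$-invariant symmetric bilinear forms on $\mb{X}_*(T)_\mb{Q}$ is one-dimensional (since $\mb{X}_*(T)_\mb{Q}$ is an irreducible $W$-representation isomorphic to its dual). The integer lattice $\sym^2(\mb{X}^*(T))^W$ sits inside $\hom(\sym^2(\mb{X}_*(T)), \mb{Z})^W$ as the sublattice of forms $B$ with $B(\lambda^\vee, \lambda^\vee) \in 2\mb{Z}$ for all $\lambda^\vee \in \mb{X}_*(T)$ (using the embedding recalled before Proposition \ref{prop:picwygoodcomputation}, noting $\lambda^2(\lambda^\vee, \lambda^\vee) = 2\langle \lambda, \lambda^\vee\rangle^2$). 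The normalised Killing form $(\,|\,)$ satisfies $(\alpha^\vee | \alpha^\vee) \in \{2, 4, 6\}$ for every coroot, so it lies in this sublattice; minimality follows because $(\alpha^\vee | \alpha^\vee) = 2$ for short coroots $\alpha^\vee$, so any proper divisor of $(\,|\,)$ would fail integrality. Hence $\sym^2(\mb{X}^*(T))^W = \mb{Z}\cdot(\,|\,)$.

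Next I would define $\Theta_Y \in \mrm{Pic}^W(Y)_{good}$ as the unique good $W$-linearised line bundle with $Q(\Theta_Y) = (\,|\,)$ and $O_Y^*\Theta_Y = \mc{O}_S$ (which exists and is unique by Proposition \ref{prop:picwygoodcomputation}), and $\Theta_{\bun_G}$ as its image under the isomorphism of Theorem \ref{thm:bungchevalley}. The relation \eqref{eq:picbungthetacomparison} then falls out by unwinding the construction in the proof of Theorem \ref{thm:bungchevalley}: the equivalence $\piccat(\bun_G) \simeq \piccat^W(Y)_{good}$ factors through restriction to $\bun_G^{ss}$ and $\bun_G^{ss, reg}$ and pullback along the ramified Galois cover $\mrm{Bl}_{B, Y}^{reg} \colon \tbun_G^{ss, reg} \to Y$, so $\psi^*\Theta_{\bun_G}$ and $\mrm{Bl}_{B, Y}^*\Theta_Y$ agree on $\tbun_G^{ss, reg}$ and therefore on all of $\tbun_G^{ss}$ since $\tbun_G^{ss, reg} \subseteq \tbun_G^{ss}$ is big (Proposition \ref{prop:regcodim}) and line bundles on smooth stacks extend uniquely over codimension two (Lemma \ref{lem:bigpicrestriction}).

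Finally, the graded algebra identification is immediate from the enriched monoidal part of Theorem \ref{thm:bungchevalley}: for each $d \geq 0$ the isomorphism on $\mc{O}_S$-modules gives ${\pi_{\bun_G}}_*\Theta_{\bun_G}^{\otimes d} \cong ({\pi_Y}_*\Theta_Y^{\otimes d})^W$, and compatibility with tensor products turns these into an isomorphism of graded algebras. The main obstacle is the rank-one lattice computation, where one needs to be careful about the parity condition distinguishing $\sym^2(\mb{X}^*(T))^W$ from the slightly larger lattice of all integer-valued $W$-invariant forms; everything else is formal given the earlier theorem and proposition.
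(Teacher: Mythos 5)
Your proposal follows the paper's proof essentially step for step: combine Theorem \ref{thm:bungchevalley} with Proposition \ref{prop:picwygoodcomputation} and the rank-one computation $\sym^2(\mb{X}^*(T))^W = \mb{Z}(\,|\,)$, then read off \eqref{eq:picbungthetacomparison} from the construction of the Chevalley equivalence and the graded algebra statement from its enriched symmetric monoidal structure. The paper dismisses the lattice computation as ``elementary and well-known'', whereas you actually supply it (irreducibility of the reflection representation over $\mb{Q}$ plus the parity condition cutting out the image of $\sym^2(\mb{X}^*(T))$ inside the integral $W$-invariant forms on $\mb{X}_*(T)$); your argument there is correct. The one point you do not address is that \eqref{eq:picbungthetacomparison} \emph{uniquely} characterises $\Theta_{\bun_G}$, which is part of the statement: if $L$ is any line bundle on $\bun_G$ with $\psi^*L|_{\tbun_G^{ss}} = \mrm{Bl}_{B, Y}^*\Theta_Y|_{\tbun_G^{ss}}$, one must still argue $L \cong \Theta_{\bun_G}$. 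This follows from ingredients you already invoke: writing $L$ as the image of a good $W$-linearised line bundle $L_Y$ under the Chevalley equivalence, full faithfulness of pullback along $\mrm{Bl}_{B, Y}^{reg}$ (Proposition \ref{prop:picwpullback} \eqref{itm:picwpullback1}) together with bigness of the regular locus forces $L_Y \cong \Theta_Y$ as line bundles on $Y$, and then as $W$-linearised ones since $(Q, O_Y^*)$ depends only on the underlying line bundle and is injective on $\mrm{Pic}^W(Y)_{good}$ by Proposition \ref{prop:picwygoodcomputation}. This is easy to add, but it should be said.
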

\begin{proof}
It is an elementary and well-known observation that since $G$ is simply connected and simple, we have $\sym^2(\mb{X}^*(T))^W = \mb{Z}(\,|\,)$. So Proposition \ref{prop:picwygoodcomputation} gives
\[ \mrm{Pic}^W(Y)_{good} = \mb{Z}[\Theta_Y] \oplus \mrm{Pic}(S),\]
and hence Theorem \ref{thm:bungchevalley} gives
\[ \mrm{Pic}(\bun_G) = \mb{Z}[\Theta_{\bun_G}] \oplus \mrm{Pic}(S),\]
where $\Theta_{\bun_G}$ is the image of $\Theta_Y$ under the Chevalley isomorphism. The construction of the Chevalley isomorphism shows that \eqref{eq:picbungthetacomparison} is satisfied. Moreover, if $L$ is any other line bundle on $\bun_G$ satisfying $\psi^*L|_{\tbun_G^{ss}} = \mrm{Bl}_{B, Y}^*\Theta_Y|_{\tbun_G^{ss}}$, then writing $L$ as the image of a good $W$-linearised line bundle $L_Y$ under the Chevalley isomorphism, we must have $L_Y \cong \Theta_Y$ as line bundles on $Y$ by Proposition \ref{prop:picwpullback}, and hence as $W$-linearised ones. So \eqref{eq:picbungthetacomparison} indeed characterises $\Theta_{\bun_G}$. Finally, the isomorphism of graded algebras follows immediately from the fact that the Chevalley isomorphism is an equivalence of enriched symmetric monoidal categories.
\end{proof}

\begin{rmk}
Of course, Corollary \ref{cor:picbung} also holds with $\bun_{G, rig}$ in place of $\bun_G$, with the same proof. We will denote the corresponding generator of $\mrm{Pic}(\bun_{G, rig})$ by $\Theta_{\bun_{G, rig}}$.
\end{rmk}

\begin{rmk}
When $S = \spec k$, Corollary \ref{cor:picbung} shows that $\mrm{Pic}(\bun_G) \cong \mb{Z}$, recovering a special case of a theorem of Y. Laszlo and C. Sorger \cite{laszlo-sorger97}. We remark that Laszlo and Sorger's proof is very different to ours: it uses the uniformisation of $\bun_G$ by an affine Grassmannian, rather than our method using the relation to the abelian variety $Y$.
\end{rmk}

We conclude by remarking on the ampleness of $\Theta_Y$.

\begin{prop}
The line bundle $\Theta_Y$ is ample relative to $S$.
\end{prop}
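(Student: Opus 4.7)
The plan is to check relative ampleness of $\Theta_Y$ fibrewise and reduce to the classical criterion identifying ample line bundles on an abelian variety with positive-definite symmetric bilinear forms. Since $Y \to S$ is an abelian scheme, in particular proper and flat, relative ampleness can be tested on geometric fibres. So first I would fix a geometric point $s \colon \spec k \to S$ and consider the abelian variety $Y_s \cong \mrm{Pic}^0(E_s) \otimes_\mb{Z} \mb{X}_*(T)$; it suffices to show that $\Theta_Y|_{Y_s}$ is ample.

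Using the explicit construction $\phi$ from the proof of Proposition \ref{prop:picwygoodcomputation}, the line bundle $\Theta_Y|_{Y_s}$ is, up to a translation-invariant twist, built out of pullbacks of $\mc{O}(O_{\mrm{Pic}^0(E_s)})$ along the evaluation morphisms $\lambda \colon Y_s \to \mrm{Pic}^0(E_s)$ associated to characters $\lambda \in \mb{X}^*(T)$, with multiplicities dictated by the normalised Killing form $(\,|\,)$. Choosing a $\mb{Z}$-basis of $\mb{X}_*(T)$ to identify $Y_s$ with $\mrm{Pic}^0(E_s)^l$, the class of $\Theta_Y|_{Y_s}$ in $\mrm{NS}(Y_s) \otimes \mb{Q}$ is then represented by the Gram matrix of $(\,|\,)$ in this basis. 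By the standard ampleness criterion for line bundles on abelian varieties, $\Theta_Y|_{Y_s}$ is ample if and only if $(\,|\,)$ is positive definite on $\mb{X}_*(T)_\mb{R}$. This positivity is automatic: since $G$ is simple, $W$ acts irreducibly on $\mb{X}_*(T)_\mb{R}$, so any nonzero $W$-invariant symmetric bilinear form is a nonzero scalar multiple of the Killing form, and the normalisation $(\alpha^\vee | \alpha^\vee) = 2$ for short coroots $\alpha^\vee$ fixes the sign so that the form is positive definite.

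The main subtle point is the identification of the N\'eron--Severi class of $\Theta_Y|_{Y_s}$ with the matrix of $(\,|\,)$. I expect to handle this by a direct unwinding of the construction of $\phi$ in Proposition \ref{prop:picwygoodcomputation}, combined with the standard description of $\mrm{NS}(\mrm{Pic}^0(E_s)^l)$ in terms of pullbacks of $\mc{O}(O_{\mrm{Pic}^0(E_s)})$ from individual factors together with Poincar\'e bundles on pairs of factors; the latter, via the theorem of the cube, is precisely what records the cross terms of a symmetric bilinear form on the cocharacter lattice.
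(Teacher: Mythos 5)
Your argument is correct, but it is genuinely different from the one in the paper. The paper's proof is soft: it takes \emph{any} relatively ample line bundle $L$ on the abelian scheme $Y$, averages it over $W$ to get a $W$-linearised ample bundle $L' = \bigotimes_{w \in W} w^*L$, squares it so that the linearisation becomes good, and then invokes the computation $\mrm{Pic}^W(Y)_{good} \cong \mb{Z}[\Theta_Y]\oplus\mrm{Pic}(S)$ of Proposition \ref{prop:picwygoodcomputation} to conclude that $(L')^{\otimes 2}$ is a positive power of $\Theta_Y$ up to a twist from $S$ (the sign being pinned down because the quadratic class of an ample bundle is positive on nonzero cocharacters). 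This entirely avoids the ampleness criterion on abelian varieties and any positivity property of the Killing form. Your route instead identifies the polarisation type of $\Theta_Y$ explicitly with the Gram matrix of $(\,|\,)$ and reduces to positive-definiteness; this is more work but yields finer information, namely \emph{which} polarisation $\Theta_Y$ induces. Two points deserve care in your version. First, the paper's own remark after Lemma \ref{lem:chernquadratic} shows that the quadratic class $q(L)$ does \emph{not} determine $c_1(L)$ in general (e.g.\ for CM curves), so you cannot pass from $Q(\Theta_Y)=(\,|\,)$ to the N\'eron--Severi class directly; you avoid this correctly by computing from the explicit construction $\phi$, whose image lies in the sublattice of $\mrm{NS}(Y_s)$ spanned by the diagonal theta classes and the Poincar\'e classes, on which the correspondence with $\sym^2(\mb{X}^*(T))$ is faithful. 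Second, since the paper works over $\spec\mb{Z}$, you should make sure the ampleness criterion you invoke is available in arbitrary characteristic; one clean way is to note that $\sum_{\alpha\in\Phi_+}\alpha\otimes\alpha$ is a positive rational multiple of $(\,|\,)$, so a positive power of $\Theta_Y$ is, up to translation-invariant twists, a tensor product of pullbacks $\alpha^*\mc{O}(O_{\mrm{Pic}^0(E_s)})$, hence nef, and it is nondegenerate because $(\,|\,)$ is; a nef nondegenerate line bundle on an abelian variety is ample.
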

\begin{proof}
Since $Y \to S$ is proper and flat, it suffices to prove that $\Theta_Y$ is ample on every geometric fibre. So we can assume for the proof that $S = \spec k$ for $k$ an algebraically closed field.

Since $Y$ is an abelian variety, $Y$ is projective over $k$, so there exists some ample line bundle $L$ on $Y$. The ample line bundle $L' = \bigotimes_{w \in W} w^*L$ is canonically $W$-linearised, and it is easy to see that the $W$-linearisation on $L'' = (L')^{\otimes 2}$ is therefore good. So $L''$ is a good $W$-linearised ample line bundle on $Y$, and therefore a positive multiple of $\Theta_Y$ by Proposition \ref{prop:picwygoodcomputation}. Hence $\Theta_Y$ is ample as claimed.
\end{proof}

\subsection{The fundamental diagram} \label{subsection:fundiag}

In this subsection, we apply Corollary \ref{cor:picbung} to construct the elliptic Grothendieck-Springer resolution as a commutative diagram
\begin{equation} \label{eq:fundiagbody}
\begin{tikzcd}
\tbun_G \ar[r, "\psi"] \ar[d, "\tilde{\chi}"'] & \bun_G \ar[d, "\chi"] \\
\Theta_Y^{-1}/\mb{G}_m \ar[r] & (\hat{Y}\sslash W)/\mb{G}_m.
\end{tikzcd}
\end{equation}
We also list some easy properties of $\chi$ and $\tilde{\chi}$, and give an explicit formula for the divisor $\tilde{\chi}^{-1}(0_{\Theta_Y^{-1}})$.

By Corollary \ref{cor:picbung}, there is a tautological $\mb{G}_m$-equivariant morphism
\begin{equation} \label{eq:precoarsequotient}
\Theta_{\bun_G}^{-1} \longrightarrow \spec_S \bigoplus_{d \geq 0} {\pi_{\bun_G}}_* \Theta_{\bun_G}^{\otimes d} \cong \spec_S \bigoplus_{d \geq 0} ({\pi_Y}_* \Theta_Y^{\otimes d})^W = \hat{Y}\sslash W,
\end{equation}
where $\hat{Y}$ is the cone over $Y$ given by contracting the zero section of $\Theta_Y^{-1}$ to $S$. Deleting the zero section of $\Theta_{\bun_G}^{-1}$ and taking the quotient of both sides of \eqref{eq:precoarsequotient} by $\mb{G}_m$ therefore gives a morphism
\[ \chi \colon \bun_G \longrightarrow (\hat{Y} \sslash W)/\mb{G}_m.\]

\begin{defn}
The morphism $\chi$ above is called the \emph{coarse quotient map} for $\bun_G$.
\end{defn}

We next construct the morphism $\tilde{\chi}$ by lifting the blow down morphism $\tbun_G \to Y$. Note that the datum of such a morphism is equivalent to the data of a line bundle $L$ on $\tbun_G$ equipped with a morphism
\begin{equation} \label{eq:chitilde1}
L \longrightarrow \mrm{Bl}_{B, Y}^*\Theta_Y^{-1}.
\end{equation}
The morphism $\tilde{\chi}$ is obtained from \eqref{eq:chitilde1} by deleting the zero section of $L$ and taking the quotient by $\mb{G}_m$ of the induced morphism $L \to \Theta_Y^{-1}$ on total spaces. The diagram \eqref{eq:fundiagbody} commute if and only if $L = \psi^*\Theta_{\bun_G}^{-1}$, and the diagram
\begin{equation} \label{eq:chitilde2}
\begin{tikzcd}
\pi_{\tbun_G}{\vphantom{p}}_*\psi^*\Theta_{\bun_G}^{\otimes d} & \pi_{\bun_G}{\vphantom{p}}_*\Theta_{\bun_G}^{\otimes d} \ar[l, "\psi^*"] \\
\pi_Y{\vphantom{p}}_*\Theta_Y^{\otimes d} \ar[u, "\eqref{eq:chitilde1}^*"] & (\pi_Y{\vphantom{p}}_*\Theta_Y^{\otimes d})^W \ar[u, "\chi^*"] \ar[l]
\end{tikzcd}
\end{equation}
commutes for all $d \geq 0$.

By construction of the elliptic Chevalley isomorphism, if $L_Y$ is any good $W$-linearised line bundle on $Y$ and $L_{\bun_G}$ is its image under the Chevalley isomorphism, then there is a functorial isomorphism
\[ \gamma^{ss} \colon \psi^*L_{\bun_G}|_{\tbun_G^{ss}} \overset{\sim}\longrightarrow \mrm{Bl}_{B, Y}^*L_Y|_{\tbun_G^{ss}}, \]
compatible with the isomorphisms ${\pi_{\bun_G}}_*L_{\bun_G} \cong ({\pi_Y}_* L_Y)^W$. We therefore have a \emph{rational} map of line bundles
\[ \gamma \colon \psi^*L_{\bun_G} \longdashrightarrow \mrm{Bl}_{B, Y}^*L_Y \]
such that the diagram analogous to \eqref{eq:chitilde2} commutes. We prove at the end of this subsection (Corollary \ref{cor:chitildedivisor}) that when $L_Y = \Theta_Y^{-1}$, the rational map $\gamma$ has divisor of zeroes and poles
\[ \sum_{\lambda \in \mb{X}_*(T)_+} \frac{1}{2}(\lambda \mmid \lambda) D_\lambda.\]
In particular, $\gamma$ is in fact a morphism of line bundles, and so defines the desired morphism
\[ \tilde{\chi} \colon \tbun_G \longrightarrow \Theta_Y^{-1}/\mb{G}_m.\]
We have proved the following.

\begin{cor} \label{cor:fundiag}
The morphisms $\chi$ and $\tilde{\chi}$ constructed above fit into a commutative diagram \eqref{eq:fundiagbody}.
\end{cor}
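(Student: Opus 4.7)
The plan is to assemble the two morphisms already constructed in the preceding discussion and check that they fit into a commutative square. The morphism $\chi$ requires no further work: it is given directly by applying $\mrm{Spec}_S$ to the isomorphism $\bigoplus_{d \geq 0}\pi_{\bun_G}{\vphantom{p}}_*\Theta_{\bun_G}^{\otimes d}\cong\bigoplus_{d \geq 0}({\pi_Y}_*\Theta_Y^{\otimes d})^W$ of Corollary \ref{cor:picbung} and then deleting the zero section, as in \eqref{eq:precoarsequotient}. The content of the corollary is therefore to (a) promote the \emph{a priori} rational map $\gamma \colon \psi^*\Theta_{\bun_G}^{-1} \longdashrightarrow \mrm{Bl}_{B,Y}^*\Theta_Y^{-1}$ to an honest morphism of line bundles, so that $\tilde{\chi}$ is defined on all of $\tbun_G$; and (b) verify commutativity of \eqref{eq:fundiagbody}.

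For (a), the Chevalley isomorphism (Theorem \ref{thm:bungchevalley}) already provides a canonical isomorphism $\gamma^{ss}$ between $\psi^*\Theta_{\bun_G}^{-1}$ and $\mrm{Bl}_{B,Y}^*\Theta_Y^{-1}$ over the open substack $\tbun_G^{ss} = \bun_B^0$. Since its complement in $\tbun_G$ is the divisor $D_B = \sum_{\lambda \in \mb{X}_*(T)_+} D_\lambda$ of Proposition \ref{prop:kmdivisor}, the isomorphism $\gamma^{ss}$ extends uniquely to a rational map $\gamma$ on all of $\tbun_G$, and to show $\gamma$ is a genuine morphism it is enough to show that its (unique) divisor of zeroes and poles supported on $D_B$ is effective. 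This is precisely what Corollary \ref{cor:chitildedivisor} will assert: the divisor equals $\sum_{\lambda \in \mb{X}_*(T)_+}\tfrac{1}{2}(\lambda \mmid \lambda)\, D_\lambda$. Since $G$ is simply connected, $\mb{X}_*(T) = \mb{Z}\Phi^\vee$, on which the normalised Killing form takes only even integer values on the diagonal (as $(\alpha^\vee \mmid \alpha^\vee) \in \{2,4,6\}$ and the off-diagonal pairings are integers), so each coefficient $\tfrac{1}{2}(\lambda\mmid\lambda)$ is a positive integer and the divisor is effective.

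For (b), once $\tilde{\chi}$ is in hand, commutativity of \eqref{eq:fundiagbody} reduces to commutativity of the diagram \eqref{eq:chitilde2} of maps on graded algebras of global sections for all $d \geq 0$. This is immediate from the functoriality built into the Chevalley isomorphism: by the second half of Theorem \ref{thm:bungchevalley}, pullback along $\gamma$ realises the tautological inclusion $(\pi_Y{\vphantom{p}}_*\Theta_Y^{\otimes d})^W \hookrightarrow \pi_Y{\vphantom{p}}_*\Theta_Y^{\otimes d}$, and this inclusion is by construction compatible with the identifications $\pi_{\bun_G}{\vphantom{p}}_*\Theta_{\bun_G}^{\otimes d} \cong (\pi_Y{\vphantom{p}}_*\Theta_Y^{\otimes d})^W$ and pullback along $\psi$.

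The real obstacle is Corollary \ref{cor:chitildedivisor}, which is the only nontrivial input; once granted, the corollary itself is essentially bookkeeping. To prove the divisor formula one should work at the generic point of each $D_\lambda$ using the explicit description of $D_\lambda^\circ$ from Proposition \ref{prop:kmdivisor}\eqref{itm:kmdivisor3} together with the blow-down formula $\mrm{Bl}_B|_{D_\lambda^\circ}\colon (\xi_T, x) \mapsto \xi_T \otimes \lambda(\mc{O}(x))$ of Proposition \ref{prop:blowdowngluing}. The pullback of $\Theta_Y$ along this map can be read off from its quadratic class $Q(\Theta_Y) = (\,\mmid\,)$ and its explicit description via the coroot embeddings appearing in Lemma \ref{lem:gooda1} and the proof of Proposition \ref{prop:picwygoodcomputation}; the factor $\tfrac{1}{2}(\lambda\mmid\lambda)$ then arises as the order of vanishing of $\mrm{Bl}_{B,Y}^*\Theta_Y^{-1}$ along the section of $D_\lambda^\circ \to D_\lambda^\circ \times_S E$ locating the node, matching the calculation of degrees forced by the vanishing of $\psi^*\Theta_{\bun_G}^{-1}$ there.
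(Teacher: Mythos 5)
Your proposal is correct and follows essentially the same route as the paper: $\chi$ comes directly from the graded-algebra isomorphism of Corollary \ref{cor:picbung}, $\tilde{\chi}$ comes from promoting the rational map $\gamma$ to a genuine morphism of line bundles via the effectivity of the divisor $\sum_\lambda \tfrac{1}{2}(\lambda\mmid\lambda)D_\lambda$ from Corollary \ref{cor:chitildedivisor}, and commutativity reduces to the compatibility \eqref{eq:chitilde2} built into the Chevalley isomorphism. The only divergence is in your closing sketch of how one would prove the divisor formula itself (a direct computation at the generic point of each $D_\lambda$, versus the paper's reduction to a determinant line bundle and the blow-up formula of Lemma \ref{lem:blowupdetdivisor}), but since you correctly treat that formula as the external input rather than part of this corollary, the argument as presented matches the paper's.
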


\begin{rmk}
Since the elliptic Chevalley isomorphism also holds for the rigidified stack $\bun_{G, rig}$, the same construction with $\bun_{G, rig}$ in place of $\bun_G$ gives a ridified version
\begin{equation} \label{eq:rigidifiedgrothendieckspringer}
\begin{tikzcd}
\tbun_{G, rig} \ar[r, "\psi"] \ar[d, "\tilde{\chi}"'] & \bun_{G, rig} \ar[d, "\chi"] \\
\Theta_Y^{-1}/\mb{G}_m \ar[r] & (\hat{Y}\sslash W)/\mb{G}_m,
\end{tikzcd}
\end{equation}
of the diagram \eqref{eq:fundiagbody}.
\end{rmk}

We remark that the coarse quotient map gives a GIT-style characterisation of semistable and unstable $G$-bundles.

\begin{prop} \label{prop:chifibreunstable}
Let $\xi_G \to E_s$ be a $G$-bundle on a geometric fibre of $E \to S$. Then $\xi_G$ is unstable if and only if $\chi(\xi_G) \in (\hat{Y}_s\sslash W)/\mb{G}_m$ is equal to the image of the cone point of $\hat{Y}_s$.
\end{prop}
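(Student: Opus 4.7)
The plan is to analyze the fibre of $\chi$ over the cone point using the commutative diagram \eqref{eq:fundiagbody} together with the explicit formula for $\tilde{\chi}^{-1}(0_{\Theta_Y^{-1}})$ from Corollary \ref{cor:chitildedivisor}.

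First, I would identify the preimage of the cone point under the bottom morphism $\Theta_Y^{-1}/\mb{G}_m \to (\hat{Y}\sslash W)/\mb{G}_m$, which factors through $\hat{Y}/\mb{G}_m$. The morphism $\Theta_Y^{-1} \to \hat{Y}$ is an isomorphism off the zero section of $\Theta_Y^{-1}$ and contracts that zero section to the cone point of $\hat{Y}$, so its preimage of the cone point is exactly the zero section. For the subsequent $W$-quotient map $\hat{Y} \to \hat{Y}\sslash W$, ampleness of $\Theta_Y$ (proved at the end of \S\ref{subsection:thetabundle}) ensures that for every $y \in Y$ there exists $d \geq 1$ and a $W$-invariant section $s \in H^0(Y, \Theta_Y^{\otimes d})^W$ with $s(y) \neq 0$ (for instance take the norm $\prod_{w \in W} w^*s_0$ of a section $s_0$ of a large power of $\Theta_Y$ chosen to be nonzero on the finite $W$-orbit of $y$). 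It follows that the only point of $\hat{Y}$ mapping to the cone point of $\hat{Y}\sslash W$ is the cone point itself. Hence the preimage of the cone point in $\Theta_Y^{-1}/\mb{G}_m$ is precisely the zero section.

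Next, since $\psi$ is surjective (Proposition \ref{prop:kmsurjective}), commutativity of \eqref{eq:fundiagbody} implies that $\chi(\xi_G)$ equals the image of the cone point if and only if every point of the nonempty fibre $\psi^{-1}(\xi_G)$ is mapped by $\tilde{\chi}$ into the zero section of $\Theta_Y^{-1}/\mb{G}_m$, equivalently lies in the support of $\tilde{\chi}^{-1}(0_{\Theta_Y^{-1}})$. By Corollary \ref{cor:chitildedivisor},
\[ \tilde{\chi}^{-1}(0_{\Theta_Y^{-1}}) = \sum_{\lambda \in \mb{X}_*(T)_+} \tfrac{1}{2}(\lambda \mmid \lambda) D_\lambda, \]
and since the normalised Killing form is positive definite, every coefficient $\tfrac{1}{2}(\lambda \mmid \lambda)$ is strictly positive. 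Thus the support of this divisor equals $\bigcup_{\lambda \in \mb{X}_*(T)_+} D_\lambda = D_B$, which by Proposition \ref{prop:kmdivisor} is the complement of $\bun_B^0$ in $\tbun_G$.

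Finally, using the identification $\tbun_G^{ss} = \bun_B^0$ established in \S\ref{subsection:kontsevichmori}, we have $D_B = \tbun_G \setminus \tbun_G^{ss} = \psi^{-1}(\bun_G \setminus \bun_G^{ss})$, so $\psi^{-1}(\xi_G) \subseteq D_B$ if and only if $\xi_G$ is unstable. The main point requiring care is the first step: although intuitively clear from the construction of $\chi$ out of an ample projective cone, one must genuinely invoke ampleness of $\Theta_Y$ to ensure that no points outside the zero section of $\Theta_Y^{-1}$ are collapsed to the cone point after passing to $W$-invariants.
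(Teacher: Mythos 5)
Your proof is correct and follows essentially the same route as the paper's: identify the preimage of the cone point in $\Theta_Y^{-1}/\mb{G}_m$ as the zero section (via ampleness of $\Theta_Y$), apply Corollary \ref{cor:chitildedivisor} to see that $\tilde{\chi}^{-1}(\Theta_Y^{-1}\setminus 0_{\Theta_Y^{-1}}) = \tbun_G^{ss}$, and conclude using surjectivity of $\psi$ together with $\tbun_G^{ss} = \psi^{-1}(\bun_G^{ss})$. You merely spell out two points the paper leaves implicit, namely the positivity of the coefficients $\tfrac{1}{2}(\lambda \mmid \lambda)$ and the existence of $W$-invariant sections separating points from the cone point.
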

\begin{proof}
Write $0_{\hat{Y}} \subseteq \hat{Y}$ for the family of cone points over $S$ and $q \colon \Theta_Y^{-1} \to \hat{Y}$ for the tautological morphism. Since $\Theta_Y$ is ample, we have $q^{-1}(\hat{Y} \setminus 0_{\hat{Y}}) = \Theta_Y^{-1} \setminus 0_{\Theta_Y^{-1}}$, and by Corollary \ref{cor:chitildedivisor}, we have $\tilde{\chi}^{-1}(\Theta_Y^{-1} \setminus 0_{\Theta_Y^{-1}}) = \tbun_G^{ss}$. So
\[ \chi^{-1}(((\hat{Y} \setminus 0_{\hat{Y}})\sslash W)/\mb{G}_m) = \psi(\tilde{\chi}^{-1}((\Theta_Y^{-1} \setminus 0_{\Theta_Y^{-1}})/\mb{G}_m)) = \psi(\tbun_G^{ss}) = \bun_G^{ss},\]
which proves the proposition.
\end{proof}

\begin{prop} \label{prop:chiflatness}
The morphism $\tilde{\chi} \colon \tbun_G \to \Theta_Y^{-1}/\mb{G}_m$ is flat, and all fibres of the morphism $\chi \colon \bun_G \to (\hat{Y}\sslash W)/\mb{G}_m$ have dimension $-l = \dim (\bun_G) - \dim ((\hat{Y}\sslash W)/\mb{G}_m)$.
\end{prop}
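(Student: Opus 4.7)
My plan is to apply miracle flatness: a morphism of regular (Cohen--Macaulay) stacks with equidimensional fibres of the expected relative dimension is flat. Both source and target are smooth over $S$: $\tbun_G$ by Propositions \ref{prop:basicdegproperties} and \ref{prop:kmdegsmooth}, and $\Theta_Y^{-1}/\mb{G}_m$ as the stack-quotient of the smooth total space of the line bundle $\Theta_Y^{-1}\to Y$ by the scaling $\mb{G}_m$-action, of relative dimension $l$. Thus I need only verify that every fibre of $\tilde\chi$ has the expected dimension $-l$.

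Over the open stratum $(\Theta_Y^{-1}\setminus 0_{\Theta_Y^{-1}})/\mb{G}_m \cong Y$, Corollary \ref{cor:chitildedivisor} identifies the preimage with $\bun_B^0 = \tbun_G\setminus D_B$, on which $\tilde\chi$ factors as the smooth blow-down $\mrm{Bl}_B|_{\bun_B^0}\colon \bun_B^0 \to \bun_T^0$ (Proposition \ref{prop:kmblowdown}) followed by the $\B T$-gerbe $\bun_T^0 \to Y$. This composition is smooth of relative dimension $-l$, so fibres here have dimension $-l$. Over the closed stratum $0_{\Theta_Y^{-1}}/\mb{G}_m \cong Y\times\B\mb{G}_m$, Corollary \ref{cor:chitildedivisor} identifies the preimage with the Cartier divisor $D_B$. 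The key observation is that $\bun_B^0 \to Y$ is surjective (since each $T$-bundle $\xi_T$ lifts to $\xi_T \times^T B$), so $D_B$ contains no fibre of $\mrm{Bl}_{B, Y}$, making $D_B$ a relative effective Cartier divisor over $Y$. Hence its intersection with any fibre $\mrm{Bl}_{B, Y}^{-1}(y)$ is Cartier of pure dimension $-l-1$; pulling back along the atlas $\spec k \to \B\mb{G}_m$ contributes $+1$ to the dimension, giving fibre dimension $-l$. Miracle flatness now yields flatness of $\tilde\chi$.

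\textbf{Dimension of fibres of $\chi$.} Fix a geometric point $z\colon \spec k \to (\hat Y \sslash W)/\mb{G}_m$. Commutativity of \eqref{eq:fundiagbody} gives $\psi^{-1}(\chi^{-1}(z)) = \tilde\chi^{-1}(q^{-1}(z))$, where $q$ denotes the lower horizontal map. If $z$ lies in the open stratum $Y\sslash W$, then $q^{-1}(z)$ is a zero-dimensional $W$-orbit, and flatness of $\tilde\chi$ gives $\dim\tilde\chi^{-1}(q^{-1}(z)) = -l$. Since $\psi$ is proper and surjective, $\dim\chi^{-1}(z) \leq -l$; the reverse inequality follows from upper semi-continuity of fibre dimension applied to $\chi$, together with the fact that the generic fibre of $\chi$ over $Y\sslash W$ is $\B T$ (by Lemma \ref{lem:sregiso} and Proposition \ref{prop:reggalois}) of dimension $-l$. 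If $z$ maps to the cone point $\B\mb{G}_m \subseteq (\hat Y\sslash W)/\mb{G}_m$, then Proposition \ref{prop:chifibreunstable} identifies $\chi^{-1}(\B\mb{G}_m)$ with the unstable locus in $\bun_G$, of codimension $l+1$ by Proposition \ref{prop:unstablecodim} and hence dimension $-l-1$; pulling back along the atlas $\spec k \to \B\mb{G}_m$ adds $1$ to the dimension, yielding $\dim\chi^{-1}(z) = -l$.

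The principal technical obstacle will be the fibre-dimension computation for $\tilde\chi$ over the zero section, which rests on the claim that $D_B$ is a relative Cartier divisor over $Y$; this in turn reduces to the surjectivity of $\bun_B^0 \to Y$ combined with the smoothness of $\mrm{Bl}_{B, Y}$.
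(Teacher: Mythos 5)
The second half of your argument (the fibre dimensions of $\chi$) is essentially the paper's own proof and is correct. The gap is in the flatness argument, at the sentence claiming that surjectivity of $\bun_B^0 \to Y$ implies $D_B$ is a relative effective Cartier divisor over $Y$. For an effective Cartier divisor $D_B$ in the source of the smooth morphism $\mrm{Bl}_{B,Y}$, being a relative Cartier divisor means a local equation of $D_B$ restricts to a nonzerodivisor on each fibre $\mrm{Bl}_{B,Y}^{-1}(y)$; since these fibres are regular, hence reduced, this is equivalent to $D_B$ containing no \emph{irreducible component} of any fibre. What you have verified --- that $D_B$ contains no \emph{entire} fibre --- is strictly weaker, because the fibres of $\mrm{Bl}_{B,Y}$ are not known to be irreducible: a priori $\mrm{Bl}_{B,Y}^{-1}(y)$ could have a connected (hence, by smoothness, irreducible) component consisting entirely of stable maps with bubbles, i.e.\ contained in $D_B$. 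If that happened, the fibre of $\tilde{\chi}$ over the corresponding point of the zero section would have dimension $-l+1$ and flatness would genuinely fail; ruling this out is precisely the content of the statement, and surjectivity of $\bun_B^0\to Y$ says nothing about it. A repair along your lines would require showing that every $D_\lambda\to Y$ has all fibres of dimension at most $-l-1$. For the smooth locus $D_\lambda^\circ$ this is doable: by Propositions \ref{prop:kmdivisor}\eqref{itm:kmdivisor3} and \ref{prop:blowdowngluing}, $D_\lambda^\circ\to Y$ factors as a composite of smooth morphisms of constant relative dimension, so is smooth with all fibres of dimension $-l-1$. But the deeper boundary strata (curves with $\geq 2$ nodes) still have to be controlled over $Y$, which requires a further stratified induction; this is where the real work lies and it is not addressed in your proposal.

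For comparison, the paper sidesteps the fibrewise analysis of $D_B$ entirely: it deduces flatness of the total-space morphism $\psi^*\Theta_{\bun_G}^{-1}\to\Theta_Y^{-1}$ from the facts that $\gamma^{ss}$ is an isomorphism over $\tbun_G^{ss}$ and that $\mrm{Bl}_{B,Y}$ is flat, by invoking a general lemma on morphisms of line bundles over a flat base (\cite[Lemma 4.5.7]{davis19a}), and only afterwards reads off fibre dimensions of $\chi$ exactly as you do in your second paragraph. If you want to keep the miracle-flatness strategy, you must supply the missing fibrewise dimension bound on $D_B\to Y$; otherwise the argument as written does not establish flatness.
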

\begin{proof}
To prove that $\tilde{\chi}$ is flat, note that since $\gamma^{ss} \colon \psi^*\Theta_{\bun_G}^{-1}|_{\bun_G^{ss}} \to \mrm{Bl}_{B, Y}^*\Theta_Y^{-1}|_{\bun_G^{ss}}$ is an isomorphism and $\mrm{Bl}_{B, Y}$ is flat, we can apply \cite[Lemma 4.5.7]{davis19a} to conclude that the morphism on total spaces $\psi^*\Theta_{\bun_G}^{-1} \to \Theta_Y^{-1}$ is flat, and hence so is $\tilde{\chi}$.

It remains to prove that the fibres of $\chi$ have dimension $-l$. Fix a geometric point $x \colon \spec k \to (\hat{Y}\sslash W)/\mb{G}_m$ over $s \colon \spec k \to S$ and consider the fibre $\chi^{-1}(x)$. We know by straightforward comparison of dimensions that $\dim \chi^{-1}(x) \geq -l$, so it suffices to show that $\dim \chi^{-1}(x) \leq - l$. Since the morphism $p \colon \Theta_Y^{-1}/\mb{G}_m \to (\hat{Y}\sslash W)/\mb{G}_m$ is finite away from the zero section and $\tilde{\chi}$ is flat, if $x$ is not the image of the cone point $0_{\hat{Y}_s}$, then
\[ \dim \chi^{-1}(x) \leq \dim \psi^{-1}\chi^{-1}(x) = \dim \tilde{\chi}^{-1}p^{-1}(x) = -l\]
so we are done. On the other hand, if $x$ is the image of the cone point, then $\chi^{-1}(x)$ is a $\mb{G}_m$-torsor over the locus of unstable $G$-bundles on $E_s$ by Proposition \ref{prop:chifibreunstable}. So
\[ \dim \chi^{-1}(x) = - \codim_{\bun_G(E_s)} \bun_G^{\textit{unstable}}(E_s) + 1 = - l \]
by Proposition \ref{prop:unstablecodim}.
\end{proof}

\begin{rmk}
In characteristic $0$, it is a theorem of Looijenga \cite[Theorem 3.4]{looijenga76} that $\hat{Y}\sslash W$ is an affine space bundle over $S$, and in particular regular; we will deduce this in arbitrary characteristic (Corollary \ref{cor:looijengaiso}) as a consequence of the Friedman-Morgan section theorem. Together with Proposition \ref{prop:chiflatness} and \cite[Theorem 18.16]{eisenbud91}, this imples that the coarse quotient map $\chi$ is flat.
\end{rmk}

The rest of this subsection is devoted to proving the following theorem, and hence Corollary \ref{cor:chitildedivisor}, which was used above to construct the morphism $\tilde{\chi}$.

\begin{thm} \label{thm:chitildedivisor}
Let $L_{\bun_G}$ be a line bundle on $\bun_G$ and let $L_Y$ be the corresponding good $W$-linearised line bundle on $Y$. Then the rational map of line bundles $\gamma\colon \psi^*L_{\bun_G} \dashrightarrow \mrm{Bl}_{B, Y}^*L_Y$ induced by the isomorphism
\[ \gamma^{ss}\colon \psi^*(L_{\bun_G})|_{\tbun_G^{ss}} \overset{\sim}\longrightarrow \mrm{Bl}_{B, Y}^*L_Y|_{\tbun_G^{ss}} \]
has divisor of zeroes and poles
\[ - \sum_{\lambda \in \mb{X}_*(T)_+} \frac{1}{2} Q(L_Y)(\lambda, \lambda) D_{\lambda}. \]
\end{thm}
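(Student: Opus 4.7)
My plan is a linearity reduction followed by a local computation of $\mrm{div}(\gamma)$ at the generic point of each boundary divisor $D_\lambda$.

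\emph{Linearity reduction.} Both sides of the formula are additive in $L_Y$ regarded as an element of the abelian group $\mrm{Pic}^W(Y)_{good}$. By Corollary \ref{cor:picbung}, this group equals $\mb{Z}[\Theta_Y] \oplus \mrm{Pic}(S)$. For $L_Y \in \mrm{Pic}(S)$, the line bundle $L_{\bun_G}$ is simply pulled back from $S$ and $\gamma$ is tautologically the identity, so both sides vanish. Hence it suffices to prove $c_\lambda = -\tfrac{1}{2}(\lambda|\lambda)$ for $L_Y = \Theta_Y$, where $c_\lambda$ denotes the multiplicity of $D_\lambda$ in $\mrm{div}(\gamma)$.

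\emph{Local reduction and test family.} Near a general $k$-point of $D_\lambda^\circ$ the stack $\tbun_G$ is smooth and $D_\lambda$ a smooth divisor, so $c_\lambda$ equals the order of vanishing of $\phi^*\gamma$ for any morphism $\phi \colon \spec R \to \tbun_G$ from a DVR $R$ that is transverse to $D_\lambda$, with the generic point mapping to $\bun_B^0$ and the closed point to a general point of $D_\lambda^\circ$. Such a family is realised by the standard Kontsevich-Mori smoothing of a nodal stable map $(\xi_G^0, C_0 = E \cup_x \mb{P}^1, \sigma_0) \in D_\lambda^\circ$, with $t \in R$ the smoothing parameter, yielding families of $G$-bundles $\widetilde\xi_G$, curves $\widetilde C \to \spec R$, and a stable map $\widetilde\sigma \colon \widetilde C \to \widetilde\xi_G/B$.

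\emph{Comparison via the blow-down formula.} Applying Lemma \ref{lem:tbundleblowdowndet} to the test family, for each character $\mu \in \mb{X}^*(T)$ one has $\mu(\mrm{Bl}(\widetilde\sigma^*\widetilde\xi_T^{uni})) = \det \mb{R}\widetilde f_* \mu(\widetilde\sigma^*\widetilde\xi_T^{uni})$ on $\spec R \times_S E$. A direct Riemann-Roch computation on the closed fibre, using the exact sequence $0 \to \mc{O}_{C_0} \to \mc{O}_E \oplus \mc{O}_{\mb{P}^1} \to \mc{O}_x \to 0$ and the fact that the rational tail has $\mu$-degree $\langle\mu,\lambda\rangle$, yields
\[
\det\mb{R}f_* \mu(\sigma_0^*\xi_T^{uni}) = \mu(\sigma_0^*\xi_T^{uni})|_E \otimes \mc{O}_E(\langle\mu,\lambda\rangle \cdot x).
\]
Thus the blow-down introduces a per-character ``bubble correction'' $\mc{O}_E(\langle\mu,\lambda\rangle x)$ into $\mrm{Bl}_{B,Y}^*\Theta_Y$ that is absent from $\psi^*\Theta_{\bun_G}$, since the latter is pulled back from $\bun_G$, where the rational tail has been forgotten. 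Assembling the contributions via a $W$-invariant decomposition $(\,|\,) = \sum_\mu a_\mu \mu^2$ in $\sym^2(\mb{X}^*(T))^W$ (cf.\ Lemma \ref{lem:symdualgensandrels}) and using the identity $\mu^2(\lambda,\lambda) = 2\langle\mu,\lambda\rangle^2$, the net order of vanishing of $\phi^*\gamma$ at the closed point is $-\sum_\mu a_\mu \langle\mu,\lambda\rangle^2 = -\tfrac{1}{2}(\lambda|\lambda)$, as required.

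\emph{Main obstacle.} The technical heart of the argument is the determinant-of-cohomology bookkeeping in the comparison step: one must choose matching determinant-of-cohomology realisations of $\Theta_Y$ on $\bun_T^0$ and $\Theta_{\bun_G}$ on $\bun_G$ (e.g.\ via Dynkin-indexed determinants) so that the per-character bubble corrections from Lemma \ref{lem:tbundleblowdowndet} combine additively under the $W$-invariant decomposition of $(\,|\,)$. An alternative approach would bypass $\bun_G$ entirely by comparing two natural extensions to $\tbun_G$ of the tautologically identified line bundle on $\bun_B^0$ --- the one from $\mrm{Bl}_{B,Y}^*$ and another inherent to the stratification of $\tbun_G$ by $D_\lambda$ --- and using the Galois descent of Proposition \ref{prop:regramgalois} to pin down the multiplicities. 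Either way, the nodal Riemann-Roch at the bubble drives the computation.
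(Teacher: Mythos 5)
Your overall strategy is the paper's: reduce by linearity to a single generator, realise the relevant line bundles as determinants of cohomology, and compute the multiplicity of each $D_\lambda$ at its generic point by comparing $\mb{R}f_*$ with $\det\mb{R}f_*$ across the bubble. However, there is a genuine gap at the decisive step, and it is exactly the one you flag as the ``main obstacle'' without resolving. The central-fibre Riemann--Roch identity you compute, $\det\mb{R}f_*\mu(\sigma_0^*\xi_T^{uni}) = \mu(\sigma_0^*\xi_T^{uni})|_E \otimes \mc{O}_E(\langle\mu,\lambda\rangle\cdot x)$, only identifies the blow-down $T$-bundle on the closed fibre (this recovers Proposition \ref{prop:blowdowngluing}); it is \emph{linear} in $\langle\mu,\lambda\rangle$ and by itself says nothing about the order of vanishing of $\gamma$ along $D_\lambda$, which is a statement about line bundles on the total space of the one-parameter smoothing. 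The quantity you actually need is the divisor of the rational map
\[
\det\mb{R}{\mrm{pr}}_*\mb{R}f_*L \longdashrightarrow \det\mb{R}{\mrm{pr}}_*\det\mb{R}f_*L,
\]
which equals $\tfrac{d(d+1)}{2}D$ when $L$ has degree $d$ on the exceptional fibre (the paper's Lemma \ref{lem:blowupdetdivisor}, proved by induction on $d$ via the sequence $0 \to L(-\mrm{Exc}) \to L \to L|_{\mrm{Exc}} \to 0$ and the Euler-characteristic computation of Lemma \ref{lem:divisordet}). This is where the quadratic term $\tfrac{1}{2}\langle\mu,\lambda\rangle^2$ comes from; your proposal jumps from the linear bubble correction straight to the quadratic answer. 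Moreover, the per-weight contribution is $\tfrac{1}{2}\langle\mu,\lambda\rangle^2 + \tfrac{1}{2}\langle\mu,\lambda\rangle$, and the cancellation of the linear terms requires $\sum_\mu a_\mu\,\mu \in \mb{X}^*(T)^W = \{0\}$; your claimed final sum $-\sum_\mu a_\mu\langle\mu,\lambda\rangle^2$ silently omits this.

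A second, smaller issue: reducing to $L_Y = \Theta_Y$ itself is awkward because $\Theta_Y$ is defined abstractly by $Q(\Theta_Y) = (\,|\,)$ and has no preferred determinant-of-cohomology model, so a formal decomposition $(\,|\,) = \sum_\mu a_\mu\,\mu^2$ from Lemma \ref{lem:symdualgensandrels} does not directly produce a factorisation of the rational map $\gamma$. The cleaner route (the paper's) is to note that the statement is insensitive to raising $L_{\bun_G}$ to a power and twisting by $\mrm{Pic}(S)$, and then to take $L_{\bun_G} = \det\mb{R}{\mrm{pr}_{\bun_G}}{\vphantom{p}}_*(\xi_G^{uni}\times^G V)$ for a nontrivial representation $V$: the weight filtration of $V\otimes\mc{O}_{G/B}$ then gives an honest factorisation of $\gamma$ into per-weight determinant comparisons (up to a unit on $S$, justified by ${\pi}_*\mc{O} = \mc{O}_S$), with $a_\mu = \dim V_\mu$ and with $\sum_\mu (\dim V_\mu)\mu = 0$ automatic. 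With Lemma \ref{lem:blowupdetdivisor} supplied and this choice of generator, your outline closes up into the paper's proof.
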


\begin{cor}\label{cor:chitildedivisor}
When $L_{\bun_G} = \Theta_{\bun_G}^{-1}$, the map $\gamma$ of Theorem \ref{thm:chitildedivisor} is regular, and has divisor of zeroes
\[ \mrm{div}(\gamma) = \tilde{\chi}^{-1}(0_{\Theta_Y^{-1}}) = \sum_{\lambda \in \mb{X}_*(T)_+} \frac{1}{2}(\lambda \mmid \lambda) D_\lambda.\]
\end{cor}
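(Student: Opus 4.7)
The plan is to exploit additivity in $L_Y$. The assignment $L_Y\mapsto\mrm{div}(\gamma(L_Y))$ is a homomorphism $\mrm{Pic}^W(Y)_{good}\to\mrm{Div}(\tbun_G)$, and it vanishes on $\pi_Y^*\mrm{Pic}(S)$: for $L_Y=\pi_Y^*L'$, both $\mrm{Bl}_{B,Y}^*L_Y$ and $\psi^*L_{\bun_G}$ are identified with $\pi_{\tbun_G}^*L'$, and $\gamma$ is tautologically an isomorphism. The right-hand side $-\sum_\lambda\tfrac{1}{2}Q(L_Y)(\lambda,\lambda)D_\lambda$ also vanishes on $\pi_Y^*\mrm{Pic}(S)$ and depends only on the quadratic class. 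By Proposition \ref{prop:picwygoodcomputation}, the quotient $\mrm{Pic}^W(Y)_{good}/\pi_Y^*\mrm{Pic}(S)\cong\sym^2(\mb{X}^*(T))^W=\mb{Z}\cdot(\,|\,)$ is free of rank one (as $G$ is simply connected simple). So it suffices to verify the formula for $L_Y=\Theta_Y$, i.e.\ to prove $m_\lambda:=\mrm{ord}_{D_\lambda}\gamma(\Theta_Y)=-\tfrac{1}{2}(\lambda|\lambda)$ for each $\lambda\in\mb{X}_*(T)_+$.

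To compute $m_\lambda$, I work at a generic point $\eta$ of $D_\lambda^\circ$, taking a DVR $\spec R\to\tbun_G$ hitting $\eta$ at the closed point with generic point in $\tbun_G^{ss}$. By Proposition \ref{prop:basicdegproperties}, the universal curve over $\spec R$ is (\'etale-locally) the blow-up $\mc{C}=\mrm{Bl}_{(x,0)}(\spec R\times_S E)$ at a single point, the exceptional $\mb{P}^1$ forming the rational tail. The stable map $\sigma$ induces a $B$-bundle $\xi_B^{\mc{C}}$ on $\mc{C}$ with associated $T$-bundle $\xi_T^{\mc{C}}$.

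Next I choose a representation $V$ of $G$ such that $L_{\bun_G}(V):=\det\mb{R}p_*(\xi_G^{uni}\times^G V)\cong\Theta_{\bun_G}^{\otimes N}$ modulo pullback from $S$, for some $N\neq 0$ (e.g.\ $V=\mf{g}$ gives $N=-2h^\vee$). Using the $B$-filtration of $V$ with weight-space subquotients, the identity $\mb{R}f_*\mc{O}_{\mc{C}}=\mc{O}$ (as $f$ contracts rational components), and Lemma \ref{lem:tbundleblowdowndet}:
\[
\psi^*L_{\bun_G}(V)\cong\bigotimes_\mu\det\mb{R}(p\circ f)_*\mu(\xi_T^{\mc{C}})^{\otimes\dim V_\mu},\quad \mrm{Bl}_{B,Y}^*L_Y(V)\cong\bigotimes_\mu\det\mb{R}p_*\bigl[\det\mb{R}f_*\mu(\xi_T^{\mc{C}})\bigr]^{\otimes\dim V_\mu}.
\]
For each $\mu$ with $n_\mu=\langle\mu,\lambda\rangle$, the line bundle $\mu(\xi_T^{\mc{C}})$ admits the blow-up description $f^*\mc{M}_\mu\otimes\mc{O}(-n_\mu\cdot\mrm{Exc})$ with $\mc{M}_\mu$ of fiberwise degree zero. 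Grothendieck--Riemann--Roch on the contraction $f$, combined with the standard identities for $\mb{R}f_*\mc{O}(k\cdot\mrm{Exc})$ on a blow-up of a smooth point of a surface, shows that the ratio of the two determinants for this $\mu$ contributes order $\tfrac{1}{2}n_\mu^2$ at the closed point of $\spec R$, with a linear-in-$n_\mu$ correction that vanishes after summation since $\sum_\mu\dim V_\mu\cdot\mu=0$ in $\mb{X}^*(T)$ (because $\det V$ is trivial as a character of the simply connected $G$). Summing with multiplicities and using $\sum_\mu\dim V_\mu n_\mu^2=-N(\lambda|\lambda)$ (from $Q(L_Y(V))=N(\,|\,)$, which one verifies via pullback of $\mu(\xi_T^{uni})$ along cocharacters $\lambda\colon\mrm{Pic}^0_S(E)\to Y$ and the degree $-n^2$ of $\det\mb{R}p_*P^{\otimes n}$ for the Poincar\'e bundle $P$), we get $Nm_\lambda=\tfrac{1}{2}\sum_\mu\dim V_\mu n_\mu^2=-\tfrac{1}{2}N(\lambda|\lambda)$, hence $m_\lambda=-\tfrac{1}{2}(\lambda|\lambda)$.

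The main obstacle is the local determinantal computation on the blown-up nodal family: namely, verifying that for $\mc{L}\cong f^*\mc{M}\otimes\mc{O}(-n\cdot\mrm{Exc})$ with $\mc{M}$ of fiberwise degree zero, the ratio $\det\mb{R}p_*[\det\mb{R}f_*\mc{L}]/\det\mb{R}(p\circ f)_*\mc{L}$ has the claimed order of vanishing $\tfrac{1}{2}n^2$ (modulo linear-in-$n$ terms). By functoriality of $\det$ for perfect complexes this reduces to the universal case $\mc{M}=\mc{O}$ on the standard local model $\mrm{Bl}_O(\spec R[[u]])$, where it can be carried out via explicit Koszul resolutions of the powers $\mc{I}_O^n$ of the ideal sheaf at the smooth point and tracked through the subsequent pushforward to $\spec R$.
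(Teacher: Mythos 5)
Your argument is essentially the paper's own proof of Theorem \ref{thm:chitildedivisor} (from which the corollary follows at once): reduce, via Corollary \ref{cor:picbung}, to the determinant bundle of a representation $V$ with nonzero quadratic class, filter by $T$-weights, compute the discrepancy between $\det\mb{R}\mrm{pr}_*\mb{R}f_*$ and $\det\mb{R}\mrm{pr}_*\det\mb{R}f_*$ over the one-node locus where the domain curve is a blow-up, cancel the linear terms using $\sum_\mu \dim V_\mu\,\mu = 0$, and identify the quadratic sum with $Q(L_Y)$ as in Lemma \ref{lem:detquadraticclass}. The only genuine divergence is the key local statement: the paper establishes it as Lemma \ref{lem:blowupdetdivisor} (divisor exactly $\tfrac{d(d+1)}{2}D$) by induction on the exceptional degree using $0\to L(-\mrm{Exc})\to L\to L|_{\mrm{Exc}}\to 0$ together with Lemma \ref{lem:divisordet}, which is a cleaner route than your proposed GRR/Koszul computation on the local model but gives the same $\tfrac12 n^2$ plus a linear term.
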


The key idea behind the proof of Theorem \ref{thm:chitildedivisor} is to reduce to the case of a specific choice of line bundle $L_{\bun_G}$ that we can describe explicitly as the determinant of a perfect complex. The following lemma plays an important role. 

\begin{lem} \label{lem:divisordet}
Let $X$ be an algebraic stack, let $D \subseteq X$ be an effective Cartier divisor on $X$, let $U = X \setminus D$, and let $i\colon D \hookrightarrow X$ denote the inclusion. If $\mc{E}$ is a perfect complex on $D$, then the rational map of line bundles
\[g\colon \mc{O}_X = \det 0 \longdashrightarrow \det \mb{R}i_*\mc{E}\]
induced by the quasi-isomorphism $0|_U \cong \mb{R}i_*\mc{E}|_U$ induces an isomorphism
\[ \mc{O}_X(\chi(\mc{E})D) \overset{\sim}\longrightarrow \det \mb{R}i_*\mc{E},\]
where $\chi$ denotes the Euler characteristic of a perfect complex.
\end{lem}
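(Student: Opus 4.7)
The plan is to reduce to the case $\mc{E} = \mc{O}_D$ via devissage and then compute explicitly using a Koszul-type resolution.

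First, since the statement is local on $X$, I would reduce to the case where $X = \spec A$ is an affine scheme and $D$ is cut out by a nonzerodivisor $f \in A$. Since $\mc{E}$ is perfect on $D$, after possibly further shrinking I can represent $\mc{E}$ by a bounded complex of finitely generated projective $\mc{O}_D$-modules. Both assignments $\mc{E} \mapsto \det \mb{R}i_*\mc{E}$ and $\mc{E} \mapsto \mc{O}_X(\chi(\mc{E}) D)$ turn distinguished triangles of perfect complexes on $D$ into canonical isomorphisms of line bundles (the first via multiplicativity of $\det$; the second via additivity of $\chi$), and the rational map $g$ is functorial under these identifications. Induction on the amplitude of the representing complex therefore reduces the claim to the case $\mc{E} = \mc{F}[0]$ for a single finitely generated projective $\mc{O}_D$-module $\mc{F}$, and additivity of both sides on direct sums further reduces to the base case $\mc{E} = \mc{O}_D$, with $\chi(\mc{E}) = 1$.

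In the base case, the key input is the Koszul resolution
\[ 0 \longrightarrow \mc{O}_X(-D) \xrightarrow{\,\cdot\, f\,} \mc{O}_X \longrightarrow i_*\mc{O}_D \longrightarrow 0, \]
which presents $i_*\mc{O}_D = \mb{R}i_*\mc{O}_D$ as a perfect complex on $X$. Applying $\det$ to this short exact sequence produces a canonical isomorphism
\[ \det(i_*\mc{O}_D) \overset{\sim}\longrightarrow \det(\mc{O}_X) \otimes \det(\mc{O}_X(-D))^{-1} = \mc{O}_X(D). \]
Restricted to $U$, where $i_*\mc{O}_D$ vanishes, this isomorphism becomes the tautology $\mc{O}_U = \mc{O}_U \otimes \mc{O}_U$, and unwinding definitions shows that $g$ is identified under the above with the canonical inclusion $\mc{O}_X \hookrightarrow \mc{O}_X(D)$. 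In particular $g$ extends to a morphism of line bundles on $X$ whose divisor of zeroes is $D = \chi(\mc{O}_D) \cdot D$, yielding the claimed isomorphism $\mc{O}_X(\chi(\mc{O}_D) D) \overset{\sim}\longrightarrow \det \mb{R}i_*\mc{O}_D$ in the base case, and hence in general after reversing the devissage.

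The hard part will be bookkeeping the functoriality of $g$ through the devissage: one must check that the identification-on-$U$ that defines $g$ for the total perfect complex corresponds, under the multiplicativity isomorphisms of $\det$ for distinguished triangles, to the product of the corresponding identifications for the subcomplexes. This is a formal consequence of functoriality of $\det$ for perfect complexes together with the fact that every complex appearing in the devissage restricts to zero on $U$, but it is the only substantive ingredient beyond the explicit Koszul computation.
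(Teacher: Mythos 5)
Your argument is correct and follows essentially the same route as the paper: both reduce, by locality and additivity of the divisor of $g$ under exact triangles of perfect complexes, to the case $\mc{E} = \mc{O}_D$, and then compute directly with the two-term resolution $[\mc{O}_X(-D) \to \mc{O}_X]$ of $i_*\mc{O}_D$, identifying $g$ with the canonical section $\mc{O}_X \to \mc{O}_X(D)$ of divisor $D$. The extra bookkeeping you flag at the end is indeed the only formal point, and since the statement only concerns the integer-valued divisor of zeroes and poles of $g$ (not a choice of isomorphism), plain additivity of that divisor under triangles suffices.
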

\begin{proof}
We need to show that the divisor of zeros and poles of $g$ is $\chi(\mc{E})D$. Since both are local on $X$ and additive in $\mc{E}$ under exact triangles, it suffices to check this when $\mc{E} = \mc{O}_D$. In this case, an explicit free resolution for $\mb{R}i_*\mc{E}$ is
\[ \mb{R}i_*\mc{E} = [\mc{O}_X(-D) \longrightarrow \mc{O}_X],\]
which gives an isomorphism
\[ \mc{O}_X \otimes \mc{O}_X(-D)^\vee \cong \det \mb{R}i_* \mc{E}.\]
The map $g$ is given by
\[ \mc{O}_X \overset{\sim}\longrightarrow \mc{O}_X(-D) \otimes \mc{O}_X(-D)^\vee \longrightarrow \mc{O}_X \otimes \mc{O}_X(-D)^\vee = \det \mb{R}i_*\mc{E},\]
which does indeed have divisor $D = \chi(\mc{E})D$ as claimed.
\end{proof}

\begin{lem} \label{lem:blowupdetdivisor}
Let $X$ be a smooth stack over $S$, let $Z \subseteq X \times_S E$ be a smooth substack of codimension $2$ mapping isomorphically to a divisor $D \subseteq X$ under the first projection $\mrm{pr}_X\colon X \times_S E \to X$, and let $f\colon C \to X \times_S E$ be the blowup along $Z$. If $L$ is a line bundle on $C$ such that $L$ restricted to any exceptional fibre of $f$ has degree $d$, then the canonical rational map
\begin{equation}\label{eq:blowupdetdivisor}
\det \mb{R}{\mrm{pr}_X}_* \mb{R}f_* L \longdashrightarrow \det \mb{R}{\mrm{pr}_X}_*\det\mb{R}f_* L
\end{equation}
induced by the quasi-isomorphism $\mb{R}f_*L|_{X \times_S E \setminus Z} \cong \det\mb{R}f_*L|_{X \times_S E \setminus Z}$ has divisor
\[ \frac{d(d + 1)}{2} D.\]
\end{lem}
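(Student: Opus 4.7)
The plan is to reduce the problem to computing Euler characteristics on $\mb{P}^1$-fibres by twisting $L$ so that its restriction to each exceptional fibre becomes trivial, and then applying Lemma \ref{lem:divisordet}. Concretely, let $E_f \subseteq C$ denote the exceptional divisor of $f$, which is a $\mb{P}^1$-bundle $p \colon E_f \to Z$ with $\mc{O}_C(E_f)|_{E_f}$ of degree $-1$ on each fibre, and set $L' = L(dE_f)$. Then $L'$ has degree $0$ on every exceptional $\mb{P}^1$, so I expect $\mb{R}f_* L'$ to be a line bundle on $X \times_S E$, equal to its own determinant. Since the canonical identification $L|_{C \setminus E_f} = L'|_{C \setminus E_f}$ induces an isomorphism $\mb{R}f_* L|_{X \times_S E \setminus Z} \cong \mb{R}f_* L'|_{X \times_S E \setminus Z}$, and since $Z$ has codimension $2$ in the smooth ambient stack $X \times_S E$, this will extend uniquely to a canonical isomorphism $\det \mb{R}f_* L \cong \det \mb{R}f_* L' = \mb{R}f_* L'$. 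Under this identification, the rational map of the lemma becomes the rational map $\det \mb{R}{\mrm{pr}_X}_* \mb{R}f_* L \dashrightarrow \det \mb{R}{\mrm{pr}_X}_* \mb{R}f_* L'$ induced by the identity on $C \setminus E_f$, which is what I will compute.

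Assume first that $d \geq 0$. The inclusion of line bundles $L \hookrightarrow L'$ fits into an exact triangle $\mb{R}f_* L \to \mb{R}f_* L' \to \mb{R}f_*(L'/L)$ on $X \times_S E$, whose third term is supported on $Z$. Using $f|_{E_f} = i \circ p$ together with the isomorphism $Z \cong D$ induced by $\mrm{pr}_X$, the push-forward of this triangle along $\mrm{pr}_X$ will have third term $i_{D,*}\mc{E}$ for $\mc{E} = \mb{R}p_*(L'/L)$ a perfect complex on $Z \cong D$. Taking determinants and applying Lemma \ref{lem:divisordet}, the rational map in question will then acquire divisor $\chi(\mc{E})D$, where $\chi(\mc{E})$ denotes the fibrewise Euler characteristic.

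To compute $\chi(\mc{E})$, I will filter $L'/L$ by the subsheaves $L(kE_f)/L$ for $0 \leq k \leq d$, whose successive quotients are the line bundles $L(kE_f)|_{E_f}$ restricting to $\mc{O}_{\mb{P}^1}(d-k)$ on each fibre of $p$. Summing fibrewise Euler characteristics on $\mb{P}^1$ then yields
\[ \chi(\mc{E}) = \sum_{k=1}^{d} (d - k + 1) = \tfrac{d(d+1)}{2}, \]
as desired. For $d \leq -1$, the same argument applies with $L' \hookrightarrow L$ in place of $L \hookrightarrow L'$: the exact triangle is rotated, the analogous filtration of $L/L'$ by the $L(kE_f)/L'$ for $d \leq k \leq 0$ gives $\chi(\mb{R}p_*(L/L')) = -\tfrac{d(d+1)}{2}$, and the reversal of direction in the triangle yields the same final divisor $\tfrac{d(d+1)}{2} D$.

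The main subtlety I expect is the careful identification between the rational map of the lemma (defined purely by the quasi-isomorphism away from $Z$) and the rational map induced by the push-forward of the canonical inclusion of line bundles; both agree with the identity on $C \setminus E_f$, but one has to keep track of directionality in order for the sign of the divisor to come out correctly in both the $d \geq 0$ and $d \leq -1$ cases. The remaining ingredients — triviality of the determinant line bundle of a coherent sheaf with codimension-$2$ support on a smooth stack, and the Euler characteristic computation on $\mb{P}^1$ — are routine.
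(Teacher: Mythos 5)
Your argument is correct and is essentially the paper's proof in telescoped form: the paper twists by the exceptional divisor one step at a time, applying Lemma \ref{lem:divisordet} to $L|_{\mrm{Exc}}$ at each stage and inducting on $|d|$, whereas you twist all the way to the degree-zero bundle $L(dE_f)$ at once and compute the total contribution $\chi(\mb{R}p_*(L'/L)) = \tfrac{d(d+1)}{2}$ via the filtration by the $L(kE_f)$ --- the same ingredients (Lemma \ref{lem:divisordet}, the codimension-$2$ extension of the determinant, and the fact that a fibrewise degree-$0$ pushforward is a line bundle) in a single step rather than a recursion. The sign bookkeeping for $d\leq -1$ also matches the paper's treatment.
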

\begin{proof}
We first observe that if $d = 0$, then the claim is true since $\mb{R}f_*L$ is a line bundle and hence \eqref{eq:blowupdetdivisor} is an isomorphism.

For a general line bundle $L$, write $\mrm{div}(L)$ for the divisor of \eqref{eq:blowupdetdivisor}. Consider the exact sequence
\[ 0 \longrightarrow L(-\mrm{Exc}) \longrightarrow L \longrightarrow L|_{\mrm{Exc}} \longrightarrow 0,\]
where $\mrm{Exc} = f^{-1}(Z)$ is the exceptional divisor. The morphism $\mb{R}f_*L(-\mrm{Exc}) \to \mb{R}f_*L$ induces a commutative diagram
\[
\begin{tikzcd}
\det \mb{R}{\mrm{pr}_X}_*\mb{R}f_*L(-\mrm{Exc}) \ar[d, dashed] \ar[r, dashed, "g"] & \det \mb{R}{\mrm{pr}_X}_*\mb{R}f_*L \ar[d, dashed] \\
\det \mb{R}{\mrm{pr}_X}_*\det \mb{R}f_*L(-\mrm{Exc}) \ar[r, "\sim"] & \det \mb{R}{\mrm{pr}_X}_*\det \mb{R}f_*L
\end{tikzcd}
\]
of rational maps of line bundles, where the bottom arrow is an isomorphism since $Z$ has codimension $2$. We deduce that
\[ \mrm{div}(L(-\mrm{Exc})) = \mrm{div}(L) + \mrm{div}(g).\]
But $\mrm{div}(g) = \mrm{div}(g')$, where
\[ g' \colon \mc{O}_X \longdashrightarrow \det (\mb{R}{\mrm{pr}_X}_*\mb{R}f_*L(-\mrm{Exc}))^{-1} \otimes \det \mb{R}{\mrm{pr}_X}_*\mb{R}f_*L = \det \mb{R}{\mrm{pr}_X}_*\mb{R}f_*(L|_\mrm{Exc})\]
is the rational map induced by the quasi-isomorphism $\mb{R}{\mrm{pr}_X}_*\mb{R}f_*(L|_\mrm{Exc})|_{X \setminus D} \simeq 0$. But $\mb{R}{\mrm{pr}_X}_*\mb{R}f_*(L|_\mrm{Exc})$ is the pushforward from $D$ of a perfect complex with Euler characteristic $d + 1$, so Lemma \ref{lem:divisordet} gives $\mrm{div}(g') = (d+ 1)D$ and hence
\[ \mrm{div}(L(-\mrm{Exc})) = \mrm{div}(L) + (d + 1)D.\]
Since $L(-\mrm{Exc})$ has degree $d + 1$ restricted to any exceptional fibre of $f$, the lemma now follows easily by induction on the absolute value of $d$.
\end{proof}

The next lemma identifies the $W$-linearised line bundle on $Y$ and quadratic form corresponding to a determinant line bundle on $\bun_G$.

\begin{lem} \label{lem:detquadraticclass}
Let $V$ be a representation of $G$, and let $L_{\bun_G} = \det \mb{R}{\mrm{pr}_{\bun_G}}{\vphantom{p}}_*(\xi_G^{uni} \times^G V)$, where $\xi_G^{uni} \to \bun_G \times_S E$ is the universal $G$-bundle, and $\mrm{pr}_{\bun_G} \colon \bun_G \times_S E \to \bun_G$ is the canonical projection. Then the corresponding good $W$-linearised line bundle on $Y$ is given by
\[ L_Y = \bigotimes_{\lambda \in \mb{X}^*(T)} \lambda^*\mc{O}(-O_{\mrm{Pic}^0_S(E)})^{\otimes \dim V_\lambda},\]
and hence
\[ q(L_Y)(\mu) = -\sum_{\lambda \in \mb{X}^*(T)} \dim V_\lambda \langle \lambda, \mu \rangle^2 \]
for $\mu \in \mb{X}_*(T)$, where
\[ V = \bigoplus_{\lambda \in \mb{X}^*(T)} V_\lambda \]
is the weight space decomposition of $V$ under the action of $T$.
\end{lem}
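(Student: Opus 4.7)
The plan is to identify $L_Y$ as the $W$-linearised line bundle on $Y$ corresponding to $L_{\bun_G}$ under the elliptic Chevalley isomorphism (Theorem \ref{thm:bungchevalley}) and then read off the formula for $q(L_Y)$ as an immediate corollary. By the construction of Chevalley's equivalence, it is enough to pull back to the dense open substack $\tbun_G^{ss} = \bun_B^0$ and verify that $\psi^*L_{\bun_G}|_{\bun_B^0}$ agrees with $\mrm{Bl}_{B, Y}^*L_Y^{\tn{cand}}|_{\bun_B^0}$, where
\[ L_Y^{\tn{cand}} = \bigotimes_{\lambda \in \mb{X}^*(T)} \lambda^*\mc{O}(-O_{\mrm{Pic}^0_S(E)})^{\otimes \dim V_\lambda} \]
is the candidate line bundle from the statement.

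On $\bun_B^0$, the pulled-back vector bundle $\psi^*(\xi_G^{uni} \times^G V)$ is identified with $\xi_B^{uni} \times^B V$, which carries a filtration (from any filtration of $V$ by $B$-submodules with one-dimensional subquotients of weights $\lambda$ occurring with multiplicity $\dim V_\lambda$) whose associated graded is $\bigoplus_\lambda V_\lambda \otimes \lambda(\xi_T^{0})$, where $\xi_T^{0} = \xi_B^{uni}/R_u(B)$ is the underlying universal $T$-bundle of degree $0$. Writing $p \colon \bun_B^0 \times_S E \to \bun_B^0$ for the projection, applying $\mb{R}p_*$ followed by $\det$ gives
\[ \psi^*L_{\bun_G}|_{\bun_B^0} \;\cong\; \bigotimes_{\lambda} \bigl(\det \mb{R}p_* \lambda(\xi_T^{0})\bigr)^{\otimes \dim V_\lambda}. \]
Each factor is pulled back from $\bun_T^0$ and, in fact, descends further to $Y$ along the $\B T$-gerbe $\bun_T^0 \to Y$: the automorphism torus $T$ of a degree-$0$ $T$-bundle $\xi_T$ acts on $\det \mb{R}\Gamma(E, \lambda(\xi_T))$ with weight $\lambda \cdot \chi(\lambda(\xi_T)) = 0$, since a degree-$0$ line bundle on a genus-$1$ curve has vanishing Euler characteristic.

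Thus the problem reduces to showing that the descended line bundle on $Y$ is $\lambda^*\mc{O}(-O_{\mrm{Pic}^0_S(E)})$. After trivialising the gerbe (for instance by rigidifying along a section $O_E$ of $E \to S$, which exists \'etale-locally), the universal rigidified $T$-bundle pulls back via $\lambda_Y \colon Y \to \mrm{Pic}^0_S(E)$ to the Poincar\'e bundle $P$ on $\mrm{Pic}^0_S(E) \times_S E$. For $\lambda \neq 0$, $\lambda_Y$ is a surjective homomorphism of abelian schemes and hence flat, so flat base change reduces the computation to that of $\det \mb{R}p'_*P$ on $\mrm{Pic}^0_S(E)$, with $p'\colon \mrm{Pic}^0_S(E) \times_S E \to \mrm{Pic}^0_S(E)$ the projection; the $\lambda = 0$ factor is constant and contributes only a twist pulled back from $S$. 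The key Poincar\'e-bundle computation is a classical consequence of Mukai's duality: $\mb{R}p'_*P$ is quasi-isomorphic to the shifted skyscraper $\mc{O}_{O_{\mrm{Pic}^0_S(E)}}[-1]$ along the zero section, since $P|_{\{x\} \times E}$ is a nontrivial degree-$0$ line bundle with vanishing cohomology for $x \neq O_{\mrm{Pic}^0_S(E)}$. This perfect complex has Euler characteristic $-1$, so Lemma \ref{lem:divisordet} gives $\det \mb{R}p'_*P \cong \mc{O}(-O_{\mrm{Pic}^0_S(E)})$. Tensoring over weights identifies $\psi^*L_{\bun_G}|_{\bun_B^0}$ with $\mrm{Bl}_{B, Y}^*L_Y^{\tn{cand}}|_{\bun_B^0}$; the natural $W$-linearisation on $L_Y^{\tn{cand}}$ induced by the permutation of weights of the $G$-representation $V$ is verified to be good using Lemma \ref{lem:goodcriterion}, so Theorem \ref{thm:bungchevalley} yields $L_Y \cong L_Y^{\tn{cand}}$.

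The quadratic-form formula is then immediate: for each $\mu \in \mb{X}_*(T)$, $q(\lambda^*\mc{O}(-O_{\mrm{Pic}^0_S(E)}))(\mu)$ equals the degree of $(\lambda \circ \mu)^*\mc{O}(-O_{\mrm{Pic}^0_S(E)})$, where $\lambda \circ \mu \colon \mrm{Pic}^0_S(E) \to \mrm{Pic}^0_S(E)$ is multiplication by $\langle \lambda, \mu \rangle$, and pullback of a degree-$(-1)$ line bundle under $[n]$ on an elliptic curve has degree $-n^2$. The main obstacle will be the Poincar\'e-bundle computation, particularly tracking normalisations and the pullback-from-$S$ ambiguities (especially for the $\lambda = 0$ summand and when trivialising the $\B T$-gerbe $\bun_T^0 \to Y$).
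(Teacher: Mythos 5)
Your proposal is correct and follows essentially the same route as the paper: restrict to the semistable locus, filter $\xi^{uni} \times^G V$ by the $T$-weights of $V$ to reduce to the rank-one case, and compute the determinant of the pushforward of the universal degree-$0$ line bundle via Lemma \ref{lem:divisordet} (a perfect complex of Euler characteristic $-1$ supported on the origin divisor), then read off $q(L_Y)$ from $\deg [n]^*\mc{O}(-O) = -n^2$. The only cosmetic difference is that the paper performs the rank-one computation directly on the stack $\bun_{\mb{G}_m}^0$ and descends $\mc{O}(-O_{\bun_{\mb{G}_m}^0})$ along the $\mb{G}_m$-gerbe at the end, whereas you descend to $\mrm{Pic}^0_S(E)$ first and phrase the computation in terms of the Poincar\'e bundle.
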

\begin{proof}

We have
\[ \psi^*L_{\bun_G} = \det \mb{R}{\mrm{pr}_{\tbun_G}}_* ((\psi^*\xi_G^{uni}) \times^G V) = \det \mb{R}{\mrm{pr}_{\tbun_G}}_* \mb{R}f_* f^*((\psi^*\xi_G^{uni}) \times^G V) \]
where $\mrm{pr}_{\tbun_G}\colon \tbun_G \times_S E \to \tbun_G$ is the first projection and $f \colon \tbun_G \times_{\Deg_S(E)} \mc{C} \to \tbun_G \times_S E$ is the pullback of the universal prestable degeneration of $E$ over $\tbun_G$. Since the $G$-linearised vector bundle $V \otimes \mc{O}_F$ on the flag variety $F = G/B$ has a $G$-equivariant filtration with associated quotients isomorphic to $V_\lambda \otimes \mc{L}_\lambda$ for $\lambda \in \mb{X}^*(T)$, we get an isomorphism
\begin{align*}
\psi^*L_{\bun_G} &\cong \bigotimes_{\lambda \in \mb{X}^*(T)} \det \mb{R}{\mrm{pr}_{\tbun_G}}{\vphantom{p}}_* \mb{R}f_* (V_\lambda \otimes(\xi_{T, \mc{C}} \times^T \mb{Z}_\lambda)) \\
& = \bigotimes_{\lambda \in \mb{X}^*(T)} (\det \mb{R}{\mrm{pr}_{\tbun_G}}{\vphantom{p}}_*\mb{R}f_*(\xi_{T, \mc{C}} \times^T \mb{Z}_\lambda))^{\otimes \dim V_{\lambda}},
\end{align*}
where $\xi_{T, \mc{C}}$ is the induced $T$-bundle on $\tbun_G \times_{\Deg_S(E)} \mc{C}$. So restricting to $\tbun_G^{ss}$ gives
\begin{equation} \label{eq:detquadraticclass1}
\psi^*L_{\bun_G}|_{\tbun_G^{ss}} = \bigotimes_{\lambda \in \mb{X}^*(T)} \mrm{Bl}_B^*(\det \mb{R}{\mrm{pr}_{\bun_T^0}}{\vphantom{p}}_*(\xi^{uni}_T \times^T \mb{Z}_{\lambda}))^{\otimes \dim V_\lambda}|_{\tbun_G^{ss}},
\end{equation}
where $\xi_T^{uni}$ is the universal $T$-bundle on $\bun_T^0 \times_S E$ and $\mrm{Bl}_B \colon \tbun_G \to \bun_T^0$ is the blow down morphism. Now, for all $\lambda \in \mb{X}^*(T)$, we have
\[ \det \mb{R}{\mrm{pr}_{\bun_T^0}}{\vphantom{p}}_*(\xi^{uni}_T \times^T \mb{Z}_\lambda) = \lambda^*\det \mb{R}{\mrm{pr}_{\bun_{\mb{G}_m}^0}}{\vphantom{p}}_*M,\]
where $M$ is the universal line bundle on $\bun_{\mb{G}_m}^0 \times_S E$. But
\[ \mb{R}{\mrm{pr}_{\bun_{\mb{G}_m}^0}}{\vphantom{p}}_*M = (\mb{R}^1{\mrm{pr}_{\bun_{\mb{G}_m}^0}}{\vphantom{p}}_*M)[-1] \]
is the pushforward of a perfect complex of Euler characteristic $-1$ on the Cartier divisor $O_{\bun_{\mb{G}_m}^0}$ corresponding to the trivial bundle, so by Lemma \ref{lem:divisordet}, we have
\[ \det \mb{R}{\mrm{pr}_{\bun_{\mb{G}_m}^0}}{\vphantom{p}}_*M = \mc{O}(-O_{\bun_{\mb{G}_m}^0}) = q^*\mc{O}(-O_{\mrm{Pic}_S^0(E)}) \]
where $q \colon \bun_{\mb{G}_m}^0 \to \mrm{Pic}^0_S(E)$ is the canonical quotient by $\B\mb{G}_m$. So \eqref{eq:detquadraticclass1} gives
\[ \psi^*L_{\bun_G}|_{\tbun_G^{ss}} = \mrm{Bl}_{B, Y}^*\bigotimes_{\lambda \in \mb{X}^*(T)} \lambda^*\mc{O}(-O_{\mrm{Pic}^0_S(E)})^{\otimes \dim V_\lambda}|_{\tbun_G^{ss}} \]
from which the result follows immediately.
\end{proof}

\begin{proof}[Proof of Theorem \ref{thm:chitildedivisor}]
We first remark that since the truth or falsehood of the statement is unchanged if we raise $L_{\bun_G}$ to a nonzero power or tensor with a line bundle on $S$, by Corollary \ref{cor:picbung}, it suffices to prove the claim for any single $L_{\bun_G}$ with $q(L_Y) \neq 0$. So choose any nontrivial representation $V$ of $G$ and set
\[ L_{\bun_G} = \det \mb{R}{\mrm{pr}_{\bun_G}}_*(\xi_G^{uni} \times^G V),\]
as in Lemma \ref{lem:detquadraticclass}. Keeping the notation from that proof, since ${\pi_{\tbun_G^{ss}}}{\vphantom{p}}_*\mc{O} = {\pi_Y}{\vphantom{p}}_*\mc{O}_Y = \mc{O}_S$ by Proposition \ref{prop:picwpullback} \eqref{itm:picwpullback1}, the rational map $\gamma$ must coincide up to rescaling by a nonvanishing function on $S$ with the rational map
\begin{equation} \label{eq:chitildedivisorcomp2}
\bigotimes_{\lambda \in \mb{X}^*(T)} (\det \mb{R}{\mrm{pr}_{\tbun_G}}{\vphantom{p}}_*\mb{R}f_*(\xi_{T, \mc{C}} \times^T \mb{Z}_\lambda))^{\otimes \dim V_\lambda} \dashrightarrow \bigotimes_{\lambda \in \mb{X}^*(T)} (\det \mb{R}{\mrm{pr}_{\tbun_G}}{\vphantom{p}}_*\det \mb{R}f_*(\xi_{T, \mc{C}} \times^T \mb{Z}_\lambda))^{\otimes \dim V_\lambda}
\end{equation}
given by the quasi-isomorphisms
\[\det \mb{R}f_*(\xi_{T, \mc{C}} \times^T \mb{Z}_\lambda)|_{\tbun_G^{ss}} \cong \mb{R}f_*(\xi_{T, \mc{C}} \times^T \mb{Z}_\lambda)|_{\tbun_G^{ss}},\]
for $\lambda \in \mb{X}^*(T)$, where we recall that $\mrm{Bl}_B^*(\xi_T^{uni} \times^T \mb{Z}_\lambda) = \det \mb{R}f_* (\xi_{T, \mc{C}} \times^T \mb{Z}_\lambda)$ by Lemma \ref{lem:tbundleblowdowndet} and the definition of $\mrm{Bl}_B \colon \tbun_G \to \bun_T^0$. To complete the proof, observe that 
\[ (\tbun_G)^{\leq 1} = \bigcup_{\mu \in \mb{X}_*(T)_+} (\tbun_G)^{\leq 1}_\mu \]
where
\[ (\tbun_G)^{\leq 1} = \tbun_G \times_{\Deg_S(E)} \Deg_S(E)^{\leq 1} \subseteq \tbun_G \]
is the open substack of points where the nodal domain curve $C$ has at most $1$ node, and for $\mu \in \mb{X}_*(T)_+$, $(\tbun_G)^{\leq 1}_{\mu} \subseteq (\tbun_G)^{\leq 1}$ is the (open) union of $\tbun_G^{ss} = \bun_B^0$ and the smooth locus $D_\mu^\circ \subseteq D_\mu$. Since the complement of $(\tbun_G)^{\leq 1}$ in $\tbun_G$ has codimension $2$, it suffices to show that for any $\mu \in \mb{X}_*(T)_+$, the restriction of \eqref{eq:chitildedivisorcomp2} to $(\tbun_G)^{\leq 1}_\mu$ has divisor
\[ \frac{1}{2}\sum_{\lambda \in \mb{X}^*(T)} \dim V_\lambda \langle \lambda, \mu \rangle^2 D_\mu.\]
By Lemma \ref{lem:blowupdetdivisor} and Proposition \ref{prop:basicdegproperties}, the map
\[ \det \mb{R}{\mrm{pr}_{(\tbun_G)^{\leq 1}_\mu}}{\vphantom{p}}_* \mb{R}f_*(\xi_{T, \mc{C}} \times^T \mb{Z}_{\lambda}) \longdashrightarrow \det \mb{R}{\mrm{pr}_{(\tbun_G)^{\leq 1}_\mu}}{\vphantom{p}}_* \det \mb{R}f_*(\xi_{T, \mc{C}} \times^T \mb{Z}_{\lambda}) \]
has divisor
\[ \frac{\langle \lambda, \mu \rangle(\langle \lambda, \mu \rangle + 1)}{2} D_\mu,\]
so taking the tensor product over all $\lambda \in \mb{X}^*(T)$, we find that the restriction of \eqref{eq:chitildedivisorcomp2} has divisor
\[ \left( \sum_{\lambda \in \mb{X}^*(T)} \frac{\dim V_\lambda}{2} \langle \lambda, \mu \rangle^2 + \sum_{\lambda \in \mb{X}^*(T)} \frac{\dim V_\lambda}{2} \langle \lambda, \mu \rangle\right) D_\mu =  \frac{1}{2}\sum_{\lambda \in \mb{X}^*(T)}\dim V_\lambda \langle \lambda, \mu \rangle^2 D_\mu\]
as required, since
\[ \sum_{\lambda \in \mb{X}^*(T)} (\dim V_\lambda) \lambda \in \mb{X}^*(T)^W = \{0\}.\]
\end{proof}

\section{The Friedman-Morgan section theorem} \label{section:friedmanmorgan}

We now turn to the problem of proving that the elliptic Grothendieck-Springer resolution \eqref{eq:fundiagbody} is a simultaneous log resolution. As advertised in the introduction, we will deduce this from an elliptic analogue of the Kostant and Steinberg section theorems.

The classical section theorems are proved by constructing subvarieties $Z$ in $\mf{g}$ or $G$ that are transverse to all $G$-orbits and meet the set of regular nilpotent or unipotent elements in a single point, and showing that these map isomorphically to the adjoint quotient. In stacky language, these subvarieties give smooth charts $Z \to \mf{g}/G$ or $G/G$ that are miniversal deformations of the points corresponding to the regular nilpotent or unipotent orbits.

In the elliptic context, the role of regular nilpotent or unipotent elements is played by the regular unstable bundles (Definition \ref{defn:regularunstable}). However, since translations on the elliptic curve provide an extra symmetry, we will work not with charts of $\bun_G$, but with the following slightly weaker objects.

\begin{defn} \label{defn:slice}
Assume that $E \to S$ has a section $O_E \colon S \to E$. A \emph{slice} of $\bun_G$ (resp., $\bun_{G, rig}$) is a morphism $Z \to \bun_G$ (resp., $Z \to \bun_{G, rig}$) of stacks over $S$, such that the composition $Z \to \bun_G \to \bun_G/E$ (resp., $Z \to \bun_{G, rig} \to \bun_{G, rig}/E$) is smooth.
\end{defn}

\begin{eg}
To illustrate the difference between slices and charts, consider the following example. Let $G = SL_2$, and consider the space $Z = \mrm{Ext}^1(\mc{O}(O_E), \mc{O}(-O_E)) \cong \mb{A}^2$ of extensions
\[ 0 \longrightarrow \mc{O}(-O_E) \longrightarrow V \longrightarrow \mc{O}(O_E) \longrightarrow 0.\]
There is a tautological map $Z \to \bun_{SL_2}$, which is easily seen to be a slice. It is not a chart, however, as it fails to be smooth at the origin: the unstable bundle $\mc{O}(-O_E) \oplus \mc{O}(O_E)$, which is in the image, has a $1$-parameter family of deformations $\{\mc{O}(-p) \oplus \mc{O}(p)\}_{p \in E}$, which are only in the image modulo translations.
\end{eg}

The outline of the section is as follows. In \S\ref{subsection:parabolicinduction} we review a parabolic induction construction for slices of $\bun_{G, rig}$ (the idea of which is due to Friedman and Morgan \cite{friedman-morgan00}), and show how in many cases it leads to the extra structure of an \emph{equivariant slice} (Definition \ref{defn:equivariantslice}). In \S\ref{subsection:regularunstable}, we recall the definition of regular unstable $G$-bundles, and show how to use Atiyah's classification of stable vector bundles $E$ together with parabolic induction to construct equivariant slices through them. In \S\ref{subsection:friedmanmorgan}, we state and prove the Friedman-Morgan section theorem. Finally, in \S\ref{subsection:friedmanmorganapplications}, we give the proof of the simultaneous log resolution property for the elliptic Grothendieck-Springer resolution, which completes the proof of Theorem \ref{thm:introgrothendieckspringer}.

In this section, the rigidified stack $\bun_{G, rig}$ plays a more central role than the ordinary stack of $G$-bundles $\bun_G$. The reason for this is that the theory of slices (especially of equivariant slices constructed by parabolic induction) is somewhat better behaved for $\bun_{G, rig}$ than for $\bun_G$ (see for example Remarks \ref{rmk:zlrigaction} and \ref{rmk:regularslicesmoothness}).

Throughout this section, we will assume unless otherwise specified that the elliptic curve $E \to S$ has a given section $O_E \colon S \to E$ and endow $E$ with its natural group scheme structure over $S$ for which $O_E$ is the identity.

\subsection{Parabolic induction and equivariant slices} \label{subsection:parabolicinduction}

If $L \subseteq G$ is a Levi subgroup and $Z_0 \to \bun^{ss}_{L, rig}$ is a slice (i.e., a morphism such that $Z_0 \to \bun^{ss}_{L, rig}/E$ is smooth), then there is a simple procedure due to Friedman and Morgan for constructing an induced slice $Z = \mrm{Ind}_L^G(Z_0) \to \bun_{G, rig}$. In this subsection, we review the definition and properties of this construction. We also show how in many cases, it produces slices $Z \to \bun_{G, rig}$ such that the morphism $Z \to (\hat{Y}\sslash W)/\mb{G}_m$ lifts to an equivariant morphism $Z \to \hat{Y}\sslash W$.

\begin{defn} \label{defn:parabolicinduction}
Let $L \subseteq G$ be a Levi subgroup, let $\mu \in \mb{X}_*(Z(L)^\circ)_\mb{Q}$ and let $P \subseteq G$ be the unique parabolic subgroup with Levi factor $L$ such that $-\mu$ is a Harder-Narasimhan vector for $P$ in the sense of Definition \ref{defn:hardernarasimhan}. If $Z_0 \to \bun_{L, rig}^{ss, \mu}$ is a morphism of stacks, the \emph{parabolic induction of $Z_0$ to $G$} is the morphism
\[ \mrm{Ind}_L^{G}(Z) = \bun_{P, rig} \times_{\bun_{L, rig}} Z \longrightarrow \bun_{G, rig}.
\]
\end{defn}

\begin{rmk}
In the situation of Definition \ref{defn:parabolicinduction}, if the morphism $Z_0 \to \bun_{L, rig}$ factors through $\bun_L$, then we have an isomorphism $\mrm{Ind}_L^G(Z_0) \cong \bun_P \times_{\bun_L} Z_0$, and hence a factorisation of $\mrm{Ind}_L^G(Z_0) \to \bun_{G, rig}$ as $\mrm{Ind}_L^G(Z_0) \to \bun_G \to \bun_{G, rig}$.
\end{rmk}

In the following proposition, we do not assume that $E \to S$ has a section.

\begin{prop} \label{prop:parabolicinductionsmooth}
Assume that the morphism $Z_0 \to \bun^{ss, \mu}_{L, rig}$ is smooth. Then so is the morphism $\mrm{Ind}_L^G(Z_0) \to \bun_{G, rig}$.
\end{prop}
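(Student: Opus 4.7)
The plan is to factor the morphism as
\[
\mrm{Ind}_L^G(Z_0) \;=\; \bun_{P, rig}^{ss, \mu} \times_{\bun_{L, rig}^{ss, \mu}} Z_0 \;\xrightarrow{\;\pi\;}\; \bun_{P, rig}^{ss, \mu} \;\xrightarrow{\;\iota\;}\; \bun_{G, rig},
\]
where $\bun_{P, rig}^{ss, \mu} \subseteq \bun_{P, rig}$ is the open substack of $P$-bundles whose induced Levi bundle is semistable of slope $\mu$. The morphism $\pi$ is smooth because it is a base change of the smooth $Z_0 \to \bun_{L, rig}^{ss, \mu}$ along the canonical projection $\bun_{P, rig}^{ss, \mu} \to \bun_{L, rig}^{ss, \mu}$. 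The whole argument therefore reduces to showing that $\iota$ is smooth.

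For $\iota$ I would compute the relative tangent complex at a point $\xi_P$ with associated Levi bundle $\xi_L$ and induced $G$-bundle $\xi_G$. Using the standard descriptions $\mb{T}_{\bun_P, \xi_P} \simeq \mb{R}\Gamma(E, \xi_P \times^P \mf{p})[1]$ and $\mb{T}_{\bun_G, \xi_G} \simeq \mb{R}\Gamma(E, \xi_G \times^G \mf{g})[1]$, together with the short exact sequence $0 \to \mf{p} \to \mf{g} \to \mf{g}/\mf{p} \to 0$ of $P$-representations, one obtains a canonical identification
\[
\mb{T}_{\iota, \xi_P} \;\simeq\; \mb{R}\Gamma\!\left(E, \, \xi_P \times^P (\mf{g}/\mf{p})\right),
\]
supported in cohomological degrees $[0, 1]$. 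Filtering $\mf{g}/\mf{p}$ by $Z(L)^\circ$-weight realises $\xi_P \times^P (\mf{g}/\mf{p})$ as an iterated extension of vector bundles $\xi_L \times^L V_\alpha$, where $\alpha \in \mb{X}^*(Z(L)^\circ)$ ranges over the $Z(L)^\circ$-weights of $\mf{g}/\mf{p}$.

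The crucial input is the sign convention in Definition~\ref{defn:parabolicinduction}: requiring that $-\mu$ (rather than $\mu$) be a Harder-Narasimhan vector for $P$ forces every weight $\alpha$ of $\mf{g}/\mf{p}$ to satisfy $\langle \alpha, \mu \rangle > 0$. Semistability of $\xi_L$ at slope $\mu$ then makes each subquotient $\xi_L \times^L V_\alpha$ a semistable vector bundle of strictly positive slope $\langle \alpha, \mu \rangle$ on every geometric fibre of $E \to S$. Standard cohomology vanishing on an elliptic curve gives $H^1(E, \xi_L \times^L V_\alpha) = 0$ for each subquotient, and working up the filtration via the long exact sequence yields $H^1(E, \xi_P \times^P (\mf{g}/\mf{p})) = 0$. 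Hence $\mb{T}_{\iota, \xi_P}$ is concentrated in degree $0$ and equal to the locally free sheaf $H^0(E, \xi_P \times^P (\mf{g}/\mf{p}))$, of constant rank on $\bun_{P, rig}^{ss, \mu}$.

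To finish I would verify the infinitesimal lifting criterion: given a square-zero thickening $S_0 \hookrightarrow S$ of affine $S$-schemes with compatible maps into $\bun_{P, rig}^{ss, \mu}$ and $\bun_{G, rig}$, extending the $P$-reduction is equivalent to lifting a section of the smooth $G/P$-bundle $\xi_G/P \to S \times E$, with obstruction in $H^1(S_0 \times E, (\xi_P \times^P \mf{g}/\mf{p}) \otimes p^*I)$, where $I$ is the ideal of $S_0$ in $S$ and $p \colon S_0 \times E \to S_0$ is the projection. The projection formula together with the fibrewise $H^1$-vanishing established above collapses this to $H^1(S_0, (p_*(\xi_P \times^P \mf{g}/\mf{p})) \otimes I) = 0$ since $S_0$ is affine; openness of semistability and the locally constant nature of the slope invariant guarantee the lift remains in $\bun_{P, rig}^{ss, \mu}$, while the rigidification contributes no extra obstruction since $Z(G)$ is finite. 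The main obstacle I anticipate is keeping the sign convention straight: a genuine Harder-Narasimhan reduction in the sense of Proposition~\ref{prop:hnembedding} has $\mu$ itself anti-dominant, which would give negative slopes and nonvanishing $H^1$; the reflected convention built into Definition~\ref{defn:parabolicinduction} is precisely what is needed to flip the sign and force the obstruction to vanish.
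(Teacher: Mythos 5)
Your proposal is correct and follows essentially the same route as the paper: reduce by base change to the smoothness of $\bun_{P}^{ss,\mu}\to\bun_G$ (the paper passes to the non-rigidified stacks by flat descent, which is the precise form of your closing remark about the rigidification), identify the relative tangent complex with $\mb{R}\Gamma(E,\xi_P\times^P\mf{g}/\mf{p})$, and use the filtration by $Z(L)^\circ$-weights together with the sign convention on the Harder--Narasimhan vector to see that all subquotients are semistable of positive slope, so $H^1$ vanishes and the tangent complex is a vector bundle in degree $0$. The paper simply cites a lemma from the author's thesis for the last step where you spell out the infinitesimal lifting criterion.
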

\begin{proof}
Since $Z_0 \to \bun^{ss, \mu}_{L, rig}$ is smooth, so is the morphism $\mrm{Ind}_L^G(Z) \to \bun_{P, rig}^{ss, \mu}$. So it suffices to show that the morphism $\bun_{P, rig}^{ss, \mu} \to \bun_{G, rig}$ is smooth. By flat descent for smoothness, this is equivalent to showing that $\bun_P^{ss, \mu} \to \bun_G$ is smooth. The relative tangent complex is
\[ \mb{T} = \mb{R}{\mrm{pr}_{\bun_P^{ss, \mu}}}{\vphantom{p}}_*(\xi_P^{uni} \times^P \mf{g}/\mf{p}),\]
where $\mf{g} = \mrm{Lie}(G)$, $\mf{p} = \mrm{Lie}(P)$ and $\xi_P^{uni} \to \bun_P^{ss, \mu} \times_S E$ is the universal $P$-bundle. But since $-\mu$ is a Harder-Narasimhan vector for $P$, the vector bundle $\xi_P^{uni} \times^P \mf{g}/\mf{p}$ has a filtration whose successive quotients are semistable of positive slope on every fibre of $\bun_P^{ss, \mu} \times_S E \to \bun_P^{ss, \mu}$. So $\mb{T}$ is a vector bundle concentrated in degree $0$ (e.g., by \cite[Lemma 2.6.3]{davis19a}), which proves the claim.
\end{proof}

\begin{cor} \label{cor:inducedslice}
Assume that $Z_0 \to \bun_{L, rig}^{ss, \mu}$ is a slice. Then $\mrm{Ind}_L^G(Z_0) \to \bun_{G, rig}$ is a slice.
\end{cor}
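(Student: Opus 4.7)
The plan is to pull back to $\bun_{G, rig}$ along the smooth surjection $\bun_{G, rig} \to \bun_{G, rig}/E$ and apply Proposition \ref{prop:parabolicinductionsmooth}. Parabolic induction interacts well with the translation action of $E$, so after this base change the induced slice becomes an honest induced bundle to which Proposition \ref{prop:parabolicinductionsmooth} applies.

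Concretely, I would set $\tilde{Z}_0 := Z_0 \times_{\bun_{L, rig}^{ss, \mu}/E} \bun_{L, rig}^{ss, \mu}$. The hypothesis that $Z_0 \to \bun_{L, rig}^{ss, \mu}/E$ is smooth gives, by base change along the smooth surjection $\bun_{L, rig}^{ss, \mu} \to \bun_{L, rig}^{ss, \mu}/E$, that $\tilde{Z}_0 \to \bun_{L, rig}^{ss, \mu}$ is smooth. Moreover, because the map $Z_0 \to \bun_{L, rig}^{ss, \mu}/E$ factors through the structure map $\phi \colon Z_0 \to \bun_{L, rig}^{ss, \mu}$, the pullback $E$-torsor $\tilde{Z}_0 \to Z_0$ is canonically trivialised, giving $\tilde{Z}_0 \cong Z_0 \times_S E$ under which the map to $\bun_{L, rig}^{ss, \mu}$ takes the form $(z, e) \mapsto t_e^* \phi(z)$ (where $t_e$ denotes translation by $e$). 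Proposition \ref{prop:parabolicinductionsmooth} then yields that $\mrm{Ind}_L^G(\tilde{Z}_0) \to \bun_{G, rig}$ is smooth.

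The key remaining step is to identify this morphism with the base change $\mrm{Ind}_L^G(Z_0) \times_{\bun_{G, rig}/E} \bun_{G, rig} \to \bun_{G, rig}$. A point of $\mrm{Ind}_L^G(\tilde{Z}_0)$ is a triple $(\xi_P, z, e)$ with $\xi_P/R_u(P) \cong t_e^*\phi(z)$, mapping to $\bun_{G, rig}$ by $(\xi_P, z, e) \mapsto \xi_P \times^P G$. A point of $\mrm{Ind}_L^G(Z_0) \times_{\bun_{G, rig}/E} \bun_{G, rig}$ is a triple $((\xi_P', z), e)$ with $\xi_P'/R_u(P) \cong \phi(z)$, mapping to $\bun_{G, rig}$ by $((\xi_P', z), e) \mapsto t_e^*(\xi_P' \times^P G)$. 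The substitution $\xi_P = t_e^*\xi_P'$ matches the two morphisms. Since smoothness is fppf-local on the target and $\bun_{G, rig} \to \bun_{G, rig}/E$ is a smooth surjection, we conclude that $\mrm{Ind}_L^G(Z_0) \to \bun_{G, rig}/E$ is smooth, as required. The only real obstacle is the bookkeeping in this identification; all of the nontrivial geometric input (the tangent-complex computation showing $\bun_{P, rig}^{ss, \mu} \to \bun_{G, rig}$ is smooth) is already packaged into Proposition \ref{prop:parabolicinductionsmooth}.
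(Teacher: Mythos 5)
Your argument is correct, and it is in substance the same reduction the paper makes — everything rests on Proposition \ref{prop:parabolicinductionsmooth} together with the compatibility of parabolic induction with the translation action of $E$ — but the execution differs. The paper's proof is a one-line base change of the ambient family: it applies Proposition \ref{prop:parabolicinductionsmooth} to the genus-one curve $E' := S \to \B_S E =: S'$, using the identification $\bun_{G,rig}/E = \bun_{G/S'}(E')_{rig}$; this is exactly why that proposition was stated over an arbitrary regular stack base and without assuming a section of the curve. You instead work on the atlas: you pull back along the smooth surjection $\bun_{G,rig}\to\bun_{G,rig}/E$, identify the pullback of $\mrm{Ind}_L^G(Z_0)$ with $\mrm{Ind}_L^G(\tilde Z_0)$ for $\tilde Z_0 = Z_0\times_S E$, and descend smoothness. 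That identification is the descent datum that the paper's reformulation packages away, and your verification of it is correct (the fibre product $X\times_{X/E}Y$ for $Y\to X/E$ factoring through $X$ is canonically $Y\times_S E$, and $t_e^*$ commutes with $-\times^P G$ and $-\times^P L$). What your route buys is that it avoids stacks of bundles over a stacky base entirely, at the cost of the bookkeeping in the middle step; what the paper's route buys is that, once Proposition \ref{prop:parabolicinductionsmooth} is stated in the right generality, the corollary is literally a special case.
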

\begin{proof}
Apply Proposition \ref{prop:parabolicinductionsmooth} to the family $E' := S \to \B_S E =: S'$, observing that $\bun_{G, rig}/E = \bun_{G/S'}(E')_{rig}$.
\end{proof}

A key feature of the parabolic induction construction is the existence of a natural action of the group $Z(L)_{rig} = Z(L)/Z(G)$ on $\mrm{Ind}_L^G(Z_0)$. Note that unlike $Z(L)$, $Z(L)_{rig}$ is always a torus, with cocharacter lattice
\[ \bigoplus_{\alpha_i \in t(P)} \mb{Z}\varpi_i^\vee \]
if $L$ is the Levi factor of a standard parabolic subgroup $P$.

In general, suppose that $X$ is a stack equipped with an action $a \colon X \times \B H \to X$ of the classifying stack of some commutative group scheme $H$. Then for any morphism of stacks $\pi \colon X' \to X$, there is a canonical action of $H$ on $X'$ over $X$ fitting into a commutative diagram
\begin{equation} \label{eq:canonicalaction}
\begin{tikzcd}
X' \ar[r] \ar[d] & X'/H \ar[r] \ar[d] & X' \ar[d] \\
X \ar[r] & X \times \B H \ar[r, "a"] & X,
\end{tikzcd}
\end{equation}
in which both squares are Cartesian, where the morphism $X \to X \times \B H$ is the quotient by the trivial action of $H$ on $X$. Explicitly, this action can be realised by using the outer square of \eqref{eq:canonicalaction} to identify $X'$ with the stack of tuples $(x' \in X', x \in X, \phi \colon x \overset{\sim}\to \pi(x'))$ and setting $(x', x, \phi)\cdot h = (x', x, \phi \circ a(h))$ for $h \in H$.

Now suppose we are in the situation of Definition \ref{defn:parabolicinduction}. Applying the above construction to the action of $\B Z(L)_{rig}$ on $\bun_{L, rig}$ inherited from the action of $\B Z(L)$ on $\bun_L$ gives an action of $Z(L)_{rig}$ on $\bun_{P, rig}$ over $\bun_{L, rig}$ and a morphism $\bun_{P, rig}/Z(L)_{rig} \to \bun_{P, rig} \to \bun_{G, rig}$. Pulling back along $Z_0 \to \bun_{L, rig}$, we get an action of $Z(L)_{rig}$ on $\mrm{Ind}_L^G(Z_0)$ over $Z_0$ and a morphism $\mrm{Ind}_L^G(Z)/Z(L)_{rig} \to \bun_{G, rig}$.

\begin{rmk} \label{rmk:zlrigaction}
If the morphism $Z_0 \to \bun_{L, rig}^{ss, \mu}$ factors through $\bun_L^{ss, \mu}$, then the $\B Z(L)$-action on $\bun_L$ gives a morphism $\mrm{Ind}_L^G(Z_0)/Z(L) \to \bun_G$ and a Cartesian diagram
\[
\begin{tikzcd}
\mrm{Ind}_L^G(Z_0)/Z(L) \ar[r] \ar[d] & \bun_G \ar[d] \\
\mrm{Ind}_L^G(Z_0)/Z(L)_{rig} \ar[r] & \bun_{G, rig}.
\end{tikzcd}
\]
Note that $\mrm{Ind}_L^G(Z_0)/Z(L) \to \bun_G$ factors through $\mrm{Ind}_L^G(Z_0)/Z(L)_{rig}$ if and only if the left hand morphism above admits a section, which holds if and only if $Z(L) = Z(G) \times Z(L)_{rig}$.
\end{rmk}

The following proposition describes the structure of the natural morphism $\mrm{Ind}_L^G(Z_0) \to Z_0$ together with the $Z(L)_{rig}$-action constructed above.

\begin{prop}[{(cf.\ \cite[Corollary 4.2.4]{friedman-morgan00})}] \label{prop:inductionaffinebundle}
Assume we are in the setup of Definition \ref{defn:parabolicinduction}. If $Z_0$ is an affine scheme, then there exists a (non-canonical) $Z(L)_{rig}$-equivariant isomorphism of stacks over $Z_0$,
\[ \mrm{Ind}_L^G(Z_0) \cong \mb{R}^1{\mrm{pr}_{Z_0}}_* (\xi_{L/Z(G)} \times^{L/Z(G)} \mf{u}),\]
where $\xi_{L/Z(G)} \to Z_0 \times_S E$ is the $L/Z(G)$-bundle classified by the morphism $Z_0 \to \bun_{L, rig} \to \bun_{L/Z(G)}$, $\mf{u}$ is the Lie algebra of the unipotent radical $R_u(P)$, and $Z(L)_{rig}$ acts on $\mf{u}$ by right conjugation. Hence, for any $Z_0$ (not necessarily affine), the morphism $\mrm{Ind}_L^G(Z_0) \to Z_0$ is always an affine space bundle with fibrewise linear $Z(L)_{rig}$-action.
\end{prop}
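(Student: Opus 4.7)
The plan is to realise $\mrm{Ind}_L^G(Z_0) \to Z_0$ as a torsor under the vector bundle $V := R^1p_*\mc{U}^{Lie}$ on $Z_0$, where $\mc{U} = \xi_{L/Z(G)} \times^{L/Z(G)} R_u(P)$ is the adjoint unipotent group scheme on $Z_0 \times_S E$ and $p = \mrm{pr}_{Z_0}$, and then to observe that $\mb{R}^1p_*\mc{U}^{Lie}$ is the trivial $V$-torsor; the non-canonical isomorphism over affine $Z_0$ will follow from vanishing of $H^1$ of vector bundles on affine schemes.

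First, since $Z(G) \subseteq Z(L)$ acts trivially on $R_u(P)$ by conjugation, the group scheme $\mc{U}$ is well-defined, and the description of $\bun_P \to \bun_L$ as the classifying stack of $R_u(P)$-reductions (which descends through the $Z(G)$-rigidification) gives a canonical $Z(L)_{rig}$-equivariant isomorphism
\[\mrm{Ind}_L^G(Z_0) \cong \bun_{\mc{U}/Z_0}(Z_0 \times_S E).\]
Next, the $Z(L)^\circ$-weight decomposition of $\mf{u}$ induces an $L$-equivariant filtration of $\mc{U}^{Lie}$ whose successive quotients come from representations on which $Z(L)^\circ$ acts by a single character $\lambda$, and are therefore semistable of slope $\langle \lambda, \mu\rangle$ on each geometric fibre of $p$. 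Since $-\mu$ is a Harder-Narasimhan vector for $P$, every weight $\lambda$ occurring in $\mf{u}$ satisfies $\langle \lambda, \mu\rangle < 0$, and such semistable bundles on an elliptic curve have no global sections; hence $p_*\mc{U}^{Lie} = 0$, and $V$ is a $Z(L)_{rig}$-equivariant vector bundle on $Z_0$.

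The comparison is then carried out inductively on the nilpotency class of $R_u(P)$ using the lower central series $\mc{U} = \mc{U}^{(1)} \supseteq \mc{U}^{(2)} \supseteq \cdots$, with central abelian quotients $\mc{V}^{(i)} = \mc{U}^{(i)}/\mc{U}^{(i+1)}$, each isomorphic as an $L$-equivariant group scheme to its Lie algebra $\mc{V}^{(i), Lie}$. The vanishing $p_*\mc{V}^{(i), Lie} = 0$ implies that the forgetful morphism $\bun_{\mc{U}/\mc{U}^{(i+1)}/Z_0} \to \bun_{\mc{U}/\mc{U}^{(i)}/Z_0}$ is a torsor under the vector bundle $R^1p_*\mc{V}^{(i), Lie}$ on $Z_0$; similarly, the short exact sequence of Lie algebras exhibits $\mb{R}^1p_*(\mc{U}^{Lie}/\mc{U}^{(i+1), Lie}) \to \mb{R}^1p_*(\mc{U}^{Lie}/\mc{U}^{(i), Lie})$ as a torsor under the same vector bundle. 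Both sides of the claimed isomorphism are thus built as parallel iterated $V$-torsors over $Z_0$. When $Z_0$ is affine, every such torsor is trivial because $H^1(Z_0, -)$ vanishes on coherent sheaves, so stage-by-stage trivialisation produces the desired isomorphism. The $Z(L)_{rig}$-equivariance can be preserved because $Z(L)_{rig}$ acts with only nontrivial weights on each $\mc{V}^{(i), Lie}$, so the torsors and their spaces of splittings decompose into nontrivial weight spaces within which equivariant choices exist. Finally, for general $Z_0$ the torsor structure is intrinsic and exhibits $\mrm{Ind}_L^G(Z_0)$ as an affine bundle under $V$ with fibrewise-linear $Z(L)_{rig}$-action.

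The main obstacle will be the inductive bookkeeping of the two parallel torsor towers together with the torus action at each stage, and verifying compatibility of the exponential identifications $\mc{V}^{(i)} \cong \mc{V}^{(i), Lie}$ with the group structures; the vanishing of $p_*\mc{U}^{Lie}$ and the triviality of $H^1$-torsors on affine schemes are otherwise standard inputs.
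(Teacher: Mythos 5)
Your proof is correct and follows essentially the same route as the paper's: identify $\mrm{Ind}_L^G(Z_0)$ with the stack of torsors under the twisted unipotent group scheme $\mc{U}$ on $Z_0 \times_S E$, filter $\mc{U}$ by normal subgroups with central vector-group quotients, observe that each stage of the resulting tower is a torsor under a vector bundle $\mb{R}^1 p_*(\cdot)$ (using $p_* = 0$, which follows from negativity of the slopes), and trivialise the tower $Z(L)_{rig}$-equivariantly over affine $Z_0$. Two differences are worth recording. First, the paper filters $R_u(P)$ by the values of $\langle \alpha, -\mu \rangle$ rather than by the lower central series; this makes each graded piece isotypic for $Z(L)^\circ$, hence a family of semistable bundles of a single negative slope, so the vanishing of $p_*$ and the local freeness of $\mb{R}^1 p_*$ are immediate, and it avoids having to check that the lower central series of $R_u(P)$ over $\spec \mb{Z}$ is flat with vector-group quotients in every characteristic. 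Second, your justification of the equivariant trivialisation is not quite the operative one: the module of splittings at stage $i$ is $\mc{O}(V_i) \otimes_{\mc{O}(Z_0)} H^0(Z_0, \mb{R}^1 p_* \mc{V}^{(i), Lie})$, where $V_i$ is the previous stage of the tower, and since $\mc{O}(V_i)$ carries weights opposite in sign to those of the $H^0$ factor, this module will in general have a nonzero weight-zero part, so it is not true that only nontrivial weights occur. What actually produces an equivariant splitting is linear reductivity of the torus: the obstruction lies in $H^1(V_i/Z(L)_{rig}, \mb{R}^1 p_*\mc{V}^{(i), Lie}) \cong H^1(\B Z(L)_{rig}, M)$ for a quasi-coherent module $M$ (using affineness of $V_i$, which needs $Z_0$ affine), and this group vanishes for every $M$ because $Z(L)_{rig}$ is diagonalisable; this is exactly the computation in the paper's proof, and it imposes no condition on the weights.
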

\begin{proof}
Let $\xi_{L/Z(G)}^{uni} \to \bun_{L, rig} \times_S E$ be the $L/Z(G)$-bundle classified by the morphism $\bun_{L, rig} \to \bun_{L/Z(G)}$, and let $\mc{U}^{uni} = \xi_{L/Z(G)}^{uni} \times^{L/Z(G)} R_u(P)$. It follows directly from the construction that the $\B Z(L)_{rig}$-action induces the right conjugation action $(x, u) \cdot g = (x, g^{-1}ug)$ of $Z(L)_{rig}$ on the group scheme $\mc{U}^{uni}$. Letting $\mc{U} = \xi_{L/Z(G)} \times^{L/Z(G)} R_u(P)$, it follows that the action of $Z(L)_{rig}$ on $\mrm{Ind}_L^G(Z_0) = \bun_{\mc{U}/Z_0}$ is also given by right conjugation.

Let $0 < \mu_1 < \cdots < \mu_n$ be the possible positive values of $\langle \alpha, -\mu \rangle$ for $\alpha \in \Phi$, and let
\[ \{1\} = U_{n + 1} \subseteq U_n \subseteq \cdots \subseteq U_1 = R_u(P)\]
be the filtration defined by
\[ U_i = \prod_{\substack{\alpha \in \Phi \\ \langle \alpha,-\mu \rangle \geq \mu_i}} U_\alpha\]
for $1 \leq i \leq n + 1$.
Letting
\[ \mc{U}_i = \xi_{L/Z(G)} \times^{L/Z(G)} U_i \subseteq \mc{U},\]
we show by induction on $i$ that each $\bun_{(\mc{U}/\mc{U}_i)/Z_0}(E)$ is $Z(L)_{rig}$-equivariantly isomorphic to the vector bundle
\[ \mb{R}^1{\mrm{pr}_{Z_0}}{\vphantom{p}}_* (\xi_{L/Z(G)} \times^{L/Z(G)} \mf{u}/\mf{u}_i) = \bigoplus_{j \leq i} \mb{R}^1{\mrm{pr}_{Z_0}}{\vphantom{p}}_*(\xi_{L/Z(G)} \times^{L/Z(G)} \mf{u}_{j - 1}/\mf{u}_{j}),\]
where $\mf{u}_i$ is the Lie algebra of $R_u(P)_i$.

For $i = 1$, the claim is trivial. For $i > 1$, the long exact sequence in nonabelian cohomology (e.g, \cite[Proposition 2.4.2]{davis19a}) implies that each morphism
\[ \bun_{(\mc{U}/\mc{U}_i)/Z_0}(E) \longrightarrow \bun_{(\mc{U}/\mc{U}_{i - 1})/Z_0}(E) \]
is a $Z(L)_{rig}$-equivariant $\bun_{(\mc{U}_{i - 1}/\mc{U}_i)/Z_0}(E)$-torsor. But $U_{i - 1}/U_i \cong \mf{u}_{i - 1}/\mf{u}_i$ as $L/Z(G)$-equivariant group schemes. So
\[ \bun_{(\mc{U}_{i - 1}/\mc{U}_i)/Z_0}(E) \cong \bun_{(\xi_{L/Z(G)}\times^{L/Z(G)} \mf{u}_{i - 1}/\mf{u}_i)/Z_0}(E) = \mb{R}^1{\mrm{pr}_{Z_0}}{\vphantom{p}}_*(\xi_{L/Z(G)} \times^{L/Z(G)} \mf{u}_{i - 1}/\mf{u}_i) \]
since
\[ H^0(E_s, \xi_{L/Z(G), z} \times^{L/Z(G)} \mf{u}_{i - 1}/\mf{u}_i) = 0\]
for any geometric point $z \colon \spec k \to Z$ over $s \colon \spec k \to S$, as this is the space of global sections of an iterated extension of semistable vector bundles of negative degree. By induction, $V_i = \bun_{(\mc{U}/\mc{U}_{i - 1})/Z_0}(E)$ is a vector bundle on $Z_0$ with linear $Z(L)_{rig}$-action, so the $Z(L)_{rig}$-equivariant torsors on it are classified by the group
\begin{align*}
H^1(V_i/Z(L)_{rig}&, \,\mb{R}^1{\mrm{pr}_{Z_0}}{\vphantom{p}}_*(\xi_{L/Z(L)} \times^{L/Z(L)} \mf{u}_{i - 1}/\mf{u}_i)) \\
&= H^1(\mb{B}Z(L)_{rig}, H^0(V_i, \mc{O}_{V_i}) \otimes_{H^0(Z_0, \mc{O}_{Z_0})} H^1(E, \xi_{L/Z(L)} \times^{L/Z(L)} \mf{u}_{i - 1}/\mf{u}_i)) \\
&= 0
\end{align*}
since $Z(L)_{rig}$ is a torus and $Z_0$ is affine. So we can trivialise the given torsor $Z(L)_{rig}$-equivariantly, to give a $Z(L)_{rig}$-equivariant isomorphism
\begin{align*}
\bun_{(\mc{U}/\mc{U}_i)/Z_0}(E) &\cong \bun_{(\mc{U}/\mc{U}_{i - 1})/Z_0}(E) \times_{Z_0} \mb{R}^1{\mrm{pr}_Z}{\vphantom{p}}_*(\xi_{L/Z(G)} \times^{L/Z(G)} \mf{u}_{i - 1}/\mf{u}_i) \\
&\cong \bigoplus_{j \leq i} \mb{R}^1{\mrm{pr}_{Z_0}}_*(\xi_{L/Z(G)} \times^{L/Z(G)} \mf{u}_{j - 1}/\mf{u}_j),
\end{align*}
as claimed.
\end{proof}

The next two propositions give root-theoretic formulas for the $Z(L)_{rig}$-weights and dimension of the affine space bundle $\mrm{Ind}_L^G(Z_0) \to Z$.

\begin{prop} \label{prop:inductionweights}
If $\lambda \in \mb{X}^*(Z(L)_{rig})$, then the multiplicity of the weight $\lambda$ in a fibre of $\mrm{Ind}_L^G(Z_0) \to Z_0$ is $d_{-\lambda} \langle \lambda, \mu \rangle$, if $\langle \lambda, \mu \rangle > 0$ and $0$ otherwise, where $d_{-\lambda}$ is the number of $\alpha \in \Phi$ such that $\alpha|_{Z(L)_{rig}} = -\lambda$.
\end{prop}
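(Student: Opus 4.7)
The plan is to reduce, via Proposition \ref{prop:inductionaffinebundle}, to a direct root-theoretic computation. After Zariski-localising on $Z_0$ so that it is affine (which does not affect fibrewise $Z(L)_{rig}$-weights), the geometric fibre of $\mrm{Ind}_L^G(Z_0) \to Z_0$ at a point $z \colon \spec k \to Z_0$ over $s \colon \spec k \to S$ is identified $Z(L)_{rig}$-equivariantly with $H^1(E_s, \xi_{L/Z(G), z} \times^{L/Z(G)} \mf{u})$, where $Z(L)_{rig}$ acts through the right conjugation action on $\mf{u}$. It therefore suffices to decompose this cohomology group into $Z(L)_{rig}$-weight spaces.

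I would start by computing the weight decomposition of $\mf{u}$ itself. Since $Z(L) \subseteq T$, for $X \in \mf{g}_\alpha$ and $\tilde g \in Z(L)$ a lift of $g \in Z(L)_{rig}$, right conjugation gives $\tilde g^{-1} X \tilde g = \alpha(\tilde g)^{-1} X$, so $\mf{g}_\alpha$ carries $Z(L)_{rig}$-weight $-\alpha|_{Z(L)_{rig}}$. Writing $\Phi(\mf{u}) = \{\alpha \in \Phi : \langle \alpha, -\mu \rangle > 0\}$ for the set of roots of $\mf{u}$ (using the Harder-Narasimhan hypothesis on $-\mu$), the weight-$\lambda$ subspace is
\[ \mf{u}^\lambda = \bigoplus_{\substack{\alpha \in \Phi(\mf{u}) \\ \alpha|_{Z(L)_{rig}} = -\lambda}} \mf{g}_\alpha. \]
For any $\alpha$ with $\alpha|_{Z(L)_{rig}} = -\lambda$, pairing with $\mu$ (via the canonical map $Z(L)^\circ \to Z(L)_{rig}$) gives $\langle \alpha, \mu \rangle = -\langle \lambda, \mu \rangle$, so the condition $\alpha \in \Phi(\mf{u})$ becomes $\langle \lambda, \mu \rangle > 0$. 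Hence $\dim \mf{u}^\lambda = d_{-\lambda}$ when $\langle \lambda, \mu \rangle > 0$ and $\mf{u}^\lambda = 0$ otherwise (for $\lambda = 0$ the subspace is also zero, since roots of $L$ do not lie in $\Phi(\mf{u})$).

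Next, I would observe that each $\mf{u}^\lambda$ is stable under the adjoint action of $L$ (as $Z(L)$ is central, its weight decomposition is automatically $L$-equivariant) and that $Z(G)$ acts trivially on it, so $\mf{u}^\lambda$ is a representation of $L/Z(G)$ on which $Z(L)^\circ$ acts with weight $-\lambda$. Because $Z_0 \to \bun_{L, rig}$ factors through $\bun_{L, rig}^{ss, \mu}$, the associated vector bundle $\xi_{L/Z(G), z} \times^{L/Z(G)} \mf{u}^\lambda$ on $E_s$ will be semistable of slope $-\langle \lambda, \mu \rangle$ and rank $d_{-\lambda}$ when $\langle \lambda, \mu \rangle > 0$ (and zero otherwise). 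Applying Riemann-Roch on the elliptic curve $E_s$---a semistable vector bundle of negative slope there satisfies $H^0 = 0$ and $\dim H^1 = -\chi = -(\rank)(\text{slope})$---yields $\dim H^1 = d_{-\lambda} \langle \lambda, \mu \rangle$, as required.

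The argument is essentially bookkeeping once Proposition \ref{prop:inductionaffinebundle} is in hand. The one subtlety to keep track of is the sign flip between left- and right-conjugation conventions, which forces the weight-$\lambda$ subspace of $\mf{u}$ to correspond to roots restricting to $-\lambda$ rather than $\lambda$ on $Z(L)_{rig}$; the other input---preservation of slope and semistability under the associated-bundle construction for semistable principal bundles---is a standard fact already invoked in the proof of Proposition \ref{prop:parabolicinductionsmooth}.
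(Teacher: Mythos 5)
Your proof is correct and follows essentially the same route as the paper's: reduce via Proposition \ref{prop:inductionaffinebundle} to computing $Z(L)_{rig}$-weights on $H^1(E_s, \xi_{L/Z(G)} \times^{L/Z(G)} \mf{u})$, identify the $\lambda$-weight space of $\mf{u}$ under right conjugation with the span of the $\mf{g}_\alpha$ for $\alpha|_{Z(L)_{rig}} = -\lambda$ (nonzero exactly when $\langle \lambda, \mu \rangle > 0$), and conclude from semistability of negative slope that $\dim H^1 = -\deg = d_{-\lambda}\langle \lambda, \mu \rangle$. The sign bookkeeping and the treatment of the $\lambda = 0$ case are both handled correctly.
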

\begin{proof}
By Proposition \ref{prop:inductionaffinebundle}, the multiplicity of $\lambda$ in $\mrm{Ind}_L^G(Z_0)$ is equal to the multiplicity in $H^1(E_s, \xi_L \times^L \mf{u})$, where $E_s$ is any geometric fibre of $E \to S$, $\xi_L \to E_s$ is a semistable $L$-bundle of slope $\mu$, and $Z(L)_{rig}$ acts by right conjugation on $\mf{u}$. But this is equal to the dimension of $H^1(E_s, \xi_L \times^L \mf{u}_{-\lambda})$, where
\[ \mf{u}_{-\lambda} = \begin{cases}{\displaystyle \bigoplus_{\substack{\alpha \in \Phi \\ \alpha|_{Z(L)_{rig}} = -\lambda}}} \mf{g}_\alpha, & \text{if}\;\; \langle \lambda, \mu \rangle > 0, \\[2ex] 0, &\text{otherwise}, \end{cases} \]
is the $\lambda$-weight space of $Z(L)_{rig}$ acting on $\mf{u}$ by right conjugation. But since $\xi_L \times^L \mf{u}_{-\lambda}$ is either $0$ or a semistable vector bundle of negative slope $-\langle \lambda, \mu \rangle$, it follows that
\[ \dim H^1(\xi_L \times^L \mf{u}_{-\lambda}) = -\deg(\xi_L \times^L \mf{u}_{-\lambda}) = \begin{cases} d_{-\lambda} \langle \lambda, \mu \rangle, & \text{if}\;\; \langle \lambda, \mu \rangle > 0, \\ 0, & \text{otherwise},\end{cases}\]
which proves the claim.
\end{proof}

\begin{prop} \label{prop:inductionreldim}
The morphism $\mrm{Ind}_L^G(Z_0) \to Z_0$ has relative dimension $\langle 2\rho_P, \mu \rangle$, where $-2\rho_P$ is the sum of all roots $\alpha \in \Phi$ such that $U_\alpha \subseteq R_u(P)$.
\end{prop}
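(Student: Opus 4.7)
The plan is to deduce the formula directly from Proposition \ref{prop:inductionweights} by summing multiplicities over all characters of $Z(L)_{rig}$. Since $\mrm{Ind}_L^G(Z_0) \to Z_0$ is an affine space bundle with fibrewise linear $Z(L)_{rig}$-action (Proposition \ref{prop:inductionaffinebundle}), the relative dimension is the sum of the multiplicities of all weights $\lambda \in \mb{X}^*(Z(L)_{rig})$ appearing in a fibre. By Proposition \ref{prop:inductionweights}, this sum equals
\[ \sum_{\substack{\lambda \in \mb{X}^*(Z(L)_{rig}) \\ \langle \lambda, \mu\rangle > 0}} d_{-\lambda} \langle \lambda, \mu\rangle = \sum_{\substack{\alpha \in \Phi \\ \langle \alpha, \mu \rangle < 0}} \langle -\alpha, \mu \rangle, \]
where the right-hand equality follows by grouping roots according to their restriction to $Z(L)_{rig}$ and using that $\langle \alpha, \mu \rangle$ depends only on this restriction (as $\mu$ lies in $\mb{X}_*(Z(L)^\circ)_\mb{Q}$, which is isogenous to $\mb{X}_*(Z(L)_{rig})_\mb{Q}$).

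The key observation is now that, since $-\mu$ is a Harder-Narasimhan vector for $P$ (Definition \ref{defn:hardernarasimhanvector}), the set $\{\alpha \in \Phi \mid \langle \alpha, \mu \rangle < 0\}$ is exactly the set of $\alpha$ such that $U_\alpha \subseteq R_u(P)$. Therefore the sum above equals $-\sum_{\alpha : U_\alpha \subseteq R_u(P)} \langle \alpha, \mu\rangle = -\langle -2\rho_P, \mu \rangle = \langle 2\rho_P, \mu\rangle$ by the definition of $2\rho_P$, completing the proof.

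No serious obstacle is expected: the only point requiring minor care is the bookkeeping with the restriction map $\mb{X}^*(T) \to \mb{X}^*(Z(L)_{rig})$ and the identification of the sign conventions for the Harder-Narasimhan vector, both of which are routine given the setup. (Alternatively, one could bypass Proposition \ref{prop:inductionweights} altogether and compute the relative dimension as $\dim H^1(E_s, \xi_L \times^L \mf{u})$ using Proposition \ref{prop:inductionaffinebundle}, noting that $H^0$ vanishes since $\xi_L \times^L \mf{u}$ is an iterated extension of semistable bundles of negative slope (as in the proof of Proposition \ref{prop:inductionaffinebundle}), and then applying Riemann--Roch in genus $1$ to the filtration of $\mf{u}$ by root spaces; this yields the same answer $-\chi = -\sum_{\alpha : U_\alpha \subseteq R_u(P)} \langle \alpha, \mu \rangle = \langle 2\rho_P, \mu\rangle$.)
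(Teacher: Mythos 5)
Your proof is correct and is essentially the paper's own argument: the paper proves this proposition in one line by summing the multiplicities of Proposition \ref{prop:inductionweights} over all $\lambda$, which is exactly the computation you carry out (with the bookkeeping over the restriction map and the Harder--Narasimhan sign convention made explicit). The alternative Riemann--Roch remark is also consistent with the dimension count already implicit in the proof of Proposition \ref{prop:inductionaffinebundle}.
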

\begin{proof}
This follows from Proposition \ref{prop:inductionweights} after taking the sum over all $\lambda$.
\end{proof}

Our key original observation about parabolic induction is that, in many cases, the $Z(L)_{rig}$-action on $\mrm{Ind}_L^G(Z_0)$ can be promoted to the following structure.

\begin{defn} \label{defn:equivariantslice}
Let $H$ be a torus, and let $\lambda \in \mb{X}^*(H)$ be a nonzero character. An \emph{equivariant slice of $\bun_{G, rig}$ with equivariance group $H$ and weight $\lambda$} is a commutative diagram
\[
\begin{tikzcd}
Z/H \ar[r] \ar[d] & \bun_{G,rig} \ar[d, "\Theta_{\bun_{G, rig}}^{-1}"] \\
\B H \ar[r, "\lambda"] & \B \mb{G}_m
\end{tikzcd}
\]
where $Z$ is a stack with $H$-action over $S$, such that the composition $Z \to Z/H \to \bun_{G, rig} \to \bun_{G, rig}/E$ is smooth. We will often suppress the group $H$ from the notation and refer to $Z \to \bun_{G, rig}$, or even simply $Z$, as an equivariant slice.
\end{defn}

\begin{rmk} \label{rmk:equivariantthetasection}
Unpacking the stacky formalism, the datum of an equivariant slice $Z/H \to \bun_{G, rig}$ of weight $\lambda$ is equivalent to the datum of an $H$-equivariant morphism
\begin{equation} \label{eq:equivariantthetasection}
Z \longrightarrow (\Theta_{\bun_{G, rig}}^{-1})^*,
\end{equation}
where $H$ acts on the complement $(\Theta_{\bun_{G, rig}}^{-1})^*$ of the zero section of $\Theta_{\bun_{G, rig}}^{-1}$ through the character $\lambda \colon H \to \mb{G}_m$. Together with the rigidified elliptic Grothendieck-Springer resolution \eqref{eq:rigidifiedgrothendieckspringer}, this gives an $H$-equivariant commutative diagram
\[
\begin{tikzcd}
\tilde{Z} \ar[r, "\psi_Z"] \ar[d, "\tilde{\chi}_Z"'] & Z \ar[d, "\chi_Z"] \\
\Theta_Y^{-1} \ar[r] & \hat{Y}\sslash W,
\end{tikzcd}
\]
where $\tilde{Z} = Z \times_{\bun_{G, rig}} \tbun_{G, rig}$ and $H$ acts on $\Theta_Y^{-1}$ and $\hat{Y}\sslash W$ via the character $\lambda$.
\end{rmk}

\begin{defn}
Let $L \subseteq G$ be a Levi subgroup and $\mu \in \mb{X}_*(Z(L)^\circ)_\mb{Q}$. A \emph{$\Theta$-trivial slice of $\bun_{L, rig}^{ss, \mu}$} is a slice $Z_0 \to \bun_{L, rig}^{ss, \mu}$ equipped with a trivialisation of the pullback $\Theta_{\bun_{L, rig}^{ss, \mu}}$ of the theta bundle $\Theta_{\bun_{G, rig}}$.
\end{defn}

\begin{prop} \label{prop:inducedequivariantslice}
Let $Z_0 \to \bun_{L, rig}^{ss, \mu}$ be a $\Theta$-trivial slice. Then there is a natural equivariant slice structure on $\mrm{Ind}_L^G(Z_0) \to \bun_{G, rig}$ with equivariance group $Z(L)_{rig}$ and weight $(\mu \mmid -)$.
\end{prop}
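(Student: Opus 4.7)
The plan is to use Remark \ref{rmk:equivariantthetasection} to reduce the statement to the construction of a $Z(L)_{rig}$-equivariant trivialisation of the pullback of $\Theta_{\bun_{G, rig}}^{-1}$ along $\mrm{Ind}_L^G(Z_0) \to \bun_{G, rig}$, on which $Z(L)_{rig}$ acts on the fibres through the character $(\mu \mmid -)$. The smoothness requirement of Definition \ref{defn:equivariantslice} is immediate from Corollary \ref{cor:inducedslice}, so the content of the proof lies entirely in producing this equivariant trivialisation.

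My first step would be to compare the theta bundle on $\bun_{G, rig}$ with an analogous line bundle on $\bun_{L, rig}^{ss, \mu}$. Write $\pi_P \colon \bun_{P, rig}^{ss, \mu} \to \bun_{G, rig}$ and $\pi_L \colon \bun_{P, rig}^{ss, \mu} \to \bun_{L, rig}^{ss, \mu}$ for the natural morphisms. Proposition \ref{prop:inductionaffinebundle} applied to the universal case $Z_0 = \bun_{L, rig}^{ss, \mu}$ shows that $\pi_L$ is an affine space bundle, and it admits a canonical zero section $\sigma \colon \bun_{L, rig}^{ss, \mu} \to \bun_{P, rig}^{ss, \mu}$ coming from the inclusion $L \hookrightarrow P$. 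Set $\Theta_{\bun_{L, rig}^{ss, \mu}} := \sigma^* \pi_P^* \Theta_{\bun_{G, rig}}$; this is the line bundle whose trivialisation on $Z_0$ is supplied by the $\Theta$-triviality assumption. Since ${\pi_L}_*\mc{O} = \mc{O}$ with vanishing higher direct images, the adjunction unit produces a canonical isomorphism of line bundles
\[ \alpha \colon \pi_P^* \Theta_{\bun_{G, rig}} \overset{\sim}\longrightarrow \pi_L^* \Theta_{\bun_{L, rig}^{ss, \mu}}. \]
Pulling $\alpha$ back along $Z_0 \to \bun_{L, rig}^{ss, \mu}$ and composing with the given trivialisation of $\Theta_{\bun_{L, rig}^{ss, \mu}}|_{Z_0}$ furnishes a trivialisation of $\Theta_{\bun_{G, rig}}|_{\mrm{Ind}_L^G(Z_0)}$.

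It remains to compute the $Z(L)_{rig}$-equivariance of this trivialisation. Both sides of $\alpha$ are naturally $Z(L)_{rig}$-equivariant: the target has trivial weight because the $Z(L)_{rig}$-action on $\bun_{P, rig}$ lies over $\bun_{L, rig}$, whereas the source carries an equivariant structure from the factorisation of $\pi_P$ through $\bun_{P, rig}^{ss, \mu}/Z(L)_{rig} \to \bun_{G, rig}$. In general, $\alpha$ does not intertwine these structures, and the discrepancy is encoded by a single character $\kappa \in \mb{X}^*(Z(L)_{rig})_{\mb{Q}}$ of the torus $Z(L)_{rig}$. The substance of the proposition is the identification $\kappa = (\mu \mmid -)$.

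This weight computation is the main obstacle, and my plan is to carry it out using the determinant-of-cohomology description of line bundles on $\bun_{G, rig}$, following Lemma \ref{lem:detquadraticclass}. After passing to a sufficiently divisible tensor power and tensoring with a pullback from $S$, one may assume $\Theta_{\bun_{G, rig}}^{\otimes N} \cong \det \mb{R}{\mrm{pr}_{\bun_{G, rig}}}{\vphantom{p}}_*(\xi_G^{uni} \times^G V)$ for some virtual representation $V$ of $G/Z(G)$ whose weight multiplicities realise the quadratic form $-N(\cdot \mmid \cdot)$. The $Z(L)_{rig}$-action $\xi_P \mapsto \xi_P \otimes \tilde{\eta}$ (lifting $\eta$ to a $Z(L)$-bundle $\tilde{\eta}$) twists each $T$-weight space $\xi_P \times^P V_\lambda$ by the associated line bundle $\tilde{\eta}^\lambda$, whose effect on the determinant of cohomology on the elliptic curve is to multiply by the character $\lambda|_{Z(L)_{rig}}$ raised to the power $\deg(\xi_L \times^L V_\lambda) = \langle \lambda, \mu \rangle \dim V_\lambda$. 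Summing over weights and invoking the polarisation of the quadratic identity $\sum_\lambda \dim V_\lambda \langle\lambda, \cdot\rangle^2 = -N(\cdot \mmid \cdot)$ yields $N\kappa = N(\mu \mmid -)|_{Z(L)_{rig}}$, and dividing by $N$ completes the argument.
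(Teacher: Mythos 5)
Your proof is correct and takes essentially the same route as the paper's: identify the pullback of $\Theta_{\bun_{G,rig}}^{-1}$ to the affine space bundle $\mrm{Ind}_L^G(Z_0)$ with the pullback of its restriction to the zero section, and then pin down the $Z(L)_{rig}$-weight of the theta bundle on $\bun_{L,rig}^{ss,\mu}$ by a determinant-of-cohomology computation. The weight computation you sketch in your final paragraph is precisely Proposition \ref{prop:mucheck}, which the paper establishes by computing the weight of $\det\mb{R}{\mrm{pr}}_*(\xi_{G/Z(G)}\times^{G/Z(G)}V)$ for a single honest nontrivial representation $V$ of $G/Z(G)$ and then using linearity of the weight and of the quadratic class together with Corollary \ref{cor:picbung} — a slightly cleaner route than your virtual representation realising $\Theta^{\otimes N}$ (note that an honest determinant bundle has negative semidefinite quadratic class, so only $\Theta^{-N}$ with $N>0$ is of that form), and one that also avoids the unnecessary lift of $\eta$ to a $Z(L)$-bundle, since the induced $L/Z(G)$-bundle already carries the $Z(L)_{rig}=Z(L/Z(G))$-twist.
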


The main ingredient in the proof of Proposition \ref{prop:inducedequivariantslice} is a computation of the action of $Z(L)_{rig}$ on the pullback of the theta bundle to $Z_0$. Before we give this computation, it will be useful to introduce the following terminology.

\begin{defn}
Let $X$ be a connected stack equipped with an action $a \colon X \times \B H \to X$ of the classifying stack of a commutative group scheme $H$. If $\mc{L}$ is a line bundle on $X$, then \emph{weight of $\mc{L}$} is the image of $\mc{L} \in \mrm{Pic}(X)$ under the homomorphism
\[ \mrm{Pic}(X) \overset{a^*}\longrightarrow \mrm{Pic}(X \times \B H) \cong \mrm{Pic}(X) \oplus \mb{X}^*(H) \longrightarrow \mb{X}^*(H),\]
where the isomorphism above is given by
\begin{align*}
\mrm{Pic}(X) \oplus \mb{X}^*(H) &\longrightarrow \mrm{Pic}(X \times \B H) \\
(\mc{L}, \lambda) &\longmapsto p^*\mc{L} \otimes q^*(\eta_H \times^H \mb{Z}_\lambda)
\end{align*}
for $\eta_H \to \B H$ the universal $H$-torsor and $p \colon X \times \B H \to X$, $q \colon X \times \B H \to \B H$ the natural projections.
\end{defn}

\begin{rmk}
It follows tautologically from the definition that whenever $f\colon X \to \B \mb{G}_m$ classifies a line bundle $\mc{L}$ with weight $\lambda$, the diagram
\[
\begin{tikzcd}
X \times \B H \ar[r] \ar[d, "{(f, \lambda)}"'] & X \ar[d, "f"] \\
\B \mb{G}_m \times \B \mb{G}_m \ar[r] & \B \mb{G}_m
\end{tikzcd}
\]
commutes, where the bottom arrow is given by tensor product of line bundles.
\end{rmk}

\begin{prop} \label{prop:mucheck}
With respect to the natural $\B Z(L)_{rig}$-action, the weight of the pullback $\Theta_{\bun_{L, rig}^{ss, \mu}}$ of $\Theta_{\bun_{G, rig}}$ to $\bun_{L, rig}^{ss, \mu}$ is given by
\[ (-\mu \mmid -) \in \hom(\mb{X}_*(Z(L)_{rig}), \mb{Z}) = \mb{X}^*(Z(L)_{rig}).\]
\end{prop}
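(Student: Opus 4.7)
The strategy is to realize $\Theta_{\bun_{G, rig}}$ as a concrete fractional power of a determinant line bundle whose pullback to $\bun_{L, rig}^{ss, \mu}$ has a transparent $\B Z(L)_{rig}$-equivariant structure. I will take $V = \mf{g}$ (the adjoint representation) and set
\[ \mc{L}_V = \det \mb{R}{\mrm{pr}_{\bun_G}}{\vphantom{p}}_*(\xi_G^{uni} \times^G \mf{g}) \]
on $\bun_G$. Since $Z(G)$ acts trivially on $\mf{g}$, the $\B Z(G)$-weight of $\mc{L}_V$ vanishes, so $\mc{L}_V$ descends to a line bundle on $\bun_{G, rig}$. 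By Lemma \ref{lem:detquadraticclass}, the good $W$-linearised line bundle on $Y$ corresponding to $\mc{L}_V$ has quadratic form $-\sum_{\alpha \in \Phi} \langle \alpha, -\rangle^2 = -2h^\vee(\,|\,)$, using the standard identity $\sum_{\alpha \in \Phi} \alpha^2 = 2h^\vee(\,|\,)$ in $\sym^2(\mb{X}^*(T))^W$ (where $h^\vee$ is the dual Coxeter number). Corollary \ref{cor:picbung} then yields an isomorphism $\mc{L}_V \cong \Theta_{\bun_{G, rig}}^{\otimes(-2h^\vee)} \otimes \pi_S^*L'$ for some $L' \in \mrm{Pic}(S)$.

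Next, I will pull $\mc{L}_V$ back to $\bun_{L, rig}^{ss, \mu}$ and compute its $\B Z(L)_{rig}$-weight directly. Decomposing $\mf{g} = \bigoplus_\chi \mf{g}_\chi$ into weight spaces for $Z(L)_{rig}$ (each $\chi$ lifts uniquely from $Z(L)^\circ$ since $\mf{g}$ is $Z(G)$-trivial) gives
\[ \mc{L}_V|_{\bun_{L, rig}^{ss, \mu}} = \bigotimes_\chi \det \mb{R}\pi_*(\xi_L^{uni} \times^L \mf{g}_\chi). \]
For a $Z(L)_{rig}$-torsor $\eta$, the identity $(\xi_L \cdot \eta) \times^L \mf{g}_\chi \cong (\xi_L \times^L \mf{g}_\chi) \otimes \pi^*(\eta \times^{Z(L)_{rig}} \mb{Z}_\chi)$, combined with the projection formula and the fact that tensoring a perfect complex of Euler characteristic $n$ with a line bundle $M$ multiplies the determinant by $M^{\otimes n}$, shows that the $\chi$-th factor has $\B Z(L)_{rig}$-weight equal to $\chi$ times its fibrewise Euler characteristic. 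Since $\xi_L^{uni}$ is semistable of slope $\mu$, Riemann-Roch on the elliptic fibre identifies this Euler characteristic with $\dim(\mf{g}_\chi)\langle \chi, \mu\rangle$.

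Summing over $\chi$ and pairing with $\nu \in \mb{X}_*(Z(L)_{rig})$, I obtain
\[ \sum_\chi \dim(\mf{g}_\chi)\langle \chi, \mu\rangle\langle\chi, \nu\rangle = \sum_{\alpha \in \Phi}\langle\alpha, \mu\rangle\langle\alpha, \nu\rangle = 2h^\vee(\mu \mmid \nu), \]
again by the same root-theoretic identity (with roots of $L$ contributing zero, since $\langle \alpha, \mu\rangle = 0$ for $\mu \in \mb{X}_*(Z(L)^\circ)_\mb{Q}$). Since $\pi_S^*L'$ has trivial $\B Z(L)_{rig}$-weight, this is also the weight of $\Theta_{\bun_{G, rig}}^{\otimes(-2h^\vee)}|_{\bun_{L, rig}^{ss, \mu}}$, and dividing by $-2h^\vee$ yields the claimed weight $(-\mu \mmid -)$. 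The main technical obstacle will be carefully verifying the sign in the twisting identity $(\xi_L \cdot \eta) \times^L \mf{g}_\chi \cong (\xi_L \times^L \mf{g}_\chi) \otimes \pi^*(\eta \times^{Z(L)_{rig}} \mb{Z}_\chi)$, which requires unwinding the paper's convention $\xi_G \cdot \eta = (\xi_G \times \eta)/Z(G)$ with $Z(G)$ acting by $(x, y)h = (xh, yh^{-1})$; the remaining steps are routine.
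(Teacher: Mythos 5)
Your proposal is correct and follows essentially the same route as the paper: the paper likewise reduces (via Corollary \ref{cor:picbung} and invariance under powers and twists by $\mrm{Pic}(S)$) to a single determinant line bundle $\det\mb{R}\mrm{pr}_*(\xi_{G/Z(G)}\times^{G/Z(G)}V)$ for a representation $V$ of $G/Z(G)$, decomposes into $Z(L)_{rig}$-weight spaces, and identifies the $\B Z(L)_{rig}$-weight as $\sum_\lambda \dim V_\lambda\langle\lambda,\mu\rangle\,\lambda = -Q(\mc{L})(\mu,-)$ using Lemma \ref{lem:detquadraticclass}. The only cosmetic difference is that you specialize to $V=\mf{g}$ and invoke $\sum_{\alpha\in\Phi}\alpha^2 = 2h^\vee(\,|\,)$ to pin down the power of $\Theta_{\bun_{G,rig}}$, whereas the paper proves the weight formula for an arbitrary nontrivial $V$ and concludes by linearity; your sign in the twisting identity does come out consistent with the paper's convention $[x,y]h=[xh,y]=[x,yh]$, so $\chi(\xi\otimes\eta)=\chi(\xi)\otimes\chi(\eta)$ with no sign flip.
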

\begin{proof}
We will in fact show that for any $\mc{L} \in \mrm{Pic}(\bun_{G, rig})$, the pullback of $\mc{L}$ to $\bun_{L, rig}^{ss, \mu}$ has weight $- Q(\mc{L})(\mu, -)$, where $Q(\mc{L})$ is the quadratic class of the corresponding $W$-linearised line bundle on $Y$. Since this statement is invariant under tensoring $\mc{L}$ with a line bundle on the base stack $S$ and raising $\mc{L}$ to a nonzero power, by Corollary \ref{cor:picbung}, it suffices to show this for a single nontrivial line bundle $\mc{L}$.

Choose any nontrivial representation $V$ of $G/Z(G)$, and set
\[ \mc{L} = \det\mb{R}{\mrm{pr}_{\bun_{G, rig}}}{\vphantom{p}}_*(\xi_{G/Z(G)} \times^{G/Z(G)} V),\]
where $\xi_{G/Z(G)} \to \bun_{G, rig} \times_S E$ is the $G/Z(G)$-bundle classified by the morphism $\bun_{G, rig} \to \bun_{G/Z(G)}$. Then the pullback of $\mc{L}$ to $\bun_{L, rig}^{ss, \mu}$ is the line bundle
\begin{align*}
\mc{L}_{\bun_{L, rig}^{ss, \mu}} &= \det \mb{R}\mrm{pr}_{\bun_{L, rig}^{ss, \mu}}{\vphantom{p}}_*(\xi_{L/Z(G)} \times^{L/Z(G)} V) \\
&= \bigotimes_{\lambda \in \mb{X}^*(Z(L)_{rig})} \det \mb{R}\mrm{pr}_{\bun_{L, rig}^{ss, \mu}}{\vphantom{p}}_*(\xi_{L/Z(G)} \times^{L/Z(G)} V_\lambda),
\end{align*}
where $\xi_{L/Z(G)} \to \bun_{L, rig} \times_S E$ is the $L/Z(G)$-bundle classified by the natural morphism $\bun_{L, rig} \to \bun_{L/Z(G)}$, and $V = \bigoplus_{\lambda \in \mb{X}^*(Z(L)_{rig})} V_\lambda$ is the weight space decomposition of $V$ under the action of the torus $Z(L)_{rig} = Z(L/Z(G))$. Pulling back along the action morphism $a \colon \bun_{L, rig}^{ss, \mu} \times \B Z(L)_{rig} \to \bun_{L, rig}^{ss, \mu}$, we get
\begin{align*}
a^*\mb{R}\mrm{pr}_{\bun_{L, rig}^{ss,\mu}}{\vphantom{p}}_*(\xi_{L/Z(G)} &\times^{L/Z(G)} V_\lambda) \\
&= \mb{R}\mrm{pr}_{\bun_{L, rig}^{ss, \mu} \times \B Z(L)_{rig}}{\vphantom{p}}_*((p^*\xi_{L/Z(G)} \otimes q^*\eta_{Z(L)_{rig}}) \times^{L/Z(G)} V_\lambda) \\
&= \mb{R}\mrm{pr}_{\bun_{L, rig}^{ss, \mu} \times \B Z(L)_{rig}}{\vphantom{p}}_*(p^*(\xi_{L/Z(G)} \times^{L/Z(G)}V_\lambda) \otimes q^*\lambda(\eta_{Z(L)_{rig}})) \\
&= \bar{p}^*(\mb{R}\mrm{pr}_{\bun_{L, rig}^{ss, \mu}}{\vphantom{p}}_*(\xi_{L/Z(G)} \times^{L/Z(G)} V_\lambda)) \otimes \bar{q}^*\lambda(\eta_{Z(L)_{rig}})
\end{align*}
where $\bar{p} \colon \bun_{L, rig}^{ss, \mu} \times \B Z(L)_{rig} \to \bun_{L, rig}^{ss, \mu}$ and $\bar{q} \colon \bun_{L, rig}^{ss, \mu} \times \B Z(L)_{rig} \to \B Z(L)_{rig}$ are the natural projections, $p$ and $q$ are their respective compositions with the projection $(\bun_{L, rig}^{ss, \mu} \times \B Z(L)_{rig}) \times_S E \to \bun_{L, rig}^{ss, \mu} \times \B Z(L)_{rig}$, and $\eta_{Z(L)_{rig}}$ is the universal $Z(L)_{rig}$-bundle on $\B Z(L)_{rig}$. So the weight of the determinant $\mc{L}_{\bun_{L, rig}^{ss, \mu}}$ is therefore
\begin{align*}
\sum_{\lambda \in \mb{X}^*(Z(L)_{rig})} \chi(\mb{R}\mrm{pr}_{\bun_{L, rig}^{ss, \mu}}{\vphantom{p}}_*(\xi_{L/Z(G)} \times^{L/Z(G)} V_\lambda)) \lambda &= \sum_{\lambda \in \mb{X}^*(Z(L)_{rig})} \dim V_\lambda \langle \lambda, \mu \rangle \lambda \\
&= -Q(\mc{L})(\mu, -)
\end{align*}
as claimed, where the last equality follows from Lemma \ref{lem:detquadraticclass}.
\end{proof}

\begin{proof}[Proof of Proposition \ref{prop:inducedequivariantslice}]
Since $\mrm{Ind}_L^G(Z_0) \to \bun_{G, rig}$ is a slice by Proposition \ref{prop:parabolicinductionsmooth}, we just need to construct an isomorphism of the pullback of $\Theta_{\bun_{G, rig}}^{-1}$ to $\mrm{Ind}_L^{G}(Z_0)/Z(L)_{rig}$ with the line bundle classified by
\[ \mrm{Ind}_L^G(Z_0)/Z(L)_{rig} \longrightarrow \B Z(L)_{rig} \xrightarrow{(\mu \mmid -)} \B \mb{G}_m.\]
Since $\mrm{Ind}_L^G(Z_0)/Z(L)_{rig} \to Z_0 \times \B Z(L)_{rig}$ is an affine space bundle by Proposition \ref{prop:inductionaffinebundle}, the pullback of $\Theta_{\bun_{G, rig}}^{-1}$ to $\mrm{Ind}_L^G(Z_0)/Z(L)_{rig}$ is canonically isomorphic to the pullback of its restriction $\Theta^{-1}_{Z_0 \times \B Z(L)_{rig}}$ to the zero section $Z_0 \times \B Z(L)_{rig}$, i.e., of the pullback of $\Theta_{\bun_{G, rig}}^{-1}$ along the morphism
\[ Z_0 \times \B Z(L)_{rig} \longrightarrow \bun_{L, rig}^{ss, \mu} \longrightarrow \bun_{G, rig}.\]
Since the first morphism above is $\B Z(L)_{rig}$-equivariant, Proposition \ref{prop:mucheck} implies that $\Theta^{-1}_{Z_0 \times \B Z(L)_{rig}}$ has weight $(\mu \mmid -)$. But the trivialisation of the pullback to $Z_0$ identifies $\Theta^{-1}_{Z_0 \times \B Z(L)_{rig}}$ with the pullback of a line bundle on $\B Z(L)_{rig}$, which must be associated to the character $(\mu \mmid -)$, so we are done.
\end{proof}

\subsection{Regular unstable bundles} \label{subsection:regularunstable}

In this subsection, we briefly review the theory of regular unstable bundles, and use Atiyah's classification of stable vector bundles to describe their moduli stacks.

\begin{defn}
Let $\alpha_i \in \Delta$ be a simple root. We say that $\alpha_i$ is \emph{special} if $\alpha_i$ is a long root, the connected components of the Dynkin diagram of $\Delta \setminus \{\alpha_i\}$ are all of type $A$, and $\alpha_i$ meets each such component at an end vertex. We call a principal bundle $\xi_P$ for a parabolic subgroup $P \subseteq G$ \emph{special} if $P$ is the standard maximal parabolic of type $\{\alpha_i\}$ with $\alpha_i \in \Delta$ special, and $\xi_P$ has slope $-\varpi_i^\vee/\langle \varpi_i, \varpi_i^\vee \rangle$.
\end{defn}

\begin{rmk}
If $G$ is of type $A$, then every simple root is special. Otherwise, there is a unique special root $\alpha_i \in \Delta$.
\end{rmk}

\begin{prop} \label{prop:regularunstable}
Let $\xi_G \to E_s$ be an unstable $G$-bundle on a geometric fibre of $E \to S$. The following are equivalent.
\begin{enumerate}[(1)]
\item \label{itm:regularunstable1} The Harder-Narasimhan reduction of $\xi_G$ is special.
\item \label{itm:regularunstable2} $\dim \mrm{Aut}(\xi_G) = l + 2$.
\item \label{itm:regularunstable3} $\dim \mrm{Aut}(\xi_G) \leq l + 2$.
\item \label{itm:regularunstable4} The Harder-Narasimhan locus of $\xi_G$ has codimension $l + 1$.
\item \label{itm:regularunstable5} The Harder-Narasimhan locus of $\xi_G$ has codimension $\leq l + 1$.
\end{enumerate}
\end{prop}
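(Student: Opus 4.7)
My plan is to organise the five conditions around the explicit formula
\[
\dim \mrm{Aut}(\xi_G) \;=\; \mrm{codim}(\bun_P^{ss,\mu} \hookrightarrow \bun_G) + h^0(E_s, \mf{l}_{\xi_L})
\]
for the canonical Harder-Narasimhan reduction $\xi_P$ of $\xi_G$, with associated Levi bundle $\xi_L$. The equivalences $(4) \Leftrightarrow (5) \Leftrightarrow (1)$ are essentially already contained in Proposition \ref{prop:unstablecodim} and its proof: $(4) \Rightarrow (5)$ is trivial, $(5) \Rightarrow (4)$ is the bound $\mrm{codim} \geq l+1$, and the explicit computation in that proof shows that equality holds precisely when $P$ is a maximal parabolic of type $\{\alpha_i\}$ with $\langle 2\rho, \varpi_i^\vee\rangle/\langle \varpi_i, \varpi_i^\vee\rangle = l+1$ and $\mu = -\varpi_i^\vee/\langle \varpi_i, \varpi_i^\vee\rangle$; by \cite[Lemma 3.3.2]{friedman-morgan00} this is equivalent to the HN reduction being special.

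To derive the formula, I first note that $\mrm{Aut}(\xi_G) = \mrm{Aut}(\xi_P)$, since automorphisms preserve the canonical reduction, so the Lie algebra is $H^0(E_s, \mf{p}_{\xi_P})$. By Proposition \ref{prop:hnlevi}, $\xi_P \cong \xi_L \times^L P$, so the $L$-equivariant sequence $0 \to \mf{n} \to \mf{p} \to \mf{l} \to 0$ induces a short exact sequence of vector bundles on $E_s$. Since $\mu$ is an HN vector, every weight of $\mf{n}$ pairs strictly positively with $\mu$, so $\mf{n}_{\xi_P}$ is a successive extension of semistable positive-slope line bundles; in particular $H^1(\mf{n}_{\xi_P}) = 0$ and $h^0(\mf{n}_{\xi_P}) = \deg \mf{n}_{\xi_P} = -\langle 2\rho_P, \mu\rangle = \mrm{codim}$ by Proposition \ref{prop:hncodimension}. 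Decomposing $\mf{l} = [\mf{l},\mf{l}] \oplus \mf{z}(\mf{l})$ as $L$-modules (with trivial action on $\mf{z}(\mf{l})$) yields $h^0(\mf{l}_{\xi_L}) \geq \dim Z(L)^\circ \geq 1$, with the second inequality strict unless $P$ is maximal. Combining, $\dim \mrm{Aut}(\xi_G) \geq l + 2$ for every unstable $\xi_G$, and equality forces both $\mrm{codim} = l+1$ (hence $(1)$) and $h^0([\mf{l},\mf{l}]_{\xi_{L/Z(L)^\circ}}) = 0$; this gives $(3) \Rightarrow (1)$.

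For the remaining implication $(1) \Rightarrow (2)$, I need to show that whenever the HN reduction is special, every semistable $\xi_L$ has $h^0([\mf{l},\mf{l}]_{\xi_{L/Z(L)^\circ}}) = 0$; this is the main obstacle of the proof. Since $\alpha_i$ is special, $\Delta \setminus \{\alpha_i\}$ is a disjoint union of type-$A$ components, so $L/Z(L)^\circ$ is a product of $PGL_{n_j}$ factors and the $L/Z(L)^\circ$-bundle decomposes accordingly as a product of $PGL_{n_j}$-bundles arising from semistable rank-$n_j$ vector bundles $W_j$. The plan is to show that the explicit form $\mu = -\varpi_i^\vee/\langle \varpi_i, \varpi_i^\vee\rangle$ forces $\gcd(n_j, \deg W_j) = 1$ for each $j$; by Atiyah's classification of indecomposable bundles on elliptic curves, each $W_j$ is then automatically stable with $\mrm{End}(W_j) = k$, so $h^0(\mf{sl}_{W_j}) = 0$. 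The essential Dynkin-theoretic input here is that $\alpha_i$ is long and meets each type-$A$ component of $\Delta \setminus \{\alpha_i\}$ at an end vertex, which controls the computation of $\deg W_j$ from the explicit form of $\mu$. Combined with the trivial implication $(2) \Rightarrow (3)$, this completes the cycle of equivalences.
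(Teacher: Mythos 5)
The overall architecture of your argument (reduce everything to the formula ``$\dim \mrm{Aut}(\xi_G) = \mrm{codim} + (\text{contribution of } \xi_L)$'' plus the root-theoretic analysis of when $-\langle 2\rho,\mu\rangle = l+1$) is sound, and the combinatorial part matches what the codimension computation in Proposition \ref{prop:unstablecodim} gives, provided you make explicit that equality in that computation forces all positive roots of the sub-Levi to pair to zero with $\mu$, hence forces $P$ maximal of special type with $\langle \varpi_i,\mu\rangle = -1$. (The paper itself only cites the thesis for this proposition, so there is no in-text proof to compare against.)

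There is, however, a genuine gap: you systematically identify $\dim\mrm{Aut}$ with $h^0$ of the adjoint Lie algebra bundle, i.e.\ you assume the automorphism group schemes are smooth. This is harmless in characteristic $0$, but the statement is about arbitrary geometric fibres of $E \to S$ with $S$ over $\spec\mb{Z}$, and in small positive characteristic the identification fails exactly in the cases at hand. Concretely, by Proposition \ref{prop:speciallevi} a semistable $\xi_L$ of slope $\mu$ corresponds to stable bundles $W_c$ of rank $n_c+1$ and degree $-1$ with a common determinant; then $\mrm{Aut}(\xi_L) = \{((t_c)_c,s) \mid t_c^{n_c+1} = s\}$ has dimension $1$, but its Lie algebra $\{((t_c)_c,t) \mid (n_c+1)t_c = t\}$ has dimension $>1$ whenever the characteristic divides some $n_c+1$ (e.g.\ $E_7$ in characteristic $2$, where the components have ranks $2,3,4$). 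So your formula $\dim\mrm{Aut}(\xi_G) = \mrm{codim} + h^0(E_s,\mf{l}_{\xi_L})$ is false there, your decomposition $\mf{l} = [\mf{l},\mf{l}]\oplus\mf{z}(\mf{l})$ is not valid as $L$-modules, and your final claim ``$\mrm{End}(W_j)=k$ hence $h^0(\mf{sl}_{W_j})=0$'' also fails when the characteristic divides the rank (the scalars then lie in $\mf{sl}$); taken literally your argument would ``disprove'' condition \eqref{itm:regularunstable2} in those characteristics, whereas the proposition is true. The fix is to work with group dimensions throughout: automorphisms preserve the canonical reduction, $\mrm{Aut}(\xi_P) = H^0(E_s,\xi_L\times^L R_u(P)) \rtimes \mrm{Aut}(\xi_L)$, the unipotent factor is built from torsors under $H^0$ of positive-slope semistable bundles (so it is smooth of dimension equal to the codimension, where your $h^0 = \deg$ computation is correct), $\dim\mrm{Aut}(\xi_L) \geq \dim Z(L)^\circ$ always (giving $(3)\Rightarrow(1)$), and $\dim\mrm{Aut}(\xi_L) = 1$ in the special case by simplicity of the stable $W_c$, which is a statement about the group and not about $h^0$ of its Lie algebra.
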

\begin{proof}
This is \cite[Proposition 5.4.2]{davis19a}.
\end{proof}

\begin{defn} \label{defn:regularunstable}
Let $\xi_G \to E_s$ be an unstable $G$-bundle on a geometric fibre of $E \to S$. We say that $\xi_G$ is \emph{regular} if it satisfies the equivalent conditions of Proposition \ref{prop:regularunstable}.
\end{defn}

The Levi subgroups associated to special roots have a very simple structure. In the following proposition, we let $\alpha_i \in \Delta$ be a special root and $L$ the Levi factor of the standard parabolic subgroup $P \subseteq G$ of type $t(P) = \{\alpha_i\}$. We write $\pi_0 = \pi_0(\Delta \setminus \{\alpha_i\})$ for the set of connected components of the Dynkin diagram of $L$; since $\alpha_i$ is special, each component $c$ is of type $A_{n_c}$ for some $n_c \geq 1$.

\begin{prop} \label{prop:speciallevi}
In the setup above, there is an isomorphism
\begin{equation} \label{eq:speciallevi1}
 L \cong \left\{\left.((A_c)_{c \in \pi_0}, \lambda) \in \prod_{c \in \pi_0} GL_{n_c + 1} \times \mb{G}_m \,\right|\, \det A_c = \lambda \right\}
\end{equation}
such that the character $\varpi_i$ of $L$ is given by \eqref{eq:speciallevi1} composed with the projection to $\mb{G}_m$.
\end{prop}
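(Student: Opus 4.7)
My plan is to construct an explicit isomorphism via suitable representations of $L$. First, since $G$ is simply connected and simple, the derived subgroup $[L,L]$ is also simply connected, with root system consisting of the simple roots $\Delta \setminus \{\alpha_i\}$; by the special assumption this decomposes into type $A_{n_c}$ components, giving $[L,L] \cong \prod_{c \in \pi_0} SL_{n_c+1}$. Moreover, a character of $T$ is $W_L$-invariant iff it pairs trivially with every $\alpha_j^\vee$ for $j \neq i$, so $\mb{X}^*(T)^{W_L} = \mb{Z}\varpi_i$, and $\varpi_i$ furnishes an isomorphism $L/[L,L] \xrightarrow{\sim} \mb{G}_m$.

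Next, for each component $c$, index the simple roots of $c$ as $\alpha_{j_c^{(1)}}, \ldots, \alpha_{j_c^{(n_c)}}$ along the chain from the end vertex adjacent to $\alpha_i$ to the opposite end vertex, and let $V_c$ be the irreducible representation of $L$ of highest weight $\varpi_{j_c^{(n_c)}}$. I would then verify two key properties. Restricted to $[L,L]$, $V_c$ has highest weight equal to the $n_c$-th fundamental weight on the $c$-factor and is trivial on the other factors, so it is the dual standard representation of $SL_{n_c+1}$; in particular $\dim V_c = n_c + 1$. Furthermore, $\det V_c = \varpi_i$: indeed $\det V_c$ is a $W_L$-invariant character, hence a multiple of $\varpi_i$, and a direct sum-of-weights computation gives $\langle \det V_c, \alpha_i^\vee \rangle = -\langle \alpha_{j_c^{(1)}}, \alpha_i^\vee \rangle = 1$ (the Cartan integer equals $-1$ in every special adjacency case, whether the bond joining $\alpha_i$ to $c$ is simple, double, or triple).

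These representations assemble into a homomorphism
\[
\phi \colon L \longrightarrow \prod_c GL(V_c) \times \mb{G}_m, \qquad \phi(\ell) = ((\rho_c(\ell))_c, \varpi_i(\ell)),
\]
whose image lies in $H = \{((A_c)_c, \lambda) : \det A_c = \lambda \text{ for all } c\}$ by the determinant computation above, and whose composition with projection to $\mb{G}_m$ is $\varpi_i$ by construction. To conclude that $\phi$ is an isomorphism, both $L$ and $H$ are split connected reductive group schemes over $\spec \mb{Z}$ of the same dimension (a routine count from $\dim L = l + |\Phi_L|$), so it suffices to verify that $\phi$ induces an isomorphism of root data; by construction the root subgroups match. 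The principal obstacle is showing that the induced map on character lattices $\mb{X}^*(T_H) \to \mb{X}^*(T)$ is bijective---equivalently, that the fundamental weights $\{\varpi_{j_c^{(n_c)}}\}_c$ together with $\varpi_i$ generate $\mb{X}^*(T)/\mb{Z}\Phi \cong Z(G)^\vee$ modulo the root lattice. This is a finite check handled case-by-case through the classification of special roots (type $A_l$ with any $\alpha_i$; the unique long root of $B_l, C_l, F_4, G_2$ giving an $A$-type subdiagram; and the branching node of $D_l, E_6, E_7, E_8$).
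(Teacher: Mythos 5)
The paper itself does not prove this proposition: its ``proof'' is a one-line citation to \cite[Lemma 3.2.1]{friedman-morgan00}, so you are supplying an argument the text outsources. Most of your supporting computations are correct: $[L,L] \cong \prod_c SL_{n_c+1}$ because the coroots $\{\alpha_j^\vee\}_{j \neq i}$ of $L$ form part of a basis of $\mb{X}_*(T) = \mb{Z}\Phi^\vee$; $\mb{X}^*(L) = \mb{Z}\varpi_i$ since $\{\varpi_j\}$ is dual to $\{\alpha_j^\vee\}$, so $\varpi_i$ identifies $L/[L,L]$ with $\mb{G}_m$; you orient each chain correctly (taking the highest weight at the end of $c$ \emph{away} from $\alpha_i$ makes the coefficient of the adjacent root $\alpha_{j_c^{(1)}}$ in the sum of the weights of $V_c$ equal to $-1$, whence $\langle \det V_c, \alpha_i^\vee \rangle = -\langle \alpha_{j_c^{(1)}}, \alpha_i^\vee \rangle = 1$ precisely because $\alpha_i$ is long); and the dimension count matches.

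The gap is in the final step. The image of $\mb{X}^*(T_H) \to \mb{X}^*(T)$ is $M = \mb{Z}\varpi_i + \sum_c \mb{Z}\varpi_{j_c^{(n_c)}} + \mb{Z}\Phi_L$, where $\Phi_L$ is the root system of $L$, and what must be shown is $M = \mb{X}^*(T)$ (surjectivity suffices, as both lattices have rank $l$). Your reformulation --- that $\varpi_i$ and the $\varpi_{j_c^{(n_c)}}$ generate $\mb{X}^*(T)/\mb{Z}\Phi$ --- is strictly weaker, because $\mb{Z}\Phi = \mb{Z}\Phi_L + \mb{Z}\alpha_i$ and $\alpha_i$ does not obviously lie in $M$: one has $\alpha_i = 2\varpi_i + \sum_c \langle \alpha_i, \alpha_{j_c^{(1)}}^\vee\rangle \varpi_{j_c^{(1)}}$, which involves the \emph{near}-end fundamental weights, not among your generators when $n_c > 1$. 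This matters as a criterion: for $E_8$, $F_4$ and $G_2$ one has $\mb{X}^*(T) = \mb{Z}\Phi$, so your stated condition is vacuous, while the genuine condition is a nontrivial arithmetic check --- e.g.\ for $E_8$ with $\alpha_i = \alpha_4$ the branch node, one needs the $\alpha_4$-coefficients of $\varpi_1, \varpi_2, \varpi_8$ (and $\varpi_4$) in the root basis, namely $10, 15, 6, 30$, to have gcd $1$, which they do. The correct finite verification is that $\varpi_i$, the $\varpi_{j_c^{(n_c)}}$, and the simple roots $\alpha_j$ for $j \neq i$ generate $\mb{X}^*(T)$; this holds in every special case, but you have neither stated it in the right form nor carried it out, and it is exactly the content distinguishing $L$ from other groups with the same derived subgroup and abelianisation.
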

\begin{proof}
This is a restatement of \cite[Lemma 3.2.1]{friedman-morgan00}.
\end{proof}

Next, we state a version of Atiyah's classification \cite{atiyah57} of stable vector bundles on an elliptic curve, adapted to our context.

\begin{thm} \label{thm:atiyahclassification}
Let $r > 0$ and $d$ be coprime integers. Then the determinant morphism
\begin{equation} \label{eq:atiyahclassificationdet}
\det \colon \bun_{GL_r}^{ss, d} \longrightarrow \mrm{Pic}_S^d(E)
\end{equation}
from the stack of semistable vector bundles on $E$ of rank $r$ and degree $d$ to the Picard variety of degree $d$ line bundles on $E$ is a $\mb{G}_m$-gerbe, where $\B \mb{G}_m$ acts on $\bun_{GL_r}^{ss, d}$ through the centre $\mb{G}_m = Z(GL_r)$ in the usual way. If $E \to S$ has a section, then the gerbe \eqref{eq:atiyahclassificationdet} is trivial.
\end{thm}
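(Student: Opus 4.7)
The plan is to split the statement into two parts: the $\mb{G}_m$-gerbe structure (which always holds) and the triviality when $E \to S$ has a section. Both are relative versions of classical results of Atiyah, so the real content is getting the family version right.

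For the gerbe structure, I would first reduce to geometric fibers. On a single elliptic curve $E_s$, the coprimality of $(r,d)$ forces every semistable rank-$r$ degree-$d$ bundle $V$ to be stable: any proper semistable subbundle $W \subset V$ of the same slope $d/r$ must satisfy $(\rank W, \deg W) \in \mb{Z}_{>0}(r,d)$, so $W = V$. Consequently $\mrm{Aut}(V) = \mb{G}_m$ (scalars), and Atiyah's classification identifies the set of isomorphism classes of stable bundles of rank $r$ and degree $d$ with $\mrm{Pic}^d(E_s)$ via the determinant. So $\det$ has the fibrewise structure of a $\mb{G}_m$-gerbe. To promote this to a relative $\mb{G}_m$-gerbe over $\mrm{Pic}^d_S(E)$, I would combine the fibrewise calculation with smoothness of both stacks over $S$, the surjectivity of $\det$ on geometric points, and the observation that the inertia of $\bun_{GL_r}^{ss,d}$ coincides everywhere with the scalar $\mb{G}_m = Z(GL_r)$ acting in the standard way.

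For the triviality, given a section $O_E \colon S \to E$, I would construct an explicit universal rank-$r$ stable bundle $\mc{V}$ on $\mrm{Pic}^d_S(E) \times_S E$ whose determinant is the Poincar\'e bundle $\mc{P}$ normalized along $O_E$. The classifying morphism $\mrm{Pic}^d_S(E) \to \bun_{GL_r}^{ss,d}$ would then be a $\mb{G}_m$-equivariant section of $\det$, trivialising the gerbe. The construction uses the Fourier--Mukai autoequivalences of $D^b(E)$, which are defined relative to $S$ thanks to the section, together with the $SL_2(\mb{Z})$-action on Mukai vectors: since $\gcd(r,d) = 1$, one can transport $(1,0)$ to $(r,d)$ through a finite sequence of such autoequivalences (elementary transformations combined with twists by $\mc{O}_E(O_E)$ and the Fourier--Mukai transform), applied to the universal line bundle $\mc{P}$.

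The main obstacle is carrying out this Fourier--Mukai construction uniformly over $S$ while tracking the determinant. On a single elliptic curve, the existence of $\mc{V}$ and the identification of its determinant are part of Atiyah's theorem. Over a general regular base $S$ one must verify that the higher direct images controlling each autoequivalence are locally free of the expected rank (so that the iterates make sense functorially), and that the non-canonical normalisations at each step assemble into the prescribed identification $\det \mc{V} \cong \mc{P}$; this is essentially a cohomology and base change bookkeeping problem, but it is where all the real work sits.
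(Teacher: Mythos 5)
Your proposal is correct in outline, but note that the paper itself contains no proof of this statement: it is described as a refinement of Atiyah's classification ``taking automorphism groups into account'' and the proof is deferred to the author's thesis. Your first half is exactly that intended refinement: coprimality of $(r,d)$ forces semistable $=$ stable on geometric fibres, hence $\mrm{Aut}(V) = \mb{G}_m$; Atiyah gives the bijection of isomorphism classes with $\mrm{Pic}^d(E_s)$ via $\det$; and one promotes this to a gerbe using smoothness of $\det$ (its relative tangent complex is $\mb{R}\Gamma$ of the traceless endomorphism bundle, shifted into degrees $[-1,0]$), surjectivity, and cohomology-and-base-change to identify the inertia with the scalar $\mb{G}_m$ and to see that any two families with the same determinant are smooth-locally isomorphic. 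For the triviality, the cited source relativises Atiyah's inductive construction of the distinguished bundles by iterated extensions and elementary modifications, whereas you use relative Fourier--Mukai autoequivalences; these are equivalent in content --- both realise the transitivity of the $SL_2(\mb{Z})$-action on primitive vectors $(r,d)$ --- and your route trades extension bookkeeping for cohomology-and-base-change bookkeeping on the kernels, which is arguably cleaner over a general base $S$. Two things to get right in your version: you cannot literally start from the vector $(1,0)$, since the Fourier--Mukai transform of a nontrivial degree-$0$ line bundle is a shifted skyscraper; start instead from $(1,d')$ for a suitable $d'$ and choose the word in the twist and the transform (e.g.\ via the continued fraction of $d/r$) so that every intermediate object is a genuine stable vector bundle satisfying the relevant index condition. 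Moreover your construction naturally produces a family over $\mrm{Pic}^{d'}_S(E)$ rather than a section over $\mrm{Pic}^d_S(E)$; you must check that the induced map on determinant varieties is an isomorphism (it is a translate of $\pm\,\mrm{id}$, by the standard formula for the determinant of a Fourier--Mukai transform) and reparametrise accordingly. Neither point is a gap, only the bookkeeping you have already flagged.
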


Theorem \ref{thm:atiyahclassification} is a straightforward refinement of Atiyah's classification of indecomposable vector bundles on an elliptic curve, taking automorphism groups into account. For a proof, see \cite[Theorem 5.3.2]{davis19a}.

Combining Proposition \ref{prop:speciallevi} and Theorem \ref{thm:atiyahclassification} gives the following description of the stack of semistable $L$-bundles when $L$ is the Levi subgroup attached to a special root.

\begin{prop} \label{prop:regunstablemoduli}
Assume that $E \to S$ has a section $O_E \colon S \to E$, fix a special root $\alpha_i \in \Delta$ and let $\mu = -\varpi_i^\vee/\langle \varpi_i, \varpi_i^\vee \rangle$. Then $\varpi_i \colon \bun_{L, rig}^{ss, \mu} \to \mrm{Pic}^{-1}_S(E)$ is a $Z(L)_{rig}$-gerbe, and there exists a unique section $S \to \bun_{L, rig}^{ss, \mu}$ lifting the section $O_E \colon S \to E \cong \mrm{Pic}_S^{-1}(E)$ such that the pullback of the theta bundle $\Theta_{\bun_{G, rig}}$ to $S$ is trivial.
\end{prop}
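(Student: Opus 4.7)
For the first claim, I would begin by applying Proposition \ref{prop:speciallevi} to identify $L$ with the fibre product $GL_{n_{c_1}+1} \times_{\mb{G}_m} \cdots \times_{\mb{G}_m} GL_{n_{c_k}+1}$ (taken over the determinant maps), under which the character $\varpi_i$ is realised as the common determinant. An $L$-bundle of slope $\mu = -\varpi_i^\vee/\langle \varpi_i, \varpi_i^\vee\rangle$ then corresponds to a compatible family of semistable vector bundles $V_c$ of rank $n_c+1$ and degree $-1$ with a shared determinant line bundle. Since $n_c+1$ and $-1$ are coprime, every such $V_c$ is stable, and Theorem \ref{thm:atiyahclassification} identifies each $\bun_{GL_{n_c+1}}^{ss, -1}$ as a $\mb{G}_m$-gerbe over $\mrm{Pic}_S^{-1}(E)$. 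Taking the fibre product over $\bun_{\mb{G}_m}^{-1}$ and rigidifying by $Z(G) \subseteq Z(L)$ would then give the desired $Z(L)_{rig}$-gerbe structure on $\varpi_i \colon \bun_{L, rig}^{ss, \mu} \to \mrm{Pic}_S^{-1}(E)$.

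For the second claim, I would first construct a canonical section $\sigma_0 \colon S \to \bun_{L, rig}^{ss, \mu}$ lifting $O_E$ by using the second assertion of Theorem \ref{thm:atiyahclassification} to trivialise each constituent $\mb{G}_m$-gerbe above $O_E \in \mrm{Pic}_S^{-1}(E)$ (via the identification $E \cong \mrm{Pic}_S^{-1}(E)$ determined by $O_E$). Any other lift of $O_E$ is obtained by twisting $\sigma_0$ by a $Z(L)_{rig}$-torsor $\tau$ on $S$, and by Proposition \ref{prop:mucheck} the pullback of the theta bundle transforms as
\[ \sigma^* \Theta_{\bun_{G, rig}} \cong \sigma_0^* \Theta_{\bun_{G, rig}} \otimes \bigl(\tau \times^{Z(L)_{rig}} \mb{Z}_{(-\mu\mmid -)}\bigr). \]
The existence and uniqueness of a $\tau$ making this trivial therefore reduces to showing that the homomorphism $H^1(S, Z(L)_{rig}) \to \mrm{Pic}(S)$ induced by the character $(-\mu\mmid-) \in \mb{X}^*(Z(L)_{rig})$ is an isomorphism. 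Since $Z(L)_{rig}$ is a connected $1$-dimensional torus (by the introductory remark that rigidifying connects up the central tori coming from Levi subgroups), this in turn reduces to showing that $(-\mu\mmid -)$ is a generator of $\mb{X}^*(Z(L)_{rig}) \cong \mb{Z}$.

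The main obstacle is thus the primitivity of $(-\mu\mmid -)$. I expect this to follow from a direct calculation using the explicit description of $L$ coming from Proposition \ref{prop:speciallevi} together with the fact that $\alpha_i$ is special: the Dynkin constraint forces $L/[L, L] \cong \mb{G}_m$ via $\varpi_i$ and lets us identify a generator of $\mb{X}_*(Z(L)_{rig})$ from the fibre product structure, while the longness of $\alpha_i$ pins down the value of $\langle\varpi_i, \varpi_i^\vee\rangle$ relative to the Killing form normalisation so that the pairing $(-\mu \mmid -)$ against this generator evaluates to $\pm 1$. With this primitivity in hand, the map $H^1(S, Z(L)_{rig}) \to \mrm{Pic}(S)$ is an isomorphism, and both existence and uniqueness (including the rigidity of automorphisms preserving the trivialisation) follow formally.
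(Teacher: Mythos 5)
Your treatment of the gerbe statement is the same as the paper's: identify $L$ via Proposition \ref{prop:speciallevi} as the fibre product of the $GL_{n_c+1}$'s over their determinants, apply Theorem \ref{thm:atiyahclassification} to each factor, and descend to the rigidification. For the section, your route differs mildly from the paper's: you first build an explicit lift $\sigma_0$ by trivialising each constituent Atiyah gerbe over $O_E$ (using the second clause of Theorem \ref{thm:atiyahclassification}) and then solve for the twisting torsor $\tau$ with $\lambda(\tau) \cong (\sigma_0^*\Theta_{\bun_{G,rig}})^{-1}$, where $\lambda = (-\mu\mmid-)$; the paper instead observes that, by Proposition \ref{prop:mucheck} and the fact that $\lambda$ is an isomorphism $Z(L)_{rig}\to\mb{G}_m$, the theta bundle itself defines a $\B Z(L)_{rig}$-equivariant map $S\times_{\mrm{Pic}^{-1}_S(E)}\bun_{L,rig}^{ss,\mu}\to\B\mb{G}_m\cong\B Z(L)_{rig}$, which trivialises the gerbe and hands you the distinguished section in one step, without ever invoking the triviality clause of Theorem \ref{thm:atiyahclassification}. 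Both arguments are correct and rest on the same two inputs (Proposition \ref{prop:mucheck} and the primitivity of $(-\mu\mmid-)$). The one step you flag as "expected" — that $(-\mu\mmid-)$ generates $\mb{X}^*(Z(L)_{rig})$ — does go through and you should make it explicit: $\mb{X}_*(Z(L)_{rig})=\mb{Z}\varpi_i^\vee$, and since $\alpha_i$ is long its coroot is short, so $(\varpi_i^\vee\mmid\alpha_j^\vee)=\tfrac{1}{2}(\alpha_j^\vee\mmid\alpha_j^\vee)\langle\alpha_j,\varpi_i^\vee\rangle=\delta_{ij}$, whence $(\varpi_i^\vee\mmid\varpi_i^\vee)=\langle\varpi_i,\varpi_i^\vee\rangle$ and $(-\mu\mmid\varpi_i^\vee)=1$.
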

\begin{proof}
By Proposition \ref{prop:speciallevi}, we have a Cartesian diagram
\[
\begin{tikzcd}
\bun_L^{ss, \mu} \ar[r] \ar[d, "\varpi_i"] & \prod_{c \in \pi_0} \bun_{GL_{n_c + 1}}^{ss, -1} \ar[d, "\det"] \\
\bun_{\mb{G}_m}^{-1} \ar[r] & \prod_{c \in \pi_0} \bun_{\mb{G}_m}^{-1},
\end{tikzcd}
\]
where the products denote iterated fibre products over $S$, and the bottom morphism is the diagonal. Since $\bun_{GL_{n_c + 1}}^{ss, -1} \to \mrm{Pic}^{-1}_S(E)$ and $\bun_{\mb{G}_m}^{-1} \to \mrm{Pic}^{-1}_S(E)$ are a $\mb{G}_m = Z(GL_{n_c + 1})$-gerbe and a $\mb{G}_m$-gerbe respectively by Theorem \ref{thm:atiyahclassification}, it follows that $\bun_L^{ss, \mu} \to \mrm{Pic}^{-1}_S(E)$ is a $Z(L) = \mb{G}_m \times_{\prod_c \mb{G}_m} \prod_c Z(GL_{n_c + 1})$-gerbe, and hence that $\bun_{L, rig}^{ss, \mu} \to \mrm{Pic}_S^{-1}(E)$ is a $Z(L)_{rig}$-gerbe as claimed.

To construct the section $S \to \bun_{L, rig}^{ss, \mu}$, note that since $(-\mu \mmid -) \colon Z(L)_{rig} \to \mb{G}_m$ is an isomorphism, the pullback of the theta bundle defines a $\B Z(L)_{rig}$-equivariant morphism
\[ S \times_{\mrm{Pic}^{-1}_S(E)} \bun_{L, rig}^{ss, \mu} \longrightarrow \B \mb{G}_m \cong \B Z(L)_{rig}\]
by Proposition \ref{prop:mucheck}. Since the source is a $Z(L)_{rig} = \mb{G}_m$-gerbe over $S$, it follows that there is a unique section such that the pullback of $\Theta_{\bun_{G, rig}}$ is trivial as claimed.
\end{proof}

\subsection{The Friedman-Morgan section theorem} \label{subsection:friedmanmorgan}

In this subsection, we state and prove our refinement of Friedman and Morgan's section theorem \cite[Theorem 5.1.1]{friedman-morgan00}. This theorem, and certain elements of its proof, are the main ingredients in the proof that the elliptic Grothendieck-Springer resolution is a simultaneous log resolution.

Fix a special root $\alpha_i \in \Delta$, let $\mu = -\varpi_i^\vee/\langle \varpi_i, \varpi_i^\vee \rangle$, and assume that $E \to S$ has a section $O_E$. Then the composition of the section $S \to \bun_{L, rig}^{ss, \mu}$ of Proposition \ref{prop:regunstablemoduli} with $\bun_{L, rig}^{ss, \mu} \to \bun_{L, rig}^{ss, \mu}/E$ is a section of a $Z(L)_{rig} = \mb{G}_m$-gerbe, and is hence smooth. Since the pullback of the theta bundle to $S$ is trivial, $Z = \mrm{Ind}_L^G(S) \to \bun_{G, rig}$ is an equivariant slice with equivariance group $Z(L)_{rig}$ and weight $(\mu\mmid -)$ by Proposition \ref{prop:inducedequivariantslice}. Identifying $Z(L)_{rig}$ with $\mb{G}_m$ via the isomorphism
\[ (\mu \mmid -) \colon Z(L)_{rig} \overset{\sim}\longrightarrow \mb{G}_m,\]
we therefore have a $\mb{G}_m$-equivariant commutative diagram
\[
\begin{tikzcd}
\tilde{Z} \ar[r] \ar[d, "\tilde{\chi}_Z"'] & Z \ar[d, "\chi_Z"] \\
\Theta_Y^{-1} \ar[r] & \hat{Y}\sslash W
\end{tikzcd}
\]
as in Remark \ref{rmk:equivariantthetasection}, where $\tilde{Z} = Z \times_{\bun_{G, rig}} \tbun_{G, rig}$.

\begin{rmk} \label{rmk:regularslicesmoothness}
Note that if the section $S \to \bun_{L, rig}^{ss, \mu}$ factors through $\bun_L^{ss, \mu}$, then the slice $Z$ constructed above factors through a morphism $Z \to \bun_G$. However, even when this happens, this morphism will not necessarily be a slice unless $S \times Z(L) \to S$ is smooth, i.e., unless $S \to \spec \mb{Z}$ avoids all primes at which $Z(L)$ is non-reduced.
\end{rmk}

\begin{thm}[Friedman-Morgan section theorem] \label{thm:friedmanmorgansection}
In the setup above, the morphisms
\[ \chi_Z \colon Z \longrightarrow \hat{Y}\sslash W \quad \text{and} \quad \tilde{\chi}_Z \colon \tilde{Z} \longrightarrow \Theta_Y^{-1}\]
are $\mb{G}_m$-equivariant isomorphisms. In particular, the rigidified coarse quotient map
\[ \chi \colon \bun_{G, rig} \longrightarrow (\hat{Y}\sslash W)/\mb{G}_m\]
admits a section.
\end{thm}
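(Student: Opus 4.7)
Plan. The approach is to combine a dimension count with an analysis of the preimage of the cone point, and then upgrade finiteness to isomorphism by working with the refinement $\tilde\chi_Z$.

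First I would verify the numerical data. Applying Proposition \ref{prop:inductionreldim} together with the special-root calculation in the proof of Proposition \ref{prop:unstablecodim} gives $\dim_S Z = l + 1 = \dim_S\hat Y\sslash W$; since $\tbun_{G, rig}\to\bun_{G, rig}$ is proper and generically finite (Proposition \ref{prop:reggalois}), one also gets $\dim_S \tilde Z = l + 1 = \dim_S\Theta_Y^{-1}$. By Proposition \ref{prop:inductionaffinebundle}, $Z$ is the total space of a vector bundle on $S$, all of whose $Z(L)_{rig}$-weights are strictly positive by Proposition \ref{prop:inductionweights}; under the identification $(\mu\mmid-)\colon Z(L)_{rig}\cong\mb{G}_m$ this matches the positive grading of $\hat Y\sslash W$ as an affine $S$-scheme. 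The $\mb{G}_m$-fixed loci on both sides are copies of $S$ (the zero section of $Z$, and the cone point of $\hat Y\sslash W$), and $\chi_Z$ sends one to the other because the zero section of $Z$ classifies the unstable $G$-bundle $\xi_L\times^L G$ (Proposition \ref{prop:chifibreunstable}).

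Second, I would pin down the fibre of $\chi_Z$ over the cone point. By Proposition \ref{prop:chifibreunstable} this fibre is supported on the unstable locus of $Z$. A point of $Z = \bun_{P, rig}\times_{\bun_{L, rig}} S$ is a $P$-bundle $\xi_P$ extending $\xi_L$; if $\xi_P\times^P G$ is unstable, then uniqueness of the Harder-Narasimhan reduction (Theorem \ref{thm:behrendhnreduction} and Proposition \ref{prop:hnlevi}) pins it down to be the canonical reduction determined by $\xi_L$, forcing $\xi_P$ to be the trivial extension $\xi_L\times^L P$. Hence $\chi_Z^{-1}(\text{cone point}) = S$ set-theoretically. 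Combined with $\mb{G}_m$-equivariance and positivity of weights, a graded-Nakayama argument then shows $\chi_Z$ is finite; irreducibility of $Z$ together with the dimension count forces surjectivity.

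Third, I would upgrade finite-surjective to isomorphism by working with $\tilde\chi_Z$. Properness of $\tbun_{G, rig}\to\bun_{G, rig}$ (Proposition \ref{prop:kmproper}) gives properness of $\tilde\chi_Z$. Over the complement of the zero section in $\Theta_Y^{-1}$, $\tilde Z$ lies in the semistable locus $\tbun_{G, rig}^{ss} = \bun_{B, rig}^0$, and the Chevalley compatibility (Theorem \ref{thm:bungchevalley} and Corollary \ref{cor:chitildedivisor}) identifies $\tilde\chi_Z$ there with the relevant restriction of the blow-down morphism. Combining with the $W$-Galois cover analysis of the regular semistable locus (Proposition \ref{prop:reggalois}) and the $Z(L)_{rig}$-gerbe description of $\bun_{L, rig}^{ss, \mu}$ (Proposition \ref{prop:regunstablemoduli}) shows this restriction to be an isomorphism; thus $\tilde\chi_Z$ is proper, quasi-finite, birational onto the normal target $\Theta_Y^{-1}$, and hence an isomorphism by Zariski's main theorem. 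The isomorphism $\chi_Z\colon Z\overset{\sim}{\to}\hat Y\sslash W$ then follows by taking the induced maps on graded pushforwards, and the section of the rigidified coarse quotient $\chi$ is given by the composition $(\hat Y\sslash W)/\mb{G}_m\cong Z/\mb{G}_m\to\bun_{G, rig}$.

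The main obstacle is the generic-fibre analysis in the third step: identifying $\tilde\chi_Z$ as an isomorphism over the complement of the zero section requires carefully tracking how the $W$-Galois cover on the regular semistable locus of $\tbun_{G, rig}$ interacts with the parabolic-induction description of $Z$ and with the rigidification (which removes the extra automorphism fibre coming from $L$-automorphisms of $\xi_L$). Since Looijenga's theorem is itself to be deduced from Theorem \ref{thm:friedmanmorgansection} (see Corollary \ref{cor:looijengaiso}), one cannot appeal to an a priori structure theorem for $\hat Y\sslash W$, and so this generic-fibre analysis on the semistable locus must carry the full weight of the argument.
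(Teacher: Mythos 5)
Your first two steps are broadly sound (the dimension count, and the graded-Nakayama argument for finiteness of $\chi_Z$ once the cone-point fibre is known), but there are two gaps, one of which is fatal to the plan as written.

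The repairable one is in step 2: you claim that if $\xi_P \times^P G$ is unstable then uniqueness of the Harder-Narasimhan reduction forces $\xi_P \cong \xi_L \times^L P$. This does not follow — the Harder-Narasimhan parabolic of $\xi_P \times^P G$ is a priori unrelated to the given $P$-structure, so you must separately rule out the possibility that a nonzero point of $Z$ lies in some other stratum $\bun_{Q, rig}^{ss, \mu'}$. The paper does this by a dimension count (as in Lemma \ref{lem:emptyregulardivisors}): any such stratum has codimension $-\langle 2\rho, \mu'\rangle \geq l + 1$ in $\bun_{G, rig}/E$ by \cite[Lemma 3.3.2]{friedman-morgan00}, while its preimage in $Z$ would be $\mb{G}_m$-invariant, disjoint from $S = Z^{\mb{G}_m}$, hence of codimension at most $l$.

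The serious gap is step 3. The entire weight of your argument rests on the claim that $\tilde{\chi}_Z$ restricts to an isomorphism over $(\Theta_Y^{-1})^*$ (or at least is birational onto a normal target), and you assert this by citing Propositions \ref{prop:reggalois} and \ref{prop:regunstablemoduli} without an argument. Those inputs do not suffice: Proposition \ref{prop:reggalois} makes $\tilde{Z}^{sreg} \to Z^{sreg}$ a $W$-Galois cover, but to identify $\tilde{Z}^{sreg}$ with $(\Theta_Y^{-1})^*|_{Y^{sreg}}$ you need to know that $Z$ meets each strictly regular isomorphism class in a single free, reduced $\mb{G}_m$-orbit — which is exactly Proposition \ref{prop:fmsliceregular}, proved in the paper only as a \emph{consequence} of Theorem \ref{thm:friedmanmorgansection}. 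You correctly flag this as the main obstacle, but you do not resolve it. The paper's proof goes the opposite way and never touches the generic fibre directly: it first shows $D_\lambda(Z) = \emptyset$ for $\lambda \neq \alpha_i^\vee$ and, via the Bruhat-cell analysis of Proposition \ref{prop:nonemptyregulardivisor} together with the multiplicity formula of Corollary \ref{cor:chitildedivisor}, that $\tilde{\chi}_Z^{-1}(0_{\Theta_Y^{-1}}) \to Y$ is an isomorphism; it then invokes a $\mb{G}_m$-rigidity statement (Lemma \ref{lem:conerecognition}, proved by deformation to the normal cone and the Grothendieck existence theorem) to conclude that $\tilde{Z}$ is itself a line bundle over $Y$, whence $\tilde{\chi}_Z$ is an isomorphism of line bundles, and $\chi_Z$ follows by taking $W$-invariant functions on the big open substack $\tilde{Z}^{ss, reg}$. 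If you want to keep your generic-fibre route, you must supply an independent proof that $\tilde{\chi}_Z$ has degree one over the strictly regular locus; in the paper the only place where such a degree-one statement is actually established is the central fibre.
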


The strategy of our proof is somewhat different to Friedman and Morgan's, and relies heavily on a computation of the pullback $\tilde{Z}$ of the elliptic Grothendieck-Springer resolution to the slice $Z$.

For future reference, we make a note of the dimension of $Z$ over $S$.

\begin{lem} \label{lem:regularslicedim}
The morphism $Z \to S$ is an affine space bundle of relative dimension $l + 1$.
\end{lem}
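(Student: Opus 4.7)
The plan is to combine Propositions \ref{prop:inductionaffinebundle} and \ref{prop:inductionreldim} with the special-root computation already encountered in the proof of Proposition \ref{prop:unstablecodim}. Since $Z = \mrm{Ind}_L^G(S)$ by construction, Proposition \ref{prop:inductionaffinebundle} applied to $Z_0 = S$ directly shows that $Z \to S$ is an affine space bundle with fibrewise linear $Z(L)_{rig}$-action; only the relative dimension needs to be computed.

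By Proposition \ref{prop:inductionreldim}, this relative dimension equals $\langle 2\rho_P, \mu \rangle$, where $-2\rho_P = \sum_{U_\alpha \subseteq R_u(P)}\alpha$. According to Definition \ref{defn:parabolicinduction}, $P$ is characterised as the parabolic with Levi $L$ for which $-\mu$ is a Harder-Narasimhan vector, and Definition \ref{defn:hardernarasimhanvector} then says that the roots in $R_u(P)$ are precisely the $\alpha \in \Phi$ with $\langle \alpha, \mu \rangle < 0$. For $\mu = -\varpi_i^\vee/\langle \varpi_i, \varpi_i^\vee \rangle$, writing $\alpha = \sum_j n_j(\alpha)\alpha_j$ gives $\langle \alpha, \mu \rangle = -n_i(\alpha)/\langle \varpi_i, \varpi_i^\vee \rangle$, so this condition amounts to $\alpha \in \Phi_+$ with $n_i(\alpha) > 0$. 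Adding up yields
\[
\langle 2\rho_P, \mu \rangle = -\sum_{\substack{\alpha \in \Phi_+ \\ n_i(\alpha) > 0}} \langle \alpha, \mu \rangle = \frac{1}{\langle \varpi_i, \varpi_i^\vee \rangle} \sum_{\alpha \in \Phi_+} n_i(\alpha) = \frac{\langle 2\rho, \varpi_i^\vee \rangle}{\langle \varpi_i, \varpi_i^\vee \rangle},
\]
where the last step uses $2\rho = \sum_{\alpha \in \Phi_+}\alpha$.

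The final step is to observe that since $\alpha_i$ is special, \cite[Lemma 3.3.2]{friedman-morgan00}, already invoked in the proof of Proposition \ref{prop:unstablecodim}, gives $\langle 2\rho, \varpi_i^\vee \rangle / \langle \varpi_i, \varpi_i^\vee \rangle = l + 1$. This yields the desired relative dimension. There is no real obstacle here: the lemma is in effect a repackaging of the fact (Propositions \ref{prop:regularunstable}\eqref{itm:regularunstable4} and \ref{prop:hncodimension}) that the Harder-Narasimhan locus of a regular unstable $G$-bundle has codimension exactly $l + 1$ in $\bun_G$, now seen through the slice $Z$.
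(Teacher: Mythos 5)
Your proposal is correct and follows exactly the paper's (one-line) proof: the affine space bundle structure comes from Proposition \ref{prop:inductionaffinebundle}, the relative dimension $\langle 2\rho_P, \mu\rangle = \langle 2\rho, \varpi_i^\vee\rangle/\langle \varpi_i, \varpi_i^\vee\rangle$ from Proposition \ref{prop:inductionreldim}, and the evaluation to $l+1$ for a special root from \cite[Lemma 3.3.2]{friedman-morgan00}. You have simply written out the root-theoretic bookkeeping that the paper leaves implicit, and it checks out.
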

\begin{proof}
This follows immediately from Propositions \ref{prop:inductionaffinebundle} and \ref{prop:inductionreldim}, and \cite[Lemma 3.3.2]{friedman-morgan00}.
\end{proof}

For $\lambda \in \mb{X}_*(T)_+$, write $D_\lambda(Z) = \tilde{Z} \times_{\tbun_{G, rig}} D_\lambda$. The first step in the proof of Theorem \ref{thm:friedmanmorgansection} is to identify those $D_\lambda(Z) \subseteq \tilde{Z}$ that are nonempty.

\begin{lem} \label{lem:emptyregulardivisors}
Assume $\lambda \in \mb{X}_*(T)_+$ and $\lambda \neq \alpha_i^\vee$. Then $D_\lambda(Z) = \emptyset$.
\end{lem}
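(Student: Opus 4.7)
Let me propose a strategy for proving Lemma \ref{lem:emptyregulardivisors}.

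The plan is to analyze any hypothetical $k$-point of $D_\lambda(Z)$ and derive the constraint $\lambda = \alpha_i^\vee$. Such a point consists of a $P$-bundle $\xi_P$ classifying a point of $Z$, so that $\xi_L := \xi_P/R_u(P)$ is semistable of slope $\mu = -\varpi_i^\vee/\langle\varpi_i,\varpi_i^\vee\rangle$, together with a stable map $(C,\sigma)$ lying over $\xi_G := \xi_P \times^P G$ and contained in the divisor $D_\lambda$. Since $\lambda \neq 0$, the map is not in $\bun_B^0 = \tbun_G^{ss}$, so $\xi_G$ is unstable; by Theorem \ref{thm:behrendhnreduction} and Proposition \ref{prop:hnlevi}, $\xi_P$ must be isomorphic to the Harder-Narasimhan reduction $\xi_L \times^L P$. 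By passing to the generic point of an irreducible component of $D_\lambda(Z)$ we may assume the stable map lies in $D_\lambda^\circ$, so that the main component gives a $B$-reduction $\xi_B$ of $\xi_G$ of $T$-degree $-\lambda$.

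First, I would run the Bruhat analysis from \S\ref{subsection:bruhat}. The reduction $\xi_B$ defines a section $E \to \xi_P \times^P G/B$ which generically meets a unique cell indexed by some $w \in W_P^0$ (Proposition \ref{prop:cosetreps}). By Proposition \ref{prop:bruhatdegrees}, this data is equivalent to a $(P \cap wBw^{-1})$-reduction of $\xi_P$ of $T$-degree $-w\lambda$, which descends via $L \cap wBw^{-1} = L \cap B$ to an $(L \cap B)$-reduction $\xi_{L,B}$ of $\xi_L$ of $T$-degree $-w\lambda$.

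Next, I would extract the combinatorial constraint from the existence of $\xi_{L,B}$. By Proposition \ref{prop:speciallevi}, $L$ is a fibre product of groups $GL_{n_c+1}$ over the determinant, so $\xi_L$ is the data of stable vector bundles $V_c$ of rank $n_c+1$ and degree $-1$ (stability being automatic from Theorem \ref{thm:atiyahclassification} and coprimality of rank and degree) with a common determinant. An $(L \cap B)$-reduction is then a compatible complete flag $V_{c,1} \subset \cdots \subset V_{c,n_c+1} = V_c$ in each component. Semistability of $V_c$ forces $\deg V_{c,k} \leq -1$ for all $0 < k \leq n_c+1$, and equality holds at $k = n_c+1$. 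Combined with the stability of the $V_c$, a careful analysis shows that the only admissible jumps have the form $\deg V_{c,k} - \deg V_{c,k-1} \in \{0,-1\}$ with the unique $-1$ occurring at $k=1$, so the $T$-degree of $\xi_{L,B}$ is determined by component and lies in an explicit finite set indexed by $W_L^0 \subseteq W_P$. Translating back, $w\lambda$ belongs to a specific $W_L$-orbit of minuscule-type cocharacters.

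Finally, I would combine this with $\lambda \in \mb{X}_*(T)_+$ and $w \in W_P^0$. The condition $w\lambda$ lies in the explicit set from the flag analysis, together with $\lambda$ being a non-negative integer combination of the $\varpi_i^\vee$, pins $\lambda$ down to $\alpha_i^\vee$: this cocharacter is dual to the unique simple root in $t(P)$ and is the unique element of $\mb{X}_*(T)_+$ whose $W_P$-orbit can realise a $B$-reduction of a semistable $\xi_L$ of the given slope. I would close by remarking that since $D_\lambda^\circ(Z)$ is empty and $D_\lambda(Z)$ is contained in the closure, $D_\lambda(Z) = \emptyset$. The main obstacle I expect is step three: verifying crisply that the constraint from Atiyah's theorem together with the Bruhat bookkeeping rules out every $\lambda \in \mb{X}_*(T)_+$ other than $\alpha_i^\vee$, rather than just those of "wrong minuscule type".
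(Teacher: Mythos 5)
Your overall strategy has a genuine gap at its very first step, and two of the later steps would also fail as stated. The inference ``$\xi_G$ is unstable, hence by Theorem \ref{thm:behrendhnreduction} and Proposition \ref{prop:hnlevi} the reduction $\xi_P$ is the Harder--Narasimhan reduction $\xi_L \times^L P$'' is a non sequitur. The $P$-reduction defining a point of $Z$ is not a canonical reduction: in Definition \ref{defn:parabolicinduction} the parabolic is chosen so that $-\mu$, not $\mu$, is a Harder--Narasimhan vector for $P$ (this is exactly what makes $\mf{g}/\mf{p}$ positive and the induction map smooth), so the uniqueness of Harder--Narasimhan reductions says nothing about $\xi_P$. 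Concretely, for $G = SL_2$ a nonsplit extension $0 \to \mc{O}(-O_E) \to V \to \mc{O}(O_E) \to 0$ carries the $B$-reduction with semistable Levi of slope $\mu$ coming from $Z$, and this is never the canonical reduction (the canonical one is the opposite flag $\mc{O}(O_E) \subseteq V$ when $V$ is split). The statement you actually need --- that the only unstable bundles hit by $Z$ are the split ones lying over the $\mb{G}_m$-fixed locus $S$ --- is true, but it is proved in the paper only \emph{after} this lemma (in the proof of Proposition \ref{prop:nonemptyregulardivisor}), using the $\mb{G}_m$-equivariance of the slice and the automorphism-dimension characterisation of regular unstable bundles (Proposition \ref{prop:regularunstable}); it cannot be extracted from Theorem \ref{thm:behrendhnreduction}.

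Two further steps would not go through. First, the Bruhat bookkeeping: a section of $\xi_P \times^P G/B$ that only \emph{generically} meets a cell need not factor through it, so Proposition \ref{prop:bruhatdegrees} does not apply; the paper's tool for exactly this problem, Proposition \ref{prop:levidegreebound}, only yields a nonempty cell of strictly smaller degree for the degenerated bundle $\xi_L \times^L P$, not a $(P \cap wBw^{-1})$-reduction of $\xi_P$ of degree $-w\lambda$. Second, the flag-degree claim is false: a stable bundle of rank $n_c+1$ and degree $-1$ admits complete flags with other degree vectors (already a rank-$2$ stable bundle of degree $-1$ has line subbundles of degree $-2$), so semistability of the $V_c$ plus Atiyah's classification alone cannot force the jumps you claim; any correct pointwise argument has to use the dominance of $\lambda$ together with the unipotent-radical part of $\xi_P$, not just the $L$-part. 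For comparison, the paper's proof avoids all of this: it is a pure codimension count on $Z$, using that $Z \to \bun_{G, rig}/E$ is smooth of relative dimension $l+1$ (Lemma \ref{lem:regularslicedim}), that the stratum $Z \times_{\bun_{G, rig}} \bun_{P_j, rig}^{ss, \mu'}$ is $Z(L)_{rig}$-invariant and disjoint from the fixed locus $S$, hence has codimension at most $l$, against the Friedman--Morgan inequality $-\langle 2\rho, \mu' \rangle \geq l+1$, yielding a contradiction without classifying any reductions.
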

\begin{proof}
Assume for a contradiction that there exists $\lambda \in \mb{X}_*(T)_+$ with $\lambda \neq \alpha_i^\vee$ and $D_\lambda(Z) \neq \emptyset$. Since $\lambda \neq \alpha_i^\vee$, there exists $\alpha_j \in \Delta$ such that $\langle \varpi_j, \lambda \rangle > 0$ and
\[ \mu' = - \frac{\langle \varpi_j, \lambda \rangle}{\langle \varpi_j, \varpi_j^\vee \rangle} \varpi_j^\vee \neq \mu.\]
Since $D_\lambda(Z) \neq \emptyset$, it follows that $Z \times_{\bun_{G, rig}} \bun_{B, rig}^{-\lambda}$, and hence $Z \times_{\bun_{G, rig}} \bun_{P_j, rig}^{\mu'}$, is nonempty, where $P_j$ is the standard maximal parabolic of type $\{\alpha_j\}$. We claim that $-\langle 2 \rho, \mu' \rangle \leq l$. This contradicts \cite[Lemma 3.3.2]{friedman-morgan00}, and so proves the lemma.

To prove the claim, note that since $Z \to \bun_{G, rig}/E$ is smooth, the preimage $Z \times_{\bun_{G, rig}} \bun_{P_j, rig}^{ss, \mu'}$ of $\bun_{P_j, rig}^{ss, \mu'}/E$ under the morphism
\[ Z \times_{\bun_{G, rig}} \bun_{P_j, rig}^{\mu'} \longrightarrow \bun_{P_j, rig}^{\mu'}/E \]
is dense, hence nonempty. Since $(P_j, \mu') \neq (P, \mu)$, where $P$ is the standard parabolic of type $\{\alpha_i\}$, by uniqueness of Harder-Narasimhan reductions, the $Z(L)_{rig}$-invariant locally closed substack $Z \times_{\bun_{G, rig}} \bun_{P_j, rig}^{ss, \mu'} \subseteq Z$ is disjoint from the $Z(L)_{rig}$-fixed locus $S \subseteq Z$. So $Z \times_{\bun_{G, rig}}\bun_{P_j, rig}^{\mu'} \to S$ is therefore flat of relative dimension at least $1$, and hence has codimension at most
\[ \dim_S Z - 1 = -\langle 2 \rho_P, \mu \rangle - 1 = l\]
by Lemma \ref{lem:regularslicedim}. But this codimension is equal to the codimension $- \langle 2\rho, \mu' \rangle$ of $\bun_{P_j, rig}^{ss, \mu'}/E$ in $\bun_{G, rig}/E$, so we have
\[ -\langle 2 \rho, \mu' \rangle \leq l,\]
as claimed.
\end{proof}

\begin{lem} \label{lem:regularsliceetale}
The morphism $D_{\alpha_i^\vee}(Z) \to Y$ is representable and \'etale.
\end{lem}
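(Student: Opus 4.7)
The plan is to verify separately that $D_{\alpha_i^\vee}(Z) \to Y$ is smooth, has relative dimension zero, and is representable, which together give étaleness. First, Lemma \ref{lem:emptyregulardivisors} combined with the normal-crossings structure of $D_B = \sum_\lambda D_\lambda$ from Proposition \ref{prop:kmdivisor}(1)--(2) implies that $D_{\alpha_i^\vee}(Z)$ is contained entirely in the smooth locus $D_{\alpha_i^\vee}^\circ$, since a point of $D_{\alpha_i^\vee}(Z)$ that also lay in some $D_\mu$ with $\mu \neq \alpha_i^\vee$ would give a nonempty $D_\mu(Z)$. This lets us use the explicit description from Proposition \ref{prop:kmdivisor}(3) throughout.

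For smoothness, the key is $E$-equivariance: translation by $E$ preserves each $D_\lambda$, so the morphism $D_{\alpha_i^\vee} \to \bun_{G, rig}$ descends to $D_{\alpha_i^\vee}/E \to \bun_{G, rig}/E$. Since $Z \to \bun_{G, rig}/E$ is smooth by the slice hypothesis (Definition \ref{defn:slice}), base change along this morphism gives a smooth map $Z \times_{\bun_{G, rig}/E} D_{\alpha_i^\vee} \to D_{\alpha_i^\vee}$. But this pullback is also an $E$-torsor over $D_{\alpha_i^\vee}(Z) = Z \times_{\bun_{G, rig}} D_{\alpha_i^\vee}$ (using that $\bun_{G, rig} \to \bun_{G, rig}/E$ is an $E$-torsor trivialised by $Z \to \bun_{G, rig}$), so by descent $D_{\alpha_i^\vee}(Z) \to D_{\alpha_i^\vee}$ is smooth. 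Composing with the smooth morphism $D_{\alpha_i^\vee}^\circ \to Y$ --- which factors through $\bun_B^{-\alpha_i^\vee} \times_S E \to \bun_T^{-\alpha_i^\vee} \times_S E \to \bun_T^0 \to Y$ via Proposition \ref{prop:blowdowngluing}, Proposition \ref{prop:kmblowdown}, and the fact that $\bun_T^0 \to Y$ is a $T$-gerbe --- gives smoothness of $D_{\alpha_i^\vee}(Z) \to Y$. A direct count using $\dim_S Z = l + 1$ (Lemma \ref{lem:regularslicedim}), $\dim_S \bun_{G, rig}/E = -1$, $\dim_S D_{\alpha_i^\vee} = -1$, and $\dim_S Y = l$ then shows $\dim_S D_{\alpha_i^\vee}(Z) = l$, so the relative dimension over $Y$ is zero.

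For representability, a geometric point of $D_{\alpha_i^\vee}(Z)$ is a triple $(z, d, \phi)$ where $z \in Z$, $d = (\xi_G^d, C, \sigma) \in D_{\alpha_i^\vee}^\circ$, and $\phi$ is an isomorphism in $\bun_{G, rig}$ between the bundles induced by $z$ and by $d$. Since $Z$ is a scheme (an affine space bundle over $S$ by Proposition \ref{prop:inductionaffinebundle}), its automorphism groups are trivial, so an automorphism of the triple reduces to an element of $\mrm{Aut}_{D_{\alpha_i^\vee}}(d)$ whose image in $\mrm{Aut}_{\bun_{G, rig}}(\xi_G^d)$ is trivial. Any such element lifts to a pair $(a_{\xi_G^d}, a_C)$ with $a_{\xi_G^d} \in Z(G)$, and since $Z(G)$ acts trivially on $G/B$, stability of the map $\sigma$ forces $a_C = 1$; thus the class of the automorphism in the rigidification is trivial. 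The main obstacle is ensuring that, despite the fact that the full non-rigidified stabilizer $\mrm{Aut}_{\tbun_G}(d)$ contains $T$ (coming from $\mrm{Aut}(\xi_B) = T$ acting on the $\mb{P}^1$-component through the character $\alpha_i$), the rigidification $\bun_{G, rig}$ collapses precisely the right amount of automorphism to give triviality of stabilizers in the fibre product; this requires careful tracking of the kernel of $\mrm{Aut}_{\tbun_G}(d) \to \mrm{Aut}_{\bun_{G, rig}}(\xi_G^d)$.
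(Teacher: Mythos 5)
Your overall architecture (restrict to the smooth stratum $D^\circ_{\alpha_i^\vee}$, prove smoothness over $Y$ via the slice property, count dimensions, handle representability by a stabiliser analysis) differs from the paper's, which instead uses that $\tilde Z \to Y \times_S \Deg_S(E)/E$ is smooth and pulls back the reduced relative normal crossings boundary of $\Deg_S(E)$: inside a stack of relative dimension $1$ over $Y$ a relative normal crossings divisor is automatically \'etale over $Y$, so no stratum analysis or dimension count is needed. Unfortunately, your smoothness step has a genuine flaw. The slice hypothesis gives smoothness of $Z \times_{\bun_{G,rig}/E} D_{\alpha_i^\vee} \to D_{\alpha_i^\vee}$, and this fibre product is indeed identified with $E \times_S D_{\alpha_i^\vee}(Z)$; but under that identification the smooth map to $D_{\alpha_i^\vee}$ is the \emph{twisted} map $(e,x) \mapsto t_e\cdot f(x)$, where $f \colon D_{\alpha_i^\vee}(Z) \to D_{\alpha_i^\vee}$ is the map you want to control, not $f \circ \mrm{pr}$. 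Descent of smoothness along the projection $E \times_S D_{\alpha_i^\vee}(Z) \to D_{\alpha_i^\vee}(Z)$ therefore does not apply, and smoothness of $f$ itself is neither established by your argument nor needed. What the equivariance of $\psi$ actually gives (since $\tbun_{G,rig} \cong \bun_{G,rig} \times_{\bun_{G,rig}/E} (\tbun_{G,rig}/E)$, whence $\tilde Z \cong Z \times_{\bun_{G,rig}/E}(\tbun_{G,rig}/E)$) is smoothness of $D_{\alpha_i^\vee}(Z) \to D_{\alpha_i^\vee,rig}/E$; to get from there to $Y$ you must additionally use that $\mrm{Bl}_{B,Y}$ is $E$-invariant (translation acts trivially on degree-zero $T$-bundles up to isomorphism, hence on $Y$). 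Your proposal skips exactly this point.

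Two further gaps. First, the containment $D_{\alpha_i^\vee}(Z) \subseteq \psi_Z^{-1}(D^\circ_{\alpha_i^\vee})$: excluding points that lie on some $D_\mu$ with $\mu \neq \alpha_i^\vee$ via Lemma \ref{lem:emptyregulardivisors} does not exclude the singular points of $D_{\alpha_i^\vee}$ itself. For instance, a stable map whose domain has two rational tails, each of degree $\alpha_i^\vee$, attached at distinct points of $E$ lies on two branches of $D_{\alpha_i^\vee}$ and on no other $D_\mu$; to rule such points out of $\tilde Z$ you need to rerun the mechanism of that lemma using the degree $-2\alpha_i^\vee$ $B$-reduction furnished by the section component (or argue via the primitivity of $\alpha_i^\vee$ and stability against contracted bubbles), neither of which you do. Second, representability: your sketch is the right idea, but you explicitly defer the key verification, and the reason you give for $a_C = 1$ --- stability of $\sigma$ --- only yields finiteness of automorphisms; triviality uses that the degree-$\alpha_i^\vee$ component is embedded (as $\alpha_i^\vee$ is primitive) and that the automorphism fixes the node because it is trivial on the section component. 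The paper gets this once and for all: Lemma \ref{lem:emptyregulardivisors} implies no point of $\tilde Z$ has a nontrivial automorphism relative to $Z$, so $\tilde Z \to Z$ is projective and everything in sight is representable over $S$, hence over $Y$.
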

\begin{proof}
First note that Lemma \ref{lem:emptyregulardivisors} implies that no point of $\tilde{Z}$ can have nontrivial automorphism group relative to $Z$, so $\tilde{Z} \to Z$ is projective. So $\tilde{Z}$ and its closed substack $D_{\alpha_i^\vee}(Z)$ are representable over $S$, so $D_{\alpha_i^\vee}(Z) \to Y$ is representable as claimed.

For \'etaleness, note that the morphism
\[ \tilde{Z} \longrightarrow \tbun_{G, rig}/E \longrightarrow (Y \times_S \Deg_S(E))/E \cong Y \times_S \Deg_S(E)/E \]
is smooth since $Z \to \bun_{G, rig}$ is a slice. Since the boundary divisor $D \subseteq \Deg_S(E)$ is a reduced divisor with normal crossings relative to $S$, the closed substack $D_{\alpha_i^\vee}(Z) = \tilde{Z} \times_{Y \times_S \Deg_S(E)/E} (Y \times_S (D/E))$ is a reduced divisor with normal crossings relative to $Y$. Since $\tilde{Z} \to Y$ has relative dimension $1$, the irreducible components of $D_{\alpha_i^\vee}(Z)$ are therefore disjoint and smooth of relative dimension $0$, hence \'etale, over $Y$.
\end{proof}

In the following lemmas, for $1 \leq r \leq n$, we write
\[ Q_r^n = \{(a_{p, q})_{1 \leq p, q \leq n} \in GL_n \mid a_{p, q} = 0 \;\;\text{if}\;\; p < \mrm{min}(q, r)\} \subseteq GL_n,\]
and $T_r^n = Q_r^n/[Q_r^n, Q_r^n] \cong \mb{G}_m^r$. For $1 \leq i \leq n$, we write $e_i \in \mb{X}^*(T^n_n)$ for the character sending a diagonal matrix to its $i$th entry, and we write $\{e_1^*, \ldots, e_n^*\} \subseteq \mb{X}_*(T^n_n)$ for the basis dual to $\{e_1, \ldots, e_n\}$. If $\lambda \in \mb{X}_*(T^n_n)$, we will also write $\lambda$ for its image in $\mb{X}_*(T^n_r)$. Finally, for $\lambda \in \mb{X}_*(T)$ (resp., $\lambda \in \mb{X}_*(T_r^n)$), we write $Y^\lambda \subseteq \hom(\mb{X}^*(T), \mrm{Pic}_S(E))$ (resp., $Y_{Q^n_r}^\lambda$) for the rigidification of $\bun_T^\lambda$ (resp., $\bun_{T_r^n}^\lambda$) with respect to $T$ (resp., $T^n_r$).

\begin{lem} \label{lem:regularlevisection}
There is an isomorphism
\[ \bun_{L \cap B}^{-\alpha_i^\vee} \times_{\bun_L^\mu} \bun_L^{ss, \mu} \overset{\sim}\longrightarrow Y^{-\alpha_i^\vee} \times_{\mrm{Pic}^{-1}_S(E)} \bun_L^{ss, \mu}\]
and hence an isomorphism
\[ \bun_{L \cap B, rig}^{-\alpha_i^\vee} \times_{\bun_{L, rig}^\mu} \bun_{L, rig}^{ss, \mu} \overset{\sim}\longrightarrow Y^{-\alpha_i^\vee} \times_{\mrm{Pic}^{-1}_S(E)} \bun_{L, rig}^{ss, \mu}\]
sending an $L \cap B$-bundle to its associated $T$-bundle and $L$-bundle.
\end{lem}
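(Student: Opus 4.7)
The plan is to verify that the forgetful morphism
\[ \Phi \colon \xi_{L \cap B} \longmapsto (\xi_{L \cap B} \times^{L \cap B} T, \xi_{L \cap B} \times^{L \cap B} L) \]
is an isomorphism of stacks, from which the rigidified statement will follow by descent along the compatible $\B Z(G)$-actions on both sides. Using the product description of $L$ from Proposition \ref{prop:speciallevi}, an $L$-bundle of slope $\mu$ corresponds to a tuple of vector bundles $(V_c)_{c \in \pi_0}$ of rank $n_c + 1$ with a common determinant; an $L \cap B$-reduction is then the additional data of a full flag on each $V_c$, whose successive quotient line bundles (as iso classes, via the $T$-bundle) are pinned down by the degree constraint $-\alpha_i^\vee$.

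The main step is to show that this flag is unique. The crucial feature is that because $\alpha_i$ is special, for each component $c$ the prescribed degrees of the flag quotients on $V_c$ are all zero except at the single position corresponding to the end vertex of $c$ adjacent to $\alpha_i$, where the degree is $\pm 1$; in particular $V_c$ has coprime rank and degree $\pm 1$, so it is stable. Since the nonzero-degree entry occurs at an extremity of the flag (top or bottom, depending on which side of $\alpha_i$ the component lies), one constructs the flag inductively by extracting a unique sub- or quotient line bundle at each step, starting from the opposite extremity. At each step, the relevant intermediate bundle $V_j$ is stable of rank $j$ and degree $\pm 1$, because any proper subbundle is also a subbundle of $V_c$ and hence, by stability of $V_c$, has integral degree $\leq -1$ (or $\leq $ an analogous bound in the $+1$ case), giving slope strictly less than $\mu(V_j)$. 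One then verifies $h^0(V_j^* \otimes L) = 1$ (or $h^0(V_j \otimes L^{-1}) = 1$) for $L$ the next prescribed degree-zero line bundle via Riemann-Roch on $E$ and the vanishing of $h^1$ for stable bundles of positive slope on elliptic curves. The final layer of the flag falls into the correct isomorphism class because $\prod_j L_j = \det V_c$, which is forced by the $\varpi_i$-compatibility condition on the RHS.

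The main obstacle will be carrying out these cohomology calculations in both possible inductive directions and verifying the stability of the intermediate bundles at every step. Once this is established, the matching of automorphism groups on both sides is a routine check: the requirement that an automorphism of $\xi_{L \cap B}$ induce an automorphism of $\xi_L$ cuts $T \subseteq \mrm{Aut}(\xi_{L \cap B})$ down to the centre $Z(L) = \mrm{Aut}(\xi_L)$, which matches the automorphism group on the RHS, so $\Phi$ is an isomorphism of stacks.
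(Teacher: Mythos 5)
Your proposal is essentially the paper's own argument: via Proposition \ref{prop:speciallevi} the lemma reduces to the assertion that on a semistable vector bundle with coprime rank and degree (hence stable, of degree $\pm 1$) a full flag with prescribed degree-zero graded pieces exists and is unique, proved by peeling off one line bundle at a time using stability of the intermediate bundles together with Riemann--Roch and $h^1$-vanishing on the elliptic curve --- this is exactly the content of Lemma \ref{lem:ssvbfiltration} (where the paper invokes Atiyah for semistability of the intermediate quotients, which your subbundle/quotient degree estimate reproves directly). The one point your sketch leaves implicit is that the peeling must be done relatively over the base: an isomorphism of stacks is not established by matching geometric points and automorphism groups, so at each step one should note (as the paper does, via $N = p_*(M_{e_1}\otimes V^\vee)$ being a line bundle and the canonical surjection $V \to M_{e_1}\otimes p^*N^{-1}$) that the unique degree-zero sub/quotient exists canonically in families, which produces an inverse morphism of stacks and renders the final automorphism comparison unnecessary.
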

\begin{proof}
Using the isomorphism of Proposition \ref{prop:speciallevi}, the claim reduces easily to Lemma \ref{lem:ssvbfiltration} below.
\end{proof}

\begin{lem} \label{lem:ssvbfiltration}
Let $n > 0$ and $1 \leq r \leq n$. Then the morphism
\begin{equation} \label{eq:ssvbfiltration1}
\bun_{Q^n_r}^{-e_n^*} \times_{\bun_{GL_n}^{-1}} \bun_{GL_n}^{ss, -1} \longrightarrow Y_{Q^n_r}^{-e_n^*} \times_{\mrm{Pic}^{-1}_S(E)} \bun_{GL_n}^{ss, -1}
\end{equation}
is an isomorphism, where the morphisms $Y_{Q^n_r}^{-e_n^*} \to \mrm{Pic}^{-1}_S(E)$ and $\bun_{GL_n}^{ss, -1} \to \mrm{Pic}^{-1}_S(E)$ are both given by the determinant.
\end{lem}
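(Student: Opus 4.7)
The plan is to interpret a $Q^n_r$-bundle as a partial flag of subbundles on a vector bundle and, using the semistability assumption, to recover the flag uniquely from its line-bundle subquotients. Concretely, $Q^n_r \subseteq GL_n$ is a parabolic with Levi factor $GL_{n-r+1} \times \mb{G}_m^{r-1}$, preserving the partial flag of $k^n$ whose steps have dimensions $n-r+1, n-r+2, \ldots, n-1$, so a $Q^n_r$-bundle is the same as the data of a vector bundle $V$ together with a chain $F_1 \subset F_2 \subset \cdots \subset F_{r-1} \subset V$ of subbundles of these ranks. Tracing the image of $-e_n^*$ under $\mb{X}_*(T^n_n) \to \mb{X}_*(T^n_r)$, the degree condition becomes $\deg(\det F_1) = -1$ together with $\deg(F_{i+1}/F_i) = 0$ for $1 \leq i \leq r-2$ and $\deg(V/F_{r-1}) = 0$. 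The induced $T^n_r$-bundle is then the tuple $(V/F_{r-1}, F_{r-1}/F_{r-2}, \ldots, F_2/F_1, \det F_1)$, whose total determinant in $\mrm{Pic}^{-1}_S(E)$ is canonically identified with $\det V$.

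The key analytic input is the following assertion, proved fibrewise: if $W$ is a stable vector bundle of rank $m \geq 2$ and degree $-1$ on a geometric fibre $E_s$ and $L$ is a line bundle of degree $0$, then $\mrm{Hom}(W, L)$ is $1$-dimensional, every nonzero map $W \to L$ is surjective, and the kernel $W' = \ker(W \twoheadrightarrow L)$ is again stable of rank $m-1$ and degree $-1$. The dimension count follows from $\chi(W^\vee \otimes L) = 1$ (Riemann--Roch) together with the vanishing $\mrm{Ext}^1(W, L) \cong \mrm{Hom}(L, W)^\vee = 0$ (Serre duality plus the observation that a nonzero $L \to W$ would produce a rank-$1$ subbundle of $W$ of slope $\geq 0 > -1/m$, contradicting stability). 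Surjectivity of a nonzero $W \to L$ holds because a non-surjective map would give a subbundle of $W$ of slope $\geq 0$, and stability of $W'$ follows because a proper subbundle of $W'$ of rank $k' < m-1$ is also a proper subbundle of $W$, hence has degree $\leq -1$ by stability of $W$, hence slope $\leq -1/k' < -1/(m-1) = \mu(W')$.

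Given these inputs, the inverse of \eqref{eq:ssvbfiltration1} over the stable locus is constructed by iterating the kernel operation: starting from the data $(L_1, \ldots, L_{r-1}, M, V)$ with $V$ stable, set $F_r := V$ and recursively let $F_{r-k}$ be the kernel of the evaluation surjection $F_{r-k+1} \twoheadrightarrow L_k$ for $k = 1, \ldots, r-1$. After $r-1$ iterations, $F_1$ is a stable bundle of rank $n-r+1$ and degree $-1$ with $\det F_1 \cong M$ (forced by the product constraint on determinants), and by Atiyah's classification (Theorem \ref{thm:atiyahclassification}) such an $F_1$ is uniquely determined by $M$. Since both sides of \eqref{eq:ssvbfiltration1} are $\mb{G}_m$-gerbes over a common base (the $\mb{G}_m$ arising from scalar automorphisms of $V$, which act trivially on flags by preserving every subbundle), a bijection of geometric points compatible with these gerbe structures promotes to an isomorphism of stacks. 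The main obstacle I expect is making the recursive construction work in families over arbitrary $S$-schemes rather than just at the level of fibres: by cohomology and base change applied to the universal $\mc{H}om(W^{uni}, L^{uni})$, the uniform vanishing of $\mrm{Ext}^1$ established above guarantees that the pushforward remains a line bundle in families and that the evaluation morphism assembles into a well-defined universal surjection, whose kernel is a subbundle of the universal $V$. Stability of this kernel in families then follows from openness of stability combined with fibrewise stability, allowing the induction to continue.
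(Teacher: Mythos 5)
Your proposal is correct and takes essentially the same route as the paper: both invert \eqref{eq:ssvbfiltration1} by iteratively forming the canonical surjection from a stable bundle $W$ of degree $-1$ onto (a twist of) a degree-zero line bundle $L$, using that $p_*\mathcal{H}om(W,L)$ is a line bundle (Euler characteristic $1$ plus the stability-forced vanishing of $\mathrm{Ext}^1(W,L)\cong\mathrm{Hom}(L,W)^\vee$) and that the kernel is again stable of degree $-1$ and one lower rank, the paper merely organising the iteration as an induction on $r$ via the quotients $Q^n_{r-1}\twoheadrightarrow GL_{n-r+2}$ and Cartesian squares. Note that what actually closes the argument is your family-level construction of the inverse morphism via cohomology and base change (which, as you observe, is insensitive to the $p^*N^{-1}$-twist because the target is the rigidified $Y^{-e_n^*}_{Q^n_r}$), not the ``bijection of geometric points of $\mb{G}_m$-gerbes'' shortcut, which on its own would not yield an isomorphism of stacks.
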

\begin{proof}
For the sake of brevity, we will write
\[ (\bun_{Q^n_r}^{-e_n^*})^{ss} = \bun_{Q^n_r}^{-e_n^*} \times_{\bun_{GL_n}^{-1}} \bun_{GL_n}^{ss, -1}.\]

We prove the claim by induction on $r$. For $r = 1$, the claim is true since $Q^n_1 = GL_n$ and $Y_{Q^n_1}^{-e_n^*} = \mrm{Pic}^{-1}_S(E)$. 

Next, suppose that $r = 2$. In this case, we construct an inverse to \eqref{eq:ssvbfiltration1} as follows. Let $V \to Y_{Q^n_2}^{-e_n^*} \times_{\mrm{Pic}^{-1}_S(E)} \bun_{GL_n}^{ss, -1} \times_S E$ be the pullback of the universal vector bundle on $\bun_{GL_n}^{ss, -1} \times_S E$ and let $M_{e_1} \to Y_{Q^n_2}^{-e_n^*} \times_{\mrm{Pic}^{-1}_S(E)} \bun_{GL_n}^{ss, -1} \times_S E$ be the pullback of the degree $0$ line bundle on $Y_{Q^n_2}^{-e_n^*} \times_S E$ associated to the character $e_1$. Writing $p\colon Y_{Q^n_2}^{-e_n^*} \times_{\mrm{Pic}^{-1}_S(E)} \bun_{GL_n}^{ss, -1} \times_S E \to Y_{Q^n_2}^{-e_n^*} \times_{\mrm{Pic}^{-1}_S(E)} \bun_{GL_n}^{ss, -1}$ for the projection to the first two factors, we have that $N = p_*(M_{e_1} \otimes V^\vee)$ is a line bundle since $M_{e_1} \otimes V^\vee$ is a family of semistable vector bundles of degree $1$. By semistability of $V$, we therefore have a natural exact sequence
\begin{equation} \label{eq:ssvbfiltration2}
0 \longrightarrow V' \longrightarrow V \longrightarrow M_{e_1} \otimes p^*N^{-1} \longrightarrow 0,
\end{equation}
where $M_{e_1} \otimes p^*N^{-1}$ is a line bundle fibrewise of degree $0$ and $V'$ is a vector bundle of rank $n - 1$ and degree $-1$. The exact sequence \eqref{eq:ssvbfiltration2} defines a degree $-e_n^*$ reduction to $Q_2^n$ of the pullback of the universal $GL_n$-bundle, and hence a morphism
\[ Y_{Q_2^n}^{-e_n^*} \times_{\mrm{Pic}^{-1}_S(E)} \bun_{GL_n}^{ss, -1} \longrightarrow (\bun_{Q^n_2}^{-e_n^*})^{ss},\]
which is easily shown to be inverse to \eqref{eq:ssvbfiltration1}. This proves the claim for $r = 2$.

Finally, assume that $r > 2$ and the lemma holds for all smaller $r$. We need to show that the outer square in the commutative diagram
\begin{equation} \label{eq:ssvbfiltration3}
\begin{tikzcd}
(\bun_{Q^n_r}^{-e_n^*})^{ss} \ar[r] \ar[d] & (\bun_{Q^n_{r - 1}}^{-e_n^*})^{ss} \ar[r] \ar[d] & \bun_{GL_n}^{ss, -1} \ar[d] \\
Y_{Q^n_r}^{-e_n^*} \ar[r] & Y_{Q^n_{r - 1}}^{-e_n^*} \ar[r] & \mrm{Pic}^{-1}_S(E)
\end{tikzcd}
\end{equation}
is Cartesian. Since the rightmost square is Cartesian by induction, it suffices to show that the leftmost suqare is Cartesian. 

Observe that there is a surjective homomorphism $Q^n_{r - 1} \to GL_{n - r + 2}$ given by forgetting the first $r - 2$ rows and columns, such that $Q^n_r \subseteq Q^n_{r - 1}$ is the preimage of $Q^{n - r + 2}_2 \subseteq GL_{n - r + 2}$. Since the morphism
\[ \bun_{Q^n_{r - 1}}^{-e_n^*} \longrightarrow \bun_{GL_{n - r + 2}}^{-1} \]
sends $(\bun_{Q^n_{r - 1}}^{-e_n^*})^{ss}$ to $\bun_{GL_{n - r + 2}}^{ss, -1}$ by \cite[Lemma II.15]{atiyah57}, we therefore have a diagram
\[
\begin{tikzcd}
(\bun_{Q^n_r}^{-e_n^*})^{ss} \ar[r] \ar[d] & (\bun_{Q^{n - r + 2}_2}^{e_{n - r + 2}^*})^{ss} \ar[d] \ar[r] & Y_{Q^{n - r + 2}_2}^{-e_{n - r + 2}^*} \ar[d] \\
(\bun_{Q^n_{r - 1}}^{-e_n^*})^{ss} \ar[r] & \bun_{GL_{n - r + 2}}^{ss, -1} \ar[r] & \mrm{Pic}^{-1}_S(E)
\end{tikzcd}
\]
in which the leftmost square is Cartesian. Since the rightmost square is also Cartesian by induction, the outer square is also. This is also the outermost square in the diagram
\[
\begin{tikzcd}
(\bun_{Q^n_r}^{-e_n^*})^{ss} \ar[r] \ar[d] & Y_{Q^n_r}^{-e_n^*} \ar[d] \ar[r] & Y_{Q^{n - r + 2}_2}^{-e_{n - r + 2}^*} \ar[d] \\
(\bun_{Q^n_{r - 1}}^{-e_n^*})^{ss} \ar[r] & Y_{Q^n_{r - 1}}^{-e_n^*} \ar[r] & \mrm{Pic}^{-1}_S(E).
\end{tikzcd}
\]
It is easy to see that the rightmost square in this diagram is Cartesian and hence so is the leftmost square. But this coincides with the leftmost square of \eqref{eq:ssvbfiltration3}, so we are done.
\end{proof}

\begin{prop} \label{prop:nonemptyregulardivisor}
The morphism $D_{\alpha_i^\vee}(Z) \to Y$ is an isomorphism.
\end{prop}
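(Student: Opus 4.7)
I would first reduce to the open locus $D_{\alpha_i^\vee}^\circ(Z)$: since $\alpha_i^\vee$ is a simple coroot, it is indecomposable in $\mb{X}_*(T)_+$ (any decomposition $\alpha_i^\vee = \lambda_1 + \lambda_2$ in $\mb{X}_*(T)_+$ would force one summand to be $0$, since pairing with $\varpi_j$ for $j \neq i$ is zero and with $\varpi_i$ is $1$). Hence any point of $D_{\alpha_i^\vee}(Z)$ outside $D_{\alpha_i^\vee}^\circ$ would lie in some $D_\lambda(Z)$ with $\lambda \neq \alpha_i^\vee$, contradicting Lemma \ref{lem:emptyregulardivisors}. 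Proposition \ref{prop:kmdivisor}\eqref{itm:kmdivisor3} then describes $D_{\alpha_i^\vee}^\circ$ as the stack of tuples $(\xi_B, x, f)$ where $\xi_B$ is a $B$-bundle of degree $-\alpha_i^\vee$, $x \in E$ is the node location, and $f$ is the pointed rational curve datum of degree $\alpha_i^\vee$ attached at $\sigma_E(x)$.

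Next, I would translate the condition that $\xi_G = \xi_B \times^B G$ lies in the image of $Z$ into data on a $T$-bundle. Since the $L$-bundle of any $\xi_G$ in $Z$ is the fixed semistable one selected by the section $S \to \bun_{L, rig}^{ss, \mu}$ of Proposition \ref{prop:regunstablemoduli}, and since the image of $-\alpha_i^\vee$ under the slope map equals $\mu$, uniqueness of the Harder-Narasimhan reduction (Theorem \ref{thm:behrendhnreduction}) forces any such $B$-reduction to factor through the canonical $P$-reduction $\xi_P$. Via $P/B \cong L/(L \cap B)$, the $B$-reductions of $\xi_P$ correspond to $(L \cap B)$-reductions of $\xi_L$, and Lemma \ref{lem:regularlevisection}, applied over the section value, identifies these further with $T$-bundles $\xi_{T'} \in Y^{-\alpha_i^\vee}$ whose $\varpi_i$-determinant equals $\mc{O}(-O_E)$.

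The composition $D_{\alpha_i^\vee}(Z) \to Y$ is then computed by Proposition \ref{prop:blowdowngluing} as $(\xi_{T'}, x) \mapsto \xi_{T'} \otimes \alpha_i^\vee(\mc{O}(x))$. To verify bijectivity on geometric points, given $y = \xi_T \in Y$ the equation $\xi_T = \xi_{T'} \otimes \alpha_i^\vee(\mc{O}(x))$ combined with $\varpi_i(\xi_{T'}) = \mc{O}(-O_E)$ gives $\mc{O}(x - O_E) = \varpi_i(\xi_T)$, which via the canonical isomorphism $E \cong \mrm{Pic}^0_S(E)$ sending $y' \mapsto \mc{O}(y' - O_E)$ determines $x$ uniquely; then $\xi_{T'} = \xi_T \otimes \alpha_i^\vee(\mc{O}(-x))$ is determined. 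The rational curve $f$ is also uniquely determined, since for a simple coroot $M_{0, 1}^+(G/B, \alpha_i^\vee)$ is a single point (the unique Schubert line through $B/B$, rigid modulo source automorphisms fixing the marked point).

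By Lemma \ref{lem:regularsliceetale}, $D_{\alpha_i^\vee}(Z) \to Y$ is representable and étale, and the construction above shows it is universally bijective on geometric points, so it is both an open immersion and surjective, hence an isomorphism. The main technical subtlety I expect is to verify the parameterization of the second paragraph stackily rather than merely set-theoretically: the apparent freedom in choosing $\xi_P \in Z$ over a given $y \in Y$ must be absorbed into automorphisms of the stable-map data, so that the actual stacky fiber is a single point. This matching of automorphism groups is built into the rigidified formulations of Proposition \ref{prop:inducedequivariantslice} and Lemma \ref{lem:regularlevisection}, and is the main point requiring careful bookkeeping.
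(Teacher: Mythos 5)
Your overall skeleton (reduce to the smooth boundary stratum, identify it with $Z \times_{\bun_{G,rig}} \bun_{B,rig}^{-\alpha_i^\vee} \times_S E$, compute the composite to $Y$ via Proposition \ref{prop:blowdowngluing}, and use Lemma \ref{lem:regularsliceetale} to upgrade a pointwise bijection to an isomorphism) matches the paper, but the central step of your second paragraph has a genuine gap. You claim that "the $L$-bundle of any $\xi_G$ in $Z$ is the fixed semistable one" and that uniqueness of Harder--Narasimhan reductions forces the degree $-\alpha_i^\vee$ $B$-reduction to factor through "the canonical $P$-reduction". This conflates the tautological parabolic datum carried by points of $Z = \mrm{Ind}_L^G(S)$ with a Harder--Narasimhan reduction: the induction parabolic is the one for which $-\mu$, not $\mu$, is a Harder--Narasimhan vector, so the tautological reduction is never an HN reduction; moreover the generic point of $Z$ is \emph{semistable} (the image of the smooth map $Z \to \bun_{G,rig}/E$ is open and the unstable locus has codimension $l+1$), so for most $z$ there is no HN reduction to appeal to at all. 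What actually has to be proved --- and is the heart of the paper's argument --- is that the only $z \in Z$ whose bundle admits a $B$-reduction of degree $-\alpha_i^\vee$ are the $\mb{G}_m$-fixed points $S = Z^{\mb{G}_m}$. The paper gets this from an automorphism-dimension count: such a bundle is unstable (pair the degree against $\varpi_i$), hence $\dim \mrm{Aut}(\xi_{G,z}) \geq l+2$ by Proposition \ref{prop:regularunstable}, while $\mb{G}_m$-invariance of $Z \to \bun_{G,rig}$ and $\dim_S Z = l+1$ give $\dim \mrm{Aut}(\xi_{G,z}) \leq l + 2 - \dim \mb{G}_m \cdot z$, forcing $\dim \mb{G}_m \cdot z = 0$. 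Nothing in your proposal plays this role. Even granting $z \in S$, a degree $-\alpha_i^\vee$ $B$-reduction of $\xi_L \times^L G$ need not refine the HN reduction: the induced $P$-reduction has the correct slope, but its Levi part need not be semistable, so HN uniqueness says nothing about its relative position. The paper rules out the nontrivial positions by the Bruhat-cell analysis over $S$: a cell $C^w$ with $w \in W^0_P \setminus \{1\}$ would produce a section of the $L/(L\cap B)$-bundle of degree $-w\alpha_i^\vee$, which the degree constraints together with Lemma \ref{lem:emptyregulardivisors} force to equal $-\alpha_i^\vee$, whence $w = 1$. Some substitute for these two steps is needed before you may invoke Lemma \ref{lem:regularlevisection} and carry out your point count.

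A smaller point: your reduction to $D^\circ_{\alpha_i^\vee}$ via indecomposability of $\alpha_i^\vee$ only excludes a breaking of the degree-$\alpha_i^\vee$ curve; it does not exclude points in the self-intersection locus of $D_{\alpha_i^\vee}$ (two rational tails each of degree $\alpha_i^\vee$), which lie in no $D_\lambda$ with $\lambda \neq \alpha_i^\vee$. These are excluded by running the argument of Lemma \ref{lem:emptyregulardivisors} on the resulting $B$-reduction of degree $-2\alpha_i^\vee$, not by the statement of that lemma. The concluding steps --- that $M_{0,1}^+(G/B, \alpha_i^\vee)$ is a single point, the explicit formula $(\xi_{T'}, x) \mapsto \xi_{T'} \otimes \alpha_i^\vee(\mc{O}(x))$, and the étale-plus-bijective conclusion --- are correct and agree with the paper.
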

\begin{proof}
Since Lemma \ref{lem:emptyregulardivisors} implies that $D_{\alpha_i^\vee}(Z) = \tilde{Z} \times_{\tbun_{G, rig}} D_{\alpha_i^\vee, rig}^\circ$, the isomorphism of Proposition \ref{prop:kmdivisor} \eqref{itm:kmdivisor3} and the well known isomorphism (cf.\! \cite[Proposition 3.4.10]{davis19a}) $M_{0, 1}^+(G/B, \alpha_i^\vee) \cong \spec \mb{Z}$ combine to give an isomorphism
\begin{equation} \label{eq:nonemptyregulardivisor0}
 D_{\alpha_i^\vee}(Z) \cong Z \times_{\bun_{G, rig}} \bun_{B, rig}^{-\alpha_i^\vee} \times_S E.
\end{equation}

We next claim that the closed immersion
\[ S \times_{\bun_{G, rig}} \bun_{B, rig}^{-\alpha_i^\vee} \longrightarrow Z \times_{\bun_{G, rig}} \bun_{B, rig}^{-\alpha_i^\vee} \]
is surjective. To prove the claim, we may assume without loss of generality that $S = \spec k$ for some algebraically closed field. Assume that $z \colon \spec k \to Z$ is a geometric point such that $\{z\} \times_{\bun_{G, rig}} \bun_{B, rig}^{-\alpha_i^\vee}$ is nonempty, let $\xi_{G, z}$ denote the corresponding $G$-bundle, and let $x$ denote its image in $\bun_{G, rig}/E$. By $\mb{G}_m$-equivariance of $Z \to \bun_{G, rig}$, we have
\[ \mb{G}_m \cdot z \subseteq Z \times_{\bun_{G, rig}/E} \B \mrm{Aut}(x).\]
So
\begin{align*}
\dim \mrm{Aut}(\xi_G) \leq \dim \mrm{Aut}(x) &= \codim_{\bun_{G, rig}/E}(\B \mrm{Aut}(x)) + 1 \\
&= \codim_Z (Z \times_{\bun_{G, rig}/E} \B \mrm{Aut}(x)) + 1 \\
&\leq l + 2 - \dim \mb{G}_m \cdot z,
\end{align*}
and hence $\dim \mb{G}_m \cdot z = 0$ by Proposition \ref{prop:regularunstable}. So $z \in Z^{\mb{G}_m} = S$, which proves the claim.

Let $P \subseteq G$ be the standard parabolic subgroup with Levi factor $L$, and consider the locally closed Bruhat cells
\[ C^w = S \times_{\bun_{P, rig}} C^{w, -\alpha_i^\vee}_{P, rig} \subseteq S \times_{\bun_{G, rig}} \bun_{B, rig}^{-\alpha_i^\vee} ,\]
for $w \in W^0_P$, where $C^{w, -\alpha_i^\vee}_{P, rig} \subseteq \bun_{P, rig} \times_{\bun_{G, rig}} \bun_{B, rig}$ is the rigidification of the locally closed substack $C^{w, -\alpha_i^\vee}_{P} \subseteq \bun_P \times_{\bun_G} \bun_B$ defined in \S\ref{subsection:bruhat}. Then Proposition \ref{prop:levidegreebound} and Lemma \ref{lem:emptyregulardivisors} imply that
\[ \bun_{B, rig}^{-\alpha_i^\vee} \times_{\bun_{G, rig}} S = \bigcup_{w \in W^0_P} C^w.\]

Assume that $w \in W^0_P$ with $C^w \neq \emptyset$. Then by Proposition \ref{prop:bruhatdegrees} there exist a geometric point $s\colon \spec k \to S$ with corresponding $L$-bundle $\xi_L \to E_s$ and a section $\sigma_L \colon E_s \to \xi_L \times^L L/(L \cap B)$ of degree $[\sigma_L] = - w \alpha_i^\vee \in \Phi^\vee$. Since $\xi_L$ has slope $\mu$, we must have $\langle \varpi_i, [\sigma_L] \rangle = -1$ and hence $[\sigma_L] \in \Phi^\vee_- \subseteq \mb{X}_*(T)_-$. Since $[\sigma_L]$ is the degree of the section
\[ \sigma' \colon E_s \overset{\sigma} \longrightarrow \xi_L \times^L L/(L \cap B) \longhookrightarrow \xi_L \times^L G/B,\]
Lemma \ref{lem:emptyregulardivisors} implies that we must have $[\sigma_L] = -\alpha_i^\vee$. So $w \in W^0_P$ and $w \alpha_i^\vee = \alpha_i^\vee$, which implies that $w^{-1}(\Phi_+^\vee) = \Phi_+^\vee$, and hence $w = 1$. So $C^w = \emptyset$ for $w \neq 1$, and hence the closed immersion
\begin{equation} \label{eq:nonemptyregulardivisor1}
C^1 \longhookrightarrow S \times_{\bun_{G, rig}} \bun_{B, rig}^{-\alpha_i^\vee} \longhookrightarrow Z \times_{\bun_{G, rig}} \bun_{B, rig}^{-\alpha_i^\vee}
\end{equation}
is surjective on geometric points. Since $D_{\alpha_i^\vee}(Z)$ is \'etale over $Y$, hence reduced, $Z \times_{\bun_{G, rig}} \bun_{B, rig}^{-\alpha_i^\vee}$ is reduced as well, so \eqref{eq:nonemptyregulardivisor1} is an isomorphism. But by Lemma \ref{lem:regularlevisection},
\[ C^1 = S \times_{\bun_{L, rig}^\mu} \bun_{L \cap B, rig}^{-\alpha_i^\vee} \overset{\sim}\longrightarrow Y^{-\alpha_i^\vee} \times_{\mrm{Pic}^{-1}_S(E)} \{\mc{O}(-O_E)\},\]
is an isomorphism, where the morphism $Y^{-\alpha_i^\vee} \to \mrm{Pic}^{-1}_S(E)$ is induced by the character $\varpi_i \colon T \to \mb{G}_m$. So by \eqref{eq:nonemptyregulardivisor0} and Proposition \ref{prop:blowdowngluing}, we can identify the morphism $D_{\alpha_i^\vee}(Z) \to Y$ with the isomorphism
\begin{align*}
E \times_S Y^{-\alpha_i^\vee} \times_{\mrm{Pic}^{-1}_S(E)} \{\mc{O}(-O_E)\} &\longrightarrow Y \\
(p, \xi_T) &\longmapsto \alpha_i^\vee(\mc{O}(p)) \otimes \xi_T,
\end{align*}
which completes the proof of the proposition.
\end{proof}

\begin{cor} \label{cor:regularsliceunstableres}
The morphism $\tilde{\chi}_Z^{-1}(0_{\Theta_Y^{-1}}) \to 0_{\Theta_Y^{-1}} = Y$ is an isomorphism, where $0_{\Theta_Y^{-1}}$ denotes the zero section of $\Theta_Y^{-1}$.
\end{cor}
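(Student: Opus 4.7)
The plan is to compute $\tilde{\chi}_Z^{-1}(0_{\Theta_Y^{-1}})$ as a scheme-theoretic divisor on $\tilde{Z}$ by pulling back the explicit formula of Corollary \ref{cor:chitildedivisor}, and then to apply the results just proved about which boundary divisors survive on $\tilde{Z}$.

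First, I would observe that the construction of $\tilde{\chi}$ and $\chi$ in \S\ref{subsection:fundiag}, applied in the rigidified setting, shows that $\tilde{\chi}$ is the composition of the morphism of total spaces $\psi^*\Theta_{\bun_{G, rig}}^{-1} \to \mrm{Bl}_{B, Y}^*\Theta_Y^{-1} \to \Theta_Y^{-1}$ with the zero section $\tbun_{G, rig} \to \psi^*\Theta_{\bun_{G, rig}}^{-1}$, and consequently that the scheme-theoretic preimage $\tilde{\chi}^{-1}(0_{\Theta_Y^{-1}})$ is the divisor of zeroes of the rational map $\gamma$ from Theorem \ref{thm:chitildedivisor} applied to $L_{\bun_{G, rig}} = \Theta_{\bun_{G, rig}}^{-1}$. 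By Corollary \ref{cor:chitildedivisor}, this divisor equals
\[
\tilde{\chi}^{-1}(0_{\Theta_Y^{-1}}) = \sum_{\lambda \in \mb{X}_*(T)_+} \tfrac{1}{2}(\lambda \mmid \lambda) D_\lambda.
\]
Pulling back along the flat morphism $\tilde{Z} \to \tbun_{G, rig}$ (flatness comes from the slice property together with Proposition \ref{prop:chiflatness}, but is not strictly needed here since $\tilde{\chi}_Z$ is obtained as an $H$-equivariant base change of $\tilde{\chi}$ via Remark \ref{rmk:equivariantthetasection}), I obtain
\[
\tilde{\chi}_Z^{-1}(0_{\Theta_Y^{-1}}) = \sum_{\lambda \in \mb{X}_*(T)_+} \tfrac{1}{2}(\lambda \mmid \lambda) D_\lambda(Z).
\]

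Next, Lemma \ref{lem:emptyregulardivisors} tells me that $D_\lambda(Z) = \emptyset$ unless $\lambda = \alpha_i^\vee$, so the sum collapses to the single term with $\lambda = \alpha_i^\vee$. Since $\alpha_i$ is a special simple root, it is by definition long, and hence the coroot $\alpha_i^\vee$ is short. By the normalisation of the Killing form in Corollary \ref{cor:picbung}, this gives $(\alpha_i^\vee \mmid \alpha_i^\vee) = 2$, so the multiplicity is $\tfrac{1}{2} \cdot 2 = 1$, and hence
\[
\tilde{\chi}_Z^{-1}(0_{\Theta_Y^{-1}}) = D_{\alpha_i^\vee}(Z)
\]
as closed subschemes of $\tilde{Z}$.

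Finally, to identify the induced morphism $D_{\alpha_i^\vee}(Z) \to 0_{\Theta_Y^{-1}} = Y$ with the map studied in Proposition \ref{prop:nonemptyregulardivisor}, I note that on the boundary divisor $D_{\alpha_i^\vee} \subseteq \tbun_{G, rig}$ the morphism $\tilde{\chi}$ factors through $\mrm{Bl}_{B, Y}$ by construction of $\tilde{\chi}$ as a lift of $\mrm{Bl}_{B, Y}$ (\S\ref{subsection:fundiag}), and that on $\tilde{Z}$ this restricts to precisely the morphism computed in Proposition \ref{prop:nonemptyregulardivisor} to be an isomorphism onto $Y$. The only point that requires a small check is the identification of multiplicities, which is immediate from the classification of special roots as long roots; this is the whole content of the argument, and there are no serious obstacles beyond assembling the cited facts.
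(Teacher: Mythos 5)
Your proposal is correct and follows essentially the same route as the paper's proof: pull back the divisor formula of Corollary \ref{cor:chitildedivisor} to the slice, collapse the sum using Lemma \ref{lem:emptyregulardivisors}, observe that $\tfrac{1}{2}(\alpha_i^\vee \mmid \alpha_i^\vee) = 1$ since $\alpha_i$ is long, and conclude via Proposition \ref{prop:nonemptyregulardivisor}. The extra detail you supply on the multiplicity normalisation and on identifying the map to $Y$ is consistent with what the paper leaves implicit.
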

\begin{proof}
Since $Z$ is a slice of $\bun_{G, rig}$, Corollary \ref{cor:chitildedivisor} implies that
\[ \tilde{\chi}_Z^{-1}(0_{\Theta_Y^{-1}}) = \sum_{\lambda \in \mb{X}_*(T)_+} \frac{1}{2}(\lambda \mmid \lambda) D_\lambda(Z).\]
Applying Lemma \ref{lem:emptyregulardivisors}, this simplifies to
\[ \tilde{\chi}_Z^{-1}(0_{\Theta_Y^{-1}}) = \frac{1}{2}(\alpha_i^\vee \mmid \alpha_i^\vee)D_{\alpha_i^\vee}(Z) = D_{\alpha_i^\vee}(Z).\]
The claim now follows from Proposition \ref{prop:nonemptyregulardivisor}.
\end{proof}

\begin{prop} \label{prop:regularsliceresolution}
The morphism $\tilde{Z} \to \Theta_Y^{-1}$ is an isomorphism.
\end{prop}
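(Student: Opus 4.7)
The strategy is to show that $\tilde{\chi}_Z$ is a flat, quasi-finite, $\mb{G}_m$-equivariant morphism of degree $1$ whose image contains the zero section $0_{\Theta_Y^{-1}}$, and then to deduce the isomorphism property from these four properties.

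First, I would establish flatness and dimensions. The morphism $\tilde{\chi}_Z$ arises as the pullback of $\tilde{\chi} \colon \tbun_{G, rig} \to \Theta_Y^{-1}/\mb{G}_m$ along the slice $Z \to \bun_{G, rig}$ (after passing to the $\mb{G}_m$-torsor $\Theta_Y^{-1} \to \Theta_Y^{-1}/\mb{G}_m$ using the equivariance structure of Remark \ref{rmk:equivariantthetasection}), so flatness follows from Proposition \ref{prop:chiflatness} by base change. By Lemma \ref{lem:regularslicedim}, $\dim_S Z = l + 1$, and $\tilde{Z} \to Z$ is proper and an isomorphism on the dense open substack where $Z$ meets $\bun_{B, rig}^0$, hence $\dim_S \tilde{Z} = l + 1 = \dim_S \Theta_Y^{-1}$. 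Flatness of $\tilde{\chi}_Z$ together with equidimensionality of source and target then forces every fibre to have dimension $0$, so $\tilde{\chi}_Z$ is quasi-finite.

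Second, I would use Corollary \ref{cor:regularsliceunstableres} to pin down the degree. That corollary identifies $\tilde{\chi}_Z^{-1}(0_{\Theta_Y^{-1}}) \to 0_{\Theta_Y^{-1}}$ with the identity of $Y$, so the fibre of $\tilde{\chi}_Z$ over every point of the zero section is a single reduced point. Since $\tilde{Z}$ and $\Theta_Y^{-1}$ are representable over $S$ (this is noted in the proof of Lemma \ref{lem:regularsliceetale} for $\tilde{Z}$), the locally constant degree of the flat quasi-finite morphism $\tilde{\chi}_Z$ on the connected target $\Theta_Y^{-1}$ is therefore identically $1$. A flat, quasi-finite, locally finite-type morphism of locally Noetherian schemes in which every fibre is a reduced point of length $1$ is radicial, and a flat radicial morphism is an open immersion; hence $\tilde{\chi}_Z$ is an open immersion.

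Third, I would use $\mb{G}_m$-equivariance to upgrade this open immersion to an isomorphism. The image of $\tilde{\chi}_Z$ is a $\mb{G}_m$-invariant open subset of $\Theta_Y^{-1}$ containing $0_{\Theta_Y^{-1}} = Y$. Since $\Theta_Y^{-1} \to Y$ is a line bundle with fibrewise scaling $\mb{G}_m$-action, any $\mb{G}_m$-invariant open subset of $\Theta_Y^{-1}$ that already contains each origin $0_y \in \mb{A}^1_y$ must contain the whole fibre $\mb{A}^1_y$ (otherwise the invariant intersection with $\mb{A}^1_y$ would be $\{0\}$, which is not open). Therefore the image is all of $\Theta_Y^{-1}$, and $\tilde{\chi}_Z$ is an isomorphism.

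The main step requiring care is the passage from flatness of $\tilde{\chi}$ to flatness of $\tilde{\chi}_Z$: although morally this is base change, one must keep track of the $\mb{G}_m$-equivariance coming from the slice structure (since $Z \to \bun_{G, rig}$ is itself only smooth after quotienting by $E$, not as a morphism of stacks), and then arrange the Cartesian diagrams so that Proposition \ref{prop:chiflatness} applies to give flatness of the lifted morphism into the total space $\Theta_Y^{-1}$ rather than only into the quotient $\Theta_Y^{-1}/\mb{G}_m$.
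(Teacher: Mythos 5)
Your first step does not go through: $\tilde{\chi}_Z$ is \emph{not} a base change of $\tilde{\chi}$. The Cartesian square defining $\tilde{Z}$ is $\tilde{Z} = Z \times_{\bun_{G, rig}} \tbun_{G, rig}$, i.e.\ it lives over $\bun_{G, rig}$ via $\psi$, not over $\Theta_Y^{-1}/\mb{G}_m$ via $\tilde{\chi}$; the map $\tilde{\chi}_Z$ is the composite $\tilde{Z} \to \tbun_{G, rig} \to \Theta_Y^{-1}/\mb{G}_m$ (lifted to $\Theta_Y^{-1}$ by the equivariant slice structure), and the first arrow is a pullback of the slice $Z \to \bun_{G, rig}$, which is not flat (it is only smooth after quotienting by $E$). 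So Proposition \ref{prop:chiflatness} gives nothing about $\tilde{\chi}_Z$: its flatness is essentially part of what has to be proved, and in the paper it only comes out a posteriori, after Lemma \ref{lem:conerecognition} exhibits $\tilde{Z}$ $\mb{G}_m$-equivariantly as a line bundle over $D_{\alpha_i^\vee}(Z) \cong Y$. Since your quasi-finiteness is deduced from flatness, the chain collapses at this point. (Miracle flatness could in principle replace base change, as $\tilde{Z}$ and $\Theta_Y^{-1}$ are both smooth of relative dimension $l + 1$ over $S$, but only after proving independently that all fibres of $\tilde{\chi}_Z$ are finite, which the proposal does not address.)

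There is a second genuine gap: even granting flatness and quasi-finiteness, the fibre degree of a flat quasi-finite morphism is \emph{not} locally constant unless the morphism is finite (equivalently, proper). Standard examples (an open immersion, or $\mb{A}^1 \sqcup (\mb{A}^1 \setminus \{0\}) \to \mb{A}^1$) show the degree can change off a closed subset, so knowing from Corollary \ref{cor:regularsliceunstableres} that the fibres over the zero section are single reduced points does not bound the degree over the complement, and the passage to ``radicial, hence open immersion'' is unjustified. To salvage your route you would need properness of $\tilde{\chi}_Z$, e.g.\ by showing $Z \to \hat{Y}\sslash W$ is proper using the $\mb{G}_m$-equivariance and the fact (implicit in the proof of Proposition \ref{prop:nonemptyregulardivisor}) that the preimage in $Z$ of the vertex of $\hat{Y}\sslash W$ lies in the fixed locus $S$; then finite flat of degree $1$ would give the isomorphism directly, without your final equivariance step. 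The paper instead sidesteps flatness and properness entirely: Lemma \ref{lem:conerecognition} makes $\tilde{Z}$ a line bundle over $Y$, and Corollary \ref{cor:regularsliceunstableres} then says the fibrewise-linear map $\tilde{Z} \to \Theta_Y^{-1}$ has zero locus exactly the zero section, hence is an isomorphism. A small slip as well: $\tilde{Z} \to Z$ is not an isomorphism over the locus meeting $\bun_{B, rig}^0$ (it is generically finite of degree $|W|$ over the semistable locus), though your dimension count $\dim_S \tilde{Z} = l + 1$ survives.
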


The idea behind the proof of Proposition \ref{prop:regularsliceresolution} is to use the $\mb{G}_m$-action on $\tilde{Z}$ and $\Theta_Y^{-1}$ to reduce to Corollary \ref{cor:regularsliceunstableres}. The key tool is the following technical lemma.

\begin{lem} \label{lem:conerecognition}
Suppose that $f \colon X \to X'$ is a proper representable $\mb{G}_m$-equivariant morphism of stacks and that $X'_0 \subseteq X'$ and $X_0 \subseteq f^{-1}(X_0')$ are closed substacks satisfying the following conditions.
\begin{enumerate}[(1)]
\item $X_0 = f^{-1}(X_0')$ set-theoretically.
\item $\mb{G}_m$ acts trivially on the closed substacks $X'_0$ and $X_0$.
\item There exists a $\mb{G}_m$-equivariant retraction $X' \to X'_0$ so that $X'$ is an affine space bundle over $X'_0$ on which $\mb{G}_m$ acts with positive weights.
\item The induced action of $\mb{G}_m$ on the normal cone $C_{X_0/X}$ has a single nonzero weight.
\end{enumerate}
Then there is a unique $\mb{G}_m$-equivariant isomorphism $X \cong C_{X_0/X}$ over $X_0'$ sending $X_0 \subseteq X$ to the zero section via the identity and inducing the identity on normal cones.
\end{lem}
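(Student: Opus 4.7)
The strategy combines a contraction of $X$ onto $X_0$ (provided by the $\mb{G}_m$-action and the properness of $f$) with a deformation-to-the-normal-cone construction, using the single-weight hypothesis (4) to rigidify the resulting family into the desired isomorphism.

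The first step is to lift the $\mb{A}^1$-extension of the $\mb{G}_m$-action from $X'$ to $X$. Condition (3) gives such an extension on $X'$ contracting $X'$ onto $X_0'$, and since $f$ is $\mb{G}_m$-equivariant, proper and representable, the valuative criterion of properness lifts it uniquely to an $\mb{A}^1$-action on $X$. Evaluating at $t=0$ gives an idempotent $\pi\colon X\to X^{\mb{G}_m}$ onto the fixed locus, and combining condition (1) with the identification $(X')^{\mb{G}_m}=X_0'$ (from (3)) shows that $X^{\mb{G}_m}=X_0$ set-theoretically, with the inclusion $X_0\subseteq X^{\mb{G}_m}$ scheme-theoretically coming from (2).

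The comparison morphism is built via deformation to the normal cone. Writing $\mc{I}\subseteq\mc{O}_X$ for the ideal of $X_0$, the Rees algebra $\mrm{Rees}(\mc{I})=\bigoplus_{n\geq 0}\mc{I}^n t^{-n}$ defines a flat family $M\to\mb{A}^1_t$ with generic fibre $X$ and special fibre $C_{X_0/X}$. The $\mb{G}_m$-action on $X$ lifts to $M$; assigning $t$ the $\mb{G}_m$-weight $w$ (the unique nonzero weight on $C_{X_0/X}$ given by (4)) makes the projection $M\to\mb{A}^1_t$ equivariant, and by hypothesis (4) the combined-weight structure on the coordinate sheaf of $M$ has weight $0$ precisely along the fibre direction. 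Combined with the contraction of $\mb{A}^1_t$ to the origin under the $\mb{G}_m$-action, this yields a $\mb{G}_m$-equivariant morphism $M\to C_{X_0/X}$ extending the identity on the special fibre, and restricting to $\{t=1\}$ produces the desired $\phi\colon X\to C_{X_0/X}$ over $X_0'$. By construction, $\phi$ induces the identity on normal cones. To show $\phi$ is an isomorphism, I decompose $\pi_*\mc{O}_X=\bigoplus_{n\geq 0}\mc{A}_n$ by $\mb{G}_m$-weight (with non-negative weights by the $\mb{A}^1$-extension), and argue inductively on $k$ that the weight-$n$ image of $\mc{A}_n$ in $\mc{I}^k/\mc{I}^{k+1}$ vanishes whenever $n$ is not a multiple of $w$, by hypothesis (4); iterating gives $\mc{A}_n\subseteq\bigcap_k\mc{I}^k=0$ by Krull's intersection theorem applied locally on the Noetherian local rings of $X$ at points of $X_0$. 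The remaining weights $\mc{A}_{nw}$ assemble into $\mc{I}^n/\mc{I}^{n+1}$, matching the graded algebra of $C_{X_0/X}$, so $\phi$ is an isomorphism. Uniqueness is immediate, as any other equivariant isomorphism fixing $X_0$ and the normal cone must fix the weight-$w$ generators and hence the whole algebra.

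The main obstacle is the careful interleaving of the $\mb{G}_m$-grading with the $\mc{I}$-adic filtration, particularly the inductive argument that the off-weight pieces of $\pi_*\mc{O}_X$ vanish and its reliance on local Noetherianity; a secondary obstacle is the precise valuative lifting of the $\mb{A}^1$-action in the stack-theoretic setting, which may require an equivariant version of the valuative criterion for proper representable morphisms of algebraic stacks.
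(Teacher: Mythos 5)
There is a genuine gap at the heart of your argument: the comparison morphism $\phi \colon X \to C_{X_0/X}$ is never actually constructed. You assert that equipping the Rees family $M \to \mb{A}^1_t$ with the diagonal $\mb{G}_m$-action (weight $w$ on $t$) ``yields a $\mb{G}_m$-equivariant morphism $M \to C_{X_0/X}$ extending the identity on the special fibre,'' but no mechanism is given, and the obvious one fails: taking $\lim_{s \to 0} s \cdot (x,t)$ contracts each fibre $M_t$ onto the $\mb{G}_m$-fixed locus of $M_0$, i.e.\ onto the zero section of $C_{X_0/X}$, not onto $C_{X_0/X}$ itself, so the limit map is just $X \to X_0 \hookrightarrow C_{X_0/X}$. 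Producing a retraction of the deformation onto its special fibre is equivalent to proving the deformation to the normal cone is ($\mb{G}_m$-equivariantly) trivial, which is exactly the content of the lemma. The difficulty is that $X$ is only \emph{proper} over the affine bundle $X'$, so you cannot write down the required map as a homomorphism of (pushforward) algebras. This is precisely why the paper's proof reduces to $X_0' = \spec A$, proves triviality of the deformation order by order in $t$ (where the single-weight hypothesis forces each thickening $C_n$ to be canonically the normal cone of its zero section, hence constant), then compactifies the family fibrewise by weighted projective bundles so that Grothendieck's existence theorem algebraizes the formal isomorphism over $\spec A\llbracket t\rrbracket$, and finally descends along $\spec A\llbracket t\rrbracket[t^{-1}] \to \spec A$ using the uniqueness statement.

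The same issue undermines your concluding isomorphism argument: you work throughout with $\pi_*\mc{O}_X$ as if $X = \spec_{X_0}\pi_*\mc{O}_X$, i.e.\ as if the contraction $\pi$ were affine. That is not known at this stage (affineness of $X$ over $X_0$ is essentially part of the conclusion, since $C_{X_0/X}$ is affine over $X_0$); without it, controlling the weight pieces of $\pi_*\mc{O}_X$ does not control $X$. One could try to import affineness of attractor maps from Drinfeld/Alper--Hall--Rydh after reducing to $X_0'$ an affine scheme, but you would then still need to prove the final step you only assert, namely that the weight-$nw$ pieces surject onto $\mc{I}^n/\mc{I}^{n+1}$ so that the weight decomposition splits the $\mc{I}$-adic filtration; your Krull-intersection argument kills the off-weight pieces (this part parallels the paper's infinitesimal computation and is fine in spirit), but it does not by itself give this surjectivity. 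Your first step (extending the $\mb{G}_m$-action to an $\mb{A}^1$-monoid action on $X$ via properness of $f$), which you flag, is plausible but also nontrivial in this stacky setting; note that the paper's route avoids needing it altogether.
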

\begin{proof}
We first remark that since $\mb{G}_m$ acts on $C_{X_0/X}$ with a single nonzero weight, every $\mb{G}_m$-equivariant automorphism of $C_{X_0/X}$ that acts as the identity on $X_0$ and the normal cone of $X_0$ in $C_{X_0/X}$ is (canonically $2$-isomorphic to) the identity. So uniqueness follows.

The idea behind the proof of existence is to show that the deformation to the normal cone is trivial. We do this by first compactifying, so that we are in a position to apply the Grothendieck existence theorem, and then showing that the deformation is trivial infinitesimally.

First note that by the uniqueness just shown, we can reduce the proof of existence by descent to the case where $X_0' = \spec A$ for some Noetherian ring $A$. Again using uniqueness and fpqc descent for morphisms of separated algebraic spaces, it suffices to show that the desired isomorphism exists after base change along the fpqc morphism $\spec A\llbracket t \rrbracket [t^{-1}] \to \spec A$.

Let $C \to \spec A[t]$ and $C' \to \spec A[t]$ denote the deformations to the normal cone of $X_0$ in $X$ and $X_0'$ in $X'$ respectively. Then there are canonical inclusions
\[ X_0 \times_{\spec A} \spec A[t] \longhookrightarrow C, \quad \text{and} \quad X_0' \times_{\spec A} \spec A[t] = \spec A[t] \longhookrightarrow C'.\]
Define compactifications
\[ \bar{C} = ((C \times \mb{A}^1) \setminus (X_0 \times_{\spec A} \spec A[t] \times \{0\}))/\mb{G}_m \quad \text{and} \quad \bar{C}' = ((C' \times \mb{A}^1) \setminus (\spec A[t] \times \{0\}))/\mb{G}_m,\]
where $\mb{G}_m$ acts on $C$ and $C'$ over $\spec A \llbracket t \rrbracket$ via the action induced from the action on $X$ and $X'$, and $\mb{G}_m$ acts on $\mb{A}^1$ via the usual weight $1$ action. Then $C' \to \spec A[t]$ is an affine space bundle on which $\mb{G}_m$ acts with positive weights, and hence $\bar{C}' \to \spec A[t]$ is a bundle of weighted projective spaces, and in particular proper. We also have that $C \to C'$ factors as
\[ C \longhookrightarrow C' \times_{X'} X \longrightarrow C',\]
where the first morphism is a closed immersion, hence proper, and the second morphism is proper by assumption on $f$. So $C \to C'$ is proper, and hence so are $\bar{C} \to \bar{C}'$ and $\bar{C} \to \spec A[t]$. We also write
\[ \bar{C}_{X_0/X} = ((C_{X_0/X} \times \mb{A}^1) \setminus (X_0 \times \{0\}))/\mb{G}_m = \bar{C} \times_{\spec A[t], t \mapsto 0} \spec A,\]
and observe that $\bar{C}_{X_0/X} \to X_0' = \spec A$ is also proper. Note also that we have divisors at infinity $(C_{X_0/X} \setminus X_0)/\mb{G}_m \subseteq \bar{C}_{X_0/X}$ and $(C \setminus X_0 \times_{\spec A} \spec A[t])/\mb{G}_m \subseteq \bar{C}$ whose complements are canonically isomorphic to $C_{X_0/X}$ and $C$ respectively.

We claim that there is a unique $\mb{G}_m$-equivariant isomorphism $C^\wedge \cong C_{X_0/X} \times_{\spec A} \spf A\llbracket t \rrbracket$ of formal stacks over $\spf A\llbracket t \rrbracket$ acting as the identity on $X_0 \times_{\spec A} \spf A\llbracket t \rrbracket$ and on the normal cone of $X_0 \times_{\spec A} \spf A\llbracket t \rrbracket$ in both sides. Given the claim, this extends to an isomorphism between proper formal stacks $\bar{C}^\wedge \cong \bar{C}_{X_0/X} \times_{\spec A} \spf A\llbracket t \rrbracket$, and hence an isomorphism $\bar{C} \times_{\spec A[t]} \spec A\llbracket t\rrbracket \cong \bar{C}_{X_0/X} \times_{\spec A} \spec A \llbracket t \rrbracket$ by the Grothendieck existence theorem. Since this isomorphism identifies the divisors at infinity and since the restricted deformation to the normal cone $C \to \spec A[t, t^{-1}]$ is canonically trivial, it restricts to give the desired isomorphism
\[ X \times_{\spec A} \spec A\llbracket t \rrbracket [t^{-1}] \cong C_{X_0/X} \times_{\spec A} \spec A\llbracket t \rrbracket [t^{-1}].\]

To prove the claim, it is enough to prove existence and uniqueness of isomorphisms $C_n \cong C_{X_0/X} \times_{\spec A} \spec A[t]/(t^n)$ for all $n \geq 0$ with the desired properties, where $C_n = C \times_{\spec A[t]} \spec A[t]/(t^n)$. Uniqueness is clear. Letting $U = \spec R_0$ be any affine \'etale chart for $X_0$ (note that $X_0$ is an algebraic space since $f$ is representable), we have an affine \'etale chart
\[ C_{X_0/X} \times_{X_0} U = \spec \bigoplus_{d \geq 0} R_d\]
for $C_{X_0/X}$, which lifts to a canonical $\mb{G}_m$-equivariant affine \'etale chart
\[ V_n = \spec \bigoplus_{d \in \mb{Z}} R_{n, d} \]
for $C_n$ since $C_n$ is a nilpotent thickening of $C_{X_0/X}$, where the gradings are induced by the $\mb{G}_m$-action. From the flatness properties of the deformation to the normal cone, we deduce that the map $U \times_{\spec A} \spec A[t]/(t^n) = \spec R_0[t]/(t^n) \to V_n$ induces an isomorphism $R_{n, 0} \cong R_0[t]/(t^n)$, that $R_{n, d} = 0$ for all $d < 0$, and that $\bigoplus_d R_{n, d}$ is generated by $R_{n, 0}$ and $R_{n, d_0}$, where $d_0 = \min \{d > 0 \mid R_d \neq 0\}$ is the single weight of $\mb{G}_m$ acting on $C_{X_0/X}$. So $V_n$ is canonically identified with the normal cone of $U \times_{\spec A} \spec A[t]/(t^n)$ in $V_n$. But this is in turn canonically isomorphic to $C_{X_0/X} \times_{X_0} U$ since the normal cone is constant in the deformation to the normal cone. By uniqueness of this identification, it glues over all \'etale affine charts of $X_0$ to give the desired isomorphism $C_n \cong C_{X_0/X} \times_{\spec A} \spec A[t]/(t^n)$.
\end{proof}

\begin{proof}[Proof of Proposition \ref{prop:regularsliceresolution}]
Applying Lemma \ref{lem:conerecognition} to the morphism $\tilde{Z} \to Z$, we deduce that $\tilde{Z}$ is $\mb{G}_m$-equivariantly isomorphic to a line bundle over $D_{\alpha_i^\vee}(Z) = Y$. So by Corollary \ref{cor:regularsliceunstableres}, $\tilde{\chi}_Z \colon \tilde{Z} \to \Theta_Y^{-1}$ is a morphism of line bundles over $Y$ such that the preimage of the zero section is the zero section, and is therefore an isomorphism.
\end{proof}

\begin{proof}[Proof of Theorem \ref{thm:friedmanmorgansection}]
Let $Z^{ss, reg} \subseteq Z$ and $\tilde{Z}^{ss, reg} \subseteq \tilde{Z}$ denote the preimages of $\bun_{G, rig}^{ss, reg}$ in $Z$ and $\tilde{Z}$ respectively. Since the morphism $Z \to \bun_{G, rig}/E$ is smooth and $Z \to S$ is surjective, Propositions \ref{prop:regramgalois} and \ref{prop:regcodim} imply that $\tilde{Z}^{ss, reg} \to Z^{ss, reg}$ is a ramified Galois cover relative to $S$ with Galois group $W$, and that $Z^{ss, reg} \subseteq Z$ and $\tilde{Z}^{ss, reg} \subseteq Z^{ss} = \tilde{\chi}_Z^{-1}((\Theta_Y^{-1})^*)$ are big relative to $S$. So, since $Z$ and $\hat{Y}\sslash W$ are affine over $S$, there is a commutative diagram
\[
\begin{tikzcd}
\tilde{Z}^{ss} \ar[r] \ar[d, "\sim" {anchor=north, rotate=90}] & \spec {\pi_{\tilde{Z}^{ss}}}{\vphantom{p}}_*\mc{O} \ar[r] \ar[d, "\sim" {anchor=north, rotate=90}]  & \spec ({\pi_{\tilde{Z}^{ss, reg}}}{\vphantom{p}}_*\mc{O})^W \ar[r, "\sim"] \ar[d, "\sim" {anchor=north, rotate=90}] & Z \ar[d, "\chi_Z"] \\
(\Theta_Y^{-1})^* \ar[r] & \spec {\pi_{(\Theta_Y^{-1})^*}}{\vphantom{p}}_*\mc{O} \ar[r] & \spec ({\pi_{(\Theta_Y^{-1})^*}}{\vphantom{p}}_*\mc{O})^W \ar[r, "\sim"] & \hat{Y}\sslash W,
\end{tikzcd}
\]
where $\pi_X \colon X \to S$ denotes the structure map for any stack $X$ over $S$, the vertical arrows are isomorphisms by Proposition \ref{prop:regularsliceresolution}, and the horizontal arrows marked are isomorphisms either by construction or by ramified Galois descent for regular functions. So $\chi_Z$ is an isomorphism, which completes the proof of the theorem.
\end{proof}

\subsection{Proof of the simultaneous log resolution property} \label{subsection:friedmanmorganapplications}

The purpose of this subsection is to complete the proof of Theorem \ref{thm:introgrothendieckspringer} by proving (Corollary \ref{cor:simultaneouslogresolution}) that the elliptic Grothendieck-Springer resolution constructed in \S\ref{subsection:fundiag} is a simultaneous log resolution in the sense of Definition \ref{defn:simultaneouslogresolution}.

We first observe that Theorem \ref{thm:friedmanmorgansection} implies that the quotient $\hat{Y}\sslash W$ is an affine space, from which it follows that the coarse quotient $\chi$ is flat.

\begin{cor}[{(cf.\ \cite[Theorem 3.4]{looijenga76})}] \label{cor:looijengaiso}
The quotient $\hat{Y}\sslash W$ is an affine space bundle over $S$, on which $\mb{G}_m$ acts linearly and with positive weights.
\end{cor}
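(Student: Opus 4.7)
The plan is to transport the desired structure from the slice $Z$ to $\hat{Y}\sslash W$ via the Friedman-Morgan section theorem. First, by Theorem \ref{thm:friedmanmorgansection}, the morphism $\chi_Z \colon Z \to \hat{Y}\sslash W$ is a $\mb{G}_m$-equivariant isomorphism of $S$-stacks, so it suffices to establish the corresponding statements for $Z$ itself.

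Next, recall that $Z = \mrm{Ind}_L^G(S)$, where $L$ is the Levi factor attached to a special simple root $\alpha_i$ and $S \to \bun_{L, rig}^{ss, \mu}$ is the canonical section constructed in Proposition \ref{prop:regunstablemoduli}. Proposition \ref{prop:inductionaffinebundle} then tells us that $Z \to S$ is an affine space bundle with fibrewise linear $Z(L)_{rig}$-action; Lemma \ref{lem:regularslicedim} records that the relative dimension is $l + 1$. Under the identification $Z(L)_{rig} \cong \mb{G}_m$ given by $(\mu \mmid -)$, this is exactly the $\mb{G}_m$-action that features in the equivariant slice structure, so $Z \to S$ is an affine space bundle of relative dimension $l+1$ on which $\mb{G}_m$ acts linearly.

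Finally, to see that the weights are positive, we apply Proposition \ref{prop:inductionweights}: a character $\lambda \in \mb{X}^*(Z(L)_{rig})$ occurs in a fibre of $Z \to S$ only when $\langle \lambda, \mu \rangle > 0$, and with multiplicity $d_{-\lambda}\langle \lambda, \mu\rangle$. Under the isomorphism $(\mu\mmid -)\colon Z(L)_{rig} \overset{\sim}\to \mb{G}_m$, a character $\lambda$ with $\langle \lambda, \mu\rangle > 0$ corresponds to a strictly positive integer weight of $\mb{G}_m$. Transporting along the isomorphism $\chi_Z$ then gives the claim for $\hat{Y}\sslash W$. No genuine obstacle arises here; the content of the corollary is entirely packaged inside Theorem \ref{thm:friedmanmorgansection} and the structural results of \S\ref{subsection:parabolicinduction}.
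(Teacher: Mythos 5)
Your proposal is correct and is essentially the paper's own argument: the paper deduces the corollary directly from Theorem \ref{thm:friedmanmorgansection} together with the fact that the claim holds for the slice $Z \to S$ by Proposition \ref{prop:inductionaffinebundle}. You merely make explicit (via Proposition \ref{prop:inductionweights} and the identification $(\mu \mmid -)\colon Z(L)_{rig} \overset{\sim}\to \mb{G}_m$) the positivity of the weights, which the paper leaves implicit.
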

\begin{proof}
This is immediate from Theorem \ref{thm:friedmanmorgansection} since the claim holds for the slice $Z \to S$ by Proposition \ref{prop:inductionaffinebundle}.
\end{proof}

\begin{cor} \label{cor:chiflat}
The coarse quotient map $\chi \colon \bun_G \to (\hat{Y}\sslash W)/\mb{G}_m$ is flat.
\end{cor}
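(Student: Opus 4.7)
The plan is to apply the miracle flatness theorem \cite[Theorem 18.16]{eisenbud91}: a morphism from a Cohen-Macaulay source to a regular target is flat provided all fibres have the expected dimension. Both hypotheses are now within reach: $\bun_G$ is smooth over the regular stack $S$ (hence regular, hence Cohen-Macaulay), and by Corollary \ref{cor:looijengaiso} just proved, $\hat{Y}\sslash W$ is an affine space bundle over $S$, so it is smooth (hence regular) over $S$, and the quotient stack $(\hat{Y}\sslash W)/\mb{G}_m$ inherits regularity from $\hat{Y}\sslash W$ (the $\mb{G}_m$-action being linear with positive weights, we may take smooth atlases coming from the complement of the origin union a neighbourhood of it in $\hat{Y}\sslash W$ itself). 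The dimension hypothesis was already verified: Proposition \ref{prop:chiflatness} says that every fibre of $\chi$ has dimension $-l = \dim \bun_G - \dim\bigl((\hat{Y}\sslash W)/\mb{G}_m\bigr)$.

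Concretely, I would proceed as follows. First, record that the regularity of $(\hat{Y}\sslash W)/\mb{G}_m$ follows from Corollary \ref{cor:looijengaiso} by descent along the smooth cover $\hat{Y}\sslash W \to (\hat{Y}\sslash W)/\mb{G}_m$. Next, recall that $\bun_G$ is smooth over $S$, hence regular. Then quote Proposition \ref{prop:chiflatness} for equidimensionality of the fibres. Finally, to pass from the scheme-theoretic statement of miracle flatness to the stacky setting, choose a smooth atlas $U \to (\hat{Y}\sslash W)/\mb{G}_m$ by a regular scheme and a smooth atlas $V \to \bun_G \times_{(\hat{Y}\sslash W)/\mb{G}_m} U$ by a Cohen-Macaulay (in fact regular) scheme; flatness of $\chi$ is equivalent to flatness of $V \to U$, and the equidimensionality of fibres is preserved under these smooth base changes, so \cite[Theorem 18.16]{eisenbud91} applies verbatim.

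The main obstacle is essentially bookkeeping: one needs to check that Corollary \ref{cor:looijengaiso}, which was stated fibrewise over $S$, really does give regularity of the total space $(\hat{Y}\sslash W)/\mb{G}_m$, and that the dimension formula in Proposition \ref{prop:chiflatness} (which is also verified fibre by geometric fibre over $S$) translates into the equidimensionality needed for miracle flatness on a smooth atlas. Both are routine once one unwinds the definitions, since $S$ is itself regular and all the constructions are smooth over $S$; no new geometric input is required beyond what is already established.
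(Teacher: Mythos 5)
Your argument is correct and coincides with the paper's own proof: the paper likewise deduces flatness by combining the regularity of $(\hat{Y}\sslash W)/\mb{G}_m$ (from Corollary \ref{cor:looijengaiso}, i.e.\ $\hat{Y}\sslash W$ being an affine space bundle over $S$) with the fibre-dimension statement of Proposition \ref{prop:chiflatness} and miracle flatness \cite[Theorem 18.16]{eisenbud91}. Your extra remarks on passing to smooth atlases simply spell out what the paper leaves implicit.
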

\begin{proof}
Since $(\hat{Y}\sslash W)/\mb{G}_m$ is smooth over $S$, hence regular, this follows from Proposition \ref{prop:chiflatness}.
\end{proof}

The rest of this subsection is devoted to exhibiting an open substack of $\bun_G$, dense in every fibre of $\chi$, over which the map $\psi \colon \tbun_G \to \bun_G$ is simply base change along $\Theta_Y^{-1}/\mb{G}_m \to (\hat{Y}\sslash W)/\mb{G}_m$; this is the only missing ingredient in the simultaneous log resolution property for \eqref{eq:introbunggrothendieckspringer}. In close analogy with the classical case, this open substack will consist of regular $G$-bundles. We first observe that Theorem \ref{thm:friedmanmorgansection} shows that these are in good supply.

\begin{prop} \label{prop:fmsliceregular}
Let $Z \to \bun_{G, rig}$ be as in Theorem \ref{thm:friedmanmorgansection}, let $z \colon \spec k \to Z$ be a geometric point not fixed under the $\mb{G}_m$-action, and let $\xi_{G, z} \to E_s$ be the corresponding $G$-bundle. Then the $G$-bundle $\xi_{G, z}$ is regular semistable in the sense of Definition \ref{defn:regularbundle}, and $\dim \mrm{Aut}(\xi_{G, z}) = l$.
\end{prop}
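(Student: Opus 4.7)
The statement decomposes into three pieces: semistability of $\xi_{G, z}$, regularity (i.e., $\dim \psi^{-1}(\xi_{G, z}) = 0$), and the equality $\dim \mrm{Aut}(\xi_{G, z}) = l$. The main tools are the $\mb{G}_m$-equivariant isomorphisms $\chi_Z\colon Z \overset{\sim}\to \hat{Y}\sslash W$ and $\tilde{\chi}_Z\colon \tilde{Z} \overset{\sim}\to \Theta_Y^{-1}$ from Theorem \ref{thm:friedmanmorgansection}, together with smoothness of $Z \to \bun_{G, rig}/E$.

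For semistability, Corollary \ref{cor:looijengaiso} shows that $\mb{G}_m$ acts on $\hat{Y}\sslash W$ with positive weights, so its $\mb{G}_m$-fixed locus is exactly the cone section; equivariance of $\chi_Z$ transports this to the zero section of $Z$. Therefore $z$ being non-$\mb{G}_m$-fixed means $\chi(\xi_{G, z})$ misses the image of the cone, and Proposition \ref{prop:chifibreunstable} yields semistability. For regularity, the Cartesian square defining $\tilde{Z}$ identifies $\tilde{Z} \times_Z \{z\}$ with $\psi_{rig}^{-1}(\bar{\xi}_{G, z})$, which has the same dimension as $\psi^{-1}(\xi_{G, z})$ because $\bun_G \to \bun_{G, rig}$ is a $Z(G)$-gerbe. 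Via $\tilde{\chi}_Z$ this agrees with the fibre of $\Theta_Y^{-1} \to \hat{Y}\sslash W$ over $\chi_Z(z)$; off the cone section, $\Theta_Y^{-1} \to \hat{Y}$ is an isomorphism and $\hat{Y} \to \hat{Y}\sslash W$ is finite, so this fibre is zero-dimensional, establishing regularity.

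For the automorphism dimension, smoothness of $Z \to \bun_{G, rig}/E$ at $z$ gives a surjection $T_z Z \twoheadrightarrow T_{\bar{\xi}}(\bun_{G, rig}/E)$. Since the morphism is $\mb{G}_m$-invariant ($\mb{G}_m$ acting trivially on the target) while $z$ is not $\mb{G}_m$-fixed, the nonzero orbit vector at $z$ lies in the kernel, forcing $\dim T_{\bar{\xi}}(\bun_{G, rig}/E) \leq l$. Unravelling via the exact sequence
\[ 1 \longrightarrow \mrm{Aut}_{\bun_G}(\xi_{G, z})/Z(G) \longrightarrow \mrm{Aut}_{\bun_{G, rig}/E}(\bar{\xi}_{G, z}) \longrightarrow \mrm{Stab}_E(\bar{\xi}_{G, z}) \longrightarrow 1 \]
(and using $\dim Z(G) = 0$, since $G$ is simply connected simple) yields $\dim \mrm{Aut}(\xi_{G, z}) + \dim \mrm{Stab}_E(\bar{\xi}_{G, z}) \leq l + 1$. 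Combined with the standard lower bound $\dim \mrm{Aut}(\xi_{G, z}) \geq l$ for a semistable bundle admitting a Borel reduction (furnished by the regularity step via $T \hookrightarrow \mrm{Aut}(\xi_B) \hookrightarrow \mrm{Aut}(\xi_G)$), this confines $\dim \mrm{Aut}(\xi_{G, z})$ to $\{l, l + 1\}$. The main obstacle is eliminating the possibility $\dim \mrm{Aut}(\xi_{G, z}) = l + 1$; I would handle this using the regularity already established, by arguing that any unipotent automorphism beyond the $T$-part would act non-trivially on some Borel reduction and thereby produce a positive-dimensional family inside $\psi^{-1}(\xi_{G, z})$, contradicting its finiteness.
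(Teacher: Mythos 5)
Your treatment of semistability and regularity is correct and essentially the paper's own argument: non-fixed points of $Z \cong \hat{Y}\sslash W$ avoid the cone point (positive weights), so Proposition \ref{prop:chifibreunstable} gives semistability, and finiteness of $\Theta_Y^{-1} \to \hat{Y}\sslash W$ away from the cone gives $\dim \psi^{-1}(\xi_{G, z}) = 0$. The problem is the third part, $\dim \mrm{Aut}(\xi_{G, z}) = l$. Your tangent-space count only confines the dimension to $\{l, l+1\}$, and your proposed elimination of the value $l + 1$ is not a proof: if $\dim \mrm{Aut}(\xi_{G, z}) = l + 1$ while $\psi^{-1}(\xi_{G, z})$ is finite, the identity component of $\mrm{Aut}(\xi_{G, z})$ automatically fixes every point of the fibre, i.e.\ preserves every degree-$0$ Borel reduction, so "any extra automorphism acts non-trivially on some Borel reduction" is precisely what has to be proved and does not follow from finiteness. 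Making it rigorous requires an upper bound $\dim \mrm{Aut}(\xi_B) \leq l$ for those reductions, which is not available from the filtration of $\xi_B \times^B \mf{b}$ when the associated $y \in Y$ lies on root hypersurfaces (the degree-$0$ line bundles $\xi_T \times^T \mb{Z}_\alpha$ with $\alpha(y) = 0$ contribute extra sections); in effect you would be re-proving part of Proposition \ref{prop:regularautomorphisms}, whose proof in the paper cites the present proposition, so this route risks circularity. Two smaller issues: the parenthetical justification of the lower bound via "$T \hookrightarrow \mrm{Aut}(\xi_B)$" is false for non-split $B$-bundles (for $G = SL_2$ and the Atiyah extension one has $\mrm{Aut}(\xi_B) \cong \mu_2 \times \mb{G}_a$, containing no torus), and in positive characteristic the orbit tangent vector at a non-fixed point can vanish (all weights at $z$ divisible by $p$), so the kernel argument needs to be replaced by a dimension count on the $1$-dimensional orbit rather than on tangent vectors.

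The paper closes the argument with an ingredient you are missing: Lemma \ref{lem:sstranslationinvariance}, translation invariance of semistable bundles, which shows $\mrm{Stab}_E(\bar{\xi}_{G, z}) = E$ and hence $\dim \mrm{Aut}(x') = \dim \mrm{Aut}(\xi_{G, z}) + 1$ for the image $x'$ in $\bun_{G, rig}/E$. With that input your own inequality $\dim \mrm{Aut}(\xi_{G, z}) + \dim \mrm{Stab}_E \leq l + 1$ would immediately give $\dim \mrm{Aut}(\xi_{G, z}) \leq l$ and, with the lower bound, equality. The paper in fact gets the exact value directly, avoiding tangent spaces altogether: by Theorem \ref{thm:friedmanmorgansection} the locally closed substack $\B \mrm{Aut}(x') \times_{\bun_{G, rig}/E} Z \hookrightarrow Z$ is exactly the $\mb{G}_m$-orbit of $z$, of codimension $l$ in $Z$, while smoothness of $Z \to \bun_{G, rig}/E$ says this codimension equals $\dim \mrm{Aut}(x') - 1 = \dim \mrm{Aut}(\xi_{G, z})$. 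You should either import the translation-invariance lemma or this orbit identification; as written, the proposal has a genuine gap at the value $l + 1$.
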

\begin{proof}
We can assume for simplicity that $S = \spec k$.

Since $z$ does not map to the image $0$ of the cone point in $\hat{Y} \sslash W$, $\xi_{G, z}$ is semistable by Proposition \ref{prop:chifibreunstable}. Since the morphism $\tilde{Z} \to Z$ can be identified with $\Theta_Y^{-1} \to \hat{Y} \sslash W$, it is finite over $z$, so $\dim \psi^{-1}(\xi_{G, z}) = 0$ and $\xi_{G, z}$ is regular.

To show that $\dim \mrm{Aut}(\xi_{G, z}) = l$, let $x$ be the image of $z$ in $\bun_{G, rig}$, and let $x'$ be its image in $\bun_{G, rig}/E$. By Lemma \ref{lem:sstranslationinvariance} below, any translate of $x$ is isomorphic to $x$, so the $E$-action on $\bun_{G, rig}$ restricts to an action on $\B \mrm{Aut}(x)$ with quotient $\B \mrm{Aut}(x')$. So we have
\[ \dim \mrm{Aut}(x') = \dim \mrm{Aut}(x) + 1 = \dim \mrm{Aut}(\xi_{G, z}) + 1.\]
Since the morphism $Z \to \bun_{G, rig}/E$ is smooth and $\bun_{G, rig}/E$ has dimension $-1$, the locally closed substack
\[ \B \mrm{Aut}(x') \times_{\bun_{G, rig}/E} Z \longhookrightarrow Z \]
has codimension $\dim \mrm{Aut}(x') - 1 = \dim \mrm{Aut}(\xi_{G, z})$. But it is clear from Theorem \ref{thm:friedmanmorgansection} that this is simply the $\mb{G}_m$-orbit of $z$, which has codimension $l$, so we are done.
\end{proof}

Given a point $y \colon \spec k \to Y$ over $s \colon \spec k \to S$, we write
\[ U_y = \prod_{\substack{\alpha \in \Phi_- \\ \alpha(y) = 0}} U_\alpha \subseteq R_u(B).\]
Note that the group scheme $\xi_T \times^T U_y$ on $E_s$ is canonically isomorphic to $U_y \times E_s$ once we fix a trivialisation of the associated $T_y$-bundle, where $T_y$ is the torus with character group $\mb{Z}\Phi_y$, $\Phi_y = \{\alpha \in \Phi \mid \alpha(y) = 0\}$ and $\xi_T$ is the $T$-bundle corresponding to $y$. We also write $U_y/[U_y, U_y] = \prod_{\alpha \in \Delta_y} U_{-\alpha}$.

\begin{lem} \label{lem:sstranslationinvariance}
Fix a geometric point $s \colon \spec k \to S$ and a semistable $G$-bundle $\xi_G \to E_s$. Then any translate of $\xi_G$ is isomorphic to $\xi_G$.
\end{lem}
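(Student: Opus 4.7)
After reducing to $S = \spec k$ for $k$ algebraically closed, I would consider the morphism $\phi \colon E_s \to \bun_G^{ss}$ defined by $q \mapsto t_q^*\xi_G$, which corresponds to the family $\mu^*\xi_G \to E_s \times E_s$ for $\mu \colon E_s \times E_s \to E_s$ the group law; the claim amounts to showing that $\phi$ factors through a single isomorphism class. The first observation is that the composition $\chi \circ \phi$ is constant. Indeed, translations on the abelian variety $E_s$ act trivially on $\mrm{Pic}^0(E_s)$, hence trivially on $Y = \hom(\mb{X}^*(T), \mrm{Pic}^0(E_s))$, on the theta bundle $\Theta_Y$, and on $\hat{Y}\sslash W$; so $\phi$ lands in the fiber $F := \chi^{-1}(\chi(\xi_G))$.

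When $\xi_G$ is regular semistable, I would close the argument by a finiteness argument. Since $\mrm{Aut}(t_q^*\xi_G) \cong \mrm{Aut}(\xi_G)$ for all $q$, the image of $\phi$ lies in the regular semistable part of $F$, which by Proposition~\ref{prop:regramgalois} corresponds to $W$-orbits on a single fiber of $\tbun_G^{ss, reg} \to Y$ and hence consists of only finitely many isomorphism classes. Connectedness of $E_s$ then forces $\phi$ to be constant on isomorphism classes.

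For a general semistable $\xi_G$, I would upgrade this via a Tannakian / $\mrm{Isom}$-scheme argument. For every representation $V$ of $G$, the vector bundle $V_\xi = \xi_G \times^G V$ is semistable of degree $0$ on $E_s$, using that $G$ is simply connected (so all representations have trivial determinant) and Ramanathan's theorem on semistability of associated bundles. Atiyah's classification of semistable degree $0$ vector bundles on an elliptic curve then gives $t_q^*V_\xi \cong V_\xi$ for all $q$. I would then study the relative isomorphism scheme
\[ I = \mrm{Isom}_{E_s \times E_s / E_s}(\mu^*\xi_G, \mrm{pr}_2^*\xi_G) \longrightarrow E_s,\]
an $\mrm{Aut}(\xi_G)$-torsor over its image $J \subseteq E_s$. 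The image $J$ contains $0$, is open (via smoothness of the torsor structure together with smoothness of $\mrm{Aut}(\xi_G)$), and—combining the object-level translation invariance of all $V_\xi$ with a limit argument from a regular semistable degeneration of $\xi_G$—is also closed, hence all of $E_s$ by connectedness.

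The main obstacle is the closedness of $J$: specialization from a regular semistable deformation of $\xi_G$ involves a jump in automorphism-group dimension, which a priori could cause the torsor $I$ to become empty in the limit. I expect this step will require either a careful Tannakian reconstruction, lifting the compatible isomorphisms $V_\xi \cong t_q^*V_\xi$ across all representations to a tensor-natural isomorphism of the associated tensor functors (and hence an isomorphism of $G$-bundles), or a structural result identifying a canonical Levi reduction of $\xi_G$ preserved by translation, reducing the claim inductively to the torus case where it is immediate from translation invariance of $\mrm{Pic}^0(E_s)$.
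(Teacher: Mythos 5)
There is a genuine gap, and you have named it yourself: the closedness of $J$, equivalently the passage from isomorphisms $t_q^*(\xi_G\times^G V)\cong \xi_G\times^G V$ for each representation $V$ separately to an isomorphism of $G$-bundles, is exactly the content of the lemma and is left open in your proposal. The earlier step is also shaky: Proposition \ref{prop:regramgalois} only says that $\tbun_G^{ss,reg}\to\bun_G^{ss,reg}$ is a finite $W$-covering; it does not bound the number of isomorphism classes of regular semistable bundles in a fibre of $\chi$, because the corresponding fibre of $\mrm{Bl}_{B,Y}^{ss}\colon\tbun_G^{ss,reg}\to Y$ is a stack whose set of isomorphism classes is not obviously finite. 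That finiteness is essentially the uniqueness statement of Proposition \ref{prop:regularautomorphisms}, which in this paper is proved \emph{after} (and via Proposition \ref{prop:fmsliceregular} depends on) the present lemma, so invoking it here would be circular.

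The paper's proof avoids all of this and is quite short: since $\xi_G$ is semistable it admits a $B$-reduction $\xi_B$ of degree $0$; writing $\xi_T=\xi_B\times^BT$ with class $y\in Y$, Lemma \ref{lem:unipotentreduction} reduces $\xi_B$ canonically to a $TU_y$-bundle $\xi_{TU_y}$. Translation invariance of degree-$0$ line bundles gives $t_x^*\xi_T\cong\xi_T$, and after fixing such an isomorphism and a trivialisation of the associated $T_y$-bundle, the remaining datum of $\xi_{TU_y}$ is a class in $H^1(E_s,U_y)$, an affine variety; the translation action of the proper connected group $E_s$ on it is therefore trivial, so $t_x^*\xi_{TU_y}\cong\xi_{TU_y}$ (up to the harmless $T$-twist coming from the chosen isomorphism of $T$-bundles) and hence $t_x^*\xi_G\cong\xi_G$. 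If you want to salvage your approach, the fix is precisely this structural reduction: replace the Tannakian/Isom-scheme argument by a translation-compatible reduction of $\xi_G$ to the solvable group $TU_y$, where the statement becomes elementary.
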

\begin{proof}
For ease of notation, we may as well assume that $S = \spec k$. We need to show that for any $x \colon \spec k \to E$, we have $t_x^*\xi_G \cong \xi_G$, where $t_x \colon E \to E$ is the translation by $x$. Since $\xi_G$ is semistable, there exists a $B$-reduction $\xi_B$ of $\xi_G$ of degree $0$. We will show that $t_x^*\xi_B \cong \xi_B$ as $B$-bundles.

Writing $\xi_T = \xi_B \times^B T$ for the associated $T$-bundle, and $y \in Y$ for the point classifying $\xi_T$, by Lemma \ref{lem:unipotentreduction}, we have that $\xi_B$ reduces canonically to a $TU_y$-bundle $\xi_{TU_y}$. Moreover, we have $t_x^*\xi_T \cong \xi_T$ since $\xi_T$ has degree $0$ (this follows from translation invariance for degree $0$ line bundles). Fixing such an isomorphism and a trivialisation of the associated $T_y$-bundle, and hence an isomorphism $\xi_T \times^T U_y \cong E \times U_y$ as above, we have that the $U_y$-bundle $t_x^*\xi_{TU_y}/T$ is the image of the $U_y$-bundle $\xi_{TU_y}/T$ under the homomorphism
\[ H^1(E, U_y) \overset{t_x^*}\longrightarrow H^1(E, U_y) \overset{t}\longrightarrow H^1(E, U_y),\]
where the second morphism is induced by some element $t$ of $T$ acting on $U_y$ determined by our choice of isomorphism $t_x^*\xi_T \cong \xi_T$. Since the translation action of $E$ on $H^1(E, U_y)$ is trivial, since $H^1(E, U_y)$ is an affine variety, the morphism $t_x^*$ above is the identity. It follows that $t_x^*\xi_{TU_y} \cong \xi_{TU_y}$, and hence $t_x^* \xi_G \cong \xi_G$ as claimed.
\end{proof}

The next result is the analogue of \cite[\S 3.7, Theorem 2]{steinberg74} for elliptic Springer theory.

\begin{prop} \label{prop:regularautomorphisms}
Let $\xi_G \in \bun_G^{ss}$ be a semistable $G$-bundle on a geometric fibre $E_s$ of $E \to S$. Then $\dim \mrm{Aut}(\xi_G) \geq l$, and the following are equivalent.
\begin{enumerate}[(1)]
\item \label{itm:regaut1} The bundle $\xi_G$ is regular.
\item \label{itm:regaut2} $\dim \mrm{Aut}(\xi_G) = l$.
\item \label{itm:regaut3} For any degree $0$ reduction $\xi_B$ of $\xi_G$ to $B$ with associated $T$-bundle $\xi_T$ corresponding to $y \in Y$, the associated $\xi_T \times^T R_u(B)$-bundle $\xi_B/T$ is induced from a $U_y$-bundle with nontrivial associated $U_{-\alpha}$-bundles for $\alpha \in \Delta_y$.
\item \label{itm:regaut4} For some degree $0$ reduction $\xi_B$ of $\xi_G$ to $B$ with associated $T$-bundle $\xi_T$ corresponding to $y \in Y$, the associated $\xi_T \times^T R_u(B)$-bundle $\xi_B/T$ is induced from a $U_y$-bundle with nontrivial associated $U_{-\alpha}$-bundles for $\alpha \in \Delta_y$.
\end{enumerate}
Moreover, there is a unique $G$-bundle satisfying the above equivalent conditions in every geometric fibre of $\chi^{ss} \colon \bun_G^{ss} \to Y\sslash W$.
\end{prop}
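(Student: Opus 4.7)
The plan is to combine an analysis of the fibres of $\psi$ via Bruhat cells with a Riemann-Roch dimension count to prove the four equivalences, and then to deduce uniqueness from the Friedman-Morgan section theorem. Throughout, fix $\xi_G \in \bun_G^{ss}(E_s)$, a degree-$0$ reduction $\xi_B$ with associated $T$-bundle $\xi_T$ corresponding to $y \in Y_s$, and (using Lemma \ref{lem:unipotentreduction}) the canonical further reduction to a $TU_y$-bundle $\xi_{TU_y}$. Upon choosing a trivialisation of the associated $T_y$-bundle, the abelianisation of the $U_y$-bundle $\xi_{TU_y}/T$ becomes a tuple $(\xi_{-\alpha})_{\alpha \in \Delta_y} \in \prod_{\alpha \in \Delta_y} H^1(E_s, \mc{O}) \cong k^{|\Delta_y|}$ well-defined up to the action of $T$ through the characters $\{-\alpha\}_{\alpha \in \Delta_y}$.

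For the lower bound $\dim \mrm{Aut}(\xi_G) \geq l$, I would use upper semi-continuity of $\xi_G \mapsto \dim \mrm{Aut}(\xi_G) = h^0(E_s, \mrm{ad}(\xi_G))$ on $\bun_G^{ss}$. On the open dense substack $\bun_G^{ss, sreg}$, Lemma \ref{lem:sregiso} identifies $\xi_G$ with $\xi_T \times^T G$ for some strictly regular $\xi_T$, and then the weight decomposition $\xi_T \times^T \mf{g} = \mc{O}^l \oplus \bigoplus_{\alpha \in \Phi} \alpha(\xi_T)$ (with all $\alpha(\xi_T)$ nontrivial of degree $0$) gives $\dim \mrm{Aut} = l$ exactly. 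Since $\bun_G^{ss, sreg}$ meets every connected component of $\bun_G^{ss}$ (via the surjection from $\bun_T^{0, sreg}$ through Proposition \ref{prop:reggalois}), this lower bound propagates to all of $\bun_G^{ss}$.

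For the four-way equivalence, I would decompose $\psi^{-1}(\xi_G) = \bigsqcup_{w \in W} C^w$ as in Lemma \ref{lem:semistablebruhatsurjective}, and compute each $C^w$ via Proposition \ref{prop:bruhatstructure} and Lemma \ref{lem:unipotentreduction}. The analysis of Lemma \ref{lem:regssexistence} generalises: $C^w$ is nonempty iff $w^{-1}\Phi_y^+ \subseteq \Phi_+$, in which case it is an affine space whose dimension is controlled by which abelianised components $\xi_{-\alpha}$ are trivial. A parallel dimension count, using the filtration of $\mrm{ad}(\xi_G) = \xi_{TU_y} \times^{TU_y} \mf{g}$ by $T$-weights together with the explicit form of the connecting maps in cohomology (the key ingredient being Lemma \ref{lem:tbunreglocal}, which handles a single $\alpha$), yields $\dim \mrm{Aut}(\xi_G) = l + \dim \psi^{-1}(\xi_G)$. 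This establishes the equivalence $(1) \Leftrightarrow (2)$, while both quantities being minimal forces every abelianised component $\xi_{-\alpha}$ ($\alpha \in \Delta_y$) to be nontrivial, giving $(3) \Leftrightarrow (1)$. Since $(4)$ is vacuously implied by $(3)$ and since any two degree-$0$ $B$-reductions of a regular $\xi_G$ differ by an element of $\mrm{Aut}(\xi_G)$ that permutes the abelianised components covariantly, the condition in $(4)$ is independent of the choice of reduction and hence equivalent to $(3)$.

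Finally, for uniqueness in each fibre of $\chi^{ss}$: Theorem \ref{thm:friedmanmorgansection} gives a section $(\hat{Y}\sslash W)/\mb{G}_m \to \bun_{G, rig}$ of $\chi$ via the Friedman-Morgan slice $Z$, and Proposition \ref{prop:fmsliceregular} shows every $z \in Z \setminus S$ yields a regular semistable bundle, producing at least one regular semistable bundle in each fibre of $\chi^{ss}$ (the lift from $\bun_{G,rig}$ to $\bun_G$ causes no ambiguity since $Z(G) \subseteq \mrm{Aut}(\xi_G)$ makes $Z(G)$-twists of $\xi_G$ isomorphic to $\xi_G$). For uniqueness, given two regular semistable bundles $\xi_G, \xi_G'$ with the same image $[y] \in Y\sslash W$, I would pick $B$-reductions so both have associated $T$-bundle $\xi_T$ (possible after adjusting by the $W$-action, which lifts to $\bun_G^{ss}$); by $(3)$, both arise from $U_y$-bundles with every abelianised component nontrivial. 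The main obstacle, and the step I expect to require the most care, is to show that the resulting tuples in $\prod_{\alpha \in \Delta_y} (k^*)$ lie in a single orbit under the combined action of $T$ (through $\mrm{Aut}(\xi_T)$) and the stabiliser $W_y \subseteq W$ acting via $N_G(T)$-lifts; this reduces to a root-theoretic statement, namely that $T$ acts transitively on $(k^*)^{|\Delta_y|}$ through the linearly independent characters $\{-\alpha\}_{\alpha \in \Delta_y}$, which follows from the linear independence of $\Delta_y$ in $\mb{X}^*(T)_\mb{Q}$ after possibly passing through a suitable isogeny (and is where the $W_y$-action is needed, if at all, to correct for torsion in $\mb{X}^*(T)/\mb{Z}\Delta_y$).
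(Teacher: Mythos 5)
Your lower bound $\dim\mrm{Aut}(\xi_G)\geq l$ via semicontinuity from the strictly regular locus is workable (modulo phrasing it in terms of the fibre dimension of the automorphism group scheme rather than $h^0(\mrm{ad})$ in positive characteristic), and your orbit computation for uniqueness correctly isolates the surjectivity of $T \to \mb{G}_m^{\Delta_y}$ (which needs no isogeny or $W_y$-correction: $\Delta_y$ is linearly independent in $\mb{X}^*(T)$, so the character map is injective and the torus map is surjective). But the core of your argument has a genuine gap: the claimed identity $\dim\mrm{Aut}(\xi_G) = l + \dim\psi^{-1}(\xi_G)$ is false. Take $\xi_G = \xi_T \times^T G$ with $y$ lying on exactly one wall, i.e.\ $\alpha(y)=0$ for a single $\alpha \in \Phi_+$ and the unipotent data trivial: then $h^0(\mrm{ad}(\xi_G)) = l+2$ and $\dim\mrm{Aut}(\xi_G) = l+2$, while the fibre $\psi^{-1}(\xi_G)$ has dimension $1$ (the cells $C^w$ with $w^{-1}\alpha \in \Phi_+$ are points and those with $w^{-1}\alpha \in \Phi_-$ are one-dimensional), so the discrepancy carries a factor of $2$, exactly as in classical Springer theory. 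With the identity gone, your route to $(1)\Leftrightarrow(2)$ and to "minimality forces all $U_{-\alpha}$-components nontrivial" collapses, and the hard implication $(4)\Rightarrow(2)$ cannot be recovered by the cohomological count you sketch: Lemma \ref{lem:tbunreglocal} only controls a single root, and for $|\Delta_y|\geq 2$ one must control the cohomology of genuinely iterated nonsplit extensions, which is precisely what the paper avoids by instead comparing $\xi_G$ with a bundle produced by the Friedman--Morgan slice (Proposition \ref{prop:fmsliceregular}) and deducing $\xi_G \cong \eta_G$ from the Cartesian square \eqref{eq:regautpullback} together with an automorphism-dimension count.

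There is a second gap in the uniqueness step: placing the two abelianised tuples in a single $T$-orbit does not yet show the two $G$-bundles are isomorphic. You must also show that, under the regularity hypothesis, the $U_y$-bundle is determined by its abelianisation $\xi_{U_y}\times^{U_y}U_y/[U_y,U_y]$; for nonabelian $U_y$ this is false without that hypothesis, and proving it is exactly the content of the pullback-square argument \eqref{eq:regautpullback} in the paper (which uses $\dim\mrm{Aut}(\eta_{U_y}) = |\Delta_y|$ and the closed-immersion property of maps between stacks of unipotent bundles). Relatedly, your claim that any two degree-$0$ $B$-reductions of a regular bundle differ by an automorphism of $\xi_G$ is false: for a strictly regular bundle the fibre $\psi^{-1}(\xi_G)$ consists of $|W|$ points with pairwise distinct associated $T$-bundles, permuted by the Galois $W$-action, while the connected automorphism group acts trivially on this finite set; the paper sidesteps this by proving $(1)\Rightarrow(3)$ and $(2)\Rightarrow(3)$ directly, each of conditions $(1)$ and $(2)$ being manifestly independent of the choice of reduction.
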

\begin{proof}
Since the statement only concerns individual $G$-bundles on geometric fibres of $E \to S$, we may assume for simplicity that $S = \spec k$ for $k$ some algebraically closed field, and that $\xi_G \to E_s = E$ is defined over $k$.

To show that $\dim \mrm{Aut}(\xi_G) \geq l$, fix any reduction $\xi_B \in \tbun_G^{ss}$ of $\xi_G$ to a $B$-bundle of degree $0$, and write $\xi_T = \xi_B \times^B T$. Note that $\mrm{Aut}(\xi_B)$ is a closed subgroup of $\mrm{Aut}(\xi_G)$, so $\dim \mrm{Aut}(\xi_G) \geq \dim \mrm{Aut}(\xi_B)$. Moerover, there is a commutative diagram
\[
\begin{tikzcd}
\B \mrm{Aut}(\xi_B) \ar[r, hook] \ar[d] & \tbun_G^{ss} \ar[d] \\
\B \mrm{Aut}(\xi_T) \ar[r, hook] & \bun_T^0
\end{tikzcd}
\]
where the top and bottom horizontal arrows are locally closed immersions of codimension $\dim \mrm{Aut}(\xi_B)$ and $\dim \mrm{Aut}(\xi_T)$ respectively (since both target stacks have dimension $0$), and the right hand vertical arrow is smooth. It follows that $\dim \mrm{Aut}(\xi_G) \geq \dim \mrm{Aut}(\xi_B) \geq \dim \mrm{Aut}(\xi_T) = l$.

To prove the equivalence of \eqref{itm:regaut1}, \eqref{itm:regaut2}, \eqref{itm:regaut3} and \eqref{itm:regaut4}, we first remark that by Lemma \ref{lem:unipotentreduction}, for every degree $0$ $B$-bundle $\xi_B$ with image $y \in Y$, the associated $\xi_T \times^T R_u(B)$-bundle $\xi_B/T$ reduces canonically to $U_y$.

It is clear that $\eqref{itm:regaut3} \Rightarrow \eqref{itm:regaut4}$. We show that $\eqref{itm:regaut4} \Rightarrow \eqref{itm:regaut2} \Rightarrow \eqref{itm:regaut3}$ and $\eqref{itm:regaut4} \Rightarrow \eqref{itm:regaut1} \Rightarrow \eqref{itm:regaut3}$.

Assume \eqref{itm:regaut4} holds, and let $\xi_{U_y}$ be the reduction of $\xi_B/T$ to $U_y$. Observe that the set of $U_y$-bundles $\eta_{U_y}$ such that the induced $G$-bundle $\eta_G$ is regular with $\dim \mrm{Aut}(\eta_G) = l$ is open, and nonempty by Proposition \ref{prop:fmsliceregular} and the existence of reductions to $U_y$ remarked above. So we can find $\eta_{U_y}$ with these properties such that all the associated $U_{-\alpha}$-bundles are nontrivial for $\alpha \in \Delta_y$. We will show that $\xi_G \cong \eta_G$, from which we can deduce \eqref{itm:regaut1} and \eqref{itm:regaut2}, as well as the uniqueness statement of the proposition.

First, observe that $\Delta_y$ is a set of positive simple roots for the root system $\Phi_y = \{\alpha \in \Phi \mid \alpha(y) = 0\}$. In particular, the homomorphism
\begin{align*}
T &\longrightarrow \mb{G}_m^{\Delta_y} = T_y \\
t &\longmapsto (\alpha(t))_{\alpha \in \Delta_y}
\end{align*}
is surjective, with kernel $K_y$ of dimension $l - |\Delta_y|$. So $T = \mrm{Aut}(\xi_T)$ acts transitively on the subset of points in $\prod_{\alpha \in \Delta_y} H^1(E, U_{-\alpha})$ with nonzero projection to each factor. So, acting by automorphisms of $\xi_T$ if necessary, we may assume that $\eta_{U_y} \times^{U_y} U_y/[U_y, U_y] \cong \xi_{U_y} \times^{U_y} U_y/[U_y, U_y]$. To prove $\eta_{U_y} \cong \xi_{U_y}$, and hence $\eta_G \cong \xi_G$, it will suffice to show that the diagram
\begin{equation} \label{eq:regautpullback}
\begin{tikzcd}
\B \mrm{Aut}(\eta_{U_y}) \ar[r, hook] \ar[d] & \bun_{U_y} \ar[d] \\
\B \mrm{Aut}(\eta_{U_y} \times^{U_y} U_y/[U_y, U_y]) \ar[r, hook] & \bun_{U_y/[U_y, U_y]}
\end{tikzcd}
\end{equation}
is a pullback.

Observe that, since $K_y$ acts trivially on $U_y$ by definition, it also acts trivially on $\bun_{U_y}$, so $\bun_{U_y}/T$ is a $K_y$-gerbe over $\bun_{U_y}/T_y$. Since $\bun_{U_y}/T$ embeds into $\bun_B^0$ as the fibre over $y \in Y$, we therefore have
\[ l = \dim \mrm{Aut}(\eta_G) \geq \dim \mrm{Aut}(\eta_B) \geq \dim \mrm{Aut}(\eta_{U_y}) + l - |\Delta_y|\]
and hence $\dim \mrm{Aut}(\eta_{U_y}) \leq |\Delta_y|$. But since the top and bottom arrows of \eqref{eq:regautpullback} are locally closed immersions of codimensions $\dim \mrm{Aut}(\eta_{U_y})$ and $\dim \mrm{Aut}(\eta_{U_y} \times^{U_y} U_y/[U_y, U_y]) = |\Delta_y|$ respectively and the right vertical morphism is smooth, we have that $\dim \mrm{Aut}(\eta_{U_y}) \geq |\Delta_y|$, and hence $\dim \mrm{Aut}(\eta_{U_y}) = |\Delta_y|$. So the locally closed immersion of $\B \mrm{Aut}(\eta_{U_y})$ into the pullback induced by \eqref{eq:regautpullback} has codimension $0$. But since $U_y$ and $U_y/[U_y, U_y]$ are unipotent, this is actually a closed immersion by \cite[Proposition 2.4.6]{davis19a}, hence an isomorphism since the pullback is smooth and connected. This completes the proof that $\eqref{itm:regaut4} \Rightarrow \eqref{itm:regaut1}$ and $\eqref{itm:regaut4} \Rightarrow \eqref{itm:regaut2}$.

We prove $\eqref{itm:regaut1} \Rightarrow \eqref{itm:regaut3}$ via the contrapositive. Assume that $\eqref{itm:regaut3}$ is false, and choose a degree $0$ reduction $\xi_B$ with with associated $T$-bundle $\xi_T$ corresponding to $y \in Y$ induced from $\xi_{T U_y}$, and $\alpha \in \Delta_y$ such that the associated $U_{-\alpha}$-bundle $\xi_{U_{-\alpha}}$ is trivial. The space $C^{s_\alpha}$ of sections of
\[ \xi_{T U_y} \times^{T U_y} B s_\alpha B/B = \xi_{T U_y} \times^{T U_y} R_u(B)/(R_u(B) \cap s_\alpha R_u(B) s_\alpha)\]
embeds as a locally closed subscheme of $\psi^{-1}(\xi_G)$. But the image of $U_{-\alpha}$ in $R_u(B)/(R_u(B) \cap s_\alpha R_u(B) s_\alpha)$ is $T U_y$-invariant, so gives a closed immersion
\[ \xi_{U_{-\alpha}} = E \times U_{-\alpha} \longhookrightarrow \xi_{T U_y} \times^{T U_y} R_u(B)/(R_u(B) \cap s_\alpha R_u(B) s_\alpha)\]
and hence a locally closed immersion $\mb{A}^1_k \hookrightarrow C^{s_\alpha} \hookrightarrow \psi^{-1}(\xi_G)$, from which we deduce $\dim \psi^{-1}(\xi_G) > 0$. So $\xi_G$ is not regular.

Finally, to prove that $\eqref{itm:regaut2} \Rightarrow \eqref{itm:regaut3}$, note that any reduction $\xi_{T U_y} \in \bun_{U_y}/T \subseteq \bun_B^0$ of a bundle $\xi_G$ with $\dim \mrm{Aut}(\xi_G) = l$ must satisfy $\dim \mrm{Aut}(\xi_{T U_y}) \leq l$. So
\[ \B \mrm{Aut}(\xi_{T U_y}) \longhookrightarrow \bun_{U_y}/T \]
is a locally closed immersion of codimension $\leq 0$, hence an open immersion. Since $\bun_{U_y}/T$ is irreducible, this open substack meets the open substack of points with nontrivial associated $U_{-\alpha}$-bundles for all $\alpha \in \Delta_y$, so $\xi_{T U_y}$ itself must have nontrivial associated $U_{-\alpha}$-bundle, and we are done.
\end{proof}

As the terminology suggests, the regular semistable and regular unstable $G$-bundles can be grouped together into a single open substack of $\bun_G$. In what follows, we define $\bun_G^{reg} \subseteq \bun_G$ to be the union over all open substacks $U \subseteq \bun_G$ such that the morphism
\begin{equation} \label{eq:regulardensity1}
\psi^{-1}(U) \longrightarrow U \times_{(\hat{Y}\sslash W)/\mb{G}_m} \Theta_Y^{-1}/\mb{G}_m
\end{equation}
is an isomorphism.

\begin{prop} \label{prop:regulardensity}
The open substack $\bun_G^{reg} \subseteq \bun_G$ is dense in every geometric fibre of $\chi \colon \bun_G \to (\hat{Y}\sslash W)/\mb{G}_m$, and big relative to $S$.
\end{prop}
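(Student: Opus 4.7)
The plan is to exhibit the regular semistable locus $\bun_G^{ss, reg}$ of Definition \ref{defn:regularbundle} together with the locus of regular unstable bundles (the union of Harder-Narasimhan strata $\bun_P^{ss, \mu}$ for special $(P, \mu)$) as open substacks contained in $\bun_G^{reg}$; the density and bigness statements will then follow from the codimension estimates of Propositions \ref{prop:regcodim}, \ref{prop:unstablecodim} and \ref{prop:regularunstable} together with a dimension count in the cone-point fibre of $\chi$.

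\textbf{Regular semistable step.} The identification $\psi^*\Theta_{\bun_G}|_{\tbun_G^{ss}} \cong \mrm{Bl}_{B, Y}^*\Theta_Y|_{\tbun_G^{ss}}$ of Corollary \ref{cor:picbung} induces a $W$-equivariant morphism
\[
\tbun_G^{ss, reg} \longrightarrow \bun_G^{ss, reg} \times_{Y \sslash W} Y,
\]
which is an isomorphism on the dense open preimage of $Y^{sreg}$ by Proposition \ref{prop:reggalois}, since over this locus both sides are étale $W$-torsors over $\bun_G^{ss, sreg}$ and a $W$-equivariant map between étale $W$-torsors is automatically an isomorphism. I would then verify the isomorphism at the generic point of each reflection divisor using the explicit local description provided by Lemma \ref{lem:tbunreglocal}, and extend across codimension $\geq 2$ using the regularity of $\tbun_G^{ss, reg}$ and the fact that the map is finite of degree $|W|$ on both sides.

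\textbf{Regular unstable step.} Here the main tool is the Friedman-Morgan section theorem (Theorem \ref{thm:friedmanmorgansection}). For each special root $\alpha_i$, the associated slice $Z_i \to \bun_{G, rig}$ comes equipped with $\mb{G}_m$-equivariant isomorphisms $Z_i \cong \hat{Y} \sslash W$ and $\tilde{Z}_i \cong \Theta_Y^{-1}$, which together give a tautological Cartesian square
\[
\tilde{Z}_i \cong Z_i \times_{\hat{Y} \sslash W} \Theta_Y^{-1}
\]
realising the base-change property over $Z_i$. Since $Z_i \to \bun_{G, rig}/E$ is smooth and its image is an open neighbourhood of the regular unstable bundles with Harder-Narasimhan type determined by $\alpha_i$, smooth descent along this cover (applied to both $Z_i$ and $\tilde{Z}_i$) transfers the isomorphism to the $E$-invariant open $U_i \subseteq \bun_{G, rig}$ obtained as the preimage of the image of $Z_i$. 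Ranging over all special roots then covers every regular unstable bundle, and pulling back to $\bun_G$ gives the desired inclusion.

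\textbf{Conclusion and main obstacle.} The complement of the regular semistable locus inside $\bun_G^{ss}$ has codimension $\geq 2$ by Proposition \ref{prop:regcodim}, while the complement of the regular unstable locus inside the unstable locus consists of Harder-Narasimhan strata of codimension $> l + 1$ in $\bun_G$ by Proposition \ref{prop:regularunstable}, hence of codimension $\geq l + 2 \geq 2$; combining these gives bigness of $\bun_G^{reg}$ relative to $S$. For density in a geometric fibre of $\chi$, fibres over non-cone points of $(\hat{Y} \sslash W)/\mb{G}_m$ lie in $\bun_G^{ss}$ and meet $\bun_G^{ss, reg}$ densely by flatness of $\chi$ (Corollary \ref{cor:chiflat}) together with the codimension-two estimate, while the cone-point fibre is a $\mb{G}_m$-torsor over the unstable locus, whose irreducible components all have codimension exactly $l + 1$ and therefore contain the top-dimensional regular Harder-Narasimhan strata densely. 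The main obstacle I anticipate is the codimension-one step of the regular semistable part: the target $\bun_G^{ss, reg} \times_{Y \sslash W} Y$ is not \emph{a priori} normal (since $Y \sslash W$ is a weighted projective cone with singularities), which rules out a direct appeal to Zariski's main theorem, and the explicit local description of $\tbun_G^{ss, reg}$ given in Lemma \ref{lem:tbunreglocal} is precisely what is needed to bypass this.
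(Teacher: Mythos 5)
There are genuine gaps, concentrated in your semistable step and in the fibrewise density argument. For the semistable step, your key claim that $\bun_G^{ss, sreg} \times_{Y \sslash W} Y$ is an \'etale $W$-torsor over $\bun_G^{ss, sreg}$ amounts to asserting that $W$ acts freely on $Y^{sreg}$ (equivalently that $Y^{sreg} \to Y\sslash W$ is \'etale), which is established nowhere and is genuinely delicate: a strictly regular point can a priori have a nontrivial stabiliser containing no reflections, and indeed the paper's proof of Proposition \ref{prop:reggalois} has to \emph{choose} a point of $Y^{sreg}$ with trivial stabiliser. Even granting the generic identification and the codimension-one check via Lemma \ref{lem:tbunreglocal}, extending an isomorphism across codimension $\geq 2$ needs flatness or normality of the target $\bun_G^{ss,reg} \times_{Y\sslash W} Y$, which you concede you cannot supply ($Y \to Y\sslash W$ is not flat, so the fibre product may well be non-reduced). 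The paper never proves the base change statement directly over $\bun_G^{ss,reg}$: it shows instead, via Propositions \ref{prop:fmsliceregular} and \ref{prop:regularautomorphisms}, that every regular \emph{semistable} bundle already lies in the open substack $U$ given as the preimage of the images of the Friedman--Morgan slices, over which the base change property is tautological from Theorem \ref{thm:friedmanmorgansection} and Proposition \ref{prop:regularsliceresolution}. Your unstable step is exactly this slice argument; the fix is to use it for the regular semistable bundles as well, rather than attempting a direct descent proof.

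For density in the cone-point fibre, you assert that every irreducible component of the unstable locus has codimension exactly $l+1$, but Propositions \ref{prop:unstablecodim} and \ref{prop:regularunstable} only bound each Harder--Narasimhan stratum from below; a priori there could be a component which is the closure of a non-special stratum of codimension $> l+1$ not contained in the closure of any special stratum, and such a component would contain no regular bundles at all. This pure-dimensionality is the crux, and it is precisely what the paper extracts from the regularity of $\hat{Y}\sslash W$ (Corollary \ref{cor:looijengaiso}) together with flatness of $\chi$ (Corollary \ref{cor:chiflat}): the cone-point fibre is then a local complete intersection, hence of pure dimension, so every component has a special stratum as a dense open and meets $\bun_G^{reg}$. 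Similarly, your treatment of the non-cone fibres by ``flatness plus the codimension-two estimate'' is insufficient: those fibres have dimension $-l \leq -2$ once $l \geq 2$, so an entire component of a fibre could lie inside the non-regular locus without violating the global codimension bound. One needs the fibrewise input of Proposition \ref{prop:regularautomorphisms} (uniqueness of the regular bundle in each geometric fibre of $\chi^{ss}$, combined with openness of $\bun_G^{reg}$ and irreducibility of these fibres), which is how the paper concludes.
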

\begin{proof}
Let $\{\alpha_1, \ldots, \alpha_n\} \subseteq \Delta$ denote the set of special roots, and let $Z_1, \ldots, Z_n$ be the corresponding regular slices of $\bun_{G, rig}$. Let $U \subseteq \bun_G$ be the preimage in $\bun_G$ of the union of the images of $Z_i \to \bun_{G, rig}/E$. Note that this is open since each $Z_i \to \bun_{G, rig}/E$ is smooth. By Proposition \ref{prop:regularsliceresolution} and Theorem \ref{thm:friedmanmorgansection}, it is clear that \eqref{eq:regulardensity1} is an isomorphism, so $U \subseteq \bun_G^{reg}$. Note that $U$ contains all regular unstable bundles by construction and that Propositions \ref{prop:fmsliceregular} and \ref{prop:regularautomorphisms} imply that $U$ also contains all regular semistable bundles, so the same is true for $\bun_G^{reg}$.

We first show that for every $x \in (\hat{Y}\sslash W)/\mb{G}_m$, $\chi^{-1}(x) \cap \bun_G^{reg}$ is dense in $\chi^{-1}(x)$. For $x$ not in the zero section of $\hat{Y}\sslash W \to S$, this is clear since Proposition \ref{prop:regularautomorphisms} implies that $\chi^{-1}(x) \cap \bun_G^{reg}$ is open and nonempty, and that $\chi^{-1}(x)$ is irreducible. For $x$ in the zero section, note that since $\hat{Y} \sslash W$ is regular, the inclusion $\{x\} \hookrightarrow \hat{Y}\sslash W$ is a local complete intersection morphism. So by Corollary \ref{cor:chiflat}, $\chi^{-1}(x)$ is a local complete intersection stack, hence of pure dimension. Since $\chi^{-1}(x)$ is the locus of unstable bundles on some fibre of $E \to S$, $\bun_G^{reg}$ meets every irreducible component of $\chi^{-1}(x)$ by construction, so $\bun_G^{reg} \cap \chi^{-1}(x)$ is dense in $\chi^{-1}(x)$ as claimed.

Finally, notice that $\bun_G^{ss, reg} \subseteq \bun_G^{reg}$, and the complement of $\bun_G^{ss}$ in $\bun_G$ has codimension at least $2$, so $\bun_G^{reg} \subseteq \bun_G$ is big by Proposition \ref{prop:regcodim}.
\end{proof}

\begin{cor} \label{cor:simultaneouslogresolution}
The elliptic Grothendieck-Springer resolution
\[
\begin{tikzcd}
\tbun_{G} \ar[r, "\psi"] \ar[d, "\tilde{\chi}"'] & \bun_{G} \ar[d, "\chi"] \\
\Theta_Y^{-1}/\mb{G}_m \ar[r, "q"] & (\hat{Y}\sslash W)/\mb{G}_m
\end{tikzcd}
\]
and its rigidification are simultaneous log resolutions in the sense of Definition \ref{defn:simultaneouslogresolution}.
\end{cor}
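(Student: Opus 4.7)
The plan is to verify in turn each of the three conditions of Definition \ref{defn:simultaneouslogresolution}, for the diagram of Corollary \ref{cor:fundiag}. Since the argument for the rigidified version \eqref{eq:rigidifiedgrothendieckspringer} is identical, I will focus on the non-rigidified one.

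For condition \eqref{itm:simultaneouslogres1}, flatness of $\chi$ is Corollary \ref{cor:chiflat} and flatness of $\tilde{\chi}$ is Proposition \ref{prop:chiflatness}. Properness of $\psi$ with finite diagonal follows from Proposition \ref{prop:kmproper} (finite relative stabilisers imply finite diagonal). For the morphism $q \colon \Theta_Y^{-1}/\mb{G}_m \to (\hat{Y}\sslash W)/\mb{G}_m$, I would check the required properties on the $\mb{G}_m$-torsor cover $\Theta_Y^{-1} \to \hat{Y}\sslash W$. Ampleness of $\Theta_Y$ (proved in \S\ref{subsection:thetabundle}) means that $\Theta_Y^{-1} \to \hat{Y}$ is a projective birational contraction of the zero section onto the cone point, and $\hat{Y} \to \hat{Y}\sslash W$ is a finite quotient by $W$. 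Hence the composition is representable, proper, surjective, and generically finite, and the same properties pass to $q$ after taking $\mb{G}_m$-quotients.

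Condition \eqref{itm:simultaneouslogres2} is exactly what Proposition \ref{prop:regulardensity} was built to supply: $\bun_G^{\mrm{reg}}$ is dense in every geometric fibre of $\chi$, and by its very definition the canonical morphism $\psi^{-1}(\bun_G^{\mrm{reg}}) \to \bun_G^{\mrm{reg}} \times_{(\hat{Y}\sslash W)/\mb{G}_m} \Theta_Y^{-1}/\mb{G}_m$ is an isomorphism. Taking fibres over any geometric point of $\Theta_Y^{-1}/\mb{G}_m$ then gives the required generic isomorphism between fibres.

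For condition \eqref{itm:simultaneouslogres3}, regularity of $\tbun_G$ follows from its smoothness over the regular base $S$ (Corollary after Proposition \ref{prop:kmdegsmooth}). To see that $\tilde{\chi}$ is smooth away from $D$, note that the complement of $D$ in $\Theta_Y^{-1}/\mb{G}_m$ is the $\mb{G}_m$-quotient of the $\mb{G}_m$-torsor $(\Theta_Y^{-1})^* \to Y$, hence is canonically $Y$. Its preimage under $\tilde{\chi}$ is $\tbun_G^{ss} = \bun_B^0$ (by Corollary \ref{cor:chitildedivisor}, the vanishing locus of $\gamma$ is $\sum_\lambda \tfrac{1}{2}(\lambda\mmid\lambda)D_\lambda$, which is set-theoretically the complement of $\bun_B^0$), and under these identifications $\tilde{\chi}|_{\tbun_G^{ss}}$ agrees with $\mrm{Bl}_{B,Y}$, which is smooth by Proposition \ref{prop:kmblowdown} composed with the smooth gerbe $\bun_T^0 \to Y$. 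Finally, Corollary \ref{cor:chitildedivisor} identifies $\tilde{\chi}^{-1}(D) = \sum_{\lambda \in \mb{X}_*(T)_+} \tfrac{1}{2}(\lambda\mmid\lambda) D_\lambda$, whose underlying reduced subscheme is the normal crossings divisor $D_B = \sum_\lambda D_\lambda$ of Proposition \ref{prop:kmdivisor} \eqref{itm:kmdivisor1}; this is therefore a (possibly non-reduced) divisor with normal crossings.

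Each ingredient is in place from the earlier sections, so the proof is essentially a bookkeeping exercise rather than a substantial new argument. The only point where I would need to be slightly careful is in correctly identifying the open set $(\Theta_Y^{-1})^*/\mb{G}_m$ with $Y$ and tracking $\tilde{\chi}$ through this identification in order to invoke the smoothness of $\mrm{Bl}_{B,Y}$; once this is done, all three conditions fall out directly.
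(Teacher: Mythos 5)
Your proposal is correct and follows essentially the same route as the paper: condition (1) from Proposition \ref{prop:kmproper}, Proposition \ref{prop:chiflatness} and Corollary \ref{cor:chiflat}, condition (2) from Proposition \ref{prop:regulardensity}, and condition (3) from Proposition \ref{prop:kmdivisor}, Proposition \ref{prop:kmblowdown} and Corollary \ref{cor:chitildedivisor}, with the identification of $\tilde{\chi}$ away from the zero section with $\mrm{Bl}_{B,Y}$. The only cosmetic differences are that you spell out the properties of $q$ explicitly and treat the rigidified case by repeating the argument, whereas the paper deduces it by descent along the gerbe $\bun_G \to \bun_{G,rig}$.
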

\begin{proof}
It is enough to prove the claim for the non-rigidified diagram: the statement for the rigidification follows immediately by descent along the gerbe $\bun_G \to \bun_{G, rig}$. For the non-rigidified diagram, condition \eqref{itm:simultaneouslogres1} of Definition \ref{defn:simultaneouslogresolution} holds by Propositions \ref{prop:kmproper} and \ref{prop:chiflatness} and Corollary \ref{cor:chiflat}, condition \eqref{itm:simultaneouslogres2} holds by Proposition \ref{prop:regulardensity}, and \eqref{itm:simultaneouslogres3} holds by Proposition \ref{prop:kmdivisor} \eqref{itm:kmdivisor1}, Proposition \ref{prop:kmblowdown}, and Corollary \ref{cor:chitildedivisor}.
\end{proof}

\bibliography{bibliography}

\end{document}